\documentclass[11pt]{article}
\usepackage[left=2.2cm,right=2.2cm,top=2.5cm,bottom=2.5cm]{geometry}
\usepackage{amsfonts,amssymb,amsthm,amsmath,amscd}
\usepackage{enumerate}

\title{Lower bounds for the magnitude of the minimum eigenvalue of graphs with applications to MaxCut and Chowla's cosine problem}
\author{Fredy Yip\thanks{Trinity College, University of Cambridge, United Kingdom. Email: \textbf{fy276@cam.ac.uk}.}}
\date{}

\usepackage{graphicx}
\usepackage{parskip}

\usepackage{overpic}
\usepackage{url}
\usepackage{color}
\usepackage[dvipsnames]{xcolor}
\usepackage{comment}

\newcommand{\conlem}[2]{{#1}_{\textcolor{NavyBlue}{\ref{#2}}}}
\newcommand{\Clem}[1]{\conlem{C}{#1}}
\newcommand{\clem}[1]{\conlem{c}{#1}}

\newenvironment{custhm}[2]{\trivlist\item[\hskip\labelsep\textbf{#1~#2.}]\em}{\endtrivlist}

\numberwithin{equation}{section}

\newtheorem{thm}{Theorem}[section]
\newtheorem{lem}[thm]{Lemma}
\newtheorem{claim}[thm]{Claim}
\newtheorem{conj}[thm]{Conjecture}
\newtheorem{prob}[thm]{Problem}
\newtheorem{cor}[thm]{Corollary}
\newtheorem{prop}[thm]{Proposition}

\theoremstyle{definition}
\newtheorem{df}[thm]{Definition}

\newtheorem{obs}[thm]{Observation}

\newcommand{\tr}[0]{\operatorname{tr}}
\newcommand{\floor}[1]{\left\lfloor #1\right\rfloor}
\newcommand{\ceil}[1]{\left\lceil #1\right\rceil}
\newcommand{\EE}[1]{\mathbb{E}\left( #1\right)}

\newcommand{\tri}[0]{\ensuremath{{t_3}}}
\newcommand{\surp}[0]{\operatorname{sp}}
\newcommand{\disc}[0]{\operatorname{disc}}
\newcommand{\mc}[0]{\operatorname{mc}}

\begin{document}

\maketitle

\begin{abstract}
    Jin, Milojevi\'c, Tomon and Zhang established a powerful recursive estimate relating the positive eigenvalues of a graph whose least eigenvalue is small in absolute value. We establish a refinement of this result, yielding improved estimates across spectral graph theory and discrepancy theory, and for Chowla's cosine problem. Recent results of Janzer, Tomon and Yip allow us to directly transfer least eigenvalue estimates to the corresponding surplus estimates. 

    Using these tools, we fully resolve a conjecture of R\"aty, Sudakov and Tomon. We show that, for an $n$-vertex graph $G$ with least eigenvalue $\lambda_n$ and surplus $\surp(G)$, if $G$ is $\epsilon$-far from all disjoint unions of cliques, then $|\lambda_n|\geq \Omega_\epsilon(n^{1/4})$ and $\surp(G)\geq \Omega_\epsilon(n^{5/4})$. Furthermore, we show that when $G$ is $n^{-o(1)}$-far from all disjoint unions of cliques, we have $|\lambda_n|\geq n^{1/4 - o(1)}$ and $\surp(G)\geq n^{5/4 - o(1)}$. Finally, we show that the surplus of a $K_t$-free graph with $m$ edges is at least $m^{0.614 - o(1)}$ as $m$ tends to infinity. 
\end{abstract}

\section{Introduction}

Discrepancy theory studies the degrees to which discrete objects may resemble their uniform analogues. An overview of its applications across combinatorics, theoretical computer science and number theory may be found in Chazelle~\cite{disc}. 

In the context of graph theory, discrepancy theory measures the deviation of graphs from uniformly weighted complete graphs. Given an $n$-vertex graph $G$ with average degree $d$, let the density of $G$ be given by $p = \frac{d}{n - 1}$. Erd\H{o}s, Goldberg, Pach, and Spencer~\cite{erdos1988cutting} defined the discrepancy of $G$ as
\begin{equation*}
    \disc(G) = \max_{U\subseteq V(G)} \left|e(G[U]) - p\binom{|U|}{2}\right|, 
\end{equation*}
where $G[U]$ denotes the induced subgraph of $G$ on $U$. 

Crucially, discrepancy offers a measure of the deviation from the uniform object and not from the corresponding random object. In fact, for $p\in [n^{-1}, 1 - n^{-1}]$, the discrepancy of the Erd\H{o}s-R\'enyi random graph $R(n, p)$ is $\Theta(\min(p, 1 - p)^{1/2}n^{3/2})$ with high probability. Following the work of Erd\H{o}s and Spencer~\cite{erdos1972imbalances} on the case where $p = 1/2$, Erd\H{o}s, Goldberg, Pach, and Spencer~\cite{erdos1988cutting} established a matching lower bound. More precisely, they showed that for any $p\in [n^{-1}, 1 - n^{-1}]$ the discrepancy of an $n$-vertex graph with density $p$ is at least $\Omega(\min(p, 1 - p)^{1/2}n^{3/2})$. 

Lower bounds of the discrepancy of a graph $G$ identify induced subgraphs with significantly more or fewer edges than expected from the density of $G$. The concept of signed discrepancy describes the extent to which each of these two outcomes hold. Specifically, the positive and negative discrepancies of a graph $G$ are defined as
\begin{align*}
    \disc^+(G) = \max_{U\subseteq V(G)} \left(e(G[U]) - p\binom{|U|}{2}\right), \\
    \disc^-(G) = \max_{U\subseteq V(G)} \left(p\binom{|U|}{2} - e(G[U])\right), 
\end{align*}
respectively. Note that, upon complementation, we have $\disc^+(G) = \disc^-\left(\overline{G}\right)$. Therefore, it suffices to study one of the two signed discrepancies; we focus on the positive discrepancy. 

Generalising the result of~\cite{erdos1988cutting}, Bollob\'as and Scott~\cite{bollobas2006discrepancy} showed that for an $n$-vertex graph with density $p\in [n^{-1}, 1 - n^{-1}]$, we have $\disc^+(G)\disc^-(G)\geq \Omega(\min(p, 1 - p)n^3)$. This implies the estimate $\disc(G)\geq \Omega(\min(p, 1 - p)^{1/2}n^{3/2})$ of~\cite{erdos1988cutting} since $\disc(G) = \max(\disc^+(G), \disc^-(G))$. When combined with the trivial estimate $\disc^-(G)\leq O(pn^2)$, this yields the lower bound $\disc^+(G)\geq \Omega(n)$ when $p\in [n^{-1}, 1/2]$. 

The positive discrepancy of a graph may be much smaller than its discrepancy. A key example is given by Tur\'an graphs. Specifically, $n$-vertex Tur\'an graphs have positive discrepancy $\Theta(n)$. Since all Tur\'an graphs have density at least $1/2$, one may hope that one may place stronger lower bounds on the positive discrepancy once the density is below and bounded away from $1/2$. Verstra\"ete \cite[Conjecture A]{Verstraete} conjectured that in this regime, when $p\in [n^{-1}, 1/2 - \epsilon]$, we have $\disc^+(G)\geq \Omega_\epsilon(p^{1/2}n^{3/2})$, matching the lower bound for discrepancy. Alon~\cite{alon1997edge} showed that this is indeed the case when $n^{-1}\leq p\leq n^{-8/9}$. R\"aty, Sudakov and Tomon~\cite{rst} extended this estimate to the regime where $n^{-1}\leq p\leq n^{-1/3}$, but showed that this conjecture of Verstra\"ete does not hold in general. Nonetheless, they offered a lower bound for positive discrepancy superior to the $\Omega(n)$ lower bound across the entire regime $n^{-1}\leq p\leq 1/2 - \epsilon$. This lower bound has been strengthened by Janzer, Tomon and Yip~\cite{bridge} into the following form, which is believed~\cite{rst} to be tight. 

\begin{thm}[Combining Theorem~1.2 of~\cite{rst} and Theorem~1.2 of~\cite{bridge}]
    Let $G$ be an $n$-vertex graph with average degree $d$ and let $\epsilon > 0$. Then
    \begin{equation*}
        \disc^+(G) = \begin{cases}
            \Omega(nd^{1/2}) & \text{ if } 1\leq d\leq n^{2/3}\\
            \Omega(n^2/d) & \text{ if } n^{2/3}\leq d\leq n^{3/4}\\
            \Omega_{\epsilon}(nd^{1/3}) & \text{ if } n^{3/4}\leq d\leq (\frac{1}{2}-\epsilon)n.
        \end{cases}
    \end{equation*}
    These bounds are tight for $1\leq d\leq n^{3/4}$.
\end{thm}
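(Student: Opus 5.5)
This theorem is cited rather than proved here, so the plan is to assemble it from the two sources and say what each part needs. The common framework is the spectral--surplus connection. Let $A$ be the adjacency matrix and $J$ the all-ones matrix, and set $M = A - pJ$ (up to a diagonal correction). Then
\begin{equation*}
e(G[U]) - p\binom{|U|}{2} = \tfrac12 \mathbf 1_U^T M \mathbf 1_U + O(|U|).
\end{equation*}
Hence lower-bounding $\disc^+(G)$ amounts to finding a $0/1$ vector $x$ with large $x^T M x$. For this I would use the standard random-restriction argument. Take a unit vector realising a large positive eigenvalue of $M$, or more generally a vector of large positive quadratic form. Round it to a set $U$ by sampling each vertex independently with probability depending on its coordinate. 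The linear terms in the expectation cancel, leaving a gain proportional to the quadratic form minus error terms controlled by $\|M\|$ and the degrees.

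\textbf{Range $1\le d\le n^{2/3}$.} Here I would follow R\"aty--Sudakov--Tomon. Since $\tr(M^2)\approx pn^2 = nd$ and $\tr(M)\approx 0$, the positive eigenvalues $\mu_i$ of $M$ satisfy $\sum_i \mu_i^2 \gtrsim nd$, unless the negative eigenvalues carry most of the mass. The latter would force a huge negative eigenvalue, which is impossible since $|\lambda_n(A)|\le \sqrt{\cdot}$-type bounds combine with $\lambda_n\ge -\Delta$. Choosing a random subset of appropriate size inside a dense piece then gives surplus $\Omega(n d^{1/2})$. In practice I would cite Theorem~1.2 of~\cite{rst} directly for this range and for $n^{2/3}\le d\le n^{3/4}$. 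The $n^2/d$ bound comes from the observation that, when $d$ is large, $\tr(M^3)$ or a vertex-neighbourhood argument yields a set with positive surplus of order $n^2/d$: take $U = N(v)$ for a vertex $v$ and average over $v$, using the count of paths of length two versus $p\binom{d}{2}$.

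\textbf{Range $n^{3/4}\le d\le(1/2-\epsilon)n$.} This is the Janzer--Tomon--Yip improvement, and I expect it to be the main obstacle. The plan is as follows.
\begin{enumerate}[(i)]
\item Show that density $p\le 1/2-\epsilon$ forces $\lambda_{\min}(M)$, equivalently $\lambda_n(A)$ after removing the $pJ$ rank-one part, to satisfy $|\lambda_n|\ge \Omega_\epsilon(d^{1/3})$, or else the positive part is large. This uses the recursive eigenvalue estimate of Jin--Milojevi\'c--Tomon--Zhang: a small least eigenvalue forces the graph to be close to a union of cliques, which is incompatible with density bounded below $1/2$ together with $d\ge n^{3/4}$.
\item Apply the transference theorem of~\cite{bridge}, which converts a least-eigenvalue bound $|\lambda_n|\ge \Lambda$ into $\disc^+(G)\gtrsim n\Lambda$. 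Its proof runs through a semidefinite or vector-rounding relaxation: one considers $\max \sum_{ij} M_{ij}\langle v_i,v_j\rangle$ and rounds via a Grothendieck-type argument.
\end{enumerate}
Combining (i) and (ii) gives $\Omega_\epsilon(nd^{1/3})$. The delicate part is (ii), namely making the rounding lose only constant factors when the negative spectrum is small.

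\textbf{Tightness for $d\le n^{3/4}$.} Random graphs give tightness for $d\le n^{2/3}$. For $n^{2/3}\le d\le n^{3/4}$ I would use a strongly regular or polarity-type construction with all nontrivial eigenvalues small in absolute value, together with a clique blow-up. For such graphs one checks that every $U$ has surplus $O(n^2/d)$ via the expander mixing lemma applied to $\mathbf 1_U$, as in~\cite{rst}.
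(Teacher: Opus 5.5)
\textbf{Verdict: there are genuine gaps.} The paper gives no proof of this statement; it quotes Theorem~1.2 of \cite{rst} for the ranges $d\le n^{3/4}$ (with the matching constructions) and Theorem~1.2 of \cite{bridge} for $d\ge n^{3/4}$. Citing \cite{rst} for the first two ranges is fine. The parts you reconstruct yourself, the third range and tightness, have substantive errors.

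\textbf{Third range.} You work with the least eigenvalue of $G$ (or of $A-pJ$). A very negative eigenvalue there is the spectral proxy for sparse sets and large cuts, i.e.\ for $\disc^-(G)$ and $\surp(G)$, not for $\disc^+(G)$.
\begin{itemize}
\item Step (ii) is false as stated. The bridge of \cite{bridge} (Theorem~\ref{sp bridge thm}) gives $\surp(G)\ge (n-n')\lambda/4$, not $\disc^+(G)\gtrsim n\Lambda$. For a counterexample, take a random bipartite graph with parts of size $n/2$ and average degree $\log n\ll d\le n/4$. With high probability every induced subgraph on at least $3n/4$ vertices has $|\lambda_{\min}|=\Omega(d)$, yet $\disc^+(G)=O(n\sqrt d)$.
\item The justification of (i) is false. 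Three disjoint cliques of order $n/3$ have density about $1/3$, $d\approx n/3$ and $\lambda_n=-1$. So being close to a union of cliques is compatible with density below $1/2$. In that example it is the positive part of the spectrum that is large, and (ii) never handles that branch.
\item What controls $\disc^+(G)$ is the least eigenvalue of the complement. By Lemma~\ref{surp and disc lem}, $\surp(\overline G)\le 2\disc^+(G)+n/4$, so surplus bounds for $\overline G$ transfer to $\disc^+(G)$. Here $\overline G$ has density at least $1/2+\epsilon$.
\item Even in this corrected direction, the clique-structure theorem of \cite{graph} only gives $|\lambda_n|\gtrsim n^{1/4}$. De Caen's example shows this is optimal for graphs far from unions of cliques. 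It therefore yields at best $\disc^+(G)\gtrsim n^{5/4}$, which falls short of $nd^{1/3}$ once $d\gg n^{3/4}$. The density of $\overline G$ above $1/2$ must be used quantitatively, so you should simply cite \cite{bridge} here.
\end{itemize}

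\textbf{Tightness.} The proposed construction cannot work. For the expander mixing lemma to give $O(n^2/d)$, every nontrivial eigenvalue would need to be $O(n/d)$ in absolute value. But a $d$-regular graph with $d\le n/2$ has $\sum_{i\ge 2}\lambda_i^2=nd-d^2\ge nd/2$, which is much larger than $n(n/d)^2$ once $d\gg n^{2/3}$. Worse, if all nontrivial eigenvalues are $O(\sqrt d)$, then $\disc^-(G)=O(n\sqrt d)$. The Bollob\'as--Scott inequality $\disc^+(G)\disc^-(G)=\Omega(pn^3)$ then forces $\disc^+(G)=\Omega(n\sqrt d)$, which is much larger than $n^2/d$. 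The same inequality shows any graph with $\disc^+(G)=O(n^2/d)$ has $\disc^-(G)=\Omega(d^2)$, i.e.\ a very large negative spectrum. Such a graph's upper bound must come from a one-sided estimate such as $\disc^+(G)\le n\lambda_2/2+d$ with $\lambda_2=O(n/d)$, not from two-sided mixing. A clique blow-up pushes the wrong way, since each clique of size $s$ alone contributes $(1-p)\binom{s}{2}$ to $\disc^+$.

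\textbf{Heuristics for the first two ranges.} These share the same misconception and are harmless only because you cite \cite{rst}. There is no bound of the form $|\lambda_n|\lesssim\sqrt{\cdot}$: bipartite graphs have $|\lambda_n|=\lambda_1\ge d$. Also, averaging over $U=N(v)$ gives nothing for triangle-free $G$.
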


For graphs whose densities may exceed $1/2$, R\"aty, Sudakov and Tomon~\cite{rst} conjectured that $\disc^+(G)\geq \Omega(n^{5/4})$ whenever $G$ is far from all Tur\'an graphs (including the empty graph and the complete graph). More precisely, we call graphs $G$ and $H$ on the same set of $n$ vertices \emph{$\epsilon$-far} from one another if the edit distance between them is at least $\epsilon n^2$; we call them \emph{$\epsilon$-close} otherwise. R\"aty, Sudakov and Tomon~\cite{rst} made the following conjecture. 

\begin{conj}[Conjecture~8.1 of~\cite{rst}] \label{5/4 conj}
    Let $G$ be an $n$-vertex graph and let $\epsilon > 0$. If $G$ is $\epsilon$-far from all Tur\'an graphs (including the empty graph and the complete graph), then 
    \begin{equation*}
        \disc^+(G)\geq \Omega_\epsilon(n^{5/4}). 
    \end{equation*}
\end{conj}

The exponent $5/4$ in Conjecture~\ref{5/4 conj} is optimal following a construction of de Caen~\cite{decaen}. Jin, Milojevi\'c, Tomon, and Zhang~\cite{graph} showed that in the setting of Conjecture~\ref{5/4 conj}, we have $\disc^+(G)\geq \Omega_\epsilon(n^{1 + \Omega(1)})$, pushing beyond the trivial bound $\disc^+(G)\geq \Omega_\epsilon(n)$ by an absolute constant in the exponent. Building on results from~\cite{graph}, Janzer, Tomon and Yip~\cite{bridge} asymptotically resolved Conjecture~\ref{5/4 conj} affirmatively. 

\begin{thm}[Theorem~1.3 of~\cite{bridge}] \label{5/4 asymp thm}
    Let $\epsilon > 0$, and let $n$ be sufficiently large with respect to $\epsilon$. If an $n$-vertex graph $G$ is $\epsilon$-far from all Tur\'an graphs (including the empty graph and the complete graph), then 
    \begin{equation*}
        \disc^+(G)\geq n^{5/4 - o_\epsilon(1)}. 
    \end{equation*}
\end{thm}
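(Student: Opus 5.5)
The plan is to pass to the spectrum: bound the surplus (equivalently $\disc^+$) from below by the least eigenvalue, then bound $|\lambda_n|$ from below by a recursive estimate on the positive eigenvalues. First, the transfer step in the form I expect from~\cite{bridge}. If $G$ has $n$ vertices and $|\lambda_n(G)| = \Lambda$, and $G$ is far from a union of cliques, then $\surp(G) \geq \Omega(n\Lambda)$ up to lower-order losses. Recall that $\disc^+(G) \geq \Omega(\surp(\overline{G}))$ up to density corrections. Being far from all Tur\'an graphs means the complement is far from all disjoint unions of cliques. So it suffices to show the following: if $H$ is $\epsilon$-far from every disjoint union of cliques, then $|\lambda_n(H)| \geq n^{1/4-o_\epsilon(1)}$. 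I will take this reduction from~\cite{bridge} as given and concentrate on the eigenvalue bound.

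For the eigenvalue bound, argue by contradiction. Suppose $|\lambda_n(H)| \leq \Lambda = n^{1/4-\delta}$. Then $A+\Lambda I$ is positive semidefinite, and the following trace identities hold:
\begin{equation*}
\sum_i \lambda_i = 0, \qquad \sum_i \lambda_i^2 = 2e(H).
\end{equation*}
Since every negative eigenvalue has magnitude at most $\Lambda$, the negative eigenvalues contribute at most $n\Lambda^2$ to $\sum_i \lambda_i^2$. Hence, once $e(H) \gg n\Lambda^2$, the positive eigenvalues carry most of the Frobenius mass.

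The key step is the Jin--Milojevi\'c--Tomon--Zhang recursion. When $\lambda_n$ is small, the top eigenvectors give an approximate Gram representation $A \approx VV^T - \Lambda I$ of low rank. This rank is controlled because
\begin{equation*}
\sum_{\lambda_i>0}\lambda_i \leq n\Lambda,
\end{equation*}
so few eigenvalues can be large. Vertices whose vectors have large inner product must be adjacent, and vertices with nearly orthogonal vectors must be nonadjacent. Iterating this gives a partition into near-cliques with few edges between parts. Concretely, I will show that if the $k$-th largest eigenvalue is at least $n/k$ scale, the recursion forces $\lambda_{k+1}$ to be comparatively small. Eventually the graph is $\epsilon$-close to a union of at most $O_\epsilon(1)$ cliques, unless $\Lambda \gtrsim n^{1/4}$.

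The main obstacle is the exponent bookkeeping: each level of the recursion loses a factor, and this is where the $o_\epsilon(1)$ in the exponent comes from. I would first try to balance the thresholds so that the recursion has depth $O(\log(1/\epsilon))$ or so, giving a loss of only $n^{o(1)}$. I expect that obtaining exactly $n^{1/4}$ at the critical step needs the de Caen-type extremal balance $e(H)\cdot(\text{error}) \approx n^2\Lambda^4/n$, which yields $\Lambda^4 \gtrsim \epsilon n$.
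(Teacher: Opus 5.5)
The reduction step contains an error, and the eigenvalue bound, which is the real content of the statement, is not proved.

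\emph{The reduction.} Being $\epsilon$-far from all Tur\'an graphs only says that $\overline{G}$ is far from all \emph{balanced} disjoint unions of cliques, not from all of them. For example, $G=K_{n/3,2n/3}$ is $\Omega(1)$-far from every Tur\'an graph. Yet $\overline{G}=K_{n/3}\cup K_{2n/3}$ has least eigenvalue $-1$ and $\surp(\overline{G})=O(n)$, so no argument routed through $\surp(\overline{G})$ can handle this $G$. You need a separate case:
\begin{itemize}
\item if $\overline{G}$ is $\delta$-close to a union of cliques, then $G$ is $\delta$-close to a complete multipartite graph $H$;
\item then $\disc^+(H)\le\disc^+(G)+\delta n^2$ by Lemma~\ref{disc continuity lem};
\item a complete multipartite graph with $\disc^+(H)\le\eta n^2$ is close to a Tur\'an graph by Lemma~\ref{turan lem}, and this is where balance is used.
\end{itemize}
Also, the transfer as you state it ($\surp(G)\gtrsim n|\lambda_n(G)|$ for $G$ far from unions of cliques) is false. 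Adjoining a disjoint star $K_{1,n}$ raises $|\lambda_n|$ to $\sqrt{n}$ but raises the surplus by only $O(n)$. That would force $\surp\gtrsim n^{3/2}$ for every such graph, contradicting the sharpness of the exponent $5/4$. Theorem~\ref{sp bridge thm} needs the eigenvalue bound for \emph{every} induced subgraph on at least $n'$ vertices. That hypothesis is available here, because deleting $\epsilon n/2$ vertices leaves a graph that is still $\epsilon/2$-far from unions of cliques.

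\emph{The eigenvalue bound.} Your ``approximate low-rank Gram representation'' is exactly what has to be proved. Its Frobenius error is $\sum_{\lambda_i\le cn}\lambda_i^2$. The only input you give for the positive eigenvalues, $\sum_{\lambda_i>0}\lambda_i\le n\Lambda$, yields just $\sum_{0\le\lambda_i\le cn}\lambda_i^2\le cn^2\Lambda$, far above the $\delta n^2$ required. In fact $A+\Lambda I$ is \emph{exactly} a Gram matrix of vectors of norm $\sqrt{\Lambda}$ with inner products in $\{0,1\}$; the difficulty is that nothing in your sketch makes it close to low rank.

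The missing idea is the Hadamard-product recursion of~\cite{graph}. Set $\tilde A=A+|\lambda_n|I\succeq 0$. The identities $A\circ A=A$ and $A\circ I=0$ give $\tilde A^{\circ 2}\preceq\tilde A+|\lambda_n|^2I$. Applying the Schur product theorem to $\tilde A_{V_T}$, together with $\dim V_T^{\circ 2}\le N_T^2\le (S_T/T)^2$, yields Lemma~\ref{key quad}. Applied repeatedly starting from $S_{\Lambda^2}\le n\Lambda$, it bounds $S_t\lesssim n\Lambda^{1/k}$ for $t\gtrsim n^{1-1/k}\Lambda^{2/k}$, with $k$ a power of two. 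Integrating $\sum_{0\le\lambda_i\le cn}\lambda_i^2\le\int_0^{cn}S_t\,dt$ then gives at most $\delta n^2$ when $\Lambda\le n^{1/4-o(1)}$. The exponent $1/4$ enters at the bottom of this recursion, through $\int_0^{n^{1/2}\Lambda}S_t\,dt\le n^{3/2}\Lambda^2$, not through a de Caen-type balance.

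You then still need the endgame you skip:
\begin{itemize}
\item a Frobenius approximation of bounded rank yields a regular partition into a bounded number of parts;
\item the induced removal lemma gives $\Omega_\epsilon(n^3)$ induced copies of $P_2$, since $G$ is far from unions of cliques, so the reduced graph contains an induced $P_2$;
\item this forces $\surp(G)=\Omega(n^2)$, hence $|\lambda_n|\ge 4\surp(G)/n=\Omega(n)$, a contradiction.
\end{itemize}
Your claims that ``the recursion forces $\lambda_{k+1}$ to be comparatively small'' and that iterating ``gives a partition into near-cliques'' supply neither the spectral step nor this endgame.
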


We fully resolve Conjecture~\ref{5/4 conj} affirmatively, removing the $o(1)$ term in the exponent in Theorem~\ref{5/4 asymp thm}. 

\begin{thm} \label{5/4 thm}
    Let $G$ be an $n$-vertex graph and let $\epsilon > 0$. If $G$ is $\epsilon$-far from all Tur\'an graphs (including the empty graph and the complete graph), then 
    \begin{equation*}
        \disc^+(G)\geq \Omega_\epsilon(n^{5/4}). 
    \end{equation*}
\end{thm}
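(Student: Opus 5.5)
We will go through the least eigenvalue of an auxiliary graph, in line with the abstract: show $|\lambda_n|\geq \Omega_\epsilon(n^{1/4})$ for graphs $\epsilon$-far from disjoint unions of cliques, then convert this to a discrepancy bound using the results of Janzer, Tomon and Yip~\cite{bridge}.

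\textbf{Step 1: reduction to the complement.} Note $\disc^+(G)=\disc^-(\overline{G})$. A graph $G$ is $\epsilon$-far from all Tur\'an graphs (including the empty and complete graphs) exactly when $\overline{G}$ is $\epsilon$-far from all disjoint unions of equal-sized cliques. Some care is needed, since the abstract's eigenvalue theorem is about \emph{arbitrary} disjoint unions of cliques. If $H=\overline{G}$ is $\epsilon/2$-close to a union of cliques of unequal sizes, a direct counting argument should give large negative discrepancy. Take $U$ to be a union of part of the cliques, chosen to make $p\binom{|U|}{2}-e(H[U])$ of order $\Omega_\epsilon(n^2)\geq n^{5/4}$ once $n$ is large. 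For example, $U$ can be the union of the cliques smaller than average, or the cliques can be split so that the density inside $U$ differs from $p$ by $\Omega_\epsilon(1)$. Otherwise $H$ is $\Omega(\epsilon)$-far from all disjoint unions of cliques, and we may assume this.

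\textbf{Step 2: the eigenvalue bound.} Next prove that if $H$ is $\delta$-far from all disjoint unions of cliques then $|\lambda_n(H)|\geq \Omega_\delta(n^{1/4})$. Here we would refine the recursive estimate of Jin, Milojevi\'c, Tomon and Zhang~\cite{graph}. Suppose $|\lambda_n|\leq \mu$. Then $A+\mu I$ is positive semidefinite, so $A$ is close to a Gram matrix of vectors. The trace identities $\sum\lambda_i^2=2e(H)$ and $\sum\lambda_i^3=6t_3(H)$ force the positive spectrum to be concentrated on few large eigenvalues. Their recursion loses a factor $n^{o(1)}$ at each iteration.

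The refinement we would aim for: run the recursion on eigenvalue scales that decrease geometrically, and track the weight of the spectrum at scale $\approx n/2^k$. If the losses per level are summable, the total loss is a constant $C_\delta$ rather than $n^{o(1)}$. The structural endpoint is a rank-$O_\delta(1)$ approximation $A\approx\sum_{i\leq r}\lambda_i v_iv_i^T$, accurate in Frobenius norm up to $o(n^2)$ whenever $\mu=o(n^{1/4})$. A PSD-plus-small-perturbation matrix of bounded rank with $0/1$ entries should then be $o(n^2)$-close to a blow-up in which non-edges within a part are rare. This means it is close to a disjoint union of cliques, which is a contradiction.

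\textbf{Step 3: transfer.} Apply the transfer result of~\cite{bridge}, which converts $|\lambda_n(H)|\geq\Lambda$ into surplus and negative discrepancy bounds of order $n\Lambda$ (up to constants depending on $\epsilon$, under the farness hypothesis). This gives $\disc^-(H)\geq\Omega_\epsilon(n^{5/4})$, and hence $\disc^+(G)\geq\Omega_\epsilon(n^{5/4})$.

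\textbf{Main obstacle.} The main obstacle is Step 2: removing the $n^{o(1)}$ loss from the recursive eigenvalue estimate of~\cite{graph}. The first thing to try is making the per-level loss depend on the spectral mass at that level, so that the losses telescope. Separately, it must be checked that the transfer theorem of~\cite{bridge} loses only constant factors and no $n^{o(1)}$.
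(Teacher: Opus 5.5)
Your outer architecture matches the paper's. You pass to $\overline{G}$ and dispose of the case where $\overline{G}$ is close to a disjoint union of cliques; the paper does this with Lemma~\ref{turan lem} and Lemma~\ref{disc continuity lem}. You then prove $|\lambda_n|\geq\Omega_\delta(n^{1/4})$ for graphs $\delta$-far from all disjoint unions of cliques, and transfer via Theorem~\ref{sp bridge thm} and $\surp(\overline{G})\leq 2\disc^+(G)+n/4$.

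The genuine gap is Step~2, which is the whole new content of the theorem. You give no mechanism for it, and the one you hint at rests on a misdiagnosis. The recursion of~\cite{graph} does not lose $n^{o(1)}$ per iteration. In normalised form $S_{T'}/Cn\geq(S_T/Cn)^2$ it composes exactly, so there are no per-level losses to make summable. The loss comes from the coarseness of the thresholds it reaches. Write $T_k=n^{1-1/k}L^{2/k}$ and start from $S_{L^2}\leq nL$. Iterated squaring then controls $S_t$ only at $t\approx T_{2^j}$, namely $S_{CT_{2^j}}\ll nL^{2^{-j}}$. The resulting bound on the contribution of $[T_{2^j},T_{2^{j+1}}]$ to $\int_0^TS_t\,dt\geq\sum_{0\leq\lambda_i\leq T}\lambda_i^2$ is about $T_{2^{j+1}}\cdot nL^{2^{-j}}=n^{2-2^{-j-1}}L^{2^{1-j}}$. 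At $L=cn^{1/4}$ this equals $c^{2^{1-j}}n^2$, which is of full order $n^2$ at every level.

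The missing idea, in the paper's route, is Proposition~\ref{key}: $S_{T'}/Cn\geq(S_T/Cn)^k$ for every integer $k\geq1$ whenever $T'/Cn=(T/Cn)^k$ and $T'\geq|\lambda_n|^2$.
\begin{itemize}
\item This gives $S_{CT_k}\ll nL^{1/k}$ for all $k$. So $[T_{k-1},T_k]$ contributes at most $n^{2-1/k}L^{2/k+1/(k-1)}$, which at $L=cn^{1/4}$ is $O(n^{2-(k-2)/(4k(k-1))})$, a genuine power saving.
\item With $r\asymp\log n/\log(1/\delta)$ levels, Proposition~\ref{spectral distribution prop} yields $\sum_{\lambda_i\leq cn}\lambda_i^2\leq\delta n^2$ (Lemma~\ref{dense lem}).
\item The regularity and induced-removal argument of Proposition~\ref{dense regpartition reduction prop} then replaces your informal bounded-rank endpoint.
\end{itemize}

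Proposition~\ref{key} is not a bookkeeping refinement. The naive $k$-fold Schur product argument needs $T'\geq|\lambda_n|^k$, which is ineffective for $k\geq4$ when $|\lambda_n|\approx n^{1/4}$. The paper's proof has three ingredients:
\begin{itemize}
\item It takes Hadamard products $M_1\circ M_2$ of low-rank matrices with $0\preceq M_i\preceq\tilde{A}=A+|\lambda_n|I$.
\item It pulls $M_1\circ M_2\preceq\tilde{A}+L^2I$ back below $\tilde{A}$ using $P=\tilde{A}^{1/2}(\tilde{A}+L^2I)^{-1/2}$. The cost is only an additive diagonal error $E\preceq L^2I$ of rank at most $N^\ell$.
\item It splits $\ell=\lfloor\ell/2\rfloor+\lceil\ell/2\rceil$, so the diagonal deficits stay below $2n^{1-1/\ell}L^{2/\ell}N$ with no factor of $\ell$.
\end{itemize}

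There are also two smaller points.
\begin{itemize}
\item In Step~1 the threshold $\epsilon/2$ does not work. Two cliques of sizes $(1/2\pm\epsilon/2)n$ are about $\epsilon/2$-far from every equal-clique union, yet have negative discrepancy $O(\epsilon^2n^2)$. An edit error of $\epsilon n^2/2$ swamps this. You need closeness $\delta\ll\eta(\epsilon/2)$, with $\eta$ from Lemma~\ref{turan lem}, as in the paper's choice $\delta=\min(\eta(\epsilon/2),\epsilon)/2$.
\item In Step~3, Theorem~\ref{sp bridge thm} requires the eigenvalue bound for every induced subgraph on at least $(1-\delta/2)n$ vertices. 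This holds because such subgraphs remain $\delta/2$-far, and the transfer loses only constants.
\end{itemize}
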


The proofs of both Theorem~\ref{5/4 asymp thm} and Theorem~\ref{5/4 thm} employ a relationship between the positive discrepancy and eigenvalues of graphs established in~\cite{bridge}. We shall discuss this in more detail in the following subsection. 

We conclude the present discussion of graph discrepancy theory by introducing the MaxCut and surplus of graphs. A \emph{cut} of a graph is a bipartition of its vertex set and the \emph{size} of a cut refers to the number of edges going across this partition. The MaxCut $\mc(G)$ of a graph $G$ refers to the maximum size of a cut in $G$. Taking a random cut where each vertex is assigned to each part with independent probability $1/2$, it follows that $\mc(G)\geq m/2$ in a graph $G$ with $m$ edges. As such, we define the surplus of a graph $G$ with $m$ edges to be $\surp(G):= \mc(G) - m/2$. The surplus of a graph forms a measure of its uniformity akin to its negative discrepancy. In fact, for an $n$-vertex regular graph $G$, we have $\surp(G) = \Theta(\disc^-(G) + n)$, and in general, $\surp(G)\leq O(\disc^-(G) + n)$~\cite{bridge}. Many results giving lower bounds for signed discrepancy have natural extensions which give equivalent lower bounds for surplus under similar assumptions. We shall include such extensions as appropriate. For instance, we have the following variant of Theorem~\ref{5/4 thm} which addresses the surplus instead. 

\begin{thm} \label{5/4 maxcut thm}
    Let $G$ be an $n$-vertex graph and let $\epsilon > 0$. If $G$ is $\epsilon$-far from all disjoint unions of cliques, then 
    \begin{equation*}
        \surp(G)\geq \Omega_\epsilon(n^{5/4}). 
    \end{equation*}
\end{thm}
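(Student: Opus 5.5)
The plan is to go through the least eigenvalue, as the abstract indicates. The statement has two parts: a spectral estimate, namely that if $G$ is $\epsilon$-far from all disjoint unions of cliques then $|\lambda_n(G)|\geq \Omega_\epsilon(n^{1/4})$, and a transfer principle from~\cite{bridge} converting least-eigenvalue bounds into surplus bounds. I will take the transfer principle from~\cite{bridge} as given, in roughly the following form. If $G$ has $n$ vertices, $m$ edges and $|\lambda_n|\leq \Lambda$, then either $\surp(G)\geq \Omega(m/\Lambda)$, or $G$ is close to a disjoint union of cliques, with closeness quantified in terms of $\Lambda$. An $\epsilon$-far graph has $m\geq \epsilon n^2/2$, since otherwise it is $\epsilon$-close to the empty graph, which is a union of singletons. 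Combining this with $\Lambda\asymp n^{1/4}$ in the non-clique alternative gives $\surp(G)\geq \Omega_\epsilon(n^2/n^{1/4}) = \Omega_\epsilon(n^{7/4})$. That is stronger than $n^{5/4}$, so the transfer must in fact carry a different normalization, namely
\[
\surp(G)\geq \Omega\Big(\sum_i |\lambda_i|^{\text{neg}}\Big)\ \text{or}\ \Omega(n|\lambda_n|).
\]
With $|\lambda_n|\geq n^{1/4}$ the bound $\surp(G)\gtrsim n|\lambda_n|$ gives exactly $n^{5/4}$. So I will use the $\surp(G)\geq \Omega(n\,|\lambda_n|)$-type transfer of~\cite{bridge}, applied to a suitable induced subgraph if necessary, and the surplus statement then reduces to the spectral statement.

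For the spectral statement I argue by contradiction. Suppose $|\lambda_n|\leq \delta n^{1/4}$. Write $A=\sum_i \lambda_i v_iv_i^T$. Since the negative part of the spectrum is small, $A+\delta n^{1/4}I$ is positive semidefinite, so $A$ is nearly PSD. The refined recursive estimate of Jin--Milojevi\'c--Tomon--Zhang (the paper's main technical tool) bounds the positive eigenvalues of such graphs: apart from a few large eigenvalues, the remaining $\lambda_i$ are small. I will use it in the form
\[
\sum_{i\geq k}\lambda_i^2\lesssim n\,|\lambda_n|^{c}
\]
for a suitable $k=O_\epsilon(1)$, so that $A$ is $o(n^2)$-close in Frobenius norm to a rank-$k$ PSD matrix $B$. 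A rank-$k$ PSD approximation of a $0/1$ adjacency matrix with zero diagonal is close to a block structure. Each vertex has a vector in $\mathbb R^k$ with $A_{uv}\approx\langle x_u,x_v\rangle$ and $|x_u|^2\approx 0$-or-small, and a standard clustering/regularity argument (vectors that are nearly $0/1$-Gram must group into near-orthogonal clusters) shows $G$ is $\epsilon$-close to a disjoint union of at most $k$ cliques plus isolated vertices. This contradicts the hypothesis.

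The main obstacle is getting the exponent exactly $1/4$ with no $n^{o(1)}$ loss. The argument in~\cite{bridge} iterates the recursive estimate $O(\log)$ times, and each step loses a constant, which produces the $o(1)$ in Theorem~\ref{5/4 asymp thm}. My first attempt is to apply the refined recursion once, with a bounded number of steps depending only on $\epsilon$, to the trace inequalities
\[
\tr(A^3)\geq 0 \quad\text{and}\quad \tr(A^2)=2m,
\]
together with $\sum_i\lambda_i=0$. Since the negative eigenvalues contribute $\sum_{\lambda_i<0}|\lambda_i|\leq n|\lambda_n|$, this gives $\sum_{\lambda_i>0}\lambda_i\leq n^{5/4}\delta$. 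Next I compare $\sum_i\lambda_i^2=2m\geq \epsilon n^2$ with the largest few eigenvalues: far from cliques forces $\sum_{i>k}\lambda_i^2\geq \epsilon' n^2$ once $k$ is large. Then Cauchy--Schwarz against the eigenvalue count, combined with the recursion's bound of $\lambda_i\lesssim n/i$-type decay, pins the exponent at $n^{1/4}$. Making this decay estimate hold with constants depending only on $\epsilon$ is where I expect the real work to lie.
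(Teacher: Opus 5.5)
Your architecture is the paper's: a spectral bound $|\lambda_n|\ge\Omega_\epsilon(n^{1/4})$ fed into a transfer result from \cite{bridge}. However, the transfer you adopt is false, and the spectral bound, which carries all the difficulty, is not established.

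\textbf{The transfer.} The inequality $\surp(G)\ge\Omega(n|\lambda_n|)$ fails in general. The disjoint union of $K_{n/2}$ and a star has $\surp(G)=\Theta(n)$ but $|\lambda_n|=\Theta(n^{1/2})$. Choosing between candidate transfer statements according to which one yields the target exponent is not a valid step; your first candidate would give $n^{7/4}$, which contradicts the sharpness of $5/4$ coming from de Caen's construction. The correct tool, Theorem~\ref{sp bridge thm}, is hereditary: if \emph{every} induced subgraph on at least $n'$ vertices has least eigenvalue of absolute value at least $\lambda$, then $\surp(G)\ge(n-n')\lambda/4$. Your phrase ``applied to a suitable induced subgraph'' gestures at this, but two things are needed explicitly. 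The bound must hold for all large induced subgraphs simultaneously. And every induced subgraph on at least $(1-\epsilon/2)n$ vertices is still $\epsilon/2$-far from disjoint unions of cliques, so the spectral bound applies uniformly and gives $\surp(G)\ge\frac{\epsilon n}{2}\cdot\Omega_\epsilon(n^{1/4})$. This part is easily repaired.

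\textbf{The spectral estimate.} This is the genuine gap, which you defer as ``where the real work lies''. What is needed is Lemma~\ref{dense lem}: for every $\delta>0$ there is $c>0$ such that $|\lambda_n|\le cn^{1/4}$ implies $\sum_{\lambda_i\le cn}\lambda_i^2\le\delta n^2$. Proposition~\ref{dense regpartition reduction prop} then finishes. It uses the induced removal lemma to get $\eta n^3$ induced copies of $P_2$, Lemma~8.2 of \cite{graph} to get a bounded regular partition from the rank-$O(c^{-2})$ approximation, and Lemma~8.3 of \cite{graph} to force $|\lambda_n|\gg_{c,\epsilon}n$.

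Your tail bound $\sum_{i\ge k}\lambda_i^2\lesssim n|\lambda_n|^{c}$ is not something the recursion produces, and the decay $\lambda_i\lesssim n/i$ is not derived. At scale $T_k=n^{1-1/k}L^{2/k}$, Proposition~\ref{key} with $T'=L^2$ gives only $S_{CT_k}\ll nL^{1/k}$. The resulting count of eigenvalues above $CT_k$ is therefore a factor $L^{1/k}$ worse than $n/i$-decay would give, and this factor is polynomial in $n$ for bounded $k$. These losses must be summed over all scales via $\sum_{0\le\lambda_i\le T}\lambda_i^2\le\int_0^T S_t\,dt$. This yields Proposition~\ref{spectral distribution prop}:
$\sum_{\lambda_i\le T}\lambda_i^2\ll nL^{1/r}T+n^{3/2}L^2+n^{2-1/r}L^{2/r+1/(r-1)}$.

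With $T=cn$ and $L=cn^{1/4}$, the first term is $c^{1+1/r}n^{2+1/(4r)}$. So $r$ must grow with $n$; the paper takes $r\asymp\log n/\log(1/\delta)$. A bounded number of levels depending only on $\epsilon$ does not suffice. The exponent $1/4$ is set by the range $t\le T_2=n^{1/2}L$, where only $S_t\le nL$ is available. That range contributes $n^{3/2}L^2$, which is at most $\delta n^2$ exactly when $L\lesssim\sqrt{\delta}\,n^{1/4}$.

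All of this relies on Proposition~\ref{key} holding for every integer $k$ under the single hypothesis $T'\ge|\lambda_n|^2$. The $o(1)$ loss in \cite{graph} comes from having the recursion only for $k$ a power of two, not from constant losses per iteration as you suggest.

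\textbf{The clustering step.} This is also inaccurate. Frobenius closeness does not control the diagonal of the PSD approximation: for a clique, $J$ approximates $J-I$ with $|x_u|^2=1$, not $\approx 0$. Without norm control, PSD Gram vectors do not exclude induced $P_2$'s. This is why the paper goes through regularity and the induced removal lemma instead.
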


Theorems~\ref{5/4 thm} and~\ref{5/4 maxcut thm} are related by complementation. We shall in fact deduce Theorem~\ref{5/4 thm} from Theorem~\ref{5/4 maxcut thm}. Note the condition of Theorem~\ref{5/4 maxcut thm} concerns all disjoint unions of cliques without requiring these cliques to be of similar order. Indeed, whilst complete multipartite graphs may have large positive discrepancy if they are far from Tur\'an graphs, all disjoint unions of cliques have small surplus. 

\subsection{Eigenvalues}

Given an $n$-vertex graph $G$, let its eigenvalues $\lambda_1\geq \dots \geq \lambda_n$ refer to the eigenvalues of its adjacency matrix. The study of eigenvalues of graphs (particularly $\lambda_2$ and $\lambda_n$) has long been central in analysing the pseudorandomness and expansion of graphs. In the context of discrepancy theory, they have been useful algebraic proxies of positive discrepancy and MaxCut which are often more tractable than their combinatorial analogues. In particular, for a $d$-regular graph, R\"aty, Sudakov and Tomon~\cite{rst} noted that
\begin{equation*}
    \disc^+(G)\leq n\lambda_2/2 + d. 
\end{equation*}
Similarly, as noted in \cite{alon1996bipartite, delorme1993laplacian, mohar1990eigenvalues}, we have
\begin{equation*}
    \surp(G)\leq n|\lambda_n|/4. 
\end{equation*}
Unfortunately, lower bounds of similar efficacy for $\surp(G)$ in terms of $|\lambda_n|$ are unavailable. Indeed, taking $G$ to be the disjoint union of a clique $K_{n/2}$ and a star $K_{1, n/2 - 1}$, we have $\surp(G) = \Theta(n)$ whilst $|\lambda_n| = \Theta(n^{1/2})$. Nevertheless, strategies for giving lower bounds for $\lambda_2$ and $|\lambda_n|$ have greatly informed approaches in establishing lower bounds for the positive discrepancy and the surplus, respectively, albeit often leading to weaker bounds in the combinatorial setting. 

Recently, Janzer, Tomon and Yip~\cite{bridge} showed that this passage from eigenvalue estimates to lower bounds for measures of discrepancy is in fact direct and loseless once suitable eigenvalue estimates are given for all moderately-sized induced subgraphs. We state a version of their result relating the surplus and $|\lambda_n|$. 

\begin{thm}[Theorem~1.8 of~\cite{bridge}] \label{sp bridge thm}
    Let $G$ be a graph on $n$ vertices, and let $n'\leq n$ be a positive integer. Assume that for any induced subgraph $H$ of $G$ on at least $n'$ vertices, the absolute value of the least eigenvalue of $H$ is at least~$\lambda$. Then $\surp(G)\geq (n - n')\lambda/4$. 
\end{thm}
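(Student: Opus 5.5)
The plan is to convert the least-eigenvalue hypothesis into an explicit cut whose surplus is at least $(n-n')\lambda/4$. Since the hypothesis concerns every induced subgraph on at least $n'$ vertices, I will build the cut greedily, peeling off one vertex at a time. Along the way I will use the least-eigenvalue eigenvector of the current induced subgraph to gain about $\lambda/4$ surplus per removed vertex.

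The basic local tool is the standard spectral bound: for a graph $H$ with least eigenvalue $-\mu$ and unit eigenvector $x$, a random cut obtained by rounding $x$ gives surplus of order $\mu\|x\|_1^2$-type quantities. This is too lossy on its own, so I will use instead the exact quadratic identity. For a $\pm1$ vector $\sigma$, the cut size is $\frac{m}{2}-\frac14\sigma^TA\sigma$, so the surplus is $\max_\sigma(-\frac14\sigma^TA\sigma)$. I will therefore maximize $-\sigma^TA\sigma$ over $\sigma\in\{-1,1\}^n$ by a process that fixes coordinates one at a time.

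\textbf{Step 1.} Relax to fractional vectors $y\in[-1,1]^n$. The function $y\mapsto -y^TAy$ is a quadratic with zero diagonal, hence multilinear. Starting from $y=0$, its value can be increased while some coordinates are moved to $\pm1$, and multilinearity ensures that rounding a fractional coordinate to $\pm1$ (one of the two choices) never decreases the value. So it suffices to find some $y\in[-1,1]^n$ with $-y^TAy\geq (n-n')\lambda$.

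\textbf{Step 2.} Construct $y$ iteratively. Let $S_0=V(G)$. At stage $t$, suppose $|S_t|\geq n'$, and let $x$ be a unit eigenvector of $G[S_t]$ for its least eigenvalue, so that $x^TA_{S_t}x\leq-\lambda$. Pick the vertex $v\in S_t$ maximizing $|x_v|$, and add a small multiple $\delta x$ supported on $S_t$ to the current free coordinates. I then need to bound two effects:
\begin{itemize}
\item the gain, which is the second-order term $\delta^2\lambda$;
\item the linear term, which is the cross term with the already-fixed part; its sign can be chosen favourable by replacing $x$ with $-x$.
\end{itemize}
Continuing until some coordinate hits $\pm1$, freezing it, and removing it from $S_t$ gives $n-n'$ stages. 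A cleaner route to the same bound is a continuous-time version: move $y$ along $\dot y=x^{(t)}/\|x^{(t)}\|_\infty$ restricted to the free coordinates. Then $\frac{d}{dt}(-y^TAy)\geq 0$ by the sign choice, and the second derivative is at least $2\lambda/\|x\|_\infty^2$.

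\textbf{Main obstacle.} The hardest step is the accounting: relating the time until a coordinate freezes to the accumulated gain, so that each frozen vertex contributes $\lambda$. I would try the potential $\Phi=\sum_v y_v^2$, which increases by at most $1$ per frozen vertex. The plan is to choose the step in the direction $x$ so that the increase of $-y^TAy$ is at least $\lambda$ times the increase of $\Phi$. This is plausible because $x^TA_Sx\leq-\lambda\|x\|_2^2$ and the first-order term can be made nonnegative. Integrating then gives $-y^TAy\geq\lambda\cdot\#\{\text{frozen}\}\geq \lambda(n-n')$, where I would stop once fewer than $n'$ vertices remain free. Finally I set the remaining free coordinates to $\pm1$ by multilinear rounding and divide by $4$ to obtain $\surp(G)\geq (n-n')\lambda/4$.
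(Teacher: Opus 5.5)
The paper does not prove this statement; it quotes it as Theorem~1.8 of~\cite{bridge}, so your argument has to stand on its own. Its core idea works, and the step you flag as the main obstacle closes in two lines. It does not close with the sign choice you make in Step~2, though. Choosing $\pm x$ only to make the linear term of $-y^TAy$ nonnegative is not enough, because $\Phi=\|y\|_2^2$ also has a cross term: $\Phi(y+tx)=\Phi(y)+2t\,x^Ty+t^2$. For example, if $x^TAy=0$, $x^Ty>0$ and $x^TA_Sx=-\lambda$, then the gain $t^2\lambda$ in $-y^TAy$ is strictly less than $\lambda$ times the gain in $\Phi$.

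Make the sign choice for the combined form $h(y):=-y^T(A+\lambda I)y$ instead. Let $S$ be the set of free coordinates with $|S|\ge n'$, and let $x$ be a unit eigenvector of $A_S$ for its least eigenvalue, extended by zero. Then
\[
h(y+tx)=h(y)-2t\,x^T(A+\lambda I)y+t^2\bigl(-x^TA_Sx-\lambda\bigr).
\]
The quadratic coefficient is $\ge 0$ by hypothesis, since a graph's least eigenvalue is $\le 0$ and hence equals $-|\lambda_{\min}|\le-\lambda$. Replacing $x$ by $-x$ if necessary makes the linear coefficient $\ge 0$ as well. So $h$ is nondecreasing in $t\ge 0$ until the first free coordinate reaches $\pm1$. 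Freeze it (together with any others that reach $\pm1$ simultaneously), and repeat while at least $n'$ coordinates remain free. Since $h(0)=0$, the final $y$ satisfies
\[
-y^TAy\ge\lambda\|y\|_2^2\ge\lambda(n-n'+1).
\]
Your multilinear rounding then finishes the proof.

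There are two smaller corrections. First, the fact you need about $\Phi$ is the trivial lower bound $\|y\|_2^2\ge\#\{\text{frozen coordinates}\}$. It is not that $\Phi$ ``increases by at most $1$ per frozen vertex'': that claim is false in general (free coordinates can also raise $\Phi$, and $\Phi$ can decrease within a stage), and it is not used. Second, choosing $v$ to maximise $|x_v|$ and the continuous-time flow are both unnecessary. The flow's claimed second-derivative bound is also unjustified when the direction $x^{(t)}$ varies with $t$. The discrete process, which moves along a fixed eigenvector within each stage, needs at most $n-n'+1$ stages.
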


An application of Theorem~\ref{sp bridge thm} will reduce Theorem~\ref{5/4 maxcut thm} to the following estimate on $|\lambda_n|$, which will be proven in Section~\ref{dense graph sec}. This reduction argument will also be given in Section~\ref{dense graph sec}. 

\begin{thm} \label{1/4 thm}
    Let $G$ be an $n$-vertex graph and let $\epsilon > 0$. Let $\lambda_n$ denote the least eigenvalue of $G$. If $G$ is $\epsilon$-far from all disjoint unions of cliques, then 
    \begin{equation*}
        |\lambda_n|\geq \Omega_\epsilon(n^{1/4}). 
    \end{equation*}
\end{thm}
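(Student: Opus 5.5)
We prove the contrapositive. Suppose $|\lambda_n|\leq \lambda$ with $\lambda = c n^{1/4}$ for a small constant $c=c(\epsilon)$. We aim to show that $G$ is $\epsilon$-close to a disjoint union of cliques.

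\textbf{Trace identity.} The basic tool is that $A+\lambda I$ is positive semidefinite. Write $A = \sum_i \lambda_i v_iv_i^T$. Then
\begin{equation*}
    \sum_i \lambda_i^2 = \tr A^2 = 2e(G)
    \quad\text{and}\quad
    \sum_{\lambda_i<0}\lambda_i^2\leq \lambda\sum_{\lambda_i<0}|\lambda_i| = \lambda\sum_{\lambda_i>0}\lambda_i ,
\end{equation*}
where the last equality uses $\tr A = 0$. Hence the positive spectrum carries almost all of the mass of $A^2$. The key refinement is the recursive estimate of Jin, Milojevi\'c, Tomon and Zhang mentioned in the abstract. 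For a graph with $|\lambda_n|\le\lambda$, it bounds the number of positive eigenvalues exceeding a threshold in terms of $\lambda$, in a form like
\begin{equation*}
    \sum_{\lambda_i>0}\lambda_i^{3} \ge \tr A^3 - O(\lambda)\tr A^2 .
\end{equation*}
It also controls the triangle count relative to the edge count. I would establish an improved version that loses only constant factors rather than $n^{o(1)}$ at each recursion step.

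\textbf{Structural step.} Use the spectral information to show that $G$ is close to a union of cliques. For each vertex $v$, PSD-ness of $A+\lambda I$ restricted to $\{v\}\cup N(v)\cup\{u\}$ gives local constraints. Concretely, consider the test vector $x=\mathbf 1_{N(v)} - t\,\mathbf 1_{\{u\}}$ with $u$ a vertex. This yields that $N(v)$ is nearly a clique whenever $\deg v\gg \lambda^2$. It also yields that vertices $u$ with many neighbours in $N(v)$ must be adjacent to most of $N(v)$: an induced $P_3$-rich configuration (a vertex with two non-adjacent neighbours, repeated) forces a negative quadratic form. Quantitatively, a set $S$ of size $s$ whose induced graph has many non-edges among neighbours of a common vertex gives $x^TAx\lesssim -s^{1/2}$ for suitable $x$. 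More generally, an induced complete bipartite-like substructure $K_{1,s}$ with an independent leaf set yields $\lambda_n\le -\sqrt s$.

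\textbf{Counting the defects.} Since $G$ is $\epsilon$-far from unions of cliques, it contains $\Omega_\epsilon(n^3)$ induced cherries (paths $P_3$), by the standard removal-type argument: the $P_3$-free graphs are exactly the unions of cliques. I would then locate, by averaging, a vertex $v$ and sets $X,Y\subseteq N(v)$ of linear size with bipartite density between them bounded away from $1$. From this I would build a test vector of the form $a\mathbf 1_v + \mathbf 1_X - \mathbf 1_Y$, or a sign-weighted combination over a random subfamily. The aim is Rayleigh quotient $\le -\Omega_\epsilon(n^{1/4})$. The exponent $1/4$ suggests an iterated argument: first obtain $n^{1/2}$-scale structure, then apply a second-moment argument on $\tr(A+\lambda I)^2$ restricted to an induced subgraph on about $n^{1/2}$ vertices.

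\textbf{Main obstacle.} The main obstacle is removing the $n^{o(1)}$ loss, i.e.\ making the recursive eigenvalue estimate tight up to constants. I would try to replace the dyadic pigeonholing over eigenvalue scales with a weighted sum over all scales whose losses form a convergent geometric series. This would be driven by the assumption that defects have density $\epsilon$ at every scale.
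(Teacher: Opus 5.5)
There is a genuine gap. The proposal never gives a mechanism that turns ``$\epsilon$-far'' plus ``$|\lambda_n|\le cn^{1/4}$'' into a contradiction, and several intermediate claims are wrong.

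Your displayed ``recursive estimate'' $\sum_{\lambda_i>0}\lambda_i^3\ge\tr A^3-O(\lambda)\tr A^2$ is vacuous: it holds simply because $\sum_{\lambda_i<0}\lambda_i^3\le 0$. It is not the Jin--Milojevi\'c--Tomon--Zhang recursion, which concerns $S_T=\sum_{\lambda_i\ge T}\lambda_i$.

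Your structural step is false as stated. Join a vertex $v$ to two disjoint cliques of size $m$, with no edges between the cliques. The negative eigenvalue solves $x^2-(m-1)x-2m=0$ and lies in $(-2,-1)$, so $|\lambda_n|<2$. Yet $\deg v=2m\gg\lambda^2$, $N(v)$ has density about $1/2$, and each vertex of the first clique has $m-1$ neighbours in $N(v)$ while missing about half of it. Locally, positive semidefiniteness of $A+\lambda I$ only gives bounds such as $\alpha(G[N(v)])\le\lambda^2$, via induced stars.

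Nor does any computation support a Rayleigh quotient $\le-\Omega_\epsilon(n^{1/4})$ for $a\mathbf 1_v+\mathbf 1_X-\mathbf 1_Y$. For equal-size near-cliques $X,Y$ with $e(X,Y)\le(1-\eta)|X||Y|$, the form is actually positive. A local witness at the scale $cn^{1/4}$ (for example an induced star with about $c^2n^{1/2}$ independent leaves) does not come out of averaging over cherries. The closing ideas (an ``iterated argument'', a ``convergent geometric series driven by defect density'') are not an argument.

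The missing ingredient is global spectral concentration. Proposition~\ref{key} shows $S_{T'}/(Cn)\ge(S_T/(Cn))^k$ for \emph{every} integer $k$, assuming only $T'\ge|\lambda_n|^2$. The proof runs as follows.
\begin{enumerate}
\item Build positive semidefinite matrices $M_\ell\preceq\tilde A=A+|\lambda_n|I$ of rank at most $N_T^\ell$, with $M_{ii}\approx((\tilde A_{V_T})_{ii})^\ell$, via Schur products $M_{\ell_1}\circ M_{\ell_2}\preceq\tilde A+|\lambda_n|^2I$.
\item Conjugate by $P=\tilde A^{1/2}(\tilde A+|\lambda_n|^2I)^{-1/2}$ to get back below $\tilde A$, at a controlled diagonal cost.
\item Split $\ell=\lfloor\ell/2\rfloor+\lceil\ell/2\rceil$, so the errors do not accumulate a factor of $k$.
\end{enumerate}
This yields $S_{CT_k}\ll nL^{1/k}$ at $T_k=n^{1-1/k}L^{2/k}$ for all $k$. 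Integrating $\sum_{0\le\lambda_i\le T}\lambda_i^2\le\int_0^TS_t\,dt$ between consecutive $T_k$ gives Proposition~\ref{spectral distribution prop}.

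Take $L=cn^{1/4}$, $T=cn$ and $r\asymp\log n/\log(1/\delta)$. Then the factor $L^{1/r}$ is $O(1)$, which is exactly how the $n^{o(1)}$ loss disappears, and so $\sum_{\lambda_i\le cn}\lambda_i^2\le\delta n^2$ (Lemma~\ref{dense lem}). Hence $A$ is within $\delta n^2$ in squared Frobenius norm of a matrix of rank at most $c^{-2}$. This gives a regular partition into $O_{c,\epsilon}(1)$ parts.

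Only at this point does $\epsilon$-farness enter. The $\eta n^3$ induced cherries from the induced removal lemma are the one step you share with the paper. Here they force an induced cherry among linear-size clusters of the reduced graph. That gives $\surp(G)\gg_{c,\epsilon}n^2$, hence $|\lambda_n|\ge\surp(G)/n\gg n$, a contradiction (Proposition~\ref{dense regpartition reduction prop}).
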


A similar but weaker result has been established in~\cite{graph}, with an estimate of the form $|\lambda_n|\geq n^{1/4 - o(1)}$. 

\begin{thm}[Theorem~1.4 of~\cite{graph}] \label{1/4 - o(1) thm}
    Let $\epsilon > 0$, and let $n$ be sufficiently large with respect to $\epsilon$. If an $n$-vertex graph $G$ is $\epsilon$-far from all disjoint unions of cliques, then the least eigenvalue $\lambda_n$ of $G$ satisfies 
    \begin{equation*}
        |\lambda_n|\geq n^{1/4 - o_\epsilon(1)}. 
    \end{equation*}
\end{thm}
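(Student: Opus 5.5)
The plan is to argue by contradiction: assume $\Lambda := |\lambda_n| \le n^{1/4-\delta}$ for some fixed $\delta>0$, and derive that $G$ is $\epsilon$-close to a disjoint union of cliques once $n$ is large.

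\textbf{Vector representation.} Since $A+\Lambda I$ is positive semidefinite, there are vectors $x_v$, $v\in V(G)$, with
\[
\|x_v\|^2=\Lambda, \qquad \langle x_u,x_v\rangle = 1 \text{ if } uv\in E(G), \qquad \langle x_u,x_v\rangle = 0 \text{ otherwise}.
\]

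\textbf{Spectral bookkeeping.} From $\tr A=0$ we get
\[
\sum_{\lambda_i>0}\lambda_i=\sum_{\lambda_i<0}|\lambda_i|\le n\Lambda .
\]
From $\tr A^2=2e(G)$ and this bound, most of the energy $\sum\lambda_i^2$ sits on a few large positive eigenvalues. Concretely, the number of eigenvalues exceeding a threshold $t$ is at most $n\Lambda/t$. So, up to an error we can control, the Gram matrix is well approximated by its projection onto a space of dimension $k \approx n\Lambda/t$. This is the step where a recursive estimate of the Jin--Milojevi\'c--Tomon--Zhang type enters. It bounds how large the positive eigenvalues can be, and hence how spread out the vectors are, in terms of the same quantity for smaller induced subgraphs.

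\textbf{Combinatorial input.} If $G$ is $\epsilon$-far from every disjoint union of cliques, then $G$ contains $\Omega_\epsilon(n^3)$ induced cherries $u\!-\!v\!-\!w$, that is, paths with $uw\notin E(G)$. This follows from a simple greedy argument, or from the induced removal lemma for $P_3$, since induced-$P_3$-free graphs are exactly disjoint unions of cliques. Averaging then gives a vertex $v$ whose neighbourhood $N(v)$ has linear size and contains $\Omega_\epsilon(n^2)$ non-edges. In other words, $G[N(v)]$ is itself far from complete.

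\textbf{Recursive step (the main obstacle).} For $u\in N(v)$ we have $\langle x_u,x_v\rangle=1$. Let $y_u$ be the projection of $x_u$ orthogonal to $x_v$. Then
\[
\|y_u\|^2=\Lambda-\tfrac1\Lambda, \qquad \langle y_u,y_{u'}\rangle = 1-\tfrac1\Lambda \text{ if } uu'\in E(G), \qquad \langle y_u,y_{u'}\rangle = -\tfrac1\Lambda \text{ otherwise}.
\]
So $G[N(v)]$ admits a PSD representation with a negative shift on non-edges. That shift is equivalent to controlling the least eigenvalue of $A_{G[N(v)]}-\frac1\Lambda J$. Many non-edges force the rank-one term $-\frac1\Lambda J$ to interact with the top eigenvalue, which is roughly the edge density times $|N(v)|$.

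Iterating this passage to neighbourhoods gives a recursion. After $s$ steps we have a subgraph of size $n/C^s$ whose representation has diagonal shrunk to about $\Lambda-s/\Lambda$, while the $\Omega_\epsilon(n^2)$ non-edges keep contributing. Combined with the dimension bound from the spectral bookkeeping, each step gains a factor in the inequality relating $\Lambda$ to $n$. Balancing the parameters over $s \approx \log n/\log\log n$ steps gives $\Lambda\ge n^{1/4}/C_\epsilon^{s}=n^{1/4-o_\epsilon(1)}$, contradicting the assumption.

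The delicate part is making the recursion lossless enough. The losses are the constant-factor shrinkage of vertex sets at each step and the degradation of the farness parameter $\epsilon$, and both compound into the $n^{-o(1)}$ factor. I would first try to keep $\epsilon$ fixed by always passing to a neighbourhood on which the cherry density is at least the global one, which exists by averaging, and then track only the accumulated vertex-count loss.
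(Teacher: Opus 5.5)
\textbf{There is a genuine gap.} It is in the recursive step, which carries the whole argument and cannot give a polynomial bound.

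Carry your projection through $s$ steps. Suppose $v_1,\dots,v_s$ span a clique with common neighbourhood $N_s$. The Gram matrix of $x_{v_1},\dots,x_{v_s}$ is $(\Lambda-1)I+J$. Projecting each $x_u$ ($u\in N_s$) off their span gives exactly $A_{G[N_s]}+\Lambda I\succeq\theta_sJ$ with $\theta_s=s/(\Lambda+s-1)$.

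\begin{itemize}
\item The diagonal never drops below $\Lambda-1$; your $\Lambda-s/\Lambda$ is only the first-order term. All you gain over interlacing is a rank-one term of weight $\theta_s$.
\item The natural way to use that term is to test against $\mathbf{1}$. This bounds the non-edge density of $N_s$ by roughly $1-\theta_s+\Lambda/|N_s|$. That contradicts a non-edge density $\eta$ only once $s\gtrsim\Lambda/\eta$.
\item By your own accounting $|N_s|\approx n/C^s$, so this route yields at best $\Lambda\gg_\epsilon\log n$. With $s\approx\log n/\log\log n$ and $\Lambda\approx n^{1/4}$ one has $\theta_s=n^{-1/4+o(1)}$, and the iteration gives essentially nothing.
\item The claimed ``gain of a factor per step'' is never written down as an inequality. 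No step of your argument produces the exponent $1/4$.
\item The iteration also cannot be kept alive as described. Averaging over centres gives a $v$ with $\Omega_\epsilon(n^2)$ non-edges in $N(v)$, but not induced cherries inside $G[N(v)]$; for example, $G[N(v)]$ could be two disjoint cliques. So farness is not inherited.
\item Your spectral bookkeeping alone gives $\sum_{0\le\lambda_i\le t}\lambda_i^2\le\int_0^tS_s\,ds\le tn\Lambda$. Making this $o(n^2)$ forces $t=o(n/\Lambda)$, which leaves $\gg\Lambda^2$ eigenvalues possibly above $t$. ``Most of the energy sits on a few large eigenvalues'' is exactly what has to be proved.
\end{itemize}

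\textbf{The two ingredients you are missing.} The paper derives the statement (indeed the stronger Theorem~\ref{1/4 thm}) from these.

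First, the Hadamard-square recursion (Lemma~\ref{key quad}, or Proposition~\ref{key}). It applies the Schur product theorem to $\tilde A=A+|\lambda_n|I$ and uses $\tilde A^{\circ 2}=A+|\lambda_n|^2I$. In your language, the tensor squares $x_u\otimes x_u$ have Gram matrix $A+\Lambda^2I$ because the off-diagonal entries are $0/1$; this is the Booleanity your argument never exploits. The result is $S_{T'}/(Cn)\ge(S_T/(Cn))^k$ whenever $T'/(Cn)=(T/(Cn))^k$ and $T'\ge|\lambda_n|^2$. This relates $S_T$ and $S_{T'}$ of the \emph{same} graph, not of smaller induced subgraphs as you describe it. Starting from $S_{L^2}\le nL$, it yields $S_t\ll nL^{1/k}$ for $t\gtrsim n^{1-1/k}L^{2/k}$. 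Integrating via $\sum_{0\le\lambda_i\le T}\lambda_i^2\le\int_0^TS_t\,dt$ gives Lemma~\ref{dense lem}: eigenvalues $\le cn$ carry quadratic mass at most $\delta n^2$ once $L\le c(\delta)n^{1/4}$. The bottleneck is the range $[0,n^{1/2}L]$, which contributes up to $n^{3/2}L^2$; this is precisely where the exponent $1/4$ comes from. Consequently $A$ is within $\delta n^2$, in squared Frobenius norm, of a matrix of rank at most $c^{-2}$.

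Second, the structural conversion (Proposition~\ref{dense regpartition reduction prop}). The low-rank approximation gives a weak regular partition into $O_{c,\epsilon}(1)$ parts (Lemma~8.2 of~\cite{graph}). Your $\Omega_\epsilon(n^3)$ induced cherries then force an induced cherry in the reduced graph. Lemma~8.3 of~\cite{graph} gives $\surp(G)\gg_{c,\epsilon}n^2$, hence $|\lambda_n|\ge\surp(G)/n\gg n$, a contradiction.

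Iterating only the $k=2$ inequality along powers of two already suffices for the $n^{1/4-o(1)}$ version.
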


The proof of Theorem~\ref{1/4 - o(1) thm} in~\cite{graph} follows from an eigenvalue recursion estimate (Lemma~6.3 of~\cite{graph}) they establish, which lies at the heart of their argument with various applications. This estimate offers significant control over the positive eigenvalues of $G$ when $|\lambda_n|$ is small. We give below the relevant quantity considered by this estimate. 

\begin{df}
    For a non-negative real number $T$ and an $n$-vertex graph $G$ with eigenvalues $\lambda_1\geq\dots\geq \lambda_n$, let 
    \begin{equation*}
        S_T = \sum_{i\in [n]: \lambda_i \geq T}\lambda_i
    \end{equation*}
    denote the sum of the eigenvalues of $G$ (counted with multiplicity) which are at least $T$. 
\end{df}

\begin{lem}[Lemma~6.3 of~\cite{graph}] \label{key quad}
    There exists an absolute constant $C\geq 1$ such that the following holds. Let $G$ be an $n$-vertex graph with least eigenvalue $\lambda_n$. If $T$ and $T'$ are non-negative real numbers satisfying
    \begin{equation} \label{key quad equation 1}
        \frac{T'}{Cn} = \left(\frac{T}{Cn}\right)^2, 
    \end{equation}
    and $T'\geq |\lambda_n|^2$, then 
    \begin{equation} \label{key quad equation 2}
        \frac{S_{T'}}{Cn}\geq \left(\frac{S_T}{Cn}\right)^2. 
    \end{equation}
\end{lem}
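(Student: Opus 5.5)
The plan is to compare $A$ against a suitable positive semidefinite test matrix built from the top eigenspace. Write $A=\sum_i \lambda_i v_iv_i^{\top}$ and set $\mu=|\lambda_n|$. Let $N=\sum_{\lambda_i\ge T} v_iv_i^{\top}$ be the orthogonal projection onto the span of the eigenvectors with eigenvalue at least $T$. Let $k=\tr N$ be the number of such eigenvalues, so that $kT\le S_T$. Also $\langle A,N\rangle=S_T$.

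\textbf{Variational inequality.} For any positive semidefinite $X$ with $\|X\|\le 1$,
\begin{equation*}
\langle A,X\rangle\le S_{T'}+T'\tr X .
\end{equation*}
This holds because $A\preceq \sum_{\lambda_i\ge T'}\lambda_i v_iv_i^{\top}+T'I$ and $0\preceq X\preceq I$. So it suffices to find such an $X$ with $\langle A,X\rangle$ of order $S_T^2/(Cn)$ and with $T'\tr X$ at most half of that.

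\textbf{First choice of $X$.} I would take the Hadamard square $X=N\circ N$. By the Schur product theorem it is positive semidefinite. Its norm satisfies $\|X\|\le\max_i N_{ii}\le1$, and its trace satisfies $\tr X=\sum_i N_{ii}^2\le k\le S_T/T$. Since $A$ is a $0/1$ matrix, $A\circ A=A$, so $\langle A,N\circ N\rangle=\sum_{ij}A_{ij}N_{ij}^2$. Cauchy--Schwarz against $\langle A,N\rangle=S_T$ then gives a lower bound of order $S_T^2/\sum_{ij}A_{ij}$.

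\textbf{Main obstacle.} This lower bound is $S_T^2/n^2$, which is a factor $n$ too weak. With $T'=T^2/(Cn)$, the error term $T'\tr X\le TS_T/(Cn)$ is not dominated by it. The hypothesis $T'\ge\mu^2$ must enter here, and recovering the lost factor is where I expect the real work to lie. My first attempt would use $A+\mu I\succeq0$ together with $A\circ A=A$. By Schur, $(A+\mu I)\circ N=A\circ N+\mu\,\mathrm{diag}(N)$ is positive semidefinite. I would then replace $X$ by a normalised test matrix of the form
\begin{equation*}
N^{1/2}\bigl((A+\mu I)\circ N\bigr)N^{1/2},
\end{equation*}
or simply $(A+\mu I)\circ N$ scaled by $1/\|A+\mu I\|$. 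The aim is that $\langle A,X\rangle$ becomes comparable to $\|A\circ N\|_F^2$, measured with trace weights coming from $N$ rather than from all $n^2$ entries.

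The Cauchy--Schwarz step should then run row by row. For each vertex $i$, use $\sum_j A_{ij}N_{ij}^2\ge (AN)_{ii}^2/d_i$, and sum over $i$ with weights $N_{ii}$. The point of the weights is that the effective number of rows becomes about $k$ rather than $n$. The diagonal correction terms, of size $\mu\sum_i N_{ii}^2\le\mu k$, are absorbed using $\mu\le\sqrt{T'}$ and $k\le S_T/T$. Finally, I would choose the absolute constant $C$ so that all error terms are at most half the main term, which yields \eqref{key quad equation 2} under \eqref{key quad equation 1}.
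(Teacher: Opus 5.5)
Your variational inequality $\langle A,X\rangle\le S_{T'}+T'\tr X$ for $0\preceq X\preceq I$ is correct, and you are right that $X=N\circ N$ loses a factor of $n$. However, the repair you sketch does not recover that factor, and the quantity it aims for is itself too small.

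Your stated aim is $\langle A,X\rangle\gtrsim\|A\circ N\|_F^2$. The row-by-row step $\sum_j A_{ij}N_{ij}^2\ge (AN)_{ii}^2/d_i$, summed with weights $N_{ii}\le 1$, is again bounded above by $\|A\circ N\|_F^2=\langle A,N\circ N\rangle$, which is the very quantity you found too weak. No sharper Cauchy--Schwarz can fix this. Take $G=K_n$ and $T=n-1$. Then $\mu=1$, $N=J/n$, $k=1$, $S_T=n-1$, and $T'=(n-1)^2/(Cn)\ge\mu^2$ once $n$ is large.
\begin{itemize}
\item $\|A\circ N\|_F^2=(n-1)/n<1$.
\item Both of your candidate test matrices, normalised by $\|A+\mu I\|=n$, equal $J/n^2$ and give $\langle A,X\rangle<1$.
\item You need $\langle A,X\rangle\ge S_T^2/(Cn)\approx n/C$.
\end{itemize}
The underlying problem is that $N$ is the bare projection onto $V_T$, so it forgets the eigenvalue weights: here $N=J/n$ while $A_{\ge T}=\frac{n-1}{n}J$. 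Hadamard-squaring $N$ therefore produces entries that are far too small.

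The missing idea is to Hadamard-square the weighted restriction $\tilde A_{V_T}=A_{\ge T}+\mu N$ of the positive semidefinite matrix $\tilde A=A+\mu I$.
\begin{itemize}
\item Since $0\preceq\tilde A_{V_T}\preceq\tilde A$, the Schur product theorem together with $A\circ A=A$ and $A\circ I=0$ gives $\tilde A_{V_T}^{\circ 2}\preceq\tilde A^{\circ 2}=A+\mu^2 I$.
\item The trace of $\tilde A_{V_T}^{\circ 2}$ is $\sum_i((\tilde A_{V_T})_{ii})^2\ge(\tr\tilde A_{V_T})^2/n\ge S_T^2/n$. This is Cauchy--Schwarz over the $n$ diagonal entries rather than over the edges, and it is exactly where the factor $n$ is recovered.
\item $\tilde A_{V_T}^{\circ 2}$ is supported on $W=V_T^{\circ 2}$, which has dimension at most $k^2$.
\end{itemize}
Plugging $X=P_W$ into your own variational inequality gives
\begin{equation*}
S_{T'}+T'k^2\ \ge\ \tr_W(A)\ \ge\ \tr\tilde A_{V_T}^{\circ 2}-\mu^2k^2\ \ge\ \frac{S_T^2}{n}-\mu^2k^2 .
\end{equation*}
Now $k^2\le S_T^2/T^2=S_T^2/(CnT')$ and $\mu^2\le T'$ yield $S_{T'}\ge(1-2/C)S_T^2/n\ge S_T^2/(Cn)$ for $C\ge 3$, which is the claim. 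This is essentially the paper's argument.

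Note that the right test matrix has trace up to $k^2$, not $k$. The scaling $T'=T^2/(Cn)$ is matched to $\dim V_T^{\circ 2}\le k^2$. The hypothesis $T'\ge\mu^2$ absorbs the term $\mu^2\dim W$ coming from $\tilde A^{\circ 2}=A+\mu^2 I$, not a diagonal correction of size $\mu k$.
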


Jin, Milojevi\'c, Tomon, and Zhang~\cite{graph} noted that an iterative application of Lemma~\ref{key quad} implies that Lemma~\ref{key quad} holds with any power of two in place of two in the exponents in~(\ref{key quad equation 1}) and~(\ref{key quad equation 2}). Our main technical result shows that this holds in fact for any positive integer. 

\begin{prop} \label{key}
    There exists an absolute constant $C\geq 1$ such that the following holds. Let $k\geq 1$ be a positive integer. If $T$ and $T'$ are non-negative real numbers satisfying
    \begin{equation*}
        \frac{T'}{Cn} = \left(\frac{T}{Cn}\right)^k, 
    \end{equation*}
    and $T'\geq |\lambda_n|^2$, then 
    \begin{equation*}
        \frac{S_{T'}}{Cn}\geq \left(\frac{S_T}{Cn}\right)^k. 
    \end{equation*}
\end{prop}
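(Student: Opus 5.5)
The plan is to prove a \emph{mixed} version of Lemma~\ref{key quad} and then induct on $k$. The mixed statement reads: if $T_1,T_2\geq 0$ and $T_3$ satisfies $\frac{T_3}{Cn}=\frac{T_1}{Cn}\cdot\frac{T_2}{Cn}$ with $T_3\geq|\lambda_n|^2$, then
\[
\frac{S_{T_3}}{Cn}\geq \frac{S_{T_1}}{Cn}\cdot\frac{S_{T_2}}{Cn}.
\]
Taking $T_1=T_2$ recovers Lemma~\ref{key quad}. Taking $T_1=T_{k-1}:=Cn(T/Cn)^{k-1}$ and $T_2=T$ gives the passage from $k-1$ to $k$. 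The hypothesis $T'=T_k\geq|\lambda_n|^2$ is the only condition needed at the top level. The intermediate thresholds need not satisfy it, but they only enter through the induction hypothesis, whose own hypothesis $T_{k-1}\geq|\lambda_n|^2$ can fail. So I would either make the mixed lemma require only $T_3\geq|\lambda_n|^2$, or handle separately the case where $T_{k-1}<|\lambda_n|^2$. In the latter case I would use the trivial bounds $S_{T}\leq n\lambda_1/\ldots$, or the fact that $S_{T_{k-1}}\geq S_{T'}$ trivially by monotonicity combined with $T/Cn\leq 1$.

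To prove the mixed lemma I would use the variational formula
\[
S_{T_3}\;\geq\;\sum_i(\lambda_i-T_3)_+ \;=\;\max_{0\preceq X\preceq I}\langle A-T_3I,X\rangle .
\]
Let $P_j$ be the orthogonal projection onto the span of the eigenvectors with $\lambda_i\geq T_j$, and take the test matrix $X=P_1\circ P_2$ (Hadamard product). By the Schur product theorem $X\succeq 0$. Since $P_1$ has diagonal entries at most $1$ and $\|P_2\|\leq1$, we also have $X\preceq I$. It remains to bound two quantities.
\begin{enumerate}[(i)]
\item The trace: $\operatorname{tr}X=\sum_v (P_1)_{vv}(P_2)_{vv}$, which is at most $\min(\operatorname{rank}P_1,\operatorname{rank}P_2)\leq \min(S_{T_1}/T_1,S_{T_2}/T_2)$.
\item The inner product $\langle A,X\rangle=\sum_{uv}A_{uv}(P_1)_{uv}(P_2)_{uv}$, which should be at least $c\,S_{T_1}S_{T_2}/n$.
\end{enumerate}
Given these, $T_3\operatorname{tr}X$ is at most $\frac{T_1T_2}{Cn}\cdot\frac{S_{T_1}}{T_1}\leq \frac{S_{T_1}S_{T_2}}{Cn}$, which is a small fraction of the main term once $C$ is large. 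This yields the claim with constant $C$.

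For (ii) I would exploit $A=A\circ A$, since the entries of $A$ are $0/1$, together with the decomposition $A=M_j+R_j$. Here $M_j=\sum_{\lambda_i\geq T_j}\lambda_i v_iv_i^T$, and $R_j$ has spectrum in $[\lambda_n,T_j)$. Thus $\langle A,P_1\circ P_2\rangle=\langle A\circ A,P_1\circ P_2\rangle=\langle A\circ P_1, A\circ P_2\rangle$-type expressions, and expanding one copy of $A$ as $M+R$ gives a main term like $\langle M_1\circ M_2, \cdot\rangle$. The main term is controlled via Cauchy--Schwarz by $(\operatorname{tr}M_1)(\operatorname{tr}M_2)/n=S_{T_1}S_{T_2}/n$, using $\sum_v (M_j)_{vv}=S_{T_j}$ and averaging over the $n$ vertices.

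\textbf{Main obstacle.} The hard part is the error terms coming from $R_j$. The positive part of $R_j$ is bounded by $T_j$, and this is absorbed exactly as in the trace estimate. The negative part is bounded by $|\lambda_n|$, and this is where $T_3\geq|\lambda_n|^2$ must enter: after Cauchy--Schwarz the negative contribution should look like $|\lambda_n|^2\operatorname{tr}X\leq T_3\operatorname{tr}X$. I would first try to follow the structure of the proof of Lemma~6.3 of~\cite{graph} verbatim, replacing each occurrence of $P\circ P$ with $P_1\circ P_2$, and check that every inequality used there is bilinear rather than genuinely quadratic. If some step relies on symmetry (for instance a bound of the form $\langle Y,Y\rangle\geq0$), I would replace it with an AM--GM or Cauchy--Schwarz split between the two factors. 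That split is the point at which the mixed version could lose constants, and it is what forces $C$ to be chosen uniformly in $k$.
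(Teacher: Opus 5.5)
\textbf{There is a genuine gap: the mixed lemma is never proved, and the step you rely on for it is false.}

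Claim (ii) already fails in the diagonal case $T_1=T_2=T$. Take $G=K_n$, so $|\lambda_n|=1$ and $P=J/n$. Choose $T=n/2$; then $T_3=n/(4C)\ge 1=|\lambda_n|^2$. Now $X=P\circ P=J/n^2$ and $\langle A,X\rangle=(n-1)/n<1$, whereas $S_T^2/n\approx n$. The Hadamard product of two unweighted projections can have operator norm of order $1/n$. Used as a contraction in the Ky Fan formula, it discards almost all the spectral mass.

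You must keep the eigenvalue weights, working with $\tilde A_{V_T}=A_{\ge T}+|\lambda_n|P_{V_T}$. You must also pay the cost through a rank bound rather than through $\tr X$. For any $0\preceq M\preceq\tilde A$ supported on $W$, Corollary~\ref{markov ish cor} gives $\tr M\le\tr_W\tilde A\le S_{T'}+(T'+|\lambda_n|)\dim W$, and $\dim W$ is controlled by Lemma~\ref{dim bound}.

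\textbf{Even after this repair, the mixed lemma does not follow.} The Schur argument gives $\tr_W\tilde A+|\lambda_n|^2\dim W\ge\sum_i x_iy_i$, where $x_i=(\tilde A_{V_{T_1}})_{ii}$ and $y_i=(\tilde A_{V_{T_2}})_{ii}$. The bound $\sum_i x_iy_i\ge\frac1n\sum_i x_i\sum_i y_i$ is a Chebyshev-type inequality, not Cauchy--Schwarz. It needs the two diagonal profiles to be similarly ordered, and nothing guarantees this: the mass above $T_1$ and the mass in $[T_2,T_1)$ can sit on different vertices, for instance in a disjoint union. The only clean bound, via $x_i\le y_i$, is $\sum_i x_i^2\ge S_{T_1}^2/n$, which is too weak for your induction. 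So you have no valid argument for the mixed lemma, and it is not clear that it holds.

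This is why the induction should not be on the scalar $S_{T_\ell}$. What must be carried from level to level is pointwise diagonal information: a matrix $0\preceq M_\ell\preceq\tilde A$ of rank at most $N_T^\ell$ with $(M_\ell)_{ii}^{1/\ell}\approx A'_{ii}$ for the fixed matrix $A'=\tilde A_{V_T}$. Then the power-mean inequality applies at the end.

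The naive choice $M_k=A'^{\circ k}$ only satisfies $M_k\preceq\tilde A^{\circ k}=A+|\lambda_n|^kI$, which forces $T'\ge|\lambda_n|^k$. The missing idea is to form $M'=M_{\lfloor \ell/2\rfloor}\circ M_{\lceil \ell/2\rceil}\preceq\tilde A+|\lambda_n|^2I$. You then pull it back under $\tilde A$ with the quasi-projection $P=\tilde A^{1/2}(\tilde A+|\lambda_n|^2I)^{-1/2}$. The error $E=(I-P)M'(I-P)\preceq|\lambda_n|^2I$ has rank at most $N_T^\ell$. Hence the diagonal deficiency grows by only $n^{1-1/\ell}|\lambda_n|^{2/\ell}N_T$ per level. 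The hypothesis $T'\ge|\lambda_n|^2$ gives $n/|\lambda_n|^2\ge C^{k-1}$, and with the balanced split this makes the losses sum geometrically.

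Your ``main obstacle'' paragraph does not locate this issue. Transcribing the proof of Lemma~\ref{key quad} with $P\circ P$ replaced by $P_1\circ P_2$ breaks exactly at the averaging step.

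A minor point: your worry about intermediate thresholds is unfounded. You may assume $T\le n$, so $T/(Cn)\le 1$ and $T_{k-1}\ge T_k=T'\ge|\lambda_n|^2$.
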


It is important to note that a straightforward extension of the argument used in~\cite{graph} to establish Lemma~\ref{key quad} would yield a version of Proposition~\ref{key} which requires $T'\geq |\lambda_n|^k$. In some scenarios, this would render the result completely ineffective for $k\geq 4$. Therefore, the key strength of Proposition~\ref{key} lies in the fact that the weaker assumption $T'\geq |\lambda_n|^2$ suffices. 

After our proofs of Theorems~\ref{5/4 thm} and~\ref{5/4 maxcut thm} have been announced in~\cite{bridge}, the author was informed by Xu and Zhou of their independent proofs of Theorems~\ref{5/4 thm} and~\ref{1/4 thm}. Their approach employed the aforementioned weaker variant of Proposition~\ref{key} with $k = 3$, which indeed suffices for this application. 

Jin, Milojevi\'c, Tomon, and Zhang~\cite{graph} showed that when $|\lambda_n|\leq n^\gamma$ for any fixed positive $\gamma < 1/6$, the graph is $n^{-\alpha}$-close (polynomially close) to a disjoint union of cliques, for some $\alpha = \alpha(\gamma) > 0$. 

\begin{thm}[Theorem~1.5 of~\cite{graph}] \label{1/6 poly thm}
    For any $\gamma\in (0, 1/6)$, there exists $\alpha > 0$ such that the following holds when $n$ is sufficiently large with respect to $\gamma$. If the least eigenvalue $\lambda_n$ of an $n$-vertex graph $G$ satisfies $|\lambda_n|\leq n^\gamma$, then $G$ is $n^{-\alpha}$-close to a disjoint union of cliques. 
\end{thm}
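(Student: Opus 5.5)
The plan is to combine a trace/Frobenius approximation with the recursion of Lemma~\ref{key quad}. The aim is to show that when $|\lambda_n|\leq n^\gamma$, the adjacency matrix $A$ is close in Frobenius norm to a matrix of very low rank, and then that a $0/1$ matrix with this property and small least eigenvalue is close to a block-diagonal all-ones matrix.

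\textbf{Step 1: trace bounds.} Write $\lambda=|\lambda_n|\leq n^\gamma$. Since $\sum_i\lambda_i=\tr A=0$, the positive eigenvalues sum to at most $n\lambda\leq n^{1+\gamma}$, so $S_0\leq n^{1+\gamma}$. The negative eigenvalues also satisfy $\sum_{\lambda_i<0}\lambda_i^2\leq n\lambda^2\leq n^{1+2\gamma}$.

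\textbf{Step 2: low-rank approximation.} Fix a threshold $T'$ and let $B=\sum_{\lambda_i\geq T'}\lambda_i u_iu_i^{T}$. Then
\begin{equation*}
\|A-B\|_F^2\leq \sum_{0<\lambda_i<T'}\lambda_i^2+\sum_{\lambda_i<0}\lambda_i^2\leq T'S_0+n^{1+2\gamma}\leq T'n^{1+\gamma}+n^{1+2\gamma}.
\end{equation*}
For this to be $o(n^2)$ polynomially, say at most $n^{2-\alpha}$, we need $T'\leq n^{1-\gamma-\alpha}$. On the other hand, $\operatorname{rk}B\leq 2e(G)/T'^2\leq n^2/T'^2$. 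So choosing $T'=n^{1-\gamma-\alpha}$ gives rank at most $n^{2\gamma+2\alpha}$. This step alone does not need Lemma~\ref{key quad}.

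\textbf{Step 3: where Lemma~\ref{key quad} enters.} The recursion is used to show that $B$ carries a positive fraction of the edge mass, and more precisely that the positive spectral mass is concentrated on eigenvalues of order close to $n$. If $S_T$ were spread over many moderate eigenvalues, then iterating Lemma~\ref{key quad} with $T\mapsto Cn(T/Cn)^2$ would force $S_{T'}$ to be either very large or very small. Combined with $S_0\leq n^{1+\gamma}$ and $\sum\lambda_i^2=2e(G)$, this should pin down a scale where the eigenvalues are at least $n^{1-\delta}$ and at most $n^{2\delta}$ of them exist. The hypothesis $T'\geq\lambda^2=n^{2\gamma}$ in Lemma~\ref{key quad} must be compatible with the thresholds $T'\approx n^{1-\gamma-\alpha}$ needed in Step 2. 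The requirement $\gamma<1/6$ should emerge from balancing $n^{2\gamma}$, $n^{1-\gamma}$ and the rank bound $n^{2\gamma}$ against each other.

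\textbf{Step 4: rounding to cliques.} Write $B=VV^{T}$, with rows $v_x\in\mathbb{R}^{r}$ and $r\leq n^{O(\gamma+\alpha)}$. Partition the vertices into $n^{O(r\cdot\text{small})}$ classes, either by a net on the bounded vectors $v_x$ or by a weak-regularity argument for the low-rank matrix. Within each class the entries $\langle v_x,v_y\rangle$ are nearly constant. Since $A$ is $0/1$ and $\|A-B\|_F^2\leq n^{2-\alpha}$, each pair of classes is nearly complete or nearly empty. This yields a graph $H$ that is $n^{-\alpha'}$-close to $G$ and is a blow-up of a small graph $R$ with loops.

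Finally, $A+\lambda I\succeq 0$ passes approximately to $R$. An induced cherry $xy,xz\in E$, $yz\notin E$ in $R$ at the level of large classes would produce many induced cherries in $G$. The Gram vectors $w_x$ of $A+\lambda I$ have $\|w_x\|^2=\lambda$, so such cherries force a test vector (uniform on $N(x)$-type class pairs) with negative quadratic form $\ll -\lambda$, a contradiction. Hence $R$ is an equivalence relation, and $H$ is a disjoint union of cliques.

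The main obstacle is Step 3: getting the spectral concentration with polynomial (rather than $n^{o(1)}$) losses, while only the weak hypothesis $T'\geq\lambda_n^2$ is available. I would first try to iterate Lemma~\ref{key quad} a bounded number of times (depending on $\gamma$), starting from $T=n^{1/2+\gamma}$.
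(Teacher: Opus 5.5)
\textbf{There is a genuine gap: Step~4 cannot work as described, and Step~3 is a heuristic that does not fill it.}

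Your Step~2 is correct, but it is valid for every $\gamma<1/2$. The argument breaks at Step~4.

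\emph{The partition has too many classes.} With $r=n^{\Omega(1)}$, a net on the Gram vectors $v_x\in\mathbb{R}^r$ produces $\exp(n^{\Omega(1)})$ classes, far more than $n$. Nothing then prevents the classes from being singletons, in which case neither the homogeneity claim nor the cherry test says anything; the cherry test needs classes of size $\gg|\lambda_n|$, cf.\ Lemma~\ref{poly P2 lem}. Weak regularity is no better: edit error $n^{2-\alpha}$ needs precision $n^{-\alpha}$, hence $\exp(n^{2\alpha})$ parts, and it only controls cut norm.

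\emph{The failure is not merely quantitative.} Step~2 applies verbatim to the de Caen graph mentioned after the statement. That graph has $|\lambda_n|=O(n^{1/4})$ but is not polynomially close to any disjoint union of cliques. Step~2 still gives a PSD $B$ of rank at most $n^{1/2+2\alpha}$ with $\|A-B\|_F^2\ll n^{2-\alpha}+n^{3/2}$. Nothing in Steps~2 and~4 uses $\gamma<1/6$. So if Step~4 could be carried out from the output of Step~2, it would prove that de Caen's graph is polynomially close to a union of cliques. All the content would therefore have to come from Step~3, which you only sketch.

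\emph{Step~3 would not rescue it.} Even granting the spectral concentration you hope for, $B$ still has polynomial rank. To force adjacent vertices to have nearly equal Gram vectors you would need $B_{xx}=1+o(1)$ for most $x$, i.e.\ $\operatorname{tr}B=(1+o(1))n$. Iterating Lemma~\ref{key quad} only gives $S_T\ll n^{1+\gamma/k}$ at thresholds $T\asymp n^{1-(1-2\gamma)/k}$.

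\textbf{The missing idea: build large rigid pieces directly, rather than rounding a low-rank approximation.} The paper does not prove this statement itself; it cites it from Jin, Milojevi\'c, Tomon and Zhang. It proves the stronger Theorem~\ref{1/4 poly thm} by loosely following their argument. Write $L=n^\gamma$.
\begin{enumerate}
\item The recursion is never used to approximate $A$. Instead it drives a triangle count and a repeated passage to dense neighbourhoods. This shows that every induced subgraph with $\gg n^{2-2\beta}$ edges contains a clique on $L^{2+\delta}$ vertices (Proposition~\ref{1/4 poly clique}).
\item This densification is exactly where the restriction on $\gamma$ enters. With Lemma~\ref{key quad} the original argument reaches $\gamma<1/6$; with Proposition~\ref{key} the paper reaches every $\gamma<1/4$.
\item One packs such cliques $V_1,\dots,V_k$ greedily. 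Lemma~\ref{poly dense Y lem}, together with maximality, shows that only $O(n^{2-\beta})$ edges of $G$ lie outside $G[V]$, where $V=\bigcup_iV_i$.
\item Cliques of size $\gg L^2$ are rigid. By Lemma~\ref{poly dichotomy lem}, each pair $V_i,V_j$ is complete or empty up to $CL^2|V_1|$ edges. Lemma~\ref{poly P2 lem} then makes the resulting pattern cherry-free.
\end{enumerate}
The cliques thus play the role of your classes, with no regularity lemma needed. The total error is $k^2CL^2|V_1|\le Cn^2L^2/|V_1|=Cn^{2-\gamma\delta}$. This is polynomially small precisely because $|V_1|=L^{2+\delta}$ exceeds $L^2\ge|\lambda_n|^2$ by a polynomial factor.
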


The example of de Caen~\cite{decaen} shows that this cannot hold for $\gamma\geq 1/4$. It is natural to wonder if Theorem~\ref{1/6 poly thm} continues to hold whenever $\gamma < 1/4$. A numerical optimisation of the approach of~\cite{graph} may yield Theorem~\ref{1/6 poly thm} for $\gamma < 1/5$, but not beyond the exponent $1/5$. Using Proposition~\ref{key} in place of Lemma~\ref{key quad}, we show that Theorem~\ref{1/6 poly thm} indeed holds for all $\gamma < 1/4$. 

\begin{thm}\label{1/4 poly thm}
    For any $\gamma\in (0, 1/4)$, there exists $\alpha > 0$ such that the following holds when $n$ is sufficiently large with respect to $\gamma$. If the least eigenvalue $\lambda_n$ of an $n$-vertex graph $G$ satisfies $|\lambda_n|\leq n^\gamma$, then $G$ is $n^{-\alpha}$-close to a disjoint union of cliques. 
\end{thm}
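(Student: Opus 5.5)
The plan is to follow the architecture of the proof of Theorem~\ref{1/6 poly thm} in~\cite{graph}, replacing every use of Lemma~\ref{key quad} by Proposition~\ref{key} with a suitably large fixed integer $k$ depending on $\gamma$. Write $\lambda = |\lambda_n|\leq n^\gamma$. The target is to show that $G$ has few ``bad'' edges and non-edges relative to a clique partition. Concretely, we aim to show that $\sum_{i\geq 1} \lambda_i^2 = 2e(G)$ is, up to an additive $n^{2-\alpha}$ error, captured by eigenvalues of size at least $n^{1-\alpha'}$. Such a large-eigenvalue structure then yields the clique decomposition, exactly as in the final step of~\cite{graph}, where a graph whose spectral mass concentrates on few huge eigenvalues and whose least eigenvalue is small is shown to be close to a disjoint union of cliques.

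The spectral input comes from the trace identities $\sum_i \lambda_i = 0$ and $\sum_i\lambda_i^2 = 2e(G)$. Since every negative eigenvalue has magnitude at most $\lambda$, the negative mass satisfies $\sum_{\lambda_i<0}|\lambda_i|\leq n\lambda$. Hence $S_0\leq n\lambda+\lambda_1$ and, more usefully, the positive eigenvalues below a threshold $T$ contribute little. The key step is to choose a base threshold $T$ slightly above $\lambda$, say $T = n^{\gamma+\delta}$, and show $S_T \geq e(G)/T$-type lower bounds. Rather than that, we argue contrapositively. Suppose $G$ is $n^{-\alpha}$-far from all unions of cliques. Then, as in~\cite{graph}, a fixed proportion of the degree/edge structure forces $S_T\geq n^{1+\gamma'}$ mass at a moderate threshold $T = n^{1/2+\theta}$ with $\theta$ small. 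Equivalently, we need $S_T/(Cn)\geq (T/(Cn))^{1-\eta}$ for some $\eta>0$, a polynomial gain over the trivial ratio. Applying Proposition~\ref{key} with $k$ chosen so that $T'=Cn(T/Cn)^k$ is still $\geq \lambda^2$ gives $S_{T'}/(Cn)\geq (S_T/Cn)^k\geq (T'/Cn)^{1-\eta}$. Eventually this exceeds what is possible, since $S_{T'}\leq \sum\lambda_i^2/T'\leq n^2/T'$. So $S_{T'}/(Cn) \le n/(CT')$, while $(T'/Cn)^{1-\eta}$ is at least this once $T'$ is small enough, i.e.\ $T'$ near $\lambda^2$.

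The accounting is as follows. With $T' \approx \lambda^2 = n^{2\gamma}$, the contradiction requires $(n^{2\gamma-1})^{1-\eta}\gg n^{1-2\gamma}\cdot n^{-1}$, i.e.\ roughly $2\gamma<1/2$, so $\gamma<1/4$. This is where the restriction enters. The freedom to take any integer $k$, rather than a power of two, is what lets $T'$ land within a bounded power of $\lambda^2$ from any starting $T$ without overshooting below $\lambda^2$. Because the weaker hypothesis $T'\geq\lambda^2$ suffices (rather than $\lambda^k$), the exponent $1/4$ is reachable.

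The main obstacle is producing the initial polynomial gain, namely a lower bound on $S_T$ at some threshold $T$ with a deficit of only $n^{-\alpha}$ from the clique structure. I would try to obtain it from the structural lemma of~\cite{graph} underlying Theorem~\ref{1/6 poly thm}. That lemma says that if $G$ is $n^{-\alpha}$-far from unions of cliques, then there are many induced cherries (paths $P_3$), and hence $\tr(A^3)$ or $\sum \lambda_i^3$ is deficient compared to $\sum\lambda_i^2\cdot\lambda_1$-type bounds, forcing spectral mass at intermediate levels. The delicate bookkeeping is to make all losses polynomially small in terms of $\alpha$, choosing $\alpha$ small relative to $1/4-\gamma$ and $k$ bounded in terms of $\gamma$.
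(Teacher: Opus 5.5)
Your proposal has a genuine gap: the spectral bootstrap never reaches a contradiction, and the input it relies on is false as stated.

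\textbf{The initial lower bound on $S_T$ does not follow from farness.} $G(n,1/2)$ is, with high probability, $\Omega(1)$-far from every disjoint union of cliques. Yet all its eigenvalues other than $\lambda_1\approx n/2$ are $O(\sqrt n)$, so $S_T\approx n/2$ at $T=n^{1/2+\theta}$, nowhere near $n^{1+\gamma'}$. Any such lower bound would have to use $|\lambda_n|\le n^\gamma$ in an essential way. You give no mechanism for that, and it is essentially the whole theorem.

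\textbf{Even granting $S_T/(Cn)\ge (T/Cn)^{1-\eta}$, no contradiction follows.} Write $x=T'/(Cn)\le T/(Cn)\le n^{-1/2+\theta}$. Your lower bound $x^{1-\eta}$ is at most $1$, while your upper bound $n/(CT')=1/(C^2x)$ is at least $n^{1/2-\theta}/C^2$. The gap only widens as $T'$ decreases. Your exponent check reflects this. At $T'=n^{2\gamma}$ the correct comparison is $(2\gamma-1)(1-\eta)$ against $1-2\gamma$, which fails for every $\gamma<1/2$. Your version, with the spurious $n^{-1}$, would require $\gamma>(1-\eta)/(4-2\eta)$, which is a lower rather than an upper bound on $\gamma$.

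The sharp upper bound is Lemma~\ref{triv S_T bound}, $S_{T'}\le n|\lambda_n|$. Combined with Proposition~\ref{key}, it yields only \emph{upper} bounds $S_{CT_k}\ll nL^{1/k}$ with $T_k=n^{1-1/k}L^{2/k}$ on the intermediate positive mass (Section~\ref{spec dist sec}). So a single global application of Proposition~\ref{key} to $G$ cannot detect distance from a cluster graph.

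Your closing step is also unavailable. Its dense analogue, Proposition~\ref{dense regpartition reduction prop}, runs through a regularity partition and the induced removal lemma, which give no polynomial control.

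\textbf{The missing idea is to turn spectral information into combinatorial structure by passing repeatedly to induced subgraphs.} These keep $|\lambda_n|\le L$ by interlacing. The paper's route is as follows.
\begin{enumerate}
\item Proposition~\ref{key} feeds Proposition~\ref{spectral distribution prop}, which gives many triangles and hence a dense small neighbourhood (Lemma~\ref{1 step lem}).
\item Iterating this densification (Sections~\ref{iter densi sec}--\ref{rec sec}), then applying Lemma~\ref{fairly dense lem} and Tur\'an's theorem, yields a clique on $L^{2+\delta}=n^{2\gamma+\gamma\delta}$ vertices. This holds in any induced subgraph $H$ with $(a_L(H),b_L(H))\succcurlyeq(\gamma^{-1}(1-2\beta),2\gamma^{-1}\beta)$ (Proposition~\ref{1/4 poly clique}).
\item The proof takes a maximal family of disjoint cliques of this size, with union $V$. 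If many edges avoid $V$, Lemma~\ref{poly dense Y lem} forces $G-V$ to be dense enough to contain another such clique, contradicting maximality.
\item Lemmas~\ref{poly dichotomy lem} and~\ref{poly P2 lem} then show the cliques are pairwise almost complete or almost empty, in a $P_2$-free pattern.
\end{enumerate}
The hypothesis $\gamma<1/4$ enters as $a=\gamma^{-1}(1-2\beta)>4$, i.e.\ the densification must terminate at $L^{4+\delta}\le n$ vertices. It does not come from a single use of Proposition~\ref{key}.
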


Theorem~\ref{1/4 poly thm} will be proven in Section~\ref{poly sec}. Theorem~\ref{sp bridge thm} yields the following surplus version of Theorem~\ref{1/4 poly thm}. 

\begin{thm}\label{5/4 poly maxcut thm}
    For any $\gamma\in (0, 5/4)$, there exists $\alpha > 0$ such that the following holds when $n$ is sufficiently large with respect to $\gamma$. If $\surp(G)\leq n^\gamma$, then $G$ is $n^{-\alpha}$-close to a disjoint union of cliques. 
\end{thm}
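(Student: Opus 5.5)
We argue by contraposition, deducing the statement from Theorem~\ref{1/4 poly thm} via Theorem~\ref{sp bridge thm}. Fix $\gamma\in(0,5/4)$; we may assume $\gamma>1$, since a smaller $\gamma$ only strengthens the hypothesis. Choose $\gamma'\in(\gamma-1,1/4)$, and let $\alpha'=\alpha(\gamma')>0$ be the constant given by Theorem~\ref{1/4 poly thm}. Suppose $\surp(G)\le n^\gamma$.

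\textbf{Step 1: a small induced subgraph.} Set $n'=n/2$. If every induced subgraph $H$ on at least $n'$ vertices had $|\lambda_{\min}(H)|\ge |V(H)|^{\gamma'}\ge (n/2)^{\gamma'}$, then Theorem~\ref{sp bridge thm} would give
\begin{equation*}
    \surp(G)\ge \frac{n}{2}\cdot\frac{(n/2)^{\gamma'}}{4}=\Omega\bigl(n^{1+\gamma'}\bigr)>n^\gamma
\end{equation*}
for large $n$, a contradiction. Hence some induced subgraph $H_1$ on at least $n/2$ vertices has $|\lambda_{\min}(H_1)|\le |V(H_1)|^{\gamma'}$. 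By Theorem~\ref{1/4 poly thm}, $H_1$ is $|V(H_1)|^{-\alpha'}$-close to a disjoint union of cliques.

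\textbf{Step 2: iteration.} One induced subgraph covers only half the vertices, so we iterate. Remove $V(H_1)$ and repeat on the remainder $G_1$. Since surplus is monotone under taking induced subgraphs (a cut of the subgraph extends to a cut of $G$ with expected surplus at least as large), we still have $\surp(G_1)\le n^\gamma$. We stop once the remainder has fewer than $n^{1-\beta}$ vertices, for a small $\beta>0$.

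While the current graph has $m\ge n^{1-\beta}$ vertices, Step 1 applies to it. The bound $m^{1+\gamma'}>n^\gamma$ holds provided $(1-\beta)(1+\gamma')>\gamma$, which is guaranteed by choosing $\beta$ small enough. Each round therefore produces $H_i$ with $|V(H_i)|\ge m_i/2$ that is $m_i^{-\alpha'}$-close to a union of cliques, where $m_i$ is the size of the current graph. Because each round at least halves the remainder, there are $O(\log n)$ rounds.

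\textbf{Step 3: total edit cost.} The total cost is bounded by three contributions:
\begin{itemize}
    \item inside the pieces: $\sum_i m_i^{2-\alpha'}\le O(n^{2-\alpha'})$, since $m_i$ decreases geometrically;
    \item the final remainder: at most $n^{2-2\beta}$;
    \item edges between distinct pieces $H_i,H_j$ (and between pieces and the remainder), which we delete.
\end{itemize}

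\textbf{Main obstacle: the cross edges.} Deleting cross edges is the delicate part. A single large clique split between pieces, or many edges running between two pieces, would be costly. To control this, I would use the surplus bound once more. Let $F$ be the graph of cross edges between the pieces and the remainder, with $e(F)$ edges. Placing each class of the partition entirely on one side of a random cut, independently, yields expected surplus at least $\Omega(\surp(F))$ plus the inherited contribution. Combined with the Edwards-type bound $\surp(F)\ge\Omega(\sqrt{e(F)})$, this suggests $e(F)$ is limited, but that bound is too weak.

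A better route is to modify the process. Instead of splitting off an arbitrary $H_1$, I would merge: $H_1$ is close to a union of cliques $Q_1,\dots,Q_r$, and I would assign each remaining vertex to the clique it is densely joined to, or leave it unassigned. A vertex adjacent to a constant fraction of a clique $Q$ while missing a constant fraction of $Q$ forms, together with $Q$, a structure contributing $\Omega(|Q|)$-scale surplus. Summing these contributions would show that only $n^{2-\Omega(1)}$ edges can be misassigned. Making this accounting rigorous, with polynomial losses and without the $O(\log n)$ rounds compounding into a constant-factor loss, is where I expect the real work to lie. If that accounting fails, the fallback is to prove the eigenvalue statement directly for $G$ by choosing $n'$ close to $n$ and re-running the argument of Theorem~\ref{1/4 poly thm}.
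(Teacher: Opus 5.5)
\textbf{Genuine gap: the cross edges in Step 3 are never handled, and deletion cannot handle them.} Steps 1 and 2 are sound as far as they go. Surplus is indeed monotone under induced subgraphs, and each round does produce a piece close to a union of cliques. But the pieces $H_i$ are arbitrary induced subgraphs whose existence is forced by the contrapositive of Theorem~\ref{sp bridge thm}. Their clique structures need not be compatible with one another. Already for $G=K_n$, whose surplus is $O(n)$, the first piece may be any half of the vertex set, and deleting the cross edges then costs $n^2/4$. Your merging repair is only heuristic. The claim that each misassigned vertex contributes $\Omega(|Q|)$ surplus, and that these contributions add over overlapping configurations, is unproved. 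Constant-fraction thresholds also do not by themselves give polynomial closeness. So as written this is not a proof.

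\textbf{The missing observation.} The slack $\gamma'>\gamma-1$ lets you take $n-n'$ polynomially small rather than a constant fraction of $n$. This removes both the iteration and the cross edges. Take $0<\alpha<\min(\alpha',1+\gamma'-\gamma)$ and run your Step 1 with $n'=n-\lfloor n^{1-\alpha}/2\rfloor$.
\begin{itemize}
\item If every induced subgraph $H$ on at least $n'$ vertices had $|\lambda_{\min}(H)|>v(H)^{\gamma'}\geq (n/2)^{\gamma'}$, Theorem~\ref{sp bridge thm} would give $\surp(G)\gg n^{1-\alpha+\gamma'}>n^{\gamma}$, a contradiction.
\item Hence a single $H_1$ covering all but at most $n^{1-\alpha}/2$ vertices has $|\lambda_{\min}(H_1)|\leq v(H_1)^{\gamma'}$. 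By Theorem~\ref{1/4 poly thm} (note $v(H_1)\geq n/2$), it is within edit distance $v(H_1)^{2-\alpha'}\leq n^{2-\alpha'}$ of a disjoint union of cliques.
\item Add the leftover vertices as singleton cliques. This costs at most $n\cdot n^{1-\alpha}/2$ further edits, so the total is below $n^{2-\alpha}$ for large $n$, since $\alpha<\alpha'$.
\end{itemize}
This is the analogue of the paper's deduction of Theorem~\ref{5/4 maxcut thm} from Theorem~\ref{1/4 thm}, which the paper says is analogous and omits. There $n-n'=\epsilon n/2$; here $n-n'=\Theta(n^{1-\alpha})$. Your fallback of taking $n'$ close to $n$ was the right instinct, but nothing in the proof of Theorem~\ref{1/4 poly thm} needs to be re-run.
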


We omit the deduction of Theorem~\ref{5/4 poly maxcut thm} from Theorem~\ref{1/4 poly thm} as it is analogous to the deduction of Theorem~\ref{5/4 maxcut thm} from Theorem~\ref{1/4 thm}. 

The use of Proposition~\ref{key} in place of Lemma~\ref{key quad} also allows for significant quantitative improvements in a range of other applications, including MaxCut in $K_t$-free graphs and Chowla's cosine problem. We finish our introduction with a discussion of these results. 

\subsection{MaxCut of $K_t$-free graphs}

A classical result of Edwards~\cite{edwards1973some, edwards1975improved} showed that the surplus of any graph with $m$ edges is at least $\Omega(m^{1/2})$, which is attained by cliques. A natural question is to investigate whether stronger surplus lower bounds may be obtained when the graph is, in an appropriate sense, far from being a clique. Our discussion above (\emph{cf.}~Theorem~\ref{5/4 maxcut thm}) explored such lower bounds for graphs which are $\epsilon$-far from disjoint unions of cliques. Approaching this problem from a different angle, the study of the MaxCut and surplus of $H$-free graphs, where $H$ is a fixed small graph, was initiated by Erd\H{o}s and Lov\'asz~\cite{erdHos1975problems} for triangle-free graphs, and has since been studied by~\cite{alon1996bipartite, alon2003maximum, alon2005maxcut, balla2024maxcut, carlson2021lower, glock2023new, graph, shearer1992note}. 

We focus on the most pertinent case where $H = K_t$ is a small clique. In a sense, this is general, since any $H$-free graph is necessarily $K_{v(H)}$-free. In the context of triangle-free graphs, Alon~\cite{alon1996bipartite} showed that the positive discrepancy of a triangle-free graph with $m$ edges is at least $\Omega(m^{4/5})$, which is tight. More generally, Alon, Bollob\'as, Krivelevich and Sudakov~\cite{alon2003maximum} showed that the positive discrepancy of a $K_t$-free graph with $m$ edges is at least $\Omega_t(m^{1/2 + \epsilon_t})$, for some $\epsilon_t > 0$ tending to zero as $t$ tends to infinity. Motivated by random graphs of appropriate densities, they conjectured that the positive discrepancy of a $K_t$-free graph with $m$ edges is at least $\Omega_t(m^{3/4 + \epsilon_t'})$, for some $\epsilon_t > 0$ depending on $t$. A first progress towards this conjecture was given by Jin, Milojevi\'c, Tomon, and Zhang~\cite{graph}, who showed that the positive discrepancy of a $K_t$-free graph with $m$ edges is at least $\Omega_t(m^{1/2 + \epsilon})$ for some $\epsilon > 0$ which is independent of $t$. More precisely, they showed that $\surp(G)\geq \Omega_t(m^{0.51})$. Leveraging Theorem~\ref{sp bridge thm} and the spectral results of~\cite{graph}, \cite{bridge} improved this lower bound, showing that $\surp(G)\geq \Omega_t(m^{0.55})$ for any $K_t$-free graph $G$ with $m$ edges. 

As noted in~\cite{bridge}, this exponent of $0.55$ may be further improved with a numerical optimisation. However, such an optimisation based on the methods of~\cite{graph} and~\cite{bridge} hits a fundamental obstacle at an exponent 0.6. We show that, using Proposition~\ref{key} in place of Lemma~\ref{key quad}, we may improve this exponent to $0.614 - o(1)$. The analogous fundamental obstacle for our method sits at $5/8 = 0.625$, roughly corresponding to the construction of de Caen~\cite{decaen}. In fact, the stronger exponent $0.616$ follows from our methods after numerical optimisation. For the sake of clarity, we present our argument for the exponent $0.614 - o(1)$. 

\begin{thm} \label{Kt maxcut thm}
    Let $t$ be a fixed positive integer. If $G$ is a $K_t$-free graph with $m$ edges, then
    \begin{equation*}
        \surp(G)\geq m^{0.614 - o_t(1)}
    \end{equation*}
    as $m$ tends to infinity. 
\end{thm}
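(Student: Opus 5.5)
The plan is to reduce, via Theorem~\ref{sp bridge thm}, to a spectral statement about induced subgraphs. We then obtain that statement from Proposition~\ref{key} combined with $K_t$-freeness.

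\textbf{Reduction.} Let $G$ be $K_t$-free with $m$ edges. By standard reductions (as in~\cite{graph,bridge}) we may pass to a subgraph that is roughly regular on $n$ vertices with average degree $d$, where $m\approx nd$. This uses the known fact that surplus is superadditive over a suitable decomposition, and that high-degree or sparse pieces contribute surplus $\Omega(m^{0.614})$ directly by Edwards-type and Shearer-type arguments. By Theorem~\ref{sp bridge thm} with $n'=n/2$, it then suffices to show that every induced subgraph $H$ on at least $n/2$ vertices has $|\lambda_{\min}(H)|\geq \lambda$, where $n\lambda\geq m^{0.614-o(1)}$. Note that $H$ is again $K_t$-free and has about $nd$ edges, after discarding sparse $H$ by a separate argument. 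So the task is: for a $K_t$-free graph on $n$ vertices with $e$ edges, if $|\lambda_n|=\lambda$ is small, derive a contradiction.

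\textbf{Spectral step.} Suppose $|\lambda_n|\leq\lambda$. Since $\sum\lambda_i=0$ and $\sum\lambda_i^2=2e$, the negative eigenvalues contribute at most $n\lambda^2$ to $\sum \lambda_i^2$. Hence $\sum_{\lambda_i>0}\lambda_i^2\geq 2e-n\lambda^2$. A dyadic pigeonhole then gives a threshold $T$ with $S_T\gtrsim e/(T\log n)$, and trivially $S_T\leq \sum_{\lambda_i>0}\lambda_i=\sum_{\lambda_i<0}|\lambda_i|\leq n\lambda$. Now apply Proposition~\ref{key} with the largest integer $k$ such that $T'=Cn(T/Cn)^k\geq\lambda^2$. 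This yields $S_{T'}\geq Cn(S_T/Cn)^k$, so there is substantial eigenvalue mass at level $T'\approx\lambda^2$. On the other side, $K_t$-freeness bounds $\sum_{\lambda_i\geq T'}\lambda_i$ through clique-counting. The closed-walk count $\sum\lambda_i^3=6\,t_3(G)$ and its higher-order analogues are controlled by iterating into neighbourhoods: neighbourhoods are $K_{t-1}$-free, and by induction on $t$ they have large negative eigenvalues. Concretely, I would use the known estimate from~\cite{graph} that in a $K_t$-free graph the number of eigenvalues above $T$ is at most roughly $n^{1+o(1)}/T^{2}$ times a power depending on degrees. Then $S_{T'}\lesssim \lambda_1\cdot\#\{\lambda_i\geq T'\}\leq d\,e/T'^2$. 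Comparing the lower and upper bounds on $S_{T'}$ gives an inequality in $n,d,\lambda,k$, from which $\lambda\geq n^{-1}(nd)^{0.614-o(1)}$ follows in every regime of $d$ by optimising over $k$ and $T$.

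\textbf{Main obstacle.} The hardest part is the optimisation together with the precise $K_t$-free upper bound on high eigenvalue mass. The exponent $0.614$ presumably emerges from the worst case of $d$ (near $n^{3/4}$-type densities, mimicking de Caen) combined with the integrality of $k$. Near that worst case, the rounding of $k$ costs us the gap to $5/8$. I would first redo the computation of~\cite{bridge} for $0.55$ with Lemma~\ref{key quad} replaced by Proposition~\ref{key}, keeping all other inputs. I would then check that choosing $k\in\{3,4,5\}$ appropriately in each density window gives exponent at least $0.614$, handling boundary regimes of $d$ with Edwards' bound or Alon-type triangle-free estimates.
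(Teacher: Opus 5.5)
\textbf{There is a genuine gap in your spectral step: nothing in it uses $K_t$-freeness.}

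The only concrete upper bound you write is $S_{T'}\le\lambda_1\cdot\#\{i:\lambda_i\ge T'\}\le 2\lambda_1 e/T'^2$. This comes from $\sum_i\lambda_i^2=2e$ and holds for every graph. The ``$K_t$-free eigenvalue count'' you attribute to~\cite{graph} is not available in a usable form; as you state it, it is again just this trivial bound.

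No argument built only from Proposition~\ref{key}, trace identities and such all-graph inequalities can work. A disjoint union of copies of $K_{d+1}$ has $|\lambda_n|=1$ and satisfies all of them, yet $n|\lambda_n|=n\ll (nd)^{0.614}$ for, say, $d\ge n^{0.7}$. What is missing is a mechanism by which $K_t$-freeness rules out clique-like structure.

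Your suggested induction on $t$ through neighbourhoods does not supply it at a fixed exponent. Apply the inductive bound to a set of at most $d$ vertices spanning $\gg d^2L^{-1/r}$ edges. Even in the dense case this loses a factor $L^{\Theta(1/r)}$ against the target, so the induction does not close with a $t$-independent exponent.

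Your reduction also needs repair. Edwards' bound only gives $\Omega(m^{1/2})$, and Shearer's concerns triangle-free graphs. The right tool is Theorem~\ref{maxcut m bridge thm}, which only requires $n|\lambda_n|\ge\Omega_t(m^{0.614})$ for every $K_t$-free graph. It needs no regularisation and no control of sparse induced subgraphs on $n/2$ vertices.

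\textbf{How the paper supplies the missing mechanism.} Assume $|\lambda_n|\le L:=(c_0m)^{0.614}/n$ for a small absolute constant $c_0$.
\begin{enumerate}
\item Proposition~\ref{key}, applied for all $k\le r$, gives Proposition~\ref{spectral distribution prop}. Hence $\tri(G)\gg nd^2L^{-1/r}$ (Proposition~\ref{triangle counting prop}).
\item After controlling the maximum degree (Lemma~\ref{find upper bal lem}), this yields an induced subgraph on at most $d$ vertices with $\gg d^2L^{-1/r}$ edges (Lemma~\ref{1 step lem}).
\item This densification is iterated. In the coordinates $(a,b)=(\log_L(c_0d),\log_L(n/(c_0d)))$, each step sends $(a,b)$ to roughly $(a-1/r,1/r)$, with $r=\lfloor f(a,b)\rfloor$ changing along the way.
\item If the starting point lies in $R_{\text{stab}}^\circ$, the iteration together with the dispersed/concentrated dichotomy forces an induced subgraph of constant density on at least $pL^{4+\epsilon}$ vertices (Proposition~\ref{abstract recursion result epsilon}).
\item Only now is $K_t$-freeness used. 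Lemma~\ref{concentrated case lem}, a consequence of Theorem~\ref{1/4 thm} (via regularity and induced removal), upgrades the density to $1-1/t$. Tur\'an's theorem then produces a $K_t$ (Proposition~\ref{Kt prop}).
\end{enumerate}
Because $K_t$-freeness enters only at the last step, the exponent is independent of $t$. Finally, $0.614$ does not come from choosing a single $k\in\{3,4,5\}$ per density window. It comes from verifying $\{(a,b):a+b+1\le 0.614(2a+b)\}\subseteq R_{\text{stab}}^\circ$ through the explicit invariant region $R_c$ (Lemma~\ref{Kt full num estimate}).
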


To show Theorem~\ref{Kt maxcut thm}, we make use of the following reduction result from~\cite{bridge}, which reduces the problem to the corresponding estimate on $|\lambda_n|$. 

\begin{thm}[Proposition~6.1 of~\cite{bridge}] \label{maxcut m bridge thm}
    Let $\alpha\leq 1$ be a positive real number, and let $t$ be a positive integer. Assume that every $K_t$-free $n$-vertex graph $G$ with $m$ edges and least eigenvalue $\lambda_n$ satisfies $n|\lambda_n|\geq \Omega_t(m^\alpha)$. Then every $K_t$-free graph $G$ with $m$ edges satisfies $\surp(G)\geq m^{\alpha - o_t(1)}$. 
\end{thm}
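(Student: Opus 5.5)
The plan is to combine the hypothesis, applied to induced subgraphs, with Theorem~\ref{sp bridge thm}, and then find an induced subgraph in which every large induced subgraph still carries almost all of the edges. Two elementary facts about surplus will be used throughout.
\begin{itemize}
\item[(a)] $\surp(G)\geq \surp(G[U])$ for every $U\subseteq V(G)$. More generally, $\surp(G)\geq \sum_i \surp(G[V_i])$ for every partition $V(G)=V_1\cup\dots\cup V_r$. To see this, take a maximum cut of each $G[V_i]$ and flip each one independently at random; the edges between parts are then cut with probability $1/2$.
\item[(b)] $\surp(G)\geq e_G(A,B)-e(G)/2$ for every bipartition $(A,B)$ of $V(G)$.
\end{itemize}
By the hypothesis, every induced subgraph $H$ satisfies $|\lambda_{\min}(H)|\geq \Omega_t(e(H)^\alpha/v(H))$, since $H$ is again $K_t$-free.

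Fix $\delta=\delta(m)\to 0$ slowly, say $\delta=(\log m)^{-1/2}$. Set $U_0=V(G)$ and run an iterative halving procedure. At stage $j$ we hold a set $U_j$ with $m_j=e(G[U_j])$ and we maintain the invariant $m_j\geq m\cdot m^{-O(j\delta)}$. We then ask whether every $W\subseteq U_j$ with $|W|\geq |U_j|/2$ satisfies $e(G[W])\geq m^{-\delta}m_j$.
\begin{itemize}
\item If yes, every such induced subgraph has least eigenvalue of absolute value at least $\Omega_t(m_j^{\alpha}m^{-\alpha\delta}/|U_j|)$. Theorem~\ref{sp bridge thm} with $n'=|U_j|/2$ then gives $\surp(G[U_j])\geq \Omega_t(m_j^\alpha m^{-\delta})$, and (a) transfers this to $G$.
\item If no, there is $W$ with $|W|\geq |U_j|/2$ spanning fewer than $m^{-\delta}m_j$ edges. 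Put $X=U_j\setminus W$, so $|X|\leq |U_j|/2$, and almost all edges of $G[U_j]$ are in $G[X]$ or in $E(X,W)$. If $e(X,W)\geq (1/2+\eta)m_j$ for a small constant $\eta>0$, then (b) applied to $G[U_j]$ gives surplus $\Omega(m_j)\geq m^{\alpha-o(1)}$ and we are done. Otherwise $e(G[X])\geq (1/2-\eta-m^{-\delta})m_j$, and we set $U_{j+1}=X$.
\end{itemize}

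Since $|U_j|$ at least halves at each step, the process stops within $\log_2 n$ steps. It cannot run out of vertices, because a set with $m_j$ edges has at least $\sqrt{m_j}$ vertices. The loss in edges per step must therefore be controlled so that after $O(\log n)$ steps we still have $m_j\geq m^{1-o(1)}$. This is where $\delta$ and the number of steps must be balanced.

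The main obstacle is the ``otherwise'' branch. There we only retain about half of the edges per step, and over $\log n$ steps this could cost a polynomial factor $2^{-\log n}$. I would fix this by changing the test from ``half of the vertices'' to ``a $(1-m^{-\delta})$-fraction of the vertices'', applying Theorem~\ref{sp bridge thm} with $n'=(1-m^{-\delta})|U_j|$. The lost factor $m^{-\delta}$ in the bound $(n-n')\lambda/4$ is only $m^{-o(1)}$. In the failing case we then delete only a small set $X$ carrying at least a $(1-m^{-\delta})$ fraction of the edges, and retain $G[X]$ unless the cross edges $e(X,W)$ dominate. In that cross-edge case I would use (b), or (a) with the partition $\{X,W\}$ and a further recursion, to gain surplus directly. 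Each step shrinks the vertex set by a factor $m^{-\delta}$ at an edge cost of $(1-O(m^{-\delta}))$, so the number of steps is $O(\log n/(\delta\log m))=O(1/\delta)$. The cumulative loss is then $m^{-o(1)}$, which gives $\surp(G)\geq m^{\alpha-o_t(1)}$.
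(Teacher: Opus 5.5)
The paper does not prove this statement; it quotes it from \cite{bridge}, so there is no in-paper argument to compare with and your proposal has to stand on its own. Its overall scheme is sound and does prove the statement. Facts (a) and (b) are correct. The ``yes'' branch with the test at $n'=\lceil (1-m^{-\delta})|U_j|\rceil$ gives $\surp(G)\ge \surp(G[U_j])\gg_t m^{-(1+\alpha)\delta}m_j^{\alpha}$ through Theorem~\ref{sp bridge thm}; here $|U_j|\ge \sqrt{m_j}\ge m^{\delta}$, so $|U_j|-n'\ge m^{-\delta}|U_j|/2$. The cross-edge branch with threshold $e(X,W)\ge (1/2+\eta)m_j$ gives $\surp(G)\ge \eta m_j$, which suffices because $\alpha\le 1$. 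However, the accounting in your last paragraph is wrong as stated.

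In the failing branch you only know $e(X)+e(X,W)>(1-m^{-\delta})m_j$. The cross edges can carry anything up to $(1/2+\eta)m_j$ without (b) giving you anything. Your suggested remedy, (a) applied to $\{X,W\}$ plus a further recursion, cannot recover them either, because (a) discards cross edges entirely. So a step does \emph{not} cost only a factor $1-O(m^{-\delta})$ in edges. All you can guarantee is $m_{j+1}=e(X)\ge (1/2-\eta-m^{-\delta})m_j\ge m_j/3$. If that were the per-step loss in your original halving scheme, it would be fatal, as you observed.

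The argument survives for a different reason than the one you give. What the $(1-m^{-\delta})$-test really buys is that the vertex set shrinks by a factor $m^{-\delta}$ per step, so there are few steps and a constant loss per step is affordable.

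\begin{itemize}
\item First delete isolated vertices, so that $n\le 2m$. This is implicitly needed for your step count $O(\log n/(\delta\log m))$.
\item After $j$ failing steps, $3^{-j}m\le m_j\le |U_j|^2\le m^{-2j\delta}n^2\le 4m^{2-2j\delta}$. This forces $j=O(1/\delta)$.
\item The process therefore stops in the yes-branch or the cross-edge branch with $m_j\ge 3^{-O(1/\delta)}m$.
\item This is $m^{1-o(1)}$ exactly when $\delta\log m\to\infty$. The yes-branch loss $m^{-O(\delta)}$ is $m^{-o(1)}$ exactly when $\delta\to 0$. Your choice $\delta=(\log m)^{-1/2}$ satisfies both.
\end{itemize}

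With this correction, the proof is complete.

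You can also collapse the iteration into one extremal choice. Take $H\sqsubseteq G$ with $e(H)>0$ maximising $e(H)\,v(H)^{-\gamma}$, where $\gamma=\ln 4/(\delta\ln m)=o(1)$.
\begin{itemize}
\item Then $e(H)\ge m n^{-\gamma}\ge m^{1-o(1)}$.
\item Suppose a failing $W$ exists with $e(X)\ge e(H)/3$. Since $|X|\le m^{-\delta}v(H)$, we get $e(X)\,|X|^{-\gamma}\ge \tfrac43\, e(H)\,v(H)^{-\gamma}$, contradicting maximality.
\end{itemize}
So $H$ either passes the test or gives surplus $\eta\, e(H)$ from cross edges.
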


Theorem~\ref{maxcut m bridge thm} reduces Theorem~\ref{Kt maxcut thm} to the following estimate of $|\lambda_n|$, which will be proven in Section~\ref{maxcut Kt sec}. 

\begin{thm} \label{Kt lambda n thm}
    Let $t$ be a fixed positive integer. If $G$ is a $K_t$-free graph with $m$ edges and least eigenvalue $\lambda_n$, then
    \begin{equation*}
        n|\lambda_n|\geq \Omega_t(m^{0.614}). 
    \end{equation*}
\end{thm}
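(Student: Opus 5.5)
We argue by contradiction. Suppose $G$ is $K_t$-free with $m$ edges and $n|\lambda_n| = \lambda n$ is small, say $\lambda := |\lambda_n| \le m^{0.614}/n$. We may assume $G$ has no isolated vertices, so $m\ge n/2$. The plan is to play off two estimates on the positive spectrum.

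The first estimate bounds the positive spectrum from below. Since the eigenvalues sum to zero, $\tr A = 0$ gives
\[
\sum_{\lambda_i>0}\lambda_i=\sum_{\lambda_i<0}|\lambda_i|\ .
\]
Together with $\tr A^2 = 2m$, each negative eigenvalue satisfies $\lambda_i^2\le \lambda|\lambda_i|$. Hence $2m = \sum\lambda_i^2 \le \sum_{\lambda_i>0}\lambda_i^2 + \lambda\sum_{\lambda_i>0}\lambda_i$. This yields either many edges' worth of squared positive eigenvalues, or $S_0\ge m/\lambda$. In either case, after discarding eigenvalues below a threshold $T$ (their contribution to $\sum\lambda_i^2$ is at most $T\,S_0$), we get a lower bound of the shape $S_T\gtrsim m/\lambda$ for $T$ a suitable multiple of $\lambda$.

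The second estimate, an upper bound, comes from $K_t$-freeness. A $K_t$-free graph has few closed walks relative to its degrees. Concretely, I would use that $\lambda_1\le O_t(\sqrt{m})$ (Nikiforov's bound $\lambda_1^2\le 2m(1-1/(t-1))$), and more importantly a bound on the number of eigenvalues exceeding $T'$. That count is at most $\tr A^2/T'^2 = 2m/T'^2$, so $S_{T'}\le \lambda_1\cdot 2m/T'^2\lesssim m^{3/2}/T'^2$. Sharper bounds via $\tr A^{2\ell}$ and the counting lemma for $K_t$-free graphs may be needed. The key point is that $S_{T'}$ must be small when $T'$ is close to $\sqrt m$: no eigenvalues exceed $\lambda_1=O_t(\sqrt m)$, so $S_{T'}=0$ for $T'>C_t\sqrt m$.

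Now apply Proposition~\ref{key} with a well-chosen integer $k$. Starting from $T\approx\lambda$ (or a slightly larger threshold where $S_T\gtrsim m/\lambda$ still holds), set
\[
T' = Cn\left(\frac{T}{Cn}\right)^k .
\]
The hypothesis $T'\ge\lambda^2$ must be checked, and it constrains $k$ relative to $\log(n/\lambda)$. The conclusion is
\[
S_{T'}\ge Cn\left(\frac{S_T}{Cn}\right)^k \gtrsim Cn\left(\frac{m}{\lambda C n^2}\right)^k\ .
\]
Comparing with the upper bound on $S_{T'}$ (for instance $S_{T'}=0$ once $T'>C_t\sqrt m$, or $S_{T'}\le m^{3/2}/T'^2$) gives an inequality among $n$, $m$ and $\lambda$ for each admissible $k$. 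Writing $m=n^{1+\beta}$ and $\lambda=n^{\gamma}$, each $k$ forbids a range of $\gamma$, and optimising over integers $k$ and intermediate thresholds yields $n\lambda\ge m^{0.614}$. The freedom to use arbitrary integer $k$, instead of powers of two, is what lets the exponents be tuned. The constraint $T'\ge\lambda^2$ rather than $\lambda^k$ keeps the admissible range wide.

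The main obstacle I expect is the dense regime $m\approx n^2$, where simple trace bounds are too weak. There I would use $K_t$-freeness more substantively, perhaps via the results of~\cite{graph} on graphs with small $|\lambda_n|$ being close to disjoint unions of cliques. In a $K_t$-free graph such cliques have fewer than $t$ vertices, so $G$ would be forced to be sparse, a contradiction. The precise numeric optimisation that produces $0.614$, splitting into ranges of $\beta$ and choosing $k$ in each, is a calculation I would carry out last.
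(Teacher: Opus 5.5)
\textbf{There is a genuine gap.} Your spectral bookkeeping breaks down at two points.

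First, the lower bound $S_T\gtrsim m/\lambda$ does not follow from $2m\le\sum_{\lambda_i>0}\lambda_i^2+\lambda S_0$. In the branch $\sum_{\lambda_i>0}\lambda_i^2\ge m$ you only get $S_0\ge m/\lambda_1\gtrsim_t\sqrt m$. You never used $K_t$-freeness at this step, and the claim is false for $K_n$, where $\lambda=1$ and $S_0=n-1$. Were it true for $K_t$-free graphs, $S_0\le n\lambda$ would give $n\lambda\gtrsim\sqrt{mn}\ge m^{3/4}$ with no further work.

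Second, Proposition~\ref{key} runs in the opposite direction to the one you use. Since $S_T=0$ for $T>n$ one may take $T\le n$, and then $T'=Cn(T/Cn)^k\le T$. So $T'$ is a \emph{lower} threshold, and comparing with ``$S_{T'}=0$ for $T'>C_t\sqrt m$'' is vacuous. With $T\approx\lambda$, the hypothesis $T'\ge\lambda^2$ fails for every $k\ge 2$. Moreover, whenever $S_T<Cn$ the conclusion is weaker than the monotonicity $S_{T'}\ge S_T$; this is your situation, since your base $m/(\lambda Cn^2)$ is below $1$. The proposition is useful only in contrapositive form: from $S_{\lambda^2}\le n\lambda$ it yields the \emph{upper} bound $S_T\lesssim n\lambda^{1/k}$ for $T\gtrsim n^{1-1/k}\lambda^{2/k}$.

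More fundamentally, no choice of $k$ and thresholds can rescue a global argument of this kind. Take $n^{1-\alpha}$ disjoint copies of $K_{n^\alpha}$ with $\alpha\in(0.63,1)$ fixed. It is not $K_t$-free, but it has every property your argument extracts from $K_t$-freeness. It satisfies the trace identities and Proposition~\ref{key}, which hold for all graphs. It also has $\lambda_1\approx n^\alpha\ll\sqrt m\approx n^{(1+\alpha)/2}$, so Nikiforov's bound holds with polynomial room. Yet $n|\lambda_n|=n=m^{1/(1+\alpha)+o(1)}$ with $1/(1+\alpha)<0.614$. Here $m$ is polynomially smaller than $n^2$, so ``$G$ is close to a disjoint union of small cliques'' gives no contradiction, and your fallback via~\cite{graph} does not reach this regime.

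The missing idea is \emph{localisation}: $K_t$-freeness has to be exploited inside a suitable induced subgraph, where $|\lambda_n|\le L$ persists by interlacing. The paper proceeds as follows:
\begin{enumerate}
\item The upper bounds on $S_T$ from Proposition~\ref{key} give the spectral distribution estimate of Proposition~\ref{spectral distribution prop}.
\item This yields $\tri(G)\gtrsim nd^2L^{-1/r}$.
\item After degree regularisation (Lemma~\ref{find upper bal lem}), a neighbourhood gives an induced subgraph on at most $d$ vertices with $\gtrsim d^2L^{-1/r}$ edges (Lemma~\ref{1 step lem}).
\item This densification is iterated, tracked by the map $g$ on $(a_L,b_L)\approx(\log_L d,\log_L(n/d))$. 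Started from a point of $R_{\text{stab}}^\circ$, it produces a constant-density induced subgraph on $\gg L^{4+\epsilon}$ vertices.
\item On that subgraph, Theorem~\ref{1/4 thm} and Tur\'an's theorem force a $K_t$.
\end{enumerate}
The exponent $0.614$ is the verification that $\{a+b+1\le 0.614(2a+b)\}\subseteq R_{\text{stab}}^\circ$ with $L\asymp m^{0.614}/n$ (Proposition~\ref{Kt prop} and Lemma~\ref{Kt full num estimate}). It does not come from optimising $k$ in a single application of Proposition~\ref{key}.
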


\subsection{Chowla's cosine problem}

Finally we discuss an application to Chowla's celebrated cosine problem on the minimum value of the Fourier form of a set of $n$ positive integers. 

Given a (finitely-supported) function $f: \mathbb{Z}\rightarrow \mathbb{C}$, let its Fourier transform $\hat{f}:\mathbb{R}/\mathbb{Z}\rightarrow \mathbb{C}$ be given by
\begin{equation*}
    \hat{f}(k) = \sum_{x\in \mathbb{Z}}e^{-2\pi i kx}f(x). 
\end{equation*}
A central theme in understanding properties of the Fourier transform in additive combinatorics is to understand how structural properties of $f$ relate to those of its Fourier transform $\hat f$. In particular, in combinatorial settings, the function $f$ often arises as the indicator function $1_X$ of a subset $X\subseteq \mathbb{Z}$. It is natural to investigate the restrictions that this assumption places upon $\hat{f}$. 

Perhaps the most influential problem under this theme is Littlewood's $L^1$-conjecture~\cite{littlewoodl1} on the minimal $L^1$ norm of the Fourier transform $\hat{f}$ of an indicator function $f = 1_X$, where $X\subseteq \mathbb{Z}$ has prescribed size. 

\begin{conj}[Littlewood's $L^1$-conjecture] \label{littlewood l1}
    Let $X\subseteq \mathbb{Z}$ be a set of $n$ integers. Then for $f = 1_X$, we have
    \begin{equation*}
        \|\hat{f}\|_1 := \int_0^1|\hat f| \gg \log n, 
    \end{equation*}
    where the implied constant is absolute. 
\end{conj}

The lower bound $\Omega(\log n)$ is attained when $X$ is an $n$-term arithmetic progression. Progress towards Conjecture~\ref{littlewood l1} began with Cohen's~\cite{cohen} work giving the first lower bound on $\|\hat{f}\|_1$ which tended to infinity with $n$. Following decades of iterative improvement by Davenport~\cite{Davenport}, Pichorides~\cite{Pichorides1974, pichorides1977, pichorides1978, pichorides1980}, and Fournier~\cite{Fournier}, Conjecture~\ref{littlewood l1} was finally resolved positively in 1981 independently by Konjagin~\cite{Konjagin} and McGehee, Pigno, and Smith~\cite{mcgeheepignosmith}. 

\begin{thm}[Konjagin~\cite{Konjagin}; McGehee, Pigno, and Smith~\cite{mcgeheepignosmith}] \label{littlewood l1 thm}
    Let $X\subseteq \mathbb{Z}$ be a set of $n$ integers. Then for $f = 1_X$, we have
    \begin{equation*}
        \|\hat{f}\|_1 \gg \log n. 
    \end{equation*}
\end{thm}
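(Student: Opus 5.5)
The plan is to prove Theorem~\ref{littlewood l1 thm} by duality, following McGehee, Pigno and Smith. It suffices to construct a test function $\Phi$ on $\mathbb{R}/\mathbb{Z}$ with $\|\Phi\|_\infty\leq C$ (an absolute constant) and $\left|\int_0^1 \hat f\,\overline{\Phi}\right|\gg \log n$. I will prove the stronger statement that for any trigonometric polynomial with frequencies $x_1<\dots<x_n$ and coefficients $a_1,\dots,a_n$ one has $\|\sum_k a_k e(x_k\theta)\|_1\gg \sum_k |a_k|/k$. Taking all $a_k=1$ then gives $\gg\log n$. This is only a change of sign convention relative to $\hat f$, which does not affect the $L^1$ norm.

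\textbf{Blocks and local test functions.} Split the ordered frequencies into dyadic blocks $B_j=\{x_k: 2^{j-1}\leq k<2^{j}\}$ for $1\leq j\leq J\asymp\log n$. For each block form the local test polynomial
\begin{equation*}
    Q_j(\theta)=2^{-j}\sum_{x_k\in B_j}\frac{a_k}{|a_k|}\,e(x_k\theta),
\end{equation*}
so that $\langle f,Q_j\rangle\approx\sum_{k\in B_j}|a_k|/k$ and $\|Q_j\|_2^2\leq 2^{-j}$. The $Q_j$ are not uniformly bounded, so their sum cannot be used directly. Following McGehee--Pigno--Smith, I will damp each block by an analytic factor built from the earlier blocks. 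Let $h_j$ be an analytic function, with spectrum in $[0,\infty)$, whose real part is a smoothed version of $|Q_j|$, for instance $|Q_j|$ convolved with a Fej\'er kernel. Its imaginary part is then the conjugate function. Set
\begin{equation*}
    \Phi=\sum_{j=1}^J Q_j\exp\!\Bigl(-\sum_{i<j}h_i\Bigr),
\end{equation*}
with the ordering chosen so that the damping factor is built from lower-frequency blocks.

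\textbf{Key steps.} First, prove $\|\Phi\|_\infty=O(1)$. The point is that $|Q_j|\,e^{-\sum_{i<j}\operatorname{Re}h_i}$ telescopes like $e^{-s_{j-1}}-e^{-s_j}$ up to controlled errors, where $s_j=\sum_{i\leq j}\operatorname{Re}h_i$. Hence the sum is bounded by $\sum_j (e^{-s_{j-1}}-e^{-s_j})\leq 1$ plus error terms. Second, compute $\langle f,\Phi\rangle$. Expand $e^{-H}=1+(e^{-H}-1)$ with $H=\sum_{i<j}h_i$, and use that $e^{-H}-1$ has spectrum in $(0,\infty)$. Multiplying $Q_j$ by it pushes the frequencies upward. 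The main term $\sum_j\langle f,Q_j\rangle\gg\sum_k|a_k|/k$ then survives, while the cross term pairs $f$ with shifted frequencies. By Cauchy--Schwarz and the bound $\|h_i\|_2\lesssim\|Q_i\|_2$ (Plancherel and $L^2$-boundedness of the conjugate function), the cross term is $O(\|f\|_1)$ or is dominated by a small multiple of the main term.

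\textbf{Main obstacle.} I expect the main difficulty to be the error analysis in both steps. The smoothing of $|Q_j|$ must be fine enough that $\operatorname{Re}h_j$ pointwise dominates $|Q_j|$ up to an $L^2$-small error, so that the telescoping bound holds. At the same time the spectrum of $h_j$ must be controlled, so that the upward frequency shifts genuinely separate from the frequencies of later blocks. The geometric decay $\|Q_j\|_2\leq 2^{-j/2}$ is what makes the accumulated errors summable. I would first try the original McGehee--Pigno--Smith choice, namely Fej\'er smoothing at scale comparable to the block's frequency range together with the inequality $|1-e^{-z}|\leq|z|$ for $\operatorname{Re}z\geq 0$. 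I would fall back on Konjagin's more combinatorial construction only if the spectral separation fails.
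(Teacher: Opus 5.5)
The paper gives no proof of this statement. It is quoted from Konjagin and McGehee--Pigno--Smith as background, so your proposal has to stand on its own.

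\textbf{The gap: the damping runs the wrong way.} Your ingredients are those of McGehee--Pigno--Smith: duality, dyadic blocks with $\|Q_j\|_2^2\le 2^{-j}$, damping by exponentials of analytic functions with real part $|Q_i|$, and telescoping. But the direction of the damping is reversed, and this is fatal. With your indexing, $B_1$ holds the lowest frequencies and the largest weights $1/k$. So $\exp(-H_j)$, with $H_j=\sum_{i<j}h_i$, damps $Q_j$ by the \emph{heaviest} blocks.
\begin{itemize}
\item The accumulated damping is not small. $\|H_j-\hat H_j(0)\|_2$ is of order $\sum_{i<j}2^{-i/2}\asymp 1$, not $O(2^{-j/2})$, so the geometric decay you invoke does not help.
\item The error spreads where it cannot be absorbed. $Q_j\,(e^{-H_j}-e^{-\hat H_j(0)})$ has spectrum in $(\min\operatorname{spec}Q_j,\infty)$, so it lands in every later block $B_l$. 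Each such block carries $2^{l-1}$ frequencies, and no Cauchy--Schwarz bound can absorb this.
\item It fails on the simplest example. Take $X=\{1,\dots,n\}$. Since $f=\sum_{k\le n}e(k\theta)$ and $Q_je^{-H_j}$ is essentially band-limited to $[2^{j-1},C2^j]$, one checks that the $j$-th term of $\langle f,\Phi\rangle$ is essentially $\overline{Q_j(0)e^{-H_j(0)}}$. Its modulus is at most $\tfrac12 e^{-c(j-1)}$, and exactly $\tfrac12e^{-(j-1)/2}$ if $\operatorname{Re}h_i=|Q_i|$. Hence $\langle f,\Phi\rangle=O(1)$, although your ``main term'' $\sum_j e^{-\hat H_j(0)}\langle f,Q_j\rangle$ is $\asymp\log n$. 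The cross terms cancel it almost entirely.
\end{itemize}

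\textbf{The fix.} The McGehee--Pigno--Smith mechanism damps each block by the \emph{light} blocks and shifts frequencies toward the heavy ones.
\begin{itemize}
\item Keep your blocks, but replace $Q_j$ by $\eta Q_j$ for a small absolute $\eta$.
\item Let $F_i$ be the analytic function with $\operatorname{Re}F_i=|Q_i|$ exactly. No smoothing is needed: $e^{-F_i}$ is already a bounded analytic function.
\item Put $G_j=\sum_{i>j}F_i$ and $\Phi=\sum_j Q_j\,\overline{e^{-G_j}}$. Telescoping from the top gives $\|\Phi\|_\infty\le(1-e^{-1})^{-1}$.
\item The error $Q_j\,\overline{(e^{-G_j}-e^{-\hat G_j(0)})}$ has spectrum below $\max\operatorname{spec}Q_j$, so it only meets blocks $B_l$ with $l\le j$.
\item By $|e^{-z}-e^{-w}|\le|z-w|$ for $\operatorname{Re}z,\operatorname{Re}w\ge 0$, its $L^2$ norm is at most $\|Q_j\|_\infty\|G_j-\hat G_j(0)\|_2\lesssim\eta^2 2^{-j/2}$.
\item Sum over $j\ge l$ and apply Cauchy--Schwarz over the $2^{l-1}$ frequencies of $B_l$. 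The error on $B_l$ is then $O(\eta^2)$, against a main term $\asymp\eta$. Summing over $\asymp\log n$ blocks gives $\langle f,\Phi\rangle\gg\log n$.
\end{itemize}
Equivalently, your analytic damping from lower-frequency blocks is correct if the blocks are indexed from the \emph{top} frequency, with weights $1/(n+1-k)$. This is harmless for $f=1_X$.

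\textbf{Two smaller points.}
\begin{itemize}
\item Spectral separation between blocks is neither available nor needed. The $x_k$ are arbitrary, and $e^{-H}$ is never band-limited; one-sidedness of the spectrum does the work. Also, pointwise domination ``up to an $L^2$-small error'' could not yield an $L^\infty$ bound, so the Fej\'er smoothing should go.
\item Block-wise Cauchy--Schwarz only handles coefficients that are constant on blocks. That suffices for $1_X$, but not for the general inequality $\gg\sum_k|a_k|/k$ you announce.
\end{itemize}
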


Another notable problem under this theme of investigating analytical properties of Fourier transforms of indicator functions is one of Chowla~\cite{Chowla}. This problem, stated as Problem~\ref{chowla problem} below, has come to be known as Chowla's cosine problem and culminates in Conjecture~\ref{chowla}, which remains open. 

Let $X$ be a set of $n$ positive integers, and let $f = 1_{X\cup -X}$. Chowla asked for lower bounds on $-\min \hat{f}$. 

\begin{df}
    For a positive integer $n$, let $\mathfrak{m}(n)$ denote the infimum of
    \begin{equation*}
        -\min \widehat{1_{X\cup -X}} = -2\min_{k\in \mathbb{R}/\mathbb{Z}}\left(\sum_{x\in X}\cos(2\pi kx)\right)
    \end{equation*}
    over sets $X$ of $n$ positive integers. 
\end{df}

\begin{prob}[Chowla's cosine problem] \label{chowla problem}
    Determine the asymptotic growth of $\mathfrak{m}(n)$ as $n$ tends to infinity. 
\end{prob}

\begin{conj}[Chowla~\cite{Chowla}] \label{chowla}
    We have $\mathfrak{m}(n)\gg \sqrt{n}$. 
\end{conj}

Note that $\int \hat{f} = 0$ for $f = 1_{X\cup -X}$. Therefore, $-\min \hat{f}\geq 0$ and thus $\mathfrak{m}(n)$ is always non-negative. The conjectured lower bound $\mathfrak{m}(n)\gg \sqrt{n}$ may be motivated by noting that $\int |\hat{f}|^2 = 2n$, and therefore the standard deviation of $\hat{f}$ from 0 is on the order of $\sqrt{n}$. From this perspective, Conjecture~\ref{chowla} asserts that the contribution to this standard deviation does not originate entirely from the positive side, and that this probabilistic heuristic remains valid. 

The conjectured lower bound $\mathfrak{m}(n)\gg \sqrt{n}$ is attained when $X = (S - S)\cap \mathbb{Z}^+$ for a Sidon set $S$. Indeed, in this case
\begin{equation*}
    f := 1_{X\cup -X} = 1_{(S - S)\backslash \{0\}} = 1_S * 1_{-S} - |S|\cdot 1_{\{0\}}. 
\end{equation*}
Therefore, 
\begin{equation*}
    \hat{f} = |\widehat{1_S}|^2 - |S|, 
\end{equation*}
hence $-\min \hat{f}\leq |S| = \Theta(\sqrt{n})$. 

Chowla's motivations for Conjecture~\ref{chowla} stem from his work~\cite{chowlaant} in analytic number theory. However, Chowla's cosine problem is also intimately connected to Littlewood's $L^1$-conjecture. To see this, note that for an even real function $f = 1_{X\cup -X}$, the Fourier transform $\hat{f}$ is also even and real. Therefore, since $\int \hat{f} = 0$, we have
\begin{equation*}
    -\min \hat{f} \geq \int_0^11_{\hat{f}\leq 0}|\hat{f}| = \frac{1}{2}\|\hat{f}\|_1. 
\end{equation*}
Hence, up to absolute constants, any lower bound for $\|\hat{f}\|_1$ translates into one for $-\min \hat{f}$. In particular, Theorem~\ref{littlewood l1 thm} implies that
\begin{equation*}
    \mathfrak{m}(n)\gg \log n. 
\end{equation*}
Pushing beyond this logarithmic bound from Littlewood's $L^1$-conjecture, Bourgain~\cite{bourgain} gave a quasi-polynomial lower bound for $\mathfrak{m}(n)$, which Ruzsa~\cite{ruzsa} refined to
\begin{equation*}
    \mathfrak{m}(n)\geq \exp(\Omega((\log n)^{1/2})). 
\end{equation*}

Bourgain~\cite{bourgain} and Ruzsa~\cite{ruzsa} employed methods of additive combinatorics and harmonic analysis. Under the assumption of a small $-\min \hat{f}$, they identified structure within the set $X\subseteq \mathbb{Z}^+$, finding polynomial-many translates of an image of a hypercube of poly-logarithmic dimension. Ruzsa noted that, in particular, $X$ must contain the sumset of two sets of quasi-polynomial size (and a third with two elements), from which a lower bound on $-\min \hat{f}$ was proven. 

The lower bound of Ruzsa remained the state of the art for many years until two breakthrough works by Jin, Milojevi\'c, Tomon, and Zhang~\cite{graph} and Bedert~\cite{bedert} in 2025, appearing in rapid succession of one another, both giving a polynomial lower bound $\mathfrak{m}(n)\geq n^{\Omega(1)}$. The approaches taken by~\cite{graph} and~\cite{bedert} are different. Jin, Milojevi\'c, Tomon, and Zhang~\cite{graph} employed methods of spectral graph theory, as discussed earlier, whilst Bedert~\cite{bedert} employed additive combinatorial and harmonic analytical methods, extending and utilising results from the earlier works of Bourgain and Ruzsa. 

The approach of Jin, Milojevi\'c, Tomon, and Zhang~\cite{graph} originally yielded an unspecified exponent which was subsequently improved to $1/10$. 

\begin{thm}[Jin, Milojevi\'c, Tomon, and Zhang~\cite{graph}] \label{JMTZ graph chowla thm}
    We have
    \begin{equation*}
        \mathfrak{m}(n)\geq n^{1/10 - o(1)}. 
    \end{equation*}
\end{thm}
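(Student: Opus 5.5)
The plan is to turn the cosine problem into a spectral statement about a Cayley-type graph, and then play the recursion of Lemma~\ref{key quad} against trivial trace bounds. Let $X$ be a set of $n$ positive integers, put $f=1_{X\cup -X}$ and $m=-\min\hat f$. We may assume $m\le n^{1/10}$, since otherwise there is nothing to prove.

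\textbf{Step 1: reduce to least eigenvalues.} Let $G$ be the graph on $\mathbb{Z}$ in which $x\sim y$ whenever $x-y\in \pm X$. For a finite vertex set, take a prime $N>2\max X$, so that $X\cup -X$ embeds injectively into $\mathbb{Z}/N\mathbb{Z}$. The Cayley graph on $\mathbb{Z}/N\mathbb{Z}$ with generating set $\pm X$ is $2n$-regular. Its eigenvalues are exactly $\hat f(k/N)$ for $k\in\mathbb{Z}/N\mathbb{Z}$, so its least eigenvalue is at least $-m$. By Cauchy interlacing, every induced subgraph $H$ (for instance on an interval of $M$ consecutive residues) also has least eigenvalue at least $-m$. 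Moreover, a long interval contains many vertices of essentially full degree $2n$. Working with such an $H$, which has $M$ vertices, rather than the whole $N$-vertex graph is essential: the scale $Cn$ appearing in Lemma~\ref{key quad} must be the number of vertices of the graph, and $N$ is far too large for this to be useful.

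\textbf{Step 2: lower bound on $S_T$ at low thresholds.} In $H$ we have $\operatorname{tr}A=0$ and $\operatorname{tr}A^2=2e(H)\approx 2nM$. Since all negative eigenvalues have absolute value at most $m$, they contribute at most $Mm^2\ll nM$ to $\operatorname{tr}A^2$. Hence $\sum_{\lambda_i>0}\lambda_i^2\gtrsim nM$. As every eigenvalue is at most $2n$, this gives $S_0\gtrsim M$. The same counting, after discarding eigenvalues below a threshold $T$, should give $S_T\gtrsim M$ for a suitable $T\ge m^2$.

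\textbf{Step 3: upper bound and iteration.} There is also a trivial upper bound at high thresholds:
\[
S_{T'}\le \frac{1}{T'}\sum_{\lambda_i\ge T'}\lambda_i^2\le \frac{2e(H)}{T'}\lesssim \frac{nM}{T'}.
\]
Now apply Lemma~\ref{key quad} with $M$ in the role of the number of vertices, iterating $j$ times. This is legitimate as long as each threshold stays at least $m^2$. Starting from $T_0\ge m^2$ with $S_{T_0}\gtrsim M$, we get $T_j/(CM)=(T_0/(CM))^{2^j}$ and $S_{T_j}/(CM)\ge (S_{T_0}/(CM))^{2^j}$. Comparing the resulting lower bound with $nM/T_j$ yields a contradiction once $M$ is chosen as a suitable power of $n$ and $m\le n^{1/10-\delta}$. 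Letting $j\to\infty$ slowly accounts for the $o(1)$ in the exponent.

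\textbf{Main obstacle.} The hardest part is the bookkeeping of Step 3. One has to choose $M$ (and the starting threshold) relative to $n$ and $m$ so that:
\begin{itemize}
\item the lower bound $S_{T_0}\gtrsim M$ survives with constants good enough that $(S_{T_0}/(CM))^{2^j}$ does not collapse;
\item the thresholds stay above $m^2$ throughout the iteration;
\item the final comparison is sharp enough to give precisely the exponent $1/10$.
\end{itemize}
One must also control the boundary effects of restricting to an interval, where vertices near the ends have degree below $2n$. I would first try $M\approx n^{a}$ with $a$ optimized from the inequality $(\,\cdot\,)^{2^j}$ versus $nM/T_j$. If the doubling steps of Lemma~\ref{key quad} prove too coarse, I would instead apply Proposition~\ref{key} with a general exponent $k$ to interpolate between thresholds. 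I expect this to recover at least the exponent $1/10$, and possibly more.
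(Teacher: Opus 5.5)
\textbf{There is a genuine gap, and it is not just bookkeeping.} Steps 1--3 use only three facts about $H$: it has $M$ vertices, it is nearly $2n$-regular, and its least eigenvalue is at least $-m$. Beyond these you use only trace identities and Lemma~\ref{key quad}, which holds for every graph. A disjoint union of copies of $K_{2n+1}$ has all of these properties with $m=1$. Every inequality you use also holds there; for instance $S_T\approx M$ for all $0<T\le 2n$. So no argument of this shape can force $m\ge n^{1/10-o(1)}$.

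\textbf{Step 3 runs in the wrong direction.} Since $S_T=0$ for $T\ge M$, Lemma~\ref{key quad} relates $T$ to the \emph{smaller} threshold $T'=T\cdot T/(CM)<T$. Feeding in $S_{T_0}\ge c_0M$ with $c_0<C$ therefore yields $S_{T_j}\ge CM\,(c_0/C)^{2^j}$ at thresholds $T_j<T_0$. This is weaker than the trivial monotonicity bound $S_{T_j}\ge S_{T_0}\ge c_0M$. Meanwhile your comparison bound $2e(H)/T_j$ only grows as $T_j$ decreases, so no contradiction can appear.

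The lemma is informative only in the contrapositive form $S_T\le CM\,(S_{T'}/(CM))^{1/k}$, starting from $S_{m^2}\le Mm$ (Lemma~\ref{triv S_T bound}). Even then it gives only $S_T\lesssim Mm^{1/k}$, which is compatible with $S_T\gtrsim M$.

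\textbf{Step 1 also fails as stated.} $\max X$ is not bounded in terms of $n$. If $\min X\ge M$, an interval of length $M$ spans no edges at all. The window would therefore need length comparable to $\max X$, and the scale $CM$ in Lemma~\ref{key quad} becomes as useless as $CN$.

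\textbf{What is missing} is a structural consequence of small $|\lambda_{\min}|$, followed by an arithmetic step that uses the Cayley structure to rule that structure out. The paper proceeds as follows.
\begin{enumerate}
\item The finite window is $X$ itself. Testing $A\succeq -|\lambda_q|I$ on $n1_{\{0\}}-2cL1_X$ shows that $X$ spans $\gg n^2/L$ edges (Lemma~\ref{triv densification}). This places $(\log_L n-1,1)$ in $\partial D_L(\Gamma_{q,X})$, independently of $\max X$.
\item Spectral distribution bounds (Proposition~\ref{spectral distribution prop}, where Proposition~\ref{key} is used in the correct direction) then force many triangles, hence denser neighbourhoods.
\item Iterating this densification (Sections~\ref{iter densi sec} and~\ref{rec sec}) produces an induced subgraph $H$ on at least $L^4$ vertices with $\Delta(\overline{H})\le L^2$ (Proposition~\ref{chowla dense subgraph prop}).
\item Only now does additive structure enter. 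A popular difference $t\in X$ among the edges of $H$ gives $|I|\gg L^8/n$, where $I=V(H)\cap(V(H)+t)$.
\item Let $D$ be the symmetric difference of $V(H)$ and $V(H)+t$. The test vector $|D|1_I-|I|1_D$ forces $|I|=O(L)$, whence $n\gg L^7$ (Proposition~\ref{chowla no dense subgraph prop}).
\end{enumerate}
Together with $(5.43,1)\in R_{\text{stab}}^\circ$ this gives $\mathfrak{m}(n)\gg n^{1/6.43}$, which implies the statement. Applying Tur\'an's theorem to $H$ instead, with only Lemma~\ref{key quad} available, is essentially the route of \cite{graph} to the exponent $1/10$.
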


This exponent of $1/10$ may be improved slightly by numerically optimising their argument. However, such optimisations cannot improve this exponent beyond $1/8$. Employing Proposition~\ref{key} in place of Lemma~\ref{key quad}, we improve this exponent to $1/6.43$. 

\begin{thm} \label{chowla main}
    We have
    \begin{equation*}
        \mathfrak{m}(n)\geq \Omega(n^{1/6.43}). 
    \end{equation*}
\end{thm}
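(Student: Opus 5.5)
We follow the spectral route of~\cite{graph}. Fix a set $X=\{x_1,\dots,x_n\}$ of positive integers with $\mathfrak{m}:=-\min \widehat{1_{X\cup -X}}$ small. For a large prime $p$ (larger than $2\max X$) we consider the Cayley graph $G$ on $\mathbb{Z}_p$ with generating set $(X\cup -X)\bmod p$. Its eigenvalues are exactly the values $\hat f(k/p)$, $k\in\mathbb{Z}_p$, so $|\lambda_{\min}(G)|\le \mathfrak{m}$. The eigenvalue sums $S_T$ are therefore sums of Fourier coefficients. Since $p$ is huge compared with $n$, we pass to a rescaled setting: we normalise by $p$ (or take an induced subgraph on a suitable Bohr-type set), so that the effective "$n$" in Proposition~\ref{key} is comparable to the number of vertices, which we denote $N$. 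We then aim to show that small $\mathfrak{m}$ forces a contradiction between two estimates. The first is an \emph{upper} estimate on $S_T$ for $T$ near the degree $2n$: this uses the Cayley structure, in particular that the multiplicity of large eigenvalues is controlled, as in~\cite{graph}. The second is a \emph{lower} estimate coming from iterating Proposition~\ref{key}.

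The key steps are as follows. First, starting from $S_0$, we obtain $S_0=\sum_{\lambda_i\ge 0}\lambda_i = \tfrac12\sum|\lambda_i|$, since the trace is zero. By Parseval this is at least roughly $\sum\lambda_i^2/\max|\lambda_i| \approx N\cdot 2n/(2n)$. More carefully, we take $T$ of order $\sqrt n$ and use $\sum_i \lambda_i^2=2nN$ together with $\sum_{\lambda_i<0}\lambda_i^2\le \mathfrak{m}\sum_{\lambda_i<0}|\lambda_i|$ to show that $S_T\gtrsim nN/\mathfrak{m}$ for some $T\lesssim \mathfrak{m}$-dependent threshold. Second, we apply Proposition~\ref{key} with a positive integer $k$ chosen optimally (not just a power of two), with $T'\ge \mathfrak{m}^2$. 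This yields
\[
\frac{S_{T'}}{CN}\ge\Bigl(\frac{S_T}{CN}\Bigr)^k, \qquad \frac{T'}{CN}=\Bigl(\frac{T}{CN}\Bigr)^k.
\]
Third, we bound $S_{T'}$ from above. Eigenvalues satisfy $\lambda_i\le 2n$, and the number of eigenvalues exceeding $T'$ is at most $\sum\lambda_i^2/T'^2=2nN/T'^2$. Hence $S_{T'}\le 2nN/T'$. Alternatively, we may use a sharper additive bound: the number of $k$ with $\hat f(k/p)\ge T'$ is controlled by additive energy/dissociativity, via the arithmetic structure, to gain further. Comparing the two bounds gives an inequality of the form $\mathfrak{m}^{a(k)}\gtrsim n^{b(k)}$. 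We then optimise $k$ together with the threshold choices, possibly interpolating between two consecutive values of $k$ or combining two applications with different $T$. This should produce the exponent $1/6.43$ instead of $1/8$, since with powers of two only the optimisation is constrained.

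The main obstacle is the bookkeeping of thresholds. We must ensure $T'\ge\mathfrak{m}^2$ while $T$ stays in the range where the lower bound on $S_T$ is valid, and the rescaling from $\mathbb{Z}_p$ must not destroy the normalisation by $Cn$ in Proposition~\ref{key}. We expect the numerical constant $6.43$ to arise as the optimum of a small explicit optimisation over $k$ and the exponent of $T$. We would first try a single application with $T=n^{\beta}$, and optimise $\beta$ and $k$ by hand. If that falls short, we would chain the Proposition twice with different integers $k_1,k_2$, using Cauchy--Schwarz between stages.
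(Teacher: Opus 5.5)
Your proposal has a genuine gap. First, the lower estimate cannot hold. By the trace identity (Lemma~\ref{triv S_T bound}) every $S_T$ is at most $N\mathfrak{m}$, so $S_T\gtrsim nN/\mathfrak{m}$ would already give $\mathfrak{m}\gtrsim\sqrt{n}$, which is Chowla's conjecture itself. What Parseval together with $\lambda_1=2n$ actually yields is only $S_T\gtrsim N$.

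Second, Proposition~\ref{key} is vacuous on the Cayley graph itself. The graph has $N=q$ vertices and all eigenvalues are at most $2n$, so $S_T=0$ unless $T\le 2n$. In that case $T'=CN(T/CN)^k\le (2n)^k(CN)^{1-k}<1\le|\lambda_N|^2$ for every $k\ge 2$ once $q$ is large. ``Normalising by $p$'' or passing to an unspecified Bohr set does not repair this.

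The paper instead performs a genuine densification. Testing $A\succeq -|\lambda_q|I$ against $n1_{\{0\}}-c'L1_X$ shows that the induced subgraph on the neighbourhood $X$ of $0$ has $\gg n^2/L$ edges. With $L=n^{1/(c+1)}$ this puts $(c,1)$ in $\partial D_L(\Gamma_{q,X})$ (Lemma~\ref{triv densification}). Proposition~\ref{key} is then used only inside the machinery of Sections~\ref{spec dist sec}--\ref{rec sec}: spectral distribution, triangle counts, passage to dense neighbourhoods, and the recursion $g$. Checking by five iterates of $g$ that $(5.43,1)\in R_{\text{stab}}^\circ$ then gives, via Corollary~\ref{abstract rec 0 cor} and Proposition~\ref{chowla dense subgraph prop}, an induced subgraph on at least $L^4$ vertices whose complement has maximum degree at most $L^2$.

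Third, and most fundamentally, every inequality you plan to compare is satisfied by a disjoint union of copies of $K_{2n+1}$, whose least eigenvalue is $-1$. That list includes trace zero, $\sum\lambda_i^2=2nN$, $\lambda_1\le 2n$, Proposition~\ref{key}, and the count of eigenvalues above $T'$. This graph is even a Cayley graph of $\mathbb{Z}/M\mathbb{Z}$ for composite $M$, with a subgroup minus $0$ as connection set. So no choice of $k$, interpolation, or chaining of such bounds can force $\mathfrak{m}\ge n^{\Omega(1)}$. Some arithmetic input ruling out clique-like structure is indispensable, and your remark about additive energy does not supply one.

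In the paper this input is Proposition~\ref{chowla no dense subgraph prop}:
\begin{enumerate}
\item Take a maximal induced $H$ with $\Delta(\overline{H})\ll L^2$. Lemmas~\ref{half graph lem} and~\ref{very dense no subgraph passage ln bound lem} show every outside vertex has at most $4L^2$ neighbours in $H$.
\item Pigeonholing the $\gg L^8$ edges of $H$ by their difference in $X$ gives $t$ with $|V(H)\cap(V(H)+t)|\gg L^8/n$.
\item Primality of $q$ gives $V(H)+t\neq V(H)$.
\item A test vector on this intersection against the symmetric difference shows the intersection has size $\ll L$.
\end{enumerate}
Hence $n\gg L^7$, which contradicts $n=L^{c+1}$ with $c=5.43<6$. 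The exponent $1/6.43$ thus arises as $1/(c+1)$ for a $c<6$ with $(c,1)\in R_{\text{stab}}^\circ$, not from a one-shot optimisation over $k$.
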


The proofs of Theorem~\ref{chowla main} (albeit with a weaker exponent $1/6.6$ for clarity of presentation) and Proposition~\ref{key} were the content of a part III essay titled ``Chowla's cosine problem" set by Timothy Gowers as a part of the Cambridge Mathematical Tripos. At the time of the official submission of the essay in May 2026, the best existing exponent on Chowla's cosine problem was $1/7$, due to Bedert~\cite{bedert}. However, this exponent has since been improved to $1/5$ by Bedert in July 2026. 

\textbf{Organisation of the paper.} The rest of the paper is organised as follows. Section~\ref{key sec} establishes our key eigenvalue recursion estimate, Proposition~\ref{key}. Section~\ref{spec dist sec} derives estimates on the distribution of the positive eigenvalues on the basis of Proposition~\ref{key}. Section~\ref{dense graph sec} establishes results for dense graphs, including Theorems~\ref{5/4 thm},~\ref{5/4 maxcut thm} and~\ref{1/4 thm}, as well as results used in the rest of the paper. Section~\ref{tri counting sec} establishes a high triangle count on the basis of the spectral distribution estimates of Section~\ref{spec dist sec}. This is used to show the existence of dense neighbourhoods in Section~\ref{dense nbh sec}. Section~\ref{iter densi sec} describes an iterative densification procedure by repeated passages to dense neighbourhoods. Section~\ref{rec sec} analyses the recursion arising from this process. Sections~\ref{maxcut Kt sec} and~\ref{Chowla cosine sec} address the applications of these results in the settings of MaxCut in $K_t$-free graphs (proving Theorems~\ref{Kt maxcut thm} and~\ref{Kt lambda n thm}) and Chowla's cosine problem (proving Theorem~\ref{chowla main}). And finally, Section~\ref{poly sec} gives a proof of Theorem~\ref{1/4 poly thm}. 

\textbf{General notation.} For a positive integer $n$, let $[n] := \{1, \dots, n\}$ denote the set of positive integers smaller than or equal to $n$. We make use of the Vinogradov notation $f\ll g$ to denote $f = O(g)$, where the implied constant will always be absolute unless stated or denoted otherwise. We omit floor and ceiling functions where they do not play a crucial role. 

\textbf{Graph-theoretic notation.} Unless otherwise specified, a graph $G$ refers to a finite simple graph. We use $V(G)$ and $E(G)$ to denote the set of vertices and the set of edges of $G$, respectively. For a subset $S\subseteq V(G)$ of vertices, let $E(S)$ denote the set of edges whose endpoints both lie in $S$. For disjoint subsets $S_1, S_2\subseteq V(G)$ of vertices, let $E(S_1, S_2)$ denote the set of edges between $S_1$ and $S_2$. We denote the cardinality of these sets by the corresponding lower case letter. That is, $v(G) = |V(G)|$, $e(G) = |E(G)|$, $e(S) = |E(S)|$ and $e(S_1, S_2) = |E(S_1, S_2)|$. For a vertex $i\in V(G)$, let $\deg i = \deg_G i$ denote its degree in $G$. Let $d = d(G) = 2e(G)/v(G)$ denote the average degree of $G$, and let $p = p(G) = d(G)/(v(G) - 1)$ denote its density. We use $H\subseteq G$ to denote that $H$ is a subgraph of $G$, and we use $H\sqsubseteq G$ to denote that $H$ is an induced subgraph of $G$. 

\section{A key spectral inequality} \label{key sec}

\begin{df}
    For two $n\times n$ symmetric (real) matrices $X, Y$, we use $X\succeq Y$ to denote that $X - Y$ is positive-semi-definite. 
\end{df}

The property which distinguishes adjacency matrices of graphs from arbitrary symmetric matrices is that adjacency matrices are Boolean. To capture this property, we consider the entry-wise product of matrices of equal dimensions, often known as the Hadamard product. Under this definition, adjacency matrices are precisely symmetric matrices which are equal to their Hadamard product with themselves. 

\begin{df}
    For two matrices $X, Y$ of equal dimensions, let the \emph{Hadamard product} $X\circ Y$ denote the matrix obtained from $X$ and $Y$ via entry-wise multiplication. In other words $(X\circ Y)_{ij} = X_{ij}Y_{ij}$. For a positive integer $k$, let $X^{\circ k}$ denote the $k$-fold Hadamard product of $X$ with itself. Let $J$ denote the all-ones matrix, the multiplicative identity under the Hadamard product. 
    
    Let 
    \begin{equation*}
        \langle X, Y\rangle := \sum_{ij}X_{ij}Y_{ij} = \tr (X^TY)
    \end{equation*}
    be the entry-wise inner product of $X$ and $Y$. Let the \emph{Frobenius norm} 
    \begin{equation*}
        \|X\|_F := \langle X, X\rangle^{1/2}
    \end{equation*}
    be the corresponding norm. 
\end{df}

Note that when applied to vectors $v, w$ of equal length, $\langle v, w\rangle$ reduces to the inner product $v\cdot w = v^Tw$ and the Frobenius norm reduces to the standard $\ell_2$ length of vectors. 

Perhaps somewhat surprisingly, the entry-wise notion of the Hadamard product is related to the global notion of positive-semi-definiteness. More specifically, the Hadamard product of positive-semi-definite matrices is a positive-semi-definite matrix. This fact, sometimes known as the Schur product theorem~\cite{Schur}, has far-reaching consequences despite its simple proof. 

\begin{lem}[Schur product theorem]
    Let $X\succeq 0$, $Y\succeq 0$ be positive-semi-definite $n\times n$ matrices, then $X\circ Y\succeq 0$. 
\end{lem}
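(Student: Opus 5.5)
The plan is to reduce the Schur product theorem to the rank-one case, using the spectral decomposition of positive-semi-definite matrices together with bilinearity of the Hadamard product. Since $X\succeq 0$ and $Y\succeq 0$ are real symmetric, the spectral theorem lets us write
\begin{equation*}
    X = \sum_{i=1}^n \alpha_i u_iu_i^T, \qquad Y = \sum_{j=1}^n \beta_j w_jw_j^T,
\end{equation*}
where $u_i, w_j\in\mathbb{R}^n$ and all $\alpha_i,\beta_j\geq 0$. The Hadamard product is bilinear, so
\begin{equation*}
    X\circ Y = \sum_{i,j}\alpha_i\beta_j\,(u_iu_i^T)\circ(w_jw_j^T).
\end{equation*}
The positive-semi-definite cone is closed under non-negative linear combinations. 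It therefore suffices to show that each $(u u^T)\circ(w w^T)$ is positive-semi-definite.

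For the rank-one case we compute entrywise: $\bigl((uu^T)\circ(ww^T)\bigr)_{kl} = u_ku_lw_kw_l = (u\circ w)_k(u\circ w)_l$. Hence
\begin{equation*}
    (uu^T)\circ(ww^T) = (u\circ w)(u\circ w)^T,
\end{equation*}
and for any vector $z$ we have $z^T(u\circ w)(u\circ w)^Tz = \bigl((u\circ w)\cdot z\bigr)^2\geq 0$. This completes the argument.

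As a sanity check, there is an equivalent route. One can show directly that $z^T(X\circ Y)z = \tr\bigl(D_zXD_zY\bigr)$, where $D_z$ is the diagonal matrix with diagonal $z$. Writing $Y = Y^{1/2}Y^{1/2}$, this trace equals $\tr\bigl(Y^{1/2}D_zXD_zY^{1/2}\bigr)$, which is the trace of a positive-semi-definite matrix and hence non-negative.

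No step is a genuine obstacle. The only point needing care is the identity $(uu^T)\circ(ww^T)=(u\circ w)(u\circ w)^T$, together with the observation that bilinearity legitimately distributes the Hadamard product over the spectral sums.
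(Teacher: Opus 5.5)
Your proposal is correct and takes essentially the same approach as the paper, which also diagonalises $X$ and $Y$ with non-negative coefficients and writes $X\circ Y$ as the non-negative combination $\sum_{i,j}a_ib_j(u_i\circ w_j)(u_i\circ w_j)^T$ of rank-one positive-semi-definite matrices. Your alternative trace argument $z^T(X\circ Y)z=\tr\bigl(Y^{1/2}D_zXD_zY^{1/2}\bigr)\geq 0$ is also valid.
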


\begin{proof}
    Diagonalising, we may write $X = \sum_{i = 1}^n a_i u_iu_i^T$, $Y = \sum_{i = 1}^n b_i w_iw_i^T$, where $a_i, b_i\geq 0$. Therefore, 
    \begin{equation*}
        X\circ Y = \sum_{i, j}a_ib_j (u_i\circ w_j)(u_i\circ w_j)^T\succeq 0. \qedhere
    \end{equation*}
\end{proof}

Not so surprisingly, we note that the Hadamard product distributes over matrix summation. This gives us the following corollary of the Schur product theorem. 

\begin{cor} \label{schur cor}
    Let $X\succeq X'\succeq 0$, $Y\succeq Y'\succeq 0$ be positive-semi-definite $n\times n$ matrices, then $X\circ Y\succeq X'\circ Y'$. 
\end{cor}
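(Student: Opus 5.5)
The plan is to reduce Corollary~\ref{schur cor} to the Schur product theorem by telescoping the difference $X\circ Y - X'\circ Y'$ into two Hadamard products of positive-semi-definite matrices. Since the Hadamard product is bilinear (it distributes over matrix addition and commutes with scalar multiplication), we can write
\begin{equation*}
    X\circ Y - X'\circ Y' = (X - X')\circ Y + X'\circ (Y - Y').
\end{equation*}
This identity is verified entry-wise: $X_{ij}Y_{ij} - X'_{ij}Y'_{ij} = (X_{ij} - X'_{ij})Y_{ij} + X'_{ij}(Y_{ij} - Y'_{ij})$.

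Next, identify the positive-semi-definite factors. By hypothesis $X - X'\succeq 0$ and $Y\succeq Y'\succeq 0$, so $Y\succeq 0$; likewise $X'\succeq 0$ and $Y - Y'\succeq 0$. The Schur product theorem then gives $(X - X')\circ Y\succeq 0$ and $X'\circ(Y - Y')\succeq 0$.

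Finally, the positive-semi-definite cone is closed under addition, since $v^T(A+B)v = v^TAv + v^TBv\geq 0$. So the sum of the two terms is positive-semi-definite, which is exactly the statement $X\circ Y\succeq X'\circ Y'$.

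There is no real obstacle here. The only point needing care is choosing the splitting so that each term pairs a difference with a matrix already known to be positive-semi-definite. This is why the hypothesis $X'\succeq 0$ and the consequence $Y\succeq 0$ (from $Y\succeq Y'\succeq 0$) are used. The alternative split $(X-X')\circ Y' + X\circ(Y-Y')$ would work equally well.
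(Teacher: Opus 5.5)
Your proof is correct and matches the paper's argument. The paper gives no written proof beyond noting that the Hadamard product distributes over matrix addition and then invoking the Schur product theorem, and your telescoping split $(X-X')\circ Y + X'\circ(Y-Y')$ is exactly that argument written out in full.
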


We similarly define the Hadamard product of vector subspaces of $\mathbb{R}^n$. 

\begin{df}
    For subspaces $U, W\leq \mathbb{R}^n$, let their \emph{Hadamard product} 
    \begin{equation*}
        U\circ W := \operatorname{span}\left(\{u\circ w|u\in U, w\in W\}\right)\leq \mathbb{R}^n
    \end{equation*}
    be the subspace spanned by Hadamard products of vectors in $U$ with vectors in $W$. For a positive integer $k$, let $U^{\circ k}$ denote the $k$-fold Hadamard product of $U$ with itself. 
\end{df}

\begin{lem} \label{dim bound}
    For subspaces $U, W\leq \mathbb{R}^n$, we have $\dim(U\circ W)\leq \dim U \dim W$. 
\end{lem}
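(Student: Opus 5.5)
The plan is to pick bases and observe that the Hadamard products of basis vectors span $U\circ W$. Let $u_1,\dots,u_a$ be a basis of $U$ and $w_1,\dots,w_b$ a basis of $W$, where $a=\dim U$ and $b=\dim W$. The spanning claim is that every generator $u\circ w$ of $U\circ W$, with $u\in U$ and $w\in W$, lies in $\operatorname{span}\{u_i\circ w_j : i\in[a], j\in[b]\}$.

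To verify it, write $u=\sum_i \alpha_i u_i$ and $w=\sum_j \beta_j w_j$. The Hadamard product is bilinear, since it is entry-wise multiplication and so distributes over addition and commutes with scalars. Hence
\begin{equation*}
    u\circ w=\sum_{i,j}\alpha_i\beta_j\,(u_i\circ w_j).
\end{equation*}
Consequently the spanning set $\{u\circ w\}$ defining $U\circ W$ is contained in the span of the $ab$ vectors $u_i\circ w_j$. Conversely, each $u_i\circ w_j$ lies in $U\circ W$, so the two spans are equal.

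A subspace spanned by $ab$ vectors has dimension at most $ab$, so $\dim(U\circ W)\le \dim U\dim W$. Nothing here is a genuine obstacle; the only point needing care is the bilinearity of $\circ$, which is immediate from the entry-wise definition. Equivalently, $U\circ W$ is the image of the linear map $U\otimes W\to\mathbb{R}^n$ induced by $(u,w)\mapsto u\circ w$.
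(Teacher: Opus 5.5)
Your proof is correct and is essentially the paper's argument: both expand $u\circ w$ bilinearly in bases of $U$ and $W$ to see that the $\dim U\dim W$ vectors $u_i\circ w_j$ span $U\circ W$. Your reverse inclusion and the tensor-product remark are harmless, but neither is needed for the upper bound.
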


\begin{proof}
    Let $k = \dim U, \ell = \dim W$. It suffices to show that for any basis $u_1, \dots, u_k$ of $U$ and $w_1, \dots, w_\ell$ of $W$, the $k\ell$ vectors $u_i\circ w_j$ span $U\circ W$. This follows since for any $u\in U$, $w\in W$, the vector $u\circ w$ lies in the span of the vectors $u_i\circ w_j$. 
\end{proof}

\begin{df}
    For a subspace $V\leq \mathbb{R}^n$, let $P_V$ denote the corresponding projection matrix. For a symmetric $n\times n$ matrix $X$, let the \textit{restriction} of $X$ to $V$ be $X_V = P_V XP_V$. Let $\tr _V(X) = \tr (X_V)$ denote the trace of $X_V$. We say that $X$ is \emph{supported on} $V$ if $X = X_V$. 
\end{df}

$X_V$ is defined so that the restriction of the bilinear form corresponding to $X$ to the subspace $V$ gives the bilinear form corresponding to $X_V$. 

The restriction of a positive-semi-definite matrix must also be positive-semi-definite. Therefore, if $X\succeq Y$, then for any $V\leq \mathbb{R}^n$, we have $X_V\succeq Y_V$ and $\tr_V(X)\geq \tr_V(Y)$. 

The following lemma relates the Hadamard product of matrices with the Hadamard product of subspaces. 

\begin{lem} \label{support Hadamard}
    If $X$ is supported on $U$ and $Y$ is supported on $W$, then $X\circ Y$ is supported on $U\circ W$. 
\end{lem}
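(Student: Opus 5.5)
The plan is to reduce to rank-one pieces using spectral decompositions, in the same way as in the proof of the Schur product theorem. Since $X$ is symmetric and supported on $U$, we have $X = P_UXP_U$. Hence every column of $X$ lies in $U$, and $X$ vanishes on $U^\perp$. Diagonalising $X$, every eigenvector with nonzero eigenvalue lies in the column space of $X$, which is contained in $U$. So we may write
\begin{equation*}
    X = \sum_i a_i u_iu_i^T
\end{equation*}
with $u_i\in U$ and real $a_i$. In the same way,
\begin{equation*}
    Y = \sum_j b_j w_jw_j^T
\end{equation*}
with $w_j\in W$ and real $b_j$. Positive semi-definiteness is not needed here; only symmetry is used.

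Next, expand the Hadamard product bilinearly, using the identity $(uu^T)\circ(ww^T) = (u\circ w)(u\circ w)^T$. This gives
\begin{equation*}
    X\circ Y = \sum_{i,j}a_ib_j (u_i\circ w_j)(u_i\circ w_j)^T.
\end{equation*}
Each vector $u_i\circ w_j$ lies in $U\circ W$ by the definition of the Hadamard product of subspaces. Writing $V = U\circ W$, each term therefore satisfies $P_V (vv^T) P_V = vv^T$ for $v\in V$. Summing over $i,j$ gives $P_V(X\circ Y)P_V = X\circ Y$, which says exactly that $X\circ Y$ is supported on $U\circ W$.

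The only point requiring care is the first step: identifying "supported on $U$" (defined as $X = P_UXP_U$) with having a spectral decomposition whose eigenvectors lie in $U$. This holds because the image of $X$ is contained in the image of $P_U$, and for a symmetric matrix the image is spanned by the eigenvectors with nonzero eigenvalues. Everything else is immediate.
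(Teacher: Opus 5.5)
Your proof is correct and follows the paper's argument: write $X$ and $Y$ as real combinations of rank-one matrices $u_iu_i^T$ and $w_jw_j^T$ with $u_i\in U$, $w_j\in W$, then expand $X\circ Y$ using $(uu^T)\circ(ww^T) = (u\circ w)(u\circ w)^T$. You additionally justify, via the spectral theorem and the column space of $X = P_UXP_U$, why such decompositions exist, which the paper takes for granted.
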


\begin{proof}
    We may write $X = X_U$ as a finite linear combination $\sum_i \alpha_i u_iu_i^T$ of vectors $u_i\in U$, and $Y = Y_W$ as a finite linear combination $\sum_j \beta_j w_jw_j^T$ of vectors $w_j\in W$. Therefore, 
    \begin{equation*}
        X\circ Y = \sum_{i, j} \alpha_i\beta_j (u_i\circ w_j)(u_i\circ w_j)^T
    \end{equation*}
    is supported on $U\circ W$. 
\end{proof}

\begin{df}
    For $T\geq 0$, let $V_T\leq \mathbb{R}^n$ denote the subspace spanned by the eigenvectors $v_i$ with an eigenvalue $\lambda_i\geq T$. Let $N_T = \dim V_T$ denote the number of eigenvalues (counted with multiplicity) greater than or equal to $T$. Let
    \begin{equation*}
        A_{\geq T} = A_{\geq T}(G) := A_{V_T} = \sum_{\lambda_i\geq T} \lambda_iv_iv_i^T, 
    \end{equation*}
    where we sum over $i\in [n]$ for which $\lambda_i\geq T$. Let 
    \begin{equation*}
        S_T = S_T(G) := \sum_{\lambda_i\geq T}\lambda_i = \tr A_{\geq T}
    \end{equation*}
    be its trace. 
\end{df}

Note that we have the decomposition
\begin{equation*}
    A = \sum_{i = 1}^n \lambda_iv_iv_i^T. 
\end{equation*}

Note that $S_T\geq TN_T$ by Markov's inequality. Both $S_T$ and $N_T$ are (not necessarily strictly) decreasing functions of $T$. 

An upper bound of $S_T$ may be given by the trace identity
\begin{equation} \label{trace id 1}
    \sum_{i = 1}^n\lambda_i = \tr A = 0. 
\end{equation}
This may be considered the ``trivial'' upper bound on $S_T$. Improving this bound will lie at the heart of our argument. 

\begin{lem} \label{triv S_T bound}
    For any $T\geq 0$, we have
    \begin{equation*}
        S_T\leq n|\lambda_n|. 
    \end{equation*}
\end{lem}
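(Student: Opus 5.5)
The plan is to deduce the bound directly from the trace identity~(\ref{trace id 1}). This identity says that the eigenvalues sum to zero. So the total mass of the positive eigenvalues equals the total magnitude of the negative ones, and each negative eigenvalue is bounded in magnitude by $|\lambda_n|$.

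First, I will bound $S_T$ by the sum of all positive eigenvalues. Since $T\geq 0$, every eigenvalue counted in $S_T$ is non-negative. Hence
\begin{equation*}
    S_T = \sum_{\lambda_i\geq T}\lambda_i \leq \sum_{\lambda_i > 0}\lambda_i,
\end{equation*}
because the right-hand side includes every term of the left-hand side, together with possibly further positive terms.

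Second, I will apply~(\ref{trace id 1}), which gives
\begin{equation*}
    \sum_{\lambda_i>0}\lambda_i = \sum_{\lambda_i<0}|\lambda_i|.
\end{equation*}
Each term on the right is at most $|\lambda_n|$, since $\lambda_n$ is the least eigenvalue. There are at most $n$ such terms, so the right-hand side is at most $n|\lambda_n|$, which completes the bound. If one wants a slightly cleaner count, there are at most $n - N_T$ negative eigenvalues, but this refinement is not needed.

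I do not expect a genuine obstacle. The only point requiring care is the first step, where the hypothesis $T\geq 0$ is used: it guarantees that discarding the eigenvalues in $(0,T)$ can only decrease the sum, rather than possibly increasing it by removing negative terms.
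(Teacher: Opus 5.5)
Your proposal is correct and matches the paper's proof: both bound $S_T$ by the sum of the non-negative eigenvalues, use the trace identity~(\ref{trace id 1}) to equate this with $-\sum_{\lambda_i<0}\lambda_i$, and bound each of at most $n$ terms by $|\lambda_n|$. Summing over $\lambda_i>0$ rather than $\lambda_i\geq 0$ changes nothing, since zero eigenvalues contribute nothing.
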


\begin{proof}
    By Equation~(\ref{trace id 1}), we have
    \begin{equation*}
        S_T\leq \sum_{\lambda_i\geq 0}\lambda_i = - \sum_{\lambda_i < 0}\lambda_i\leq n|\lambda_n|. 
    \end{equation*}
    Here, and in what follows, we abuse notation and use $\sum_{\lambda_i\geq 0}$, for example, to denote the sum over $i\in [n]$ for which $\lambda_i \geq 0$. 
\end{proof}

The following Lemma bounds the trace of $A$ over a low-dimensional subspace $V$. 

\begin{lem} \label{markov ish}
    For any subspace $V\leq \mathbb{R}^n$ and any $T\geq 0$, we have
    \begin{equation*}
        \tr_V(A)\leq S_T + T\dim V. 
    \end{equation*}
\end{lem}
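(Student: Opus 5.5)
We prove Lemma~\ref{markov ish} by splitting $A$ along its spectrum. Write $A = A_{\geq T} + B + A_{<0}$, where
\begin{equation*}
    B = \sum_{0\leq \lambda_i < T}\lambda_iv_iv_i^T, \qquad A_{<0} = \sum_{\lambda_i < 0}\lambda_iv_iv_i^T.
\end{equation*}
Since $\tr_V$ is linear, it suffices to bound the restricted trace of each of the three terms.

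The negative part satisfies $A_{<0}\preceq 0$. Restriction preserves semidefiniteness, so $\tr_V(A_{<0})\leq 0$.

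For the top part, we have $0\preceq A_{\geq T}$. Moreover $A_{\geq T}\preceq P_V$-independent comparisons are not needed. The projection $P_V$ has eigenvalues in $[0,1]$, so
\begin{equation*}
    \tr_V(A_{\geq T}) = \tr(P_VA_{\geq T}P_V) = \tr(A_{\geq T}^{1/2}P_VA_{\geq T}^{1/2})\leq \tr A_{\geq T} = S_T,
\end{equation*}
using $P_V\preceq I$ and the cyclicity of the trace.

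For the middle part, $0\preceq B\preceq T\cdot I$, and hence $B_V\preceq T P_V$. It follows that
\begin{equation*}
    \tr_V(B)\leq T\tr P_V = T\dim V.
\end{equation*}

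Summing the three bounds gives $\tr_V(A)\leq S_T + T\dim V$. There is no real obstacle here. The only point needing care is the use of $P_V\preceq I$ together with cyclicity of the trace, to see that restricting a PSD matrix cannot increase its trace.
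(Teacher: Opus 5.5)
Your proof is correct and is essentially the paper's argument: the paper uses the single comparison $A\preceq A_{\geq T}+TI$ (your bounds $B\preceq TI$ and $A_{<0}\preceq 0$ combined), restricts to $V$, and uses $\tr_V(A_{\geq T})\leq \tr A_{\geq T}=S_T$ just as you do. The stray sentence about ``$P_V$-independent comparisons'' is garbled and should simply be deleted.
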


\begin{proof}
    Since $A\preceq A_{\geq T} + TI$, we have
    \begin{equation*}
        \tr_V(A)\leq \tr_V(A_{\geq T}) + \tr_V(TI)\leq \tr(A_{\geq T}) + T\dim V = S_T + T\dim V. \qedhere
    \end{equation*}
\end{proof}

As discussed earlier, the key property of an adjacency matrix $A$ is that $A = A^{\circ 2}$. In hopes of applying the Schur product theorem, we consider the positive-semi-definite matrix $A + |\lambda_n|I\succeq 0$. 

\begin{df}
    Let the \emph{modified adjacency matrix} be $\tilde{A} := A + |\lambda_n|I\succeq 0$
\end{df}

Since $A\circ I = 0$, we have
\begin{equation} \label{mod adj sq eq}
    \tilde{A}^{\circ 2} = A + |\lambda_n|^2 I\preceq \tilde{A} + |\lambda_n|^2 I. 
\end{equation}
As a corollary of Lemma~\ref{markov ish}, we may also give an upper bound on the trace of $A$ over a low-dimensional subspace $V$. 

\begin{cor} \label{markov ish cor}
    For any subspace $V\leq \mathbb{R}^n$ and any $T\geq 0$, we have
    \begin{equation*}
        \tr_V(\tilde{A})\leq S_T + (T + |\lambda_n|)\dim V. 
    \end{equation*}
\end{cor}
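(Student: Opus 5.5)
This follows directly from Lemma~\ref{markov ish}, since $\tilde{A}$ and $A$ differ only by a multiple of the identity. The proof is a single line.

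By definition $\tilde{A} = A + |\lambda_n|I$, and restriction to $V$ is linear, so
\begin{equation*}
    \tr_V(\tilde{A}) = \tr_V(A) + |\lambda_n|\tr_V(I).
\end{equation*}
The restriction of $I$ to $V$ is $P_V I P_V = P_V$. Its trace is $\dim V$, because $P_V$ is an orthogonal projection of rank $\dim V$.

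Lemma~\ref{markov ish} gives $\tr_V(A)\leq S_T + T\dim V$ for every $T\geq 0$. Substituting this into the identity above yields
\begin{equation*}
    \tr_V(\tilde{A})\leq S_T + T\dim V + |\lambda_n|\dim V = S_T + (T + |\lambda_n|)\dim V,
\end{equation*}
which is the claimed bound.

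There is no real obstacle here. The only fact beyond Lemma~\ref{markov ish} is $\tr(P_V) = \dim V$, which is immediate.
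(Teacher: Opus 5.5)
Your proof is correct and is exactly the deduction the paper intends: the paper states this as an immediate corollary of Lemma~\ref{markov ish} without writing out a proof. The only extra input is $\tr_V(|\lambda_n| I) = |\lambda_n|\tr(P_V) = |\lambda_n|\dim V$, which you supply.
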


The following recursive spectral inequality, stated earlier in the introduction as Proposition~\ref{key}, lies at the heart of our argument. Heuristically, under the assumption that negative eigenvalues are close to zero, Proposition~\ref{key} dictates the distribution of the mass of the positive eigenvalues, which are much larger in absolute value than the negative eigenvalues. It generalises Lemma~\ref{key quad} (Lemma~6.3 of~\cite{graph}), which corresponds to the special case $k = 2$ of Proposition~\ref{key}. 

\begin{custhm}{Proposition}{\ref{key}}
    There exists an absolute constant $C = \Clem{key}\geq 1$ such that the following holds. Let $k\geq 1$ be a positive integer. If $T$ and $T'$ are non-negative real numbers satisfying
    \begin{equation*}
        \frac{T'}{Cn} = \left(\frac{T}{Cn}\right)^k, 
    \end{equation*}
    and $T'\geq |\lambda_n|^2$, then 
    \begin{equation} \label{key prop statement ineq}
        \frac{S_{T'}}{Cn}\geq \left(\frac{S_T}{Cn}\right)^k. 
    \end{equation}
\end{custhm}

Before we give the proof of Proposition~\ref{key} in its full generality, we first give a proof for the special case $k = 2$, corresponding to Lemma~\ref{key quad}. 

\begin{proof}[Proof in the case where $k = 2$; \emph{i.e.}, a proof of Lemma~\ref{key quad}]
    Fix $C = 2^{10}$. Let $L = |\lambda_n|$. Since $\tilde{A}\succeq \tilde{A}_{V_T}\succeq 0$, it follows from Corollary~\ref{schur cor} that
    \begin{equation*}
        \tilde{A} + L^2I\succeq \tilde{A}^{\circ 2}\succeq \tilde{A}_{V_T}^{\circ 2}. 
    \end{equation*}
    Taking the projection onto $V_T^{\circ 2}$ gives
    \begin{equation} \label{k = 2 eq}
        \tilde{A}_{V_T^{\circ 2}} + L^2P_{V_T^{\circ 2}}\succeq \tilde{A}_{V_T}^{\circ 2}. 
    \end{equation}
    Taking the trace, and applying the power-mean inequality, we have
    \begin{equation*}
        \tr \tilde{A}_{V_T^{\circ 2}} + L^2\dim V_T^{\circ 2}\geq \sum_{i = 1}^n ((\tilde{A}_{V_T})_{ii})^2\geq n^{-1}\left(\tr \tilde{A}_{V_T}\right)^2\geq n^{-1}S_T^2. 
    \end{equation*}
    However, by Corollary~\ref{markov ish cor}, we have
    \begin{equation*}
        \tr \tilde{A}_{V_T^{\circ 2}}\leq S_{T'} + (T' + L)\dim V_T^{\circ 2}. 
    \end{equation*}
    Hence
    \begin{equation*}
        S_{T'}\geq n^{-1}S_T^2 - (T' + L^2 + L)\dim V_T^{\circ 2}. 
    \end{equation*}
    Lemma~\ref{dim bound} gives 
    \begin{equation*}
        \dim V_T^{\circ 2}\leq (\dim V_T)^2 = N_T^2\leq T^{-2}S_T^2. 
    \end{equation*}
    Therefore, 
    \begin{equation*}
        S_{T'}\geq n^{-1}S_T^2\left(1 - \frac{T' + L^2 + L}{CT'}\right)\geq \frac{S_T^2}{Cn}. \qedhere
    \end{equation*}
\end{proof}

Iterating the special case $k = 2$ of Proposition~\ref{key} implies the special case of Proposition~\ref{key} where $k$ is a power of two. This is the approach taken in~\cite{graph}. The large gap between powers of two led to a relatively coarse bound on the contribution of small positive eigenvalues to the quadratic mass. The main role of Proposition~\ref{key} is to drastically tighten this estimate, which eventually leads to a stronger bound on the triangle count (see Section~\ref{tri counting sec}) and a more rapid densification process (see Section~\ref{iter densi sec}). 

One may attempt to generalise the proof above for $k = 2$ to general values of $k$ by establishing a $k$-fold generalisation 
\begin{equation*}
    \tilde{A}_{V_T^{\circ k}} + |\lambda_n|^kP_{V_T^{\circ k}}\succeq A_{V_T}^{\circ k}. 
\end{equation*}
of~(\ref{k = 2 eq}). This line of approach does indeed establish an inequality of the form~(\ref{key prop statement ineq}). However, it requires the much stronger assumption that $T'\geq |\lambda_n|^k$ instead of the assumption $T'\geq |\lambda_n|^2$ in Proposition~\ref{key}, severely limiting its applicability in later parts of our argument. We now give the proof of Proposition~\ref{key} in its full generality. 

\begin{proof}[Proof of Proposition~\ref{key} for general values of $k$]
    Take $C = 2^{10}$. We may assume that $T\leq n$, otherwise $S_T = 0$ and the desired inequality holds trivially. Similarly, since the desired inequality is trivial for $k = 1$, we may assume that $k\geq 2$. Let $L = |\lambda_n|$. The desired inequality holds trivially when $e(G) = 0$. We assume that $e(G) > 0$. Hence $L\geq 1$. 
    
    Let $U = V_T$, $A' = \tilde{A}_U = A_{\geq T} + LP_{V_T}$ and $N = N_T = \dim U$. Note that $\tr A' = S_T + LN$.  By Lemma~\ref{dim bound}, $\dim (U^{\circ \ell})\leq \dim(U)^\ell = N_T^\ell$ for any positive integer $\ell$, a fact that we shall use throughout. 
    
    For each $\ell\leq k$, we seek to find a matrix $M = M_\ell$ satisfying $0\preceq M\preceq \tilde{A}$ with rank at most $N^\ell$ and high trace. In particular, it would suffice for each diagonal entry $M_{ii}$ of $M$ to be comparable to $(A'_{ii})^{\ell}$. Formally, we employ the following definitions. 

    For a positive integer $\ell$, $i\in [n]$, and a matrix $M$ with rank at most $N^\ell$ satisfying $0\preceq M\preceq \tilde{A}$, let 
    \begin{equation*}
        c_{\ell, i}(M) := A'_{ii} - (M_{ii})^{1/\ell}\leq A'_{ii}
    \end{equation*}
    and let
    \begin{equation*}
        c_\ell(M) := \sum_{i = 1}^n \max(c_{\ell, i}(M), 0). 
    \end{equation*}
    Let 
    \begin{equation*}
        c_\ell = \min_M c_\ell(M), 
    \end{equation*}
    where we take the minimum over matrices $M$ with rank at most $N^\ell$ satisfying $0\preceq M\preceq \tilde{A}$. 
    
    We shall give an upper bound on $c_\ell$ via induction on $\ell$. We first establish a suitable recursive inequality on $c_\ell$. 

    \begin{claim} \label{key claim}
        For any positive integers $\ell_1, \ell_2$. Let $\ell = \ell_1 + \ell_2$. Then
        \begin{equation*}
            c_\ell\leq c_{\ell_1} + c_{\ell_2} + n^{1 - 1/\ell}L^{2/\ell}N. 
        \end{equation*}
    \end{claim}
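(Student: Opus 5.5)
Take near-optimal matrices $M_1$ (rank at most $N^{\ell_1}$) and $M_2$ (rank at most $N^{\ell_2}$) with $0\preceq M_j\preceq \tilde A$, attaining $c_{\ell_1}$ and $c_{\ell_2}$. (A minimiser exists by compactness; otherwise work with near-minimisers.) Form the Hadamard product $M_1\circ M_2$. By Corollary~\ref{schur cor} and~(\ref{mod adj sq eq}),
\begin{equation*}
    0\preceq M_1\circ M_2\preceq \tilde A^{\circ 2}\preceq \tilde A + L^2 I .
\end{equation*}
I would first check that $M_1\circ M_2$ is supported on a subspace of dimension at most $N^{\ell}$. Indeed $M_j$ is supported on its range $R_j$, with $\dim R_j\le N^{\ell_j}$. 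Lemmas~\ref{support Hadamard} and~\ref{dim bound} then give support on $R_1\circ R_2$, of dimension at most $N^{\ell_1+\ell_2}$. Call this subspace $W$.

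The candidate will be $M := (M_1\circ M_2 - L^2 P_W)_+$, or more carefully a matrix dominated by $\tilde A$. The term $L^2I$ is the problem, since we need $M\preceq \tilde A$. I would handle it as follows. Restrict to $W$: $(M_1\circ M_2)\preceq \tilde A_W + L^2P_W$. Then pick $M$ as the positive part of $M_1\circ M_2 - L^2P_W$ in a way that stays below $\tilde A$; subtracting a scalar multiple of the projection commutes nicely enough that the positive part is $\preceq \tilde A_W\preceq \tilde A$. An alternative is the matrix $M=\tilde A^{1/2}\,\cdot$ (compression) construction. Either way the diagonal loss is controlled. This $M$ has rank at most $N^\ell$, and its diagonal satisfies $M_{ii}\ge (M_1)_{ii}(M_2)_{ii} - L^2 (P_W)_{ii}$ in a suitable averaged sense. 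Diagonal entries of a Hadamard product multiply exactly, so $(M_1\circ M_2)_{ii}=(M_1)_{ii}(M_2)_{ii}$. The losses are governed by $\sum_i (P_W)_{ii}=\dim W\le N^\ell$.

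The diagonal accounting goes as follows. Write $a_i=A'_{ii}$, so $(M_j)_{ii}^{1/\ell_j}\ge a_i - c_{\ell_j,i}$. Then
\begin{equation*}
    ((M_1)_{ii}(M_2)_{ii})^{1/\ell}\ge a_i - \max(c_{\ell_1,i},0)-\max(c_{\ell_2,i},0),
\end{equation*}
because the geometric mean with weights $\ell_1/\ell,\ell_2/\ell$ of numbers each at least $a_i - x_i$ is at least $a_i - x_i$ for the worse one. This already gives the $c_{\ell_1}+c_{\ell_2}$ terms. The remaining loss comes from subtracting $L^2 p_i$ with $p_i=(P_W)_{ii}\in[0,1]$. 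We use $x^{1/\ell}-(x-y)_+^{1/\ell}\le y^{1/\ell}$, so the extra loss at $i$ is at most $(L^2p_i)^{1/\ell}$. Summing and applying Hölder (power mean with $\sum p_i\le N^\ell$) gives
\begin{equation*}
    \sum_i L^{2/\ell}p_i^{1/\ell}\le L^{2/\ell} n^{1-1/\ell}(N^\ell)^{1/\ell}= n^{1-1/\ell}L^{2/\ell}N,
\end{equation*}
which is exactly the error term.

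The main obstacle is making the subtraction of $L^2P_W$ rigorous. We need a PSD matrix $M$ with $M\preceq\tilde A$, rank at most $N^\ell$, and $M_{ii}\ge (M_1)_{ii}(M_2)_{ii}-L^2p_i$ entrywise on the diagonal. The bare positive part does not obviously control individual diagonal entries. My first attempt would be the operator $M=X^{1/2}(X+L^2P_W)^{-1}X^{1/2}\cdot$, with $X=M_1\circ M_2$ on $W$ — no. More simply I would take $M = \tilde A_W^{1/2} Y \tilde A_W^{1/2}$-type compressions, or use $M := X - L^2 P_W\preceq \tilde A_W$, which holds in the PSD order. This $M$ need not be PSD, but diagonal entries only enter through $(M_{ii})_+$. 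So I would relax the definition, or note that the claim is only used through traces, and check that the $c_\ell$ framework tolerates this. If the PSD requirement is essential, I would replace $M$ by its positive part and use $\tilde A_W\succeq (X-L^2P_W)_+$. This holds because $\tilde A_W\succeq0$ and $\tilde A_W\succeq X-L^2P_W$ do not directly imply it, so care is needed — this is where I expect the real work.
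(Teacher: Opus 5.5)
\textbf{There is a genuine gap.} Your setup matches the paper's: the Hadamard product $M'=M_1\circ M_2$ with $0\preceq M'\preceq\tilde A+L^2I$, the rank bound via Lemmas~\ref{support Hadamard} and~\ref{dim bound}, the diagonal accounting through the weighted geometric mean, and a power-mean bound on the error. Your use of positive parts in the accounting is in fact the careful way to do it.

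What is left open is the step that carries the content of the claim. You need to produce from $M'$ a matrix $M$ with $0\preceq M\preceq\tilde A$, rank at most $N^\ell$, and controlled diagonal loss, and none of your candidates does this.

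\begin{itemize}
\item You correctly note the lattice problem: $B\preceq A$ and $A\succeq 0$ do not give $B_+\preceq A$.
\item Beyond that, the step $\tilde A_W\preceq\tilde A$ in your chain is false in general. Compressing a PSD matrix by an orthogonal projection does not give something it dominates. Take $\tilde A=\begin{pmatrix}1&1\\1&1\end{pmatrix}$ (the $K_2$ case) and $W=\operatorname{span}(e_1)$.
\item From $M'\preceq\tilde A+L^2I$ you only get $M'-L^2P_W\preceq\tilde A+L^2P_{W^\perp}$. In the same example with $L=1$ and $M'=\operatorname{diag}(3/2,0)$, the matrix $M'-L^2P_W$ is not $\preceq\tilde A$.
\item You also cannot relax positivity or the domination by $\tilde A$. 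The definition of $c_\ell$ requires $0\preceq M\preceq\tilde A$, and this is not used only through traces. The next Schur-product step (Corollary~\ref{schur cor}) needs $0\preceq M_j\preceq\tilde A$ to get $M_1\circ M_2\preceq\tilde A^{\circ 2}$. The final step needs $M\preceq\tilde A$ to bound $\operatorname{tr} M$ by $\operatorname{tr}_V(\tilde A)$.
\end{itemize}

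The missing idea is to compress by an operator that \emph{commutes} with $\tilde A$. Set $P=\tilde A^{1/2}(\tilde A+L^2I)^{-1/2}$. Then $0\preceq P\preceq I$, and $M:=PM'P$ satisfies $M\preceq P(\tilde A+L^2I)P=\tilde A$, with $M\succeq 0$ and rank at most $N^\ell$.

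The diagonal loss is then controlled not additively, as you aimed for, but by a row-wise triangle inequality. Apply it to $M'^{1/2}=PM'^{1/2}+(I-P)M'^{1/2}$ to get $\sqrt{M'_{ii}}\le\sqrt{M_{ii}}+\sqrt{E_{ii}}$, where $E=(I-P)M'(I-P)$. Here
$E\preceq(I-P)(\tilde A+L^2I)(I-P)=\bigl(\sqrt{\tilde A+L^2I}-\sqrt{\tilde A}\bigr)^2\preceq L^2I$
and $\operatorname{rank}E\le N^\ell$, so $\operatorname{tr}E\le L^2N^\ell$.

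Since $2/\ell\le 1$, this gives $(M'_{ii})^{1/\ell}\le M_{ii}^{1/\ell}+E_{ii}^{1/\ell}$. The power-mean inequality then gives $\sum_i E_{ii}^{1/\ell}\le n^{1-1/\ell}L^{2/\ell}N$. This is the same error term your accounting was built to produce, with $E_{ii}$ playing the role of your $L^2(P_W)_{ii}$.
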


    \begin{proof}
        Take $M_1$ and $M_2$ such that $0\preceq M_1, M_2\preceq \tilde{A}$, the rank of $M_1$ is at most $N^{\ell_1}$, the rank of $M_2$ is at most $N^{\ell_2}$, and 
        \begin{align*}
            c_{\ell_1}(M_1) &= c_{\ell_1}, \\
            c_{\ell_2}(M_2) &= c_{\ell_2}. 
        \end{align*}
        Define $M' = M_1\circ M_2\succeq 0$. Note that the rank of $M'$ is at most $N^\ell$ and by Corollary~\ref{schur cor}, we have
        \begin{equation*}
            M'\preceq \tilde{A}^{\circ 2}\preceq \tilde{A} + L^2I. 
        \end{equation*}
        To obtain a matrix $M$ from $M'$ which satisfies $M\preceq \tilde{A}$, we consider the following quasi-projection of $M$. Let $P = \tilde{A}^{1/2}(\tilde{A} + L^2I)^{-1/2}$. Note that $0\preceq P\preceq I$. Take $M = PM'P$, the rank of $M$ is at most $N^\ell$ and we have
        \begin{equation*}
            0\preceq M\preceq \tilde{A}. 
        \end{equation*}
        Hence $c_\ell\leq c_\ell(M)$. We now relate the diagonal entries of $M$ to those of $M'$. Let $E = (I - P)M'(1 - P)$ account for the error induced by this quasi-projection. Note that the rank of $E$ is also at most $N^\ell$ and
        \begin{equation*}
            E\preceq (I - P)(\tilde{A} + L^2I)(I - P) = \left(\sqrt{\tilde{A} + L^2I} - \sqrt{\tilde{A}}\right)^2\preceq L^2I. 
        \end{equation*}
        Applying the triangle inequality to the Euclidean norm of the $i$th row of the matrix identity
        \begin{equation*}
            M'^{1/2} = PM'^{1/2} + (I - P)M'^{1/2}, 
        \end{equation*}
        it follows that
        \begin{equation*}
            \sqrt{M'_{ii}}\leq \sqrt{M_{ii}} + \sqrt{E_{ii}}. 
        \end{equation*}
        Since $(x + y)^\alpha\leq x^\alpha + y^\alpha$ for any $x, y\geq 0$ and $\alpha\leq 1$, we have
        \begin{equation*}
            (M_{ii}')^{1/\ell}\leq (M_{ii})^{1/\ell} + (E_{ii})^{1/\ell}
        \end{equation*}
        Therefore, 
        \begin{align*}
            (M_{ii})^{1/\ell}&\geq (M_{ii}')^{1/\ell} - (E_{ii})^{1/\ell} = ((M_1)_{ii})^{1/\ell}((M_2)_{ii})^{1/\ell} - (E_{ii})^{1/\ell}\\
            &\geq (A'_{ii} - c_{\ell_1, i}(M_1))^{\ell_1/\ell}(A'_{ii} - c_{\ell_2, i}(M_2))^{\ell_2/\ell} - (E_{ii})^{1/\ell}\\
            &\geq A'_{ii} - c_{\ell_1, i}(M_1) - c_{\ell_2, i}(M_2) - (E_{ii})^{1/\ell}
        \end{align*}
        Therefore, 
        \begin{equation*}
            c_{\ell, i}(M)\leq c_{\ell_1, i}(M_1) + c_{\ell_2, i}(M_2) + (E_{ii})^{1/\ell}. 
        \end{equation*}
        Hence, 
        \begin{equation*}
            c_\ell\leq c_{\ell}(M)\leq c_{\ell_1} + c_{\ell_2} + \sum_{i = 1}^n(E_{ii})^{1/\ell}. 
        \end{equation*}
        Since the rank of $E$ is at most $N^\ell$ and $0\preceq E\preceq L^2I$, we have $\tr E\leq L^2 N^\ell$. Hence, by the power-mean inequality, 
        \begin{equation*}
            \sum_{i = 1}^n(E_{ii})^{1/\ell}\leq n^{1 - 1/\ell}(\tr E)^{1/\ell}\leq n^{1 - 1/\ell} L^{2/\ell} N, 
        \end{equation*}
        as needed, establishing the claim. 
    \end{proof}
    
    We return to the proof of Proposition~\ref{key}. We shall prove via induction on $\ell$ that for any positive integer $\ell\leq k$, 
    \begin{equation} \label{key ind}
        c_\ell\leq 2n^{1 - 1/\ell}L^{2/\ell}N. 
    \end{equation}
    The base case $\ell = 1$ follows since $c_1\leq c_1(A') = 0$. 

    We now seek to establish~(\ref{key ind}) for a given $\ell\in [2, k]$, given that~(\ref{key ind}) holds for all smaller values. Applying Claim~\ref{key claim} with $\ell_1 = \floor{ \frac{\ell}{2}}$ and $\ell_2 = \ceil{\frac{\ell}{2}}$, we have
    \begin{equation*}
        c_\ell\leq c_{\ell_1} + c_{\ell_2} + n^{1 - 1/\ell}L^{2/\ell}N. 
    \end{equation*}
    Note that the positive integers $\ell_1, \ell_2$ are both strictly smaller than $\ell$. Therefore, by our inductive hypothesis, we have
    \begin{align*}
        c_{\ell_1}&\leq 2n^{1 - 1/\ell_1}L^{2/\ell_1}N, \\
        c_{\ell_2}&\leq 2n^{1 - 1/\ell_2}L^{2/\ell_2}N. 
    \end{align*}
    Therefore, 
    \begin{align*}
        c_\ell&\leq c_{\ell_1} + c_{\ell_2} + n^{1 - 1/\ell}L^{2/\ell}N\\
        &\leq 2n^{1 - 1/\ell_1}L^{2/\ell_1}N + 2n^{1 - 1/\ell_2}L^{2/\ell_2}N + n^{1 - 1/\ell}L^{2/\ell}N\\
        &\leq 4n^{1 - 1/\ell_2}L^{2/\ell_2}N + n^{1 - 1/\ell}L^{2/\ell}N. 
    \end{align*}
    To establish~(\ref{key ind}), it suffices now to show that 
    \begin{equation*}
        4n^{1 - 1/\ell_2}L^{2/\ell_2}N\leq n^{1 - 1/\ell}L^{2/\ell}N. 
    \end{equation*}
    This amounts to showing that
    \begin{equation*}
        \frac{n}{L^2}\geq 4^{\frac{\ell\ell_2}{\ell - \ell_2}} = 4^{\frac{\ell\ell_2}{\ell_1}}. 
    \end{equation*}
    However, note that 
    \begin{equation*}
        \frac{L^2}{Cn}\leq \frac{T'}{Cn} = \left(\frac{T}{Cn}\right)^k\leq C^{-k}. 
    \end{equation*}
    Therefore, 
    \begin{equation*}
        \frac{n}{|\lambda_n|^2}\geq C^{k - 1}\geq 4^{2k}\geq 4^{\frac{\ell\ell_2}{\ell_1}}, 
    \end{equation*}
    since $\ell\leq k$ and $\ell_2\leq 2\ell_1$. This concludes our inductive proof of~(\ref{key ind}) for positive integers $\ell\leq k$. 

    In particular, taking $\ell = k$, we have
    \begin{equation*}
        c_k\leq 2n^{1 - 1/k}L^{2/k}N. 
    \end{equation*}
    Take $M$ such that $0\preceq M\preceq \tilde{A}$, the rank of $M$ is at most $N^k$ and $c_k(M) = c_k$. By the power-mean inequality, we have
    \begin{equation*}
        \tr A' = \sum_{i = 1}^n A'_{ii} \leq \left(\sum_{i = 1}^n(M_{ii})^{1/k}\right) + c_k \leq n^{1 - 1/k}\left(\tr M\right)^{1/k} + c_k. 
    \end{equation*}
    Therefore, 
    \begin{equation*}
        \tr M\geq \left(n^{-1 + 1/k}S_T - 2L^{2/k}N\right)^k\geq \left(\frac{15}{16}n^{-1 + 1/k}S_T\right)^k\geq n^{1 - k}\left(\frac{15}{16}S_T\right)^k, 
    \end{equation*}
    since $n^{-1 + 1/k}S_T\geq n^{-1 + 1/k}TN = C^{1 - 1/k}T'^{1/k}N\geq C^{1 - 1/k}L^{2/k}N\geq 32L^{2/k}N$. Let $V$ denote the support of $M$. It follows that $\tr M\leq \tr_V(\tilde{A})$. Hence, by Corollary~\ref{markov ish cor}, we have
    \begin{equation*}
        \tr M\leq S_{T'} + (T' + L)N^k\leq S_{T'} + (T' + L)N^k\leq S_{T'} + 2T'N^k. 
    \end{equation*}
    Therefore, 
    \begin{align*}
        S_{T'}&\geq n^{1 - k}\left(\frac{15}{16}S_T\right)^k - 2T'N^k\\
        &\geq n^{1 - k}S_T^k\left(\left(\frac{15}{16}\right)^k - 2\frac{n^{k - 1}T'}{T^k}\right)\\
        &\geq n^{1 - k}S_T^k\left(\left(\frac{15}{16}\right)^k - 2\cdot C^{1 - k}\right)\\
        &\geq (Cn)^{1 - k}S_T^k,  
    \end{align*}
    as needed, concluding the proof of Proposition~\ref{key}. 
\end{proof}

Proposition~\ref{key} allows us to derive a stronger upper bound on $S_T$ than Lemma~\ref{triv S_T bound} by applying Lemma~\ref{triv S_T bound} to $S_{T'}$. 

In our inductive derivation of the bound~(\ref{key ind}) on $c_\ell$ in Proposition~\ref{key}, it is crucial that we apply Claim~\ref{key claim} with $\ell_1, \ell_2\leq (1 - \Omega(1))\ell$. The more na\"ive choice, $\ell_1 = 1, \ell_2 = \ell - 1$ (or vice versa), would lead to an extra factor of $k$ in the upper bound for $c_k$, ultimately preventing the densification to a subgraph of constant density in Section~\ref{iter densi sec}. 

\section{Spectral distribution} \label{spec dist sec}

The upper bounds on $S_t$ provided by Proposition~\ref{key} will be used in this section to estimate $\sum_{0\leq \lambda_i\leq T}\lambda_i^2$ via the inequality
\begin{equation} \label{Fubini}
    \sum_{0\leq \lambda_i\leq T}\lambda_i^2\leq \sum_{\lambda_i\geq 0}\lambda_i\min(\lambda_i, T) = \int_0^TS_tdt. 
\end{equation}
The equality in~(\ref{Fubini}) follows from an application of Fubini's theorem. This estimate on $\sum_{0\leq \lambda_i\leq T}\lambda_i^2$ will allow us to show that the main contribution to quadratic energy comes from relatively large eigenvalues. The precise form of this estimate is given by the following proposition. 

\begin{prop} \label{spectral distribution prop}
    Let $r\geq 2$ be a positive integer, let $T, L > 1$ be positive real numbers, and let $G$ be an $n$-vertex graph with least eigenvalue $\lambda_n$. We assume that  $|\lambda_n|\leq L$, $n\geq L^4$, and $T\geq n^{1/2}L$. Then
    \begin{equation*}
        \sum_{\lambda_i\leq T}\lambda_i^2\leq \Clem{spectral distribution prop}\left( nL^{\frac{1}{r}}T + n^{3/2}L^2 + n^{2 - \frac{1}{r}}L^{\frac{2}{r} + \frac{1}{r - 1}}\right), 
    \end{equation*}
    where $\Clem{spectral distribution prop}\geq 1$ is an absolute constant. 
\end{prop}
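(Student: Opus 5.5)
The plan is to split the sum into negative and non-negative eigenvalues and to control the non-negative part through the integral identity~(\ref{Fubini}). The negative part is easy: every negative eigenvalue has absolute value at most $L$, so
\begin{equation*}
    \sum_{\lambda_i<0}\lambda_i^2\leq nL^2\leq n^{3/2}L^2 .
\end{equation*}
For the non-negative part it suffices to bound $\int_0^T S_t\,dt$. I will bound $S_t$ pointwise for each $t$ by combining Proposition~\ref{key} with the trivial bound of Lemma~\ref{triv S_T bound}, choosing the exponent $k$ according to the size of $t$.

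\textbf{Pointwise bound.} Fix $k\geq 1$ and set $t'=Cn(t/Cn)^k$. If $t'\geq L^2$, then Proposition~\ref{key} together with $S_{t'}\leq nL$ gives
\begin{equation*}
    S_t\leq Cn\left(\frac{S_{t'}}{Cn}\right)^{1/k}\leq Cn\left(\frac{L}{C}\right)^{1/k}\leq CnL^{1/k}.
\end{equation*}
The condition $t'\geq L^2$ is exactly $t\geq t_k:=Cn\,(L^2/Cn)^{1/k}$. Since $L^2\leq n^{1/2}<Cn$, the thresholds increase with $k$: we have $t_1=L^2$ and $t_2=(CnL^2)^{1/2}$, and in general $t_k\leq Cn^{1-1/k}L^{2/k}$.

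\textbf{Splitting the integral.} I split $[0,T]$ into the following ranges.
\begin{enumerate}[(i)]
    \item On $[0,t_1]$, use $S_t\leq nL$. The contribution is $nL^3\leq n^{3/2}L^2$, using $L\leq n^{1/4}$.
    \item On $[t_{k-1},t_k]$ for $2\leq k\leq r$, apply the pointwise bound with exponent $k-1$, which is valid since $t\geq t_{k-1}$. The contribution is at most
    \begin{equation*}
        CnL^{1/(k-1)}t_k\ll n^{2-1/k}L^{2/k+1/(k-1)}=:a_k .
    \end{equation*}
    \item On $[t_r,T]$, use exponent $r$. The contribution is at most $CnL^{1/r}T$, which is the first term of the claimed bound.
\end{enumerate}
The endpoint terms match the statement: $a_2=n^{3/2}L^2$ is the second term and $a_r$ is the third. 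The hypothesis $T\geq n^{1/2}L$ is only needed to make these ranges sensible. Where $t_k>T$, the corresponding ranges are truncated at $T$, which only helps.

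\textbf{Main obstacle.} The delicate step is summing $a_2+\dots+a_r$ with an absolute constant that does not depend on $r$. Passing from $a_k$ to $a_{k+1}$ multiplies by
\begin{equation*}
    n^{\frac{1}{k(k+1)}}\,L^{-\frac{2}{k(k+1)}-\frac{1}{k(k-1)}} .
\end{equation*}
By $n\geq L^4$ this factor is at least $L^{\frac{4}{k(k+1)}-\frac{2}{k(k+1)}-\frac{1}{k(k-1)}}$, whose exponent is nonnegative once $k\geq 3$. So the sequence is essentially increasing, but the ratios tend to $1$, so the sum need not be $O(a_r)$ directly. I would first try to avoid this loss by coarsening the decomposition. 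Use only a sparse set of exponents, for instance $k$ with $r-k$ ranging over powers of two together with $k=2$, and on each coarse range apply the pointwise bound with the smallest exponent allowed there. The resulting terms should then grow geometrically towards $a_r$, or be dominated by $a_2$, with only a constant loss.

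If that fails, I would rebalance in a different way. On $[t_{k-1},t_k]$ I would compare with $S_t\leq CnL^{1/k}$ for $t$ close to $t_k$, using that $S_t$ is monotone in $t$, so as to extract an extra factor $(L^2/n)^{c/k^2}$ per step. This would make $\sum_k a_k\ll a_2+a_r$. Throughout, the $C^{\pm 1/k}$ factors are bounded by $C$ and are harmless.
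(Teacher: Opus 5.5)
Your setup is correct and has the same skeleton as the paper's proof. The negative part, the identity~(\ref{Fubini}), the staircase bound $S_t\le CnL^{1/k}$ for $t\ge t_k$ (from Proposition~\ref{key} with $S_{t'}\le nL$), and the pieces $[0,t_1]$, $[t_{k-1},t_k]$, $[t_r,T]$ all appear there, with $T_k=n^{1-1/k}L^{2/k}$ and $S_{Ct}$ in place of your $t_k$ and $S_t$. Your observation that $n\ge L^4$ forces $a_3\le a_4\le\dots\le a_r$ is correct and neatly replaces Lemma~\ref{almost convexity lem}, so for bounded $r$ you are done.

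The gap is that the proposition's content is an absolute constant independent of $r$, and that step is left open. Neither suggested fix supplies it.

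The powers-of-two coarsening is tied to $r$ rather than to the natural scale $\Lambda=\log(n/L^2)$. Using one exponent $k'$ on $[t_{k'},t_k]$ costs a factor $L^{1/k'-1/k}$, which is affordable only when $\log(t_k/t_{k'})\approx\Lambda(1/k'-1/k)$ is $O(1)$. Two concrete failures:
\begin{enumerate}[(i)]
\item For $r$ a power of two, your scheme contains the range $[t_2,t_{r/2}]$ with exponent $2$. It contributes about $nL^{1/2}t_{r/2}\ge n^{2-2/r}L^{1/2}$, which exceeds $n^2$ once $L=n^\gamma$ and $r>4/\gamma$.
\item Even if you keep every $k\le r/2$, it still fails. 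Take $r=2^{j+2}$, $\log n=r^2$, $L=n^{1/4-1/(4r)}$ and $T=t_r$. The range $[t_{r/2},t_{3r/4}]$ with exponent $r/2$ contributes $n^{2-1/(6r)}$ up to $e^{O(1)}$ factors. The right-hand side is $O(n^{2-1/(4r)})$, so you lose a factor $e^{r/12-O(1)}$.
\end{enumerate}

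The second fix fails as well. Since $S_t$ is decreasing, monotonicity cannot carry $S_{t_k}\le CnL^{1/k}$ down to $t<t_k$. Moreover, $\sum_k a_k\ll a_2+a_r$ is itself false. For $r/2\le k\le r$ one has $a_r/a_k\le e^{2\Lambda(r-k)/r^2}$. So with $L=n^{1/5}$ and $\sqrt{\log n}\ll r\ll\log n$, roughly $r^2/\Lambda$ of the $a_k$ are within a constant factor of $a_r$, while $a_r\gg a_2$ and the bound is still nontrivial. A per-step factor $e^{-c\Lambda/k^2}$ is $\approx 1$ in that range, so it does not help. What actually helps is that $[t_{k-1},t_k]$ has relative length only about $\Lambda/k^2$, and your estimate $\int_{t_{k-1}}^{t_k}S_t\,dt\le CnL^{1/(k-1)}t_k$ throws this away.

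Two ingredients are missing.
\begin{enumerate}[(a)]
\item Integrate at the scale of $t$ rather than of $k$. The paper writes $\int_{T_{10}}^{T_r}S_{Ct}\,dt\le\sup_t\bigl(S_{Ct}t^{1-\tau}\bigr)\int_{T_{10}}^{T_r}t^{\tau-1}\,dt\ll\sup_t\bigl(S_{Ct}t^{1-\tau}\bigr)T_r^{\tau}$ with $\tau=0.1$. A dyadic decomposition in $t$ would serve equally well.
\item You need a strict margin, not just monotonicity of $a_k$: $a_k(n/L^2)^{\tau(1/k-1/r)}\le a_r$ for $10<k\le r$. This reduces to $\bigl(1+\frac{1}{k-1}\bigr)\bigl(1+\frac{1}{r-1}\bigr)\le(1-\tau)(\gamma^{-1}-2)$ with $\gamma=\log_nL\le\frac14$.
\end{enumerate}
With these, your fine sum is only needed for $k\le 10$.
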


An asymptotically optimal choice of $r$ in Proposition~\ref{spectral distribution prop} is given by $r = \floor{\frac{\log(n/T)}{\log(n/L^2)}}$. However, we shall prove Proposition~\ref{spectral distribution prop} for general values of $r\geq 2$. 

The rest of this section is dedicated to the proof of Proposition~\ref{spectral distribution prop}. To this end, we fix a positive integer $r$, positive real numbers $T, L$, and an $n$-vertex graph $G$ satisfying the assumptions of Proposition~\ref{spectral distribution prop}. Since
\begin{equation*}
    \sum_{\lambda_i < 0}\lambda_i^2\leq n|\lambda_n|^2\leq nL^2\ll n^{3/2}L^2,  
\end{equation*}
it suffices to show that
\begin{equation} \label{spectral dist ineq form 1}
    \sum_{0\leq \lambda_i\leq T}\lambda_i^2\ll nL^{\frac{1}{r}}T + n^{3/2}L^2 + n^{2 - \frac{1}{r}}L^{\frac{2}{r} + \frac{1}{r - 1}}. 
\end{equation}

\begin{lem} \label{almost convexity lem}
    We have
    \begin{equation*}
        n^{3/2}L^2 + n^{2 - \frac{1}{r}}L^{\frac{2}{r} + \frac{1}{r - 1}}\asymp\max_{k\in \mathbb{Z}, 2\leq k\leq r}n^{2 - \frac{1}{k}}L^{\frac{2}{k} + \frac{1}{k - 1}}. 
    \end{equation*}
\end{lem}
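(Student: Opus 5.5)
Write $f(k) = n^{2 - \frac{1}{k}}L^{\frac{2}{k} + \frac{1}{k - 1}}$ for real $k\geq 2$. Both sides are sums or maxima of finitely many positive terms, so the plan is to prove the two inequalities separately, up to absolute constants. For the direction ``$\ll$'' I note that $f(2) = n^{3/2}L^{1 + 2} = n^{3/2}L^3$. This is not $n^{3/2}L^2$, so I first compare: $f(2)\geq n^{3/2}L^2$ since $L > 1$. Also $f(r)$ is itself one of the terms in the maximum. Hence the left-hand side is at most $f(2) + f(r)\leq 2\max_{2\leq k\leq r}f(k)$, which is the easy direction.

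The substantive direction is to show that every $f(k)$ with $2\leq k\leq r$ is $\ll n^{3/2}L^2 + f(r)$. Taking logarithms, set $x = \log L$ and $y = \log n$, so that $y\geq 4x$ by the standing assumption $n\geq L^4$. Then
\begin{equation*}
    \log f(k) = \left(2 - \tfrac{1}{k}\right)y + \left(\tfrac{2}{k} + \tfrac{1}{k - 1}\right)x.
\end{equation*}
The natural approach is to show that this is ``almost convex'' in $k$, so that its maximum over the integer range $[2, r]$ is attained near an endpoint. Differentiating $g(k) = -y/k + x(2/k + 1/(k - 1))$ gives
\begin{equation*}
    g'(k) = \frac{y - 2x}{k^2} - \frac{x}{(k - 1)^2}.
\end{equation*}
This changes sign at most once, and it is positive once $(k - 1)^2/k^2 > x/(y - 2x)$. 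Since $y\geq 4x$ gives $x/(y - 2x)\leq 1/2$, the derivative is positive for all $k\geq 4$, because there $(k-1)^2/k^2\geq 9/16 > 1/2$. So $g$ is increasing on $[4, r]$, and $\max_{4\leq k\leq r}f(k) = f(r)$ whenever $r\geq 4$.

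It then remains to handle $k\in\{2, 3\}$ (and small $r$). Here the endpoint term $n^{3/2}L^2$ must absorb them, and this is the main obstacle, because $f(2) = n^{3/2}L^3$ genuinely exceeds $n^{3/2}L^2$. I would first double-check whether the intended term at $k = 2$ matches; if the lemma is stated as written, I would compare $f(2)$ and $f(3) = n^{5/3}L^{7/6}$ against $f(r)$ and $n^{3/2}L^2$ directly using $n\geq L^4$. For instance, $f(3)/f(4) = n^{-1/12}L^{1/2 + 1/2 - 1/3 - 1/2} = n^{-1/12}L^{1/6}\leq 1$ since $n\geq L^2$. For $k = 2$, $f(2)/f(3) = n^{-1/6}L^{3 - 7/6} = n^{-1/6}L^{11/6}$, which is not bounded by $1$ under $n\geq L^4$ alone. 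So I expect the case $k = 2$ to require either reading the term $n^{3/2}L^2$ as dominating (perhaps via an intended exponent) or a case split according to whether $r = 2$. If $r = 2$, the right-hand side is just $f(2)$ and the left-hand side contains $f(2)$ itself, so the two sides agree trivially. For $r\geq 3$, I would try to show $f(2)\ll f(r)$ plus the $n^{3/2}L^2$ term, and if that fails, I would flag the discrepancy in the exponents. The convexity computation on $k\geq 4$ together with the direct comparison at $k = 3$ is the core of the argument.
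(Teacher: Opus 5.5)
Your proposal has a gap, and it comes from an arithmetic slip at $k=2$. There $\frac{2}{k}+\frac{1}{k-1}=1+1=2$, so $f(2)=n^{3/2}L^{2}$. This is \emph{exactly} the first term on the left-hand side, not $n^{3/2}L^{3}$. Because of this slip you leave the case $k=2$ unproved and suggest the lemma may be misstated, so the argument as written is incomplete. The ``main obstacle'' you describe does not exist.

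There is a second slip in $f(3)/f(4)$. The exponent of $L$ is $\frac23+\frac12-\frac12-\frac13=\frac13$, so $f(3)/f(4)=n^{-1/12}L^{1/3}$. This is at most $1$ precisely because $n\geq L^4$, not merely $n\geq L^2$. The comparison also presupposes $r\geq 4$; when $r=3$ the term $f(3)$ is $f(r)$ itself.

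Once $f(2)$ is corrected, you do not need the case analysis at all. You already observed that $g'(k)=\frac{y-2x}{k^2}-\frac{x}{(k-1)^2}$ changes sign at most once. Since $k^2g'(k)=(y-2x)-x\left(\frac{k}{k-1}\right)^2$ is increasing in $k$, that change can only be from negative to positive. So $g$ is decreasing and then increasing on $[2,r]$, and its maximum there is attained at $k=2$ or $k=r$.

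This gives $\max_{2\le k\le r}f(k)=\max(f(2),f(r))\asymp n^{3/2}L^2+f(r)$ directly. It holds without invoking $n\geq L^4$, and it is exactly the paper's argument. With $\gamma=\log_nL$, the paper shows $x^2f_1'(x)=1-2\gamma-\gamma\left(1+\frac{1}{x-1}\right)^2$ is increasing, and uses the mean value theorem to exclude a maximum in the interior of $[2,r]$.
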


\begin{proof}
    It suffices to show that 
    \begin{equation} \label{almost convexity ineq 1}
        n^{2 - \frac{1}{k}}L^{\frac{2}{k} + \frac{1}{k - 1}}\leq \max(n^{3/2}L^2, n^{2 - \frac{1}{r}}L^{\frac{2}{r} + \frac{1}{r - 1}})
    \end{equation}
    for any $k\in [2, r]$. Let $\gamma = \log_nL$. To show (\ref{almost convexity ineq 1}), it suffices to show that the smooth function
    \begin{equation*}
        x\rightarrow f_1(x) = - \frac{1}{x} + \gamma\left(\frac{2}{x} + \frac{1}{x - 1}\right)
    \end{equation*}
    attains its maximum over $[2, r]$ at an endpoint. If not, let $f(x) > \max(f(2), f(r))$ for some $x_1\in (2, r)$. Therefore, by the mean value theorem, there exist $x_2\in (2, x_1)$ and $x_3\in (x_1, r)$ such that $f_1'(x_2) > 0 > f_1'(x_3)$. In particular, $x_2^2f_1'(x_2) > 0 > x_3^2f_1'(x_3)$, which contradicts the fact that $x^2f_1'(x) = 1 - 2\gamma - \gamma\left(1 + \frac{1}{x - 1}\right)^2$ is an increasing function of $x$ over $[2, \infty)$. 
\end{proof}

Lemma~\ref{almost convexity lem} and~(\ref{Fubini}) together reduce~(\ref{spectral dist ineq form 1}) to showing that
\begin{equation} \label{spectral dist ineq form 2}
    \int_0^TS_tdt\ll nL^{\frac{1}{r}}T + \max_{k\in \mathbb{Z}, 2\leq k\leq r}n^{2 - \frac{1}{k}}L^{\frac{2}{k} + \frac{1}{k - 1}}. 
\end{equation}

We now employ the results of Section~\ref{key sec} to estimate $S_t$ on the left-hand side of~(\ref{spectral dist ineq form 2}). By Lemma~\ref{triv S_T bound}, for any $t > 0$, we have $S_t\leq n|\lambda_n|\leq nL$. In particular, $S_{L^2}\leq n|\lambda_n|\leq nL$. Applying Proposition~\ref{key} with $T' = L^2$, it follows that for any positive integer $k$, we have
\begin{equation*}
    S_{T_k'}\leq \Clem{key}n(L/\Clem{key})^{1/k}\ll nL^{1/k}, 
\end{equation*}
where $T_k' := \Clem{key}n (L^2/\Clem{key}n)^{1/k}$. Taking $T_k := n^{1 - 1/k}L^{2/k}\geq T_k'/\Clem{key}$, we have
\begin{equation*}
    S_{\Clem{key}T_k}\leq S_{T_k'}\ll nL^{1/k}. 
\end{equation*}
Note that $T_k$ is an increasing sequence of positive real numbers. 

For the sake of technical convenience, we shall estimate $\int_0^T S_{\Clem{key}t}dt$ instead of $\int_0^T S_tdt$. This suffices as
\begin{equation*}
    \int_0^T S_tdt\leq \int_0^{\Clem{key}T} S_tdt = \Clem{key} \int_0^T S_{\Clem{key}t}dt. 
\end{equation*}
In other words, to show Proposition~\ref{spectral distribution prop}, it suffices to show that
\begin{equation} \label{spectral dist ineq form 3}
    \int_0^TS_{\Clem{key}t}dt\ll nL^{\frac{1}{r}}T + \max_{k\in \mathbb{Z}, 2\leq k\leq r}n^{2 - \frac{1}{k}}L^{\frac{2}{k} + \frac{1}{k - 1}}. 
\end{equation}
Note that
\begin{equation*}
    \int_0^{T_2}S_{\Clem{key}t}dt\leq T_2 nL = n^{3/2}L^2\leq \max_{k\in \mathbb{Z}, 2\leq k\leq r}n^{2 - \frac{1}{k}}L^{\frac{2}{k} + \frac{1}{k - 1}}, 
\end{equation*}
and, if $T > T_r$, 
\begin{equation*}
    \int_{T_r}^{T}S_{\Clem{key}t}dt\leq (T - T_r)S_{\Clem{key}T_r}\ll nL^{1/r}T. 
\end{equation*}
Therefore, (\ref{spectral dist ineq form 3}) further reduces to showing that
\begin{equation} \label{spectral dist ineq form 4}
    \int_{T_2}^{T_r}S_{\Clem{key}t}dt\ll \max_{k\in \mathbb{Z}, 2\leq k\leq r}n^{2 - \frac{1}{k}}L^{\frac{2}{k} + \frac{1}{k - 1}}. 
\end{equation}

For any positive integer $r_0\geq 2$, we note that
\begin{equation} \label{small r equation}
    \int_{T_2}^{T_{r_0}}S_{\Clem{key}t}dt = \sum_{k = 3}^{r_0} \int_{T_{k - 1}}^{T_k}S_{\Clem{key}t}dt\ll \sum_{k = 3}^{r_0} T_kS_{\Clem{key}T_{k - 1}} = \sum_{k = 3}^{r_0} n^{2 - \frac{1}{k}}L^{\frac{2}{k} + \frac{1}{k - 1}}\leq r_0\max_{k\in \mathbb{Z}, 2\leq k\leq r}n^{2 - \frac{1}{k}}L^{\frac{2}{k} + \frac{1}{k - 1}}. 
\end{equation}
We fix $r_0 = 10$. Note that~(\ref{spectral dist ineq form 4}) follows from~(\ref{small r equation}) if $r\leq r_0$. Therefore, we may assume that $r > r_0$. In this case, (\ref{small r equation}) reduces~(\ref{spectral dist ineq form 4}) to showing that
\begin{equation} \label{spectral dist ineq form 5}
    \int_{T_{r_0}}^{T_r}S_{\Clem{key}t}dt\ll \max_{k\in \mathbb{Z}, 2\leq k\leq r}n^{2 - \frac{1}{k}}L^{\frac{2}{k} + \frac{1}{k - 1}}. 
\end{equation}
We shall in fact show that
\begin{equation} \label{spectral dist ineq form 6}
    \int_{T_{r_0}}^{T_r}S_{\Clem{key}t}dt\ll n^{2 - \frac{1}{r}}L^{\frac{2}{r} + \frac{1}{r - 1}}. 
\end{equation}

We now estimate the left-hand side of~(\ref{spectral dist ineq form 6}). Fixing $\tau = 0.1$, we have
\begin{align*}
    \int_{T_{r_0}}^{T_r}S_{\Clem{key}t}dt &\leq \left(\sup_{t\in (T_{r_0}, T_r)}S_{\Clem{key}t}t^{1 - \tau}\right) \cdot \int_{T_{r_0}}^{T_r} t^{\tau - 1}dt\ll \sup_{t\in (T_{r_0}, T_r)}S_{\Clem{key}t}t^{1 - \tau}T_r^{\tau}\\
    &= 
    \max_{r_0 < k\leq r}\sup_{t\in (T_{k - 1}, T_k)}S_{\Clem{key}t}t^{1 - \tau}T_r^{\tau}\leq \max_{r_0 < k\leq r}S_{\Clem{key}T_{k - 1}}T_k^{1 - \tau}T_r^{\tau}\\
    &\ll \max_{r_0 < k\leq r}n^{2 - \frac{1}{k}}L^{\frac{2}{k} + \frac{1}{k - 1}}(n/L^2)^{\frac{\tau}{k} - \frac{\tau}{r}}. 
\end{align*}
Therefore, to show~(\ref{spectral dist ineq form 6}), it suffices to show that the maximum above is attained when $k = r$. This is the content of the following lemma, which concludes the proof of Proposition~\ref{spectral distribution prop}. 

\begin{lem}
    For any positive integer $k\in (r_0, r]$, we have
    \begin{equation*}
        n^{2 - \frac{1}{k}}L^{\frac{2}{k} + \frac{1}{k - 1}}(n/L^2)^{\frac{\tau}{k} - \frac{\tau}{r}}\leq n^{2 - \frac{1}{r}}L^{\frac{2}{r} + \frac{1}{r - 1}}. 
    \end{equation*}
\end{lem}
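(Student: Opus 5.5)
We take logarithms base $n$ and reduce the claim to an elementary inequality between exponents. Set $\gamma = \log_n L$. The hypothesis $n\geq L^4$ of Proposition~\ref{spectral distribution prop} gives $0<\gamma\leq 1/4$, and we have $n/L^2 = n^{1-2\gamma}$. Dividing both sides of the desired inequality by the appropriate power of $n$, it suffices to show that
\begin{equation*}
    g(k) := -\frac{1}{k} + \gamma\left(\frac{2}{k} + \frac{1}{k-1}\right) + \tau(1-2\gamma)\left(\frac{1}{k} - \frac{1}{r}\right)
\end{equation*}
satisfies $g(k)\leq g(r)$ for every integer $k\in(r_0, r]$. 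Here $r_0 = 10$ and $\tau = 0.1$.

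The plan is to write $a := \frac{1}{k}-\frac{1}{r}\geq 0$ and express $g(k)-g(r)$ as $a$ times a bracket. The only term not already a multiple of $a$ is the $\frac{1}{k-1}$ term. For it we use
\begin{equation*}
    \frac{1}{k-1}-\frac{1}{r-1} = \frac{r-k}{(k-1)(r-1)} = a\cdot \frac{k}{k-1}\cdot\frac{r}{r-1}.
\end{equation*}
Since $r\geq k\geq 11$, this is at most $a\cdot (11/10)^2 = 1.21a$. Collecting terms gives
\begin{equation*}
    g(k)-g(r) \leq a\left(-1 + 2\gamma + 1.21\gamma + \tau(1-2\gamma)\right)\leq a\left(-1 + 3.21\gamma + 0.1\right).
\end{equation*}
We dropped the $-2\gamma\tau$ term, which is harmless because $\gamma\geq 0$.

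Using $\gamma\leq 1/4$, the bracket is at most $-1 + 0.8025 + 0.1 <0$. Since $a\geq 0$, we get $g(k)\leq g(r)$, as required. The case $k=r$ is trivial because $a=0$.

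The only delicate point is the sign of the bracket. This is exactly where the restriction $k>r_0=10$ is used, to make the ratio $\frac{kr}{(k-1)(r-1)}$ close to $1$. The smallness of $\tau$ and the hypothesis $n\geq L^4$ are also needed there. No further obstacle is expected.
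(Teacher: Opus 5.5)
Your proof is correct and takes essentially the same route as the paper. Both take logarithms base $n$ with $\gamma=\log_n L\in(0,1/4]$, factor out $\frac1k-\frac1r$ via $\frac{1}{k-1}-\frac{1}{r-1}=\frac{kr}{(k-1)(r-1)}\bigl(\frac1k-\frac1r\bigr)$, and use $k,r\geq 11$ to get $\frac{kr}{(k-1)(r-1)}\leq 1.21$; the paper phrases the final check as $1.21\leq(1-\tau)(\gamma^{-1}-2)$, and your factoring by $a\geq 0$ covers $k=r$ without the paper's implicit division by $r-k$.
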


\begin{proof}
    Let $\gamma = \log_n L\in (0, 1/4]$. The asserted inequality is equivalent to 
    \begin{equation*}
        \left(\frac{1}{k - 1} - \frac{1}{r - 1}\right)\leq (1 - \tau)(\gamma^{-1} - 2)\left(\frac{1}{k} - \frac{1}{r}\right), 
    \end{equation*}
    which may further be written as
    \begin{equation*}
        \frac{1}{(k - 1)(r - 1)}\leq (1 - \tau)(\gamma^{-1} - 2)\frac{1}{kr}. 
    \end{equation*}
    In other words, we seek to show that
    \begin{equation*}
        \left(1 + \frac{1}{k - 1}\right)\left(1 + \frac{1}{r - 1}\right)\leq (1 - \tau)(\gamma^{-1} - 2). 
    \end{equation*}
    This follows since $r - 1\geq k - 1\geq r_0 = 10$. 
\end{proof}

\section{Results for dense graphs} \label{dense graph sec}

We first give a proof of Theorem~\ref{1/4 thm}, which we restate in the following equivalent form. 

\begin{custhm}{Theorem}{\ref{1/4 thm}'}
    For any $\epsilon > 0$, there exists $c = \clem{1/4 thm}(\epsilon) > 0$ such that the following holds. Let an $n$-vertex graph $G$ be $\epsilon$-far from any disjoint union of cliques, then the least eigenvalue $\lambda_n$ of $G$ satisfies
    \begin{equation*}
        |\lambda_n|\geq cn^{1/4}. 
    \end{equation*}
\end{custhm}

\begin{lem} \label{dense lem}
    For any $\delta > 0$, there exists $c = \clem{dense lem}(\delta) > 0$ such that the following holds. Let $G$ be an $n$-vertex graph $G$ with least eigenvalue $\lambda_n$. If $|\lambda_n|\leq cn^{1/4}$, then 
    \begin{equation*}
        \sum_{\lambda_i\leq cn}\lambda_i^2\leq \delta n^2. 
    \end{equation*}
\end{lem}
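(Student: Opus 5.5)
We plan to apply Proposition~\ref{spectral distribution prop} with a fixed $r$ (depending on $\delta$), $L = cn^{1/4}$ and $T = cn$, after dealing with degenerate cases. First, if $|\lambda_n| < 1$ then $G$ has no edges (any edge yields $\lambda_n\leq -1$), so $\sum\lambda_i^2 = 2e(G) = 0$ and there is nothing to prove. We may therefore assume $L := cn^{1/4}\geq |\lambda_n|\geq 1$. The small-$n$ case is also trivial: if $n$ is bounded in terms of $\delta$, then choosing $c$ small forces $cn^{1/4}<1$, so the graph is empty. (If $L$ is exactly $1$, enlarge it slightly; this only changes constants.) Hence the hypotheses $L>1$ and $n\geq L^4$ hold once $c\leq 1$. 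The condition $T\geq n^{1/2}L$ reads $cn\geq c n^{3/4}$, which holds for $n\geq 1$.

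Proposition~\ref{spectral distribution prop} then gives
\begin{equation*}
\sum_{\lambda_i\leq cn}\lambda_i^2\leq \Clem{spectral distribution prop}\left(nL^{1/r}\cdot cn + n^{3/2}L^2 + n^{2-\frac1r}L^{\frac2r+\frac1{r-1}}\right).
\end{equation*}
We bound the three terms in turn.
\begin{itemize}
\item The second term is $c^2n^2$.
\item The first term is $c^{1+1/r}n^{2+\frac{1}{4r}}$. This carries a bad power $n^{1/(4r)}$, and it is the main obstacle: with $T=cn$ fixed, $L^{1/r}$ grows with $n$.
\item For the third term, the exponent of $n$ is $2-\frac1r+\frac14\left(\frac2r+\frac1{r-1}\right) = 2-\frac{1}{2r}+\frac{1}{4(r-1)}$, which is at most $2$ for $r\geq 2$ (equality at $r=2$). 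Its $c$-factor is $c^{2/r+1/(r-1)}$, so it is at most $c^{2/r}n^2$.
\end{itemize}

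To handle the first term we let $r$ depend on $n$, as the remark after Proposition~\ref{spectral distribution prop} suggests. Take $r$ with $L^{1/r}$ bounded, namely $r\approx \log L\asymp \log n$. Then the first term is $\ll c\,n^2$. The third term becomes roughly $n^2\cdot n^{-1/(4r)}\cdot L^{2/r+1/(r-1)}$, and with $r\asymp\log n$ each factor $n^{O(1/r)}$ is $O(1)$. The first term is then fine, but the third term is only $O(n^2)$, with no smallness.

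To gain smallness in the third term, we exploit the $c$-dependence more carefully. Choose $r$ as the largest integer with $L^{1/r}\geq c^{-1/2}$, say; then $cL^{1/r}\leq c^{1/2}\cdot O(1)$ roughly. The third term equals $n^{2}\,(L^4/n)^{1/(4r)}\,L^{\frac{1}{r-1}-\frac{1}{2r}}\cdot L^{0}$. Using $L^4/n=c^4$, this is $n^2c^{1/r}L^{\frac{r+1}{2r(r-1)}}$, and $L^{O(1/r^2)}$ is $c^{-O(1/r)}$-bounded in this regime. So the third term is about $n^2 c^{\Theta(1/r)}$, which is small only if $r$ stays bounded. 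That conflicts with the first term unless $L^{1/r}$ is comparable to $c^{-\Theta(1)}$, and this is exactly what the choice of $r$ arranges.

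The exponent bookkeeping here is where I expect difficulty. If it fails, the fallback is to revert to the integral bound~(\ref{spectral dist ineq form 2}) directly. In that approach, one splits $\int_0^{cn}S_t\,dt$ at the scales $T_k=n^{1-1/k}L^{2/k}$ and uses $S_{\Clem{key}T_k}\ll nL^{1/k}$ together with $L^4=c^4n$. The point is that $T_k L^{1/(k-1)} = n\cdot(L^4/n)^{\cdots}$ carries powers of $c$ on every dyadic piece. One would sum the resulting geometric-type series to obtain $O(c^{\Omega(1)})n^2$, and then choose $c$ so that this is at most $\delta n^2$.
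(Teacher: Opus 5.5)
There is a genuine gap at exactly the step you flag. The proposal never shows that, for a single choice of $r$, the three terms from Proposition~\ref{spectral distribution prop} are at most $\delta n^2$. Moreover, the computation you use to test your choice is wrong. Substituting $n=c^{-4}L^4$ gives
\[
n^{2-\frac1r}L^{\frac2r+\frac1{r-1}} = n^2\,c^{4/r}\,L^{-\frac{r-2}{r(r-1)}},
\]
which is a \emph{negative} power of $L$ for $r\ge 3$, not $n^2c^{1/r}L^{\frac{r+1}{2r(r-1)}}$. In terms of $n$, the exponent $2-\frac1{2r}+\frac1{4(r-1)}$ is at most $2-\frac1{5r}$ for $r\ge 6$, and this is the saving the paper uses. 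Taken at face value, your formula with $L^{1/r}\approx c^{-1/2}$ gives a third term of order $c^{-1/4}n^2$. So ``this is exactly what the choice of $r$ arranges'' is unsupported.

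Your dismissal of $r\asymp\log n$ is also too quick, since it only applies when $L^{1/r}$ is bounded by an absolute constant. The first term is $cL^{1/r}n^2$, with a factor $c$ that does not depend on $r$. So you may let $n^{1/r}$ be a large constant depending on $\delta$. Then the saving $n^{-1/(5r)}$ in the third term is as small as you like, and the cost $n^{1/(4r)}$ in the first term is absorbed by shrinking $c$. This is the paper's proof. Writing $C$ for the constant of Proposition~\ref{spectral distribution prop}, the choice $r\approx\log n/(5\log(2C/\delta))$ gives $Cn^{2-1/(5r)}\le\frac12\delta n^2$ and $n^{1/(4r)}\le(2C/\delta)^2$. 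Taking $c\le\delta^3C^{-3}/16$ then finishes.

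With the corrected identity, your own choice of $r$ also closes. If $r\ge3$ is maximal with $L^{1/r}\ge c^{-1/2}$, then $cL^{1/r}\le c^{1/3}$ and $c^{4/r}L^{-\frac{r-2}{r(r-1)}}\le c^{4/r+\frac{r-2}{2(r-1)}}\le c^{1/4}$. If no such $r\ge 3$ exists, i.e.\ $n<c^{-10}$, take $r=2$. Then the first term is $c^{3/2}n^{1/8}\cdot n^2\le c^{1/4}n^2$ and the third is $c^2n^2$, so everything is $O(c^{1/4})n^2$.

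Your fallback is not an independent route. Suppose you bound each piece $\int_{T_{k-1}}^{T_k}S_{Ct}\,dt$ by $T_kS_{CT_{k-1}}$, as in~(\ref{small r equation}). Then for every $k$ in the top half of the $\asymp\log n/\log(1/c)$ scales, the piece is at least roughly $c\,n^2$. This is not a geometric series, and it does not sum to $O(c^{\Omega(1)})n^2$ uniformly in $n$. Making it work requires the $t^{1-\tau}$-weighting from the proof of Proposition~\ref{spectral distribution prop}. That amounts to re-proving the proposition, after which you still face the same choice of $r$.
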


\begin{proof}
    Without loss of generality, we assume that $\delta < 0.1$. We may assume also that $n$ is sufficiently large with respect to $\delta$. Otherwise, we may take $c$ to be sufficiently small such that
    \begin{equation*}
        \sum_{\lambda_i\leq cn}\lambda_i^2\leq c^2n^3\leq \delta n^2. 
    \end{equation*}
    We take $c\in (0, 1)$ to be fixed later. Applying Proposition~\ref{spectral distribution prop} with $T = cn$ and $L = cn^{1/4}$, we have, for any positive integer $r\geq 2$, 
    \begin{equation*}
        \sum_{\lambda_i\leq cn}\lambda_i^2\leq \Clem{spectral distribution prop}\left(c^{1 + \frac{1}{r}}n^{2 + \frac{1}{4r}} + c^2n^2 + c^{\frac{2}{r} + \frac{1}{r - 1}}n^{2 - \frac{1}{2r} + \frac{1}{4(r - 1)}}\right)\leq 2c\Clem{spectral distribution prop}n^{2 + \frac{1}{4r}} + \Clem{spectral distribution prop}n^{2 - \frac{1}{2r} + \frac{1}{4(r - 1)}}. 
    \end{equation*}
    When $r\geq 10$, we have
    \begin{equation*}
        \sum_{\lambda_i\leq cn}\lambda_i^2\leq 2c\Clem{spectral distribution prop}n^{2 + \frac{1}{4r}} + \Clem{spectral distribution prop}n^{2 - \frac{1}{5r}}. 
    \end{equation*}
    Take $r = \floor{\frac{\log n}{\log (2\Clem{spectral distribution prop}/\delta)}}/5$. Since $n$ is sufficiently large with respect to $\delta$, we may assume that $r\geq 10$ and $r\geq \frac{\log n}{8\log (2\Clem{spectral distribution prop}/\delta)}$. Therefore, 
    \begin{equation*}
        \sum_{\lambda_i\leq cn}\lambda_i^2\leq 2c\Clem{spectral distribution prop}n^{2 + \frac{1}{4r}} + \Clem{spectral distribution prop}n^{2 - \frac{1}{5r}}\leq 2c\Clem{spectral distribution prop}\left(2\Clem{spectral distribution prop}/\delta\right)^2n^2 + \frac{1}{2}\delta n^2. 
    \end{equation*}
    Taking $c\leq \delta^3\Clem{spectral distribution prop}^{-3}/16$ yields the desired inequality. 
\end{proof}

To establish Theorem~\ref{1/4 thm}' from Lemma~\ref{dense lem}, we employ the following result, which is a direct consequence of Lemmas~8.2 and 8.3 of~\cite{graph}, and the induced graph removal lemma of Alon, Fischer, Krivelevich, and Szegedy~\cite{inducedrem}. For the sake of completeness, we sketch the proof below (see~\cite{graph}, Section 8 for a definition of a $\delta$-regular partition into $K$ parts). 

\begin{prop} \label{dense regpartition reduction prop}
    For any $\epsilon > 0$, there exists $\delta = \conlem{\delta}{dense regpartition reduction prop}(\epsilon) > 0$ such that the following holds. Let $G$ be an $n$-vertex graph with least eigenvalue $\lambda_n$. If $G$ is $\epsilon$-far from any disjoint union of cliques, and for some $c > 0$, 
    \begin{equation*}
        \sum_{\lambda_i\leq cn}\lambda_i^2\leq \delta n^2, 
    \end{equation*}
    then $|\lambda_n|\gg_{c, \epsilon}n$. 
\end{prop}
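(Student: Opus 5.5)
We plan to pass to a low-rank approximation of $A$ and then to a regularity-type partition. Write $A = A_{\geq cn} + R$, where $R = \sum_{\lambda_i < cn}\lambda_iv_iv_i^T$. The hypothesis says precisely that $\|R\|_F^2 = \sum_{\lambda_i\le cn}\lambda_i^2\le \delta n^2$. Moreover $A_{\geq cn}$ has rank at most $N_{cn}\le \sum\lambda_i^2/(cn)^2\le 1/c^2$, since $\sum\lambda_i^2 = 2e(G)\le n^2$. Hence $A$ is $\sqrt{\delta}n$-close in Frobenius norm to a positive-semidefinite matrix $B := A_{\geq cn}$ of rank at most $c^{-2}$.

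Following Lemmas~8.2 and 8.3 of~\cite{graph}, we will use this low-rank structure to build a partition of $V(G)$ into $K = K(c,\delta)$ parts that is $\delta'$-regular in the relevant sense. To do this, we round the eigenvector coordinates of the (at most $c^{-2}$) top eigenvectors into boxes. Vertices in a common box have nearly equal rows of $B$, so the density pattern between parts is approximated by a $K\times K$ matrix $D$ with $D\approx$ the averaged entries of $B$. The error in the part densities is controlled by $\|R\|_F^2\le\delta n^2$ via Cauchy--Schwarz, up to a small exceptional set of parts.

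The key structural step is as follows. Because $B\succeq 0$, the reduced density matrix is approximately positive semidefinite, while being approximately $0/1$-valued off tiny error. We then show that if $G$ is $\epsilon$-far from unions of cliques, the induced removal lemma~\cite{inducedrem} produces $\Omega_\epsilon(n^3)$ induced copies of the path $P_3$ (cherries with a non-edge). Equivalently, we find three parts $X,Y,Z$ of linear size with densities $d(X,Y),d(Y,Z)\ge 1-\eta$ and $d(X,Z)\le\eta$, together with near-complete internal densities where relevant. A $3\times3$ principal block of the form $\begin{pmatrix}1&1&0\\1&1&1\\0&1&1\end{pmatrix}$ (appropriately weighted) has a negative eigenvalue $1-\sqrt2$. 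Testing $A$ against the vector $x$ that is constant on $X,Y,Z$ with these eigenvector weights then gives
\[
x^TAx\le -\Omega(n)\|x\|^2 + \text{(error from irregularity and }R),
\]
and the errors are $o(n)\|x\|^2$ once $\delta=\delta(\epsilon)$ is small and $K$ is taken large. This yields $|\lambda_n|\gg_{c,\epsilon} n$.

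The main obstacle is making the last step quantitatively consistent. The regularity and rounding error must be smaller than the gap $\Omega_\epsilon(1)$ produced by the induced $P_3$ configuration, but the number of parts depends on $c$. One cannot simply take $\delta$ small relative to $c$, because $\delta$ must depend only on $\epsilon$. The order of quantifiers therefore has to be arranged carefully. First choose $\delta(\epsilon)$ so that the Frobenius error $\delta n^2$ cannot destroy the $\Omega_\epsilon(n^3)$ induced cherries guaranteed by induced removal; this is where the removal lemma replaces a regularity count. Only afterwards refine according to $c$, which affects just the implied constant in $|\lambda_n|\gg_{c,\epsilon}n$.
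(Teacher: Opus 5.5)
Your outline matches the paper's proof. Induced removal gives $\Omega_\epsilon(n^3)$ induced cherries. The rank-$\le c^{-2}$ matrix $A_{\geq cn}$, with $\|A-A_{\geq cn}\|_F^2\le\delta n^2$, feeds Lemma~8.2 of~\cite{graph} to give a homogeneous partition into $K=K(c,\epsilon)$ parts, with $\delta$ depending only on $\epsilon$ (exactly the quantifier order you describe). A surviving cherry then locates parts of size $\Omega(n/K)$.

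The only difference is the last step. The paper invokes Lemma~8.3 of~\cite{graph} to get $\surp(G)\gg K^{-2}n^2$ and then uses $|\lambda_n|\ge\surp(G)/n$, whereas you test the Rayleigh quotient directly with weights $(1,-\sqrt2,1)$. Your test needs no information about internal densities: they enter with nonnegative coefficients and are at most $1$, so the all-ones-diagonal block is the worst case and the "near-complete internal densities" claim can be dropped. You should also cover the case where both ends of the cherry fall in the same part, for example by splitting that part in two.
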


\begin{proof}
    Note that a disjoint union of cliques is precisely an induced-$P_2$-free graph, where $P_2$ denotes the path of two edges. Since $G$ is $\epsilon$-far from disjoint unions of cliques, by the induced graph removal lemma, there exists $\eta = \eta(\epsilon) > 0$ such that there are at least $\eta n^3$ induced copies of $P_2$ in $G$. Without loss of generality, we assume that $\eta < 0.1$. 

    Let $v_1, \dots, v_n$ be an orthonormal set of eigenvectors corresponding to the eigenvalues $\lambda_1\geq \dots\geq \lambda_n$. Let $B = \sum_{\lambda_i > cn}\lambda_i v_iv_i^T$. The rank of $B$ is bounded above by
    \begin{equation*}
        |\{i\mid\lambda_i > cn\}|\leq \frac{\sum_{i = 1}^n\lambda_i^2}{(cn)^2}\leq c^{-2}, 
    \end{equation*}
    and $\|A - B\|_F^2 = \sum_{\lambda_i\leq cn}\lambda_i^2\leq \delta n^2$, where $A$ is the adjacency matrix of $G$. By Lemma~8.2 of~\cite{graph}, if $\delta$ is sufficiently small with respect to $\eta$, then $G$ admits an $\eta/10$-regular partition into $K$ parts, where $\eta^{-1}\leq K\ll_{c, \eta}1$. Since there are at least $\eta n^3$ induced copies of $P_2$ in $G$, there must be an induced copy of $P_2$ in the auxiliary graph corresponding to the regular partition. By Lemma~8.3 of~\cite{graph}, this implies that $\surp(G)\gg K^{-2}n^2$. Since $\eta = \eta(\epsilon)$, we have $K\ll_{c, \epsilon} 1$. Hence $\surp(G)\gg_{c, \epsilon} n^2$. Since $|\lambda_n|\geq \surp(G)/n$, the desired assertion follows. 
\end{proof}

We are now ready to prove Theorem~\ref{1/4 thm}'. 

\begin{proof}[Proof of Theorem~\ref{1/4 thm}']
    We seek to show that $|\lambda_n|\gg_\epsilon n^{1/4}$. Set $\delta = \conlem{\delta}{dense regpartition reduction prop}(\epsilon) > 0$. Take $c_1 = \clem{dense lem}(\delta) > 0$. We may assume that $|\lambda_n|\leq c_1n^{1/4}$, otherwise the desired claim holds. By Lemma~\ref{dense lem}, 
    \begin{equation*}
        \sum_{\lambda_i\leq c_1n}\lambda_i^2\leq \delta n^2. 
    \end{equation*}
    Hence, by Proposition~\ref{dense regpartition reduction prop}, there exists $c_2 = c_2(\epsilon) > 0$ such that $|\lambda_n| \geq c_2n\geq c_2n^{1/4}$. 
\end{proof}

We note that sufficient proximity to a disjoint union of cliques in particular implies the existence of a rather dense induced subgraph. 

\begin{lem} \label{close to union of cliques implies rather dense subgraph}
    For any $p, q\in (0, 1)$, there exists $\epsilon = \conlem{\epsilon}{close to union of cliques implies rather dense subgraph}(p, q) > 0$ such that the following holds. Let $G$ be an $n$-vertex graph with density at least $p$ which is $\epsilon$-close to a disjoint union of cliques. Then $G$ has an induced subgraph on $pn/2$ vertices with density at least $q$. 
\end{lem}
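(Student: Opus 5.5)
Let $H$ be a disjoint union of cliques $Q_1,\dots,Q_s$ on $V(G)$ with $|E(G)\triangle E(H)|\leq \epsilon n^2$, with $|Q_1|\geq\dots\geq|Q_s|$. The plan is to show that $H$ has a clique of order at least roughly $pn$, and then to take an appropriately sized vertex subset inside it. On such a set, $G$ differs from $H$ in few pairs, so $G$ is nearly complete there.

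\textbf{Step 1: the cliques of $H$ have large $\ell_2$ mass.} Since $e(G)\geq p\binom{n}{2}$, we have
\begin{equation*}
    \sum_j \binom{|Q_j|}{2} = e(H)\geq p\binom{n}{2} - \epsilon n^2\geq (p/2 - \epsilon)n^2 - O(n).
\end{equation*}
We may assume $n$ is large: if $pn/2$ is below a constant, a single edge (which exists since $p>0$) together with the convention that small vertex sets are handled trivially suffices, or we can shrink $\epsilon$ below $1/n^2$ so that $G=H$. Then
\begin{equation*}
    \sum_j|Q_j|^2\geq (p - 3\epsilon)n^2.
\end{equation*}
Since $\sum_j|Q_j|^2\leq |Q_1|\sum_j|Q_j| = |Q_1|n$, we get $|Q_1|\geq (p-3\epsilon)n\geq pn/2$ once $\epsilon\leq p/6$.

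\textbf{Step 2: pick the subset.} Let $W\subseteq Q_1$ be any set of exactly $pn/2$ vertices. Every pair in $W$ is an edge of $H$, so the number of non-edges of $G$ inside $W$ is at most $\epsilon n^2$. Hence
\begin{equation*}
    e(G[W])\geq \binom{pn/2}{2} - \epsilon n^2,
\end{equation*}
and the density of $G[W]$ is at least $1 - \epsilon n^2/\binom{pn/2}{2}\geq 1 - 10\epsilon/p^2$ for $n$ large. Choosing
\begin{equation*}
    \epsilon = \min\left(\frac{p}{6}, \frac{(1-q)p^2}{10}\right)
\end{equation*}
gives density at least $q$.

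\textbf{Main obstacle.} The only point requiring care is Step 1. Closeness alone does not prevent $H$ from being split into many small cliques; what forces a large clique is the combination of the density lower bound and the convexity bound $\sum_j|Q_j|^2\leq |Q_1| n$. The small-$n$ boundary cases are routine and are absorbed into the choice of $\epsilon$, noting that $\epsilon n^2<1$ forces $G=H$.
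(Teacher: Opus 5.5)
Your proof is essentially the paper's: your bound $\sum_j|Q_j|^2\le |Q_1|n$ is exactly the paper's $\Delta(H)\ge d(H)$ rewritten in terms of clique sizes. Both arguments then restrict $G$ to that clique, charge its missing pairs to the edit budget $\epsilon n^2$, and dispose of small $n$ by observing that $G\ne H$ forces $\epsilon n^2>1$.

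The one slip is in the constants. With your $\epsilon=(1-q)p^2/10$, that observation only gives $pn>\sqrt{10/(1-q)}$, whereas your estimate $\binom{pn/2}{2}\ge p^2n^2/10$ needs $pn\ge 10$. So shrink the constant as the paper does: $\epsilon=p^2(1-q)/100$ forces $pn>10$ whenever $G\ne H$. Do not rely on the ``single edge'' fallback, which fails once $pn/2\ge 3$.
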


\begin{proof}
    We take $\epsilon = p^2(1 - q)/100$. We assume that $G$ is not itself a disjoint union of cliques; otherwise, the conclusion is evident. In particular, we may assume that $\epsilon n^2\geq 1$. Hence $n\geq 10$. 
    
    Let $H$ be a disjoint union of cliques on the same vertex set as $G$ which is $\epsilon$-close to $G$. In particular, $\Delta(H)\geq d(H)\geq d(G) - 2\epsilon n\geq pn/2$. Hence there exists a clique $C$ in $H$ of at least $\frac{1}{2}pn + 1$ vertices. Hence
    \begin{equation*}
        e_G(C)\geq e_H(C) - \epsilon n^2\geq e_H(C)\left(1 - \epsilon n^2\middle/\binom{|C|}{2}\right)\geq e_H(C) (1 - \epsilon p^{-2})\geq q\cdot e_H(C), 
    \end{equation*}
    and $G[C]$ is an induced subgraph of $G$ on at least $pn/2$ vertices with density at least $q$. 
\end{proof}

Combining Theorem~\ref{1/4 thm}' and Lemma~\ref{close to union of cliques implies rather dense subgraph} yields the following corollary. 

\begin{cor} \label{rather dense subgraph cor}
    For any $p, q\in (0, 1)$, there exists $c = \clem{rather dense subgraph cor}(p, q) > 0$ such that the following holds. Let $G$ be an $n$-vertex graph with density at least $p$ and least eigenvalue $\lambda_n$. If
    \begin{equation*}
        |\lambda_n|\leq cn^{1/4}
    \end{equation*}
    then $G$ has an induced subgraph on $pn/2$ vertices with density at least $q$. 
\end{cor}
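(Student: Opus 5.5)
The plan is to chain Theorem~\ref{1/4 thm}' and Lemma~\ref{close to union of cliques implies rather dense subgraph} in the obvious way, with the constants chosen in the right order.

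Given $p, q\in (0,1)$, first set $\epsilon = \conlem{\epsilon}{close to union of cliques implies rather dense subgraph}(p, q) > 0$, the proximity threshold from Lemma~\ref{close to union of cliques implies rather dense subgraph}. Then set $c = \clem{rather dense subgraph cor}(p,q) := \clem{1/4 thm}(\epsilon)$, the constant provided by Theorem~\ref{1/4 thm}' for this value of $\epsilon$.

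Now let $G$ be an $n$-vertex graph with density at least $p$ and $|\lambda_n|\leq cn^{1/4}$. We argue that $G$ cannot be $\epsilon$-far from every disjoint union of cliques. Indeed, if it were, Theorem~\ref{1/4 thm}' would give $|\lambda_n|\geq \clem{1/4 thm}(\epsilon) n^{1/4} = cn^{1/4}$.

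The only point needing care is the boundary case of equality, since the hypothesis allows $|\lambda_n| = cn^{1/4}$ exactly. I would handle it by taking $c$ to be, say, $\clem{1/4 thm}(\epsilon)/2$. The strict inequality $|\lambda_n| < \clem{1/4 thm}(\epsilon)n^{1/4}$ then contradicts Theorem~\ref{1/4 thm}', with a possible degenerate exception only when $n^{1/4}c$ is tiny. That exception is harmless, because any positive constant factor in $c$ is permitted.

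Hence $G$ is $\epsilon$-close to some disjoint union of cliques, where $\epsilon$-close means edit distance less than $\epsilon n^2$. Since $G$ also has density at least $p$, Lemma~\ref{close to union of cliques implies rather dense subgraph} applies directly and yields an induced subgraph on $pn/2$ vertices with density at least $q$, as required.

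No step here is a genuine obstacle. The argument is a bookkeeping composition of the two earlier results, and the only things to check are the order of choosing constants ($\epsilon$ depends on $(p,q)$, then $c$ depends on $\epsilon$) and the strict-versus-nonstrict threshold handled above.
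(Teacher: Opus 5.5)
Your proposal is correct and matches the paper's proof: fix $\epsilon$ from Lemma~\ref{close to union of cliques implies rather dense subgraph}, take $c = \clem{1/4 thm}(\epsilon)/2$ so that Theorem~\ref{1/4 thm}' forces $G$ to be $\epsilon$-close to a disjoint union of cliques, then apply the lemma. The ``degenerate exception'' you mention never occurs: since $cn^{1/4} > 0$, the inequality $|\lambda_n| \le cn^{1/4} < 2cn^{1/4} = \clem{1/4 thm}(\epsilon)n^{1/4}$ is always strict.
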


\begin{proof}
    Let $\epsilon = \conlem{\epsilon}{close to union of cliques implies rather dense subgraph}(p, q) > 0$, and let $c = \clem{1/4 thm}(\epsilon)/2 > 0$. Since
    \begin{equation*}
        |\lambda_n|\leq cn^{1/4} < \clem{1/4 thm}(\epsilon)n^{1/4}, 
    \end{equation*}
    by Theorem~\ref{1/4 thm}', $G$ is $\epsilon$-close to a disjoint union of cliques. The desired conclusion now follows from Lemma~\ref{close to union of cliques implies rather dense subgraph}. 
\end{proof}

We finish this section with the proof of Theorem~\ref{5/4 maxcut thm} and Theorem~\ref{5/4 thm}. The argument here mirrors that of the analogous results in~\cite{bridge}. We restate Theorem~\ref{5/4 maxcut thm} in the following equivalent form. 

\begin{custhm}{Theorem}{\ref{5/4 maxcut thm}'}
    For every $\epsilon > 0$, there exists $c = \clem{5/4 maxcut thm}(\epsilon) > 0$ such that if an $n$-vertex graph $G$ is $\epsilon$-far from all disjoint unions of cliques, then 
    \begin{equation*}
        \surp(G)\geq cn^{5/4}. 
    \end{equation*}
\end{custhm}

\begin{proof}
    Without loss of generality, let $\epsilon < 0.1$. Every induced subgraph $H$ of $G$ on at least $(1 - \epsilon/2)n$ vertices is $\epsilon/2$-far from all disjoint unions of cliques. Therefore, by Theorem~\ref{1/4 thm}', the absolute value of the least eigenvalue of $H$ is at least $\clem{1/4 thm}(\epsilon/2)v(H)^{1/4}\geq \clem{1/4 thm}(\epsilon/2)n^{1/4}/2$. Hence by Theorem~\ref{sp bridge thm}, we have
    \begin{equation*}
        \surp(G)\geq \frac{\epsilon n}{2}\cdot \frac{\clem{1/4 thm}(\epsilon/2)n^{1/4}}{2} \gg_\epsilon n^{5/4}, 
    \end{equation*}
    as needed. 
\end{proof}

For the deduction of Theorem~\ref{5/4 thm} from Theorem~\ref{5/4 maxcut thm}', we make use of the following lemmas from~\cite{bridge}. 

\begin{lem}[Lemma~2.1 of~\cite{bridge}] \label{surp and disc lem}
    Let $G$ be a graph on $n$ vertices. Then 
    $$\surp\left(\overline{G}\right)\leq 2\disc^+(G) + \frac{n}{4}.$$ 
\end{lem}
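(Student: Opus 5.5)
We compare a maximum cut of $\overline{G}$ with the positive discrepancy of $G$ on its two sides. Write $p$ for the density of $G$, and fix a maximum cut $V(G) = U\cup W$ of $\overline{G}$ with $|U| = a$ and $|W| = b = n - a$. Every pair of vertices is an edge of exactly one of $G$ and $\overline{G}$. Hence
\begin{equation*}
    \mc\left(\overline{G}\right) = e_{\overline{G}}(U, W) = ab - e_G(U, W) = ab - e(G) + e_G(U) + e_G(W).
\end{equation*}
Using $e(\overline{G}) = \binom{n}{2} - e(G)$, we get
\begin{equation*}
    \surp\left(\overline{G}\right) = ab - \tfrac{1}{2}\binom{n}{2} - \tfrac{1}{2}e(G) + e_G(U) + e_G(W).
\end{equation*}

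Next we insert the discrepancy. By the definition of $\disc^+(G)$, we have $e_G(U)\leq p\binom{a}{2} + \disc^+(G)$, and the same holds for $W$ with $b$ in place of $a$. Substituting $e(G) = p\binom{n}{2}$ then leaves the bound
\begin{equation*}
    \surp\left(\overline{G}\right)\leq 2\disc^+(G) + ab - \tfrac{1}{2}\binom{n}{2} + p\left(\binom{a}{2} + \binom{b}{2} - \tfrac{1}{2}\binom{n}{2}\right).
\end{equation*}

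It remains to show that the explicit term is at most $n/4$. Since $\binom{a}{2} + \binom{b}{2} + ab = \binom{n}{2}$, the explicit term equals
\begin{equation*}
    (1-p)\left(ab - \tfrac{1}{2}\tbinom{n}{2}\right).
\end{equation*}
From $ab\leq n^2/4$ we get $ab - \tfrac{1}{2}\binom{n}{2}\leq n^2/4 - n(n-1)/4 = n/4$. When this quantity is nonnegative, multiplying by $1 - p\in[0,1]$ keeps it at most $n/4$. When it is negative, the whole term is at most $0\leq n/4$. In both cases the lemma follows.

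The only step needing care is this last one: the coefficient of $p$ must be handled through the identity $\binom{a}{2} + \binom{b}{2} + ab = \binom{n}{2}$, so that the explicit term factors as $(1-p)$ times $ab - \tfrac{1}{2}\binom{n}{2}$. The remaining steps are direct substitution.
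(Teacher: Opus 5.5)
Your proof is correct and complete. You fix a maximum cut $(U,W)$ of $\overline{G}$ and bound $e_G(U)$, $e_G(W)$ by $p\binom{|U|}{2}+\disc^+(G)$ and $p\binom{|W|}{2}+\disc^+(G)$; the identity $\binom{a}{2}+\binom{b}{2}+ab=\binom{n}{2}$ then collapses the remainder to $(1-p)\bigl(ab-\tfrac12\binom{n}{2}\bigr)\le n/4$. The paper does not prove this lemma itself but imports it as Lemma~2.1 of~\cite{bridge}, so your direct counting serves as a self-contained proof (it implicitly needs $n\ge 2$ so that $p$ is defined; the case $n=1$ is trivial).
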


\begin{lem}[Lemma~2.2 of~\cite{bridge}] \label{disc continuity lem}
    Let $G$ and $H$ be graphs of edit distance $s$. Then $$\disc^+(H)\leq \disc^+(G)+s.$$
\end{lem}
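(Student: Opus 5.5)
The plan is to compare the two discrepancy functionals set by set. The only subtlety is that the reference densities $p(G)$ and $p(H)$ differ, so the comparison has to track the change in density as well as the change in edge counts.

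\emph{Setup.} Fix the common vertex set of size $n$. Let $A$ be the number of pairs that are edges of $H$ but not of $G$ (added edges), and $B$ the number that are edges of $G$ but not of $H$ (removed edges), so that $A + B = s$. Then
\begin{equation*}
    p(H) - p(G) = \frac{A - B}{\binom{n}{2}}.
\end{equation*}

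\emph{Fix a set.} Fix any $U \subseteq V$ with $|U| = u$. Let $a \le A$ and $b \le B$ be the numbers of added and removed pairs lying inside $U$, so that $e_H(U) - e_G(U) = a - b$. Put $\theta = \binom{u}{2}/\binom{n}{2} \in [0,1]$. Then
\begin{equation*}
    \Bigl(e_H(U) - p(H)\tbinom{u}{2}\Bigr) - \Bigl(e_G(U) - p(G)\tbinom{u}{2}\Bigr) = (a - A\theta) + (B\theta - b).
\end{equation*}

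\emph{Bound the two terms separately.} For the first, $a - A\theta \le A(1-\theta) \le A$. For the second, $B\theta - b \le B\theta \le B$. Hence the difference above is at most $A + B = s$.

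\emph{Take maxima.} The bound gives
\begin{equation*}
    e_H(U) - p(H)\tbinom{u}{2} \le \disc^+(G) + s \qquad \text{for every } U.
\end{equation*}
Taking the maximum over $U$ on the left yields $\disc^+(H) \le \disc^+(G) + s$.

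The only real obstacle is handling the density shift correctly. The key point is that $\theta \le 1$, so the change in the global density costs at most as much as the edits themselves. One also has to be careful that added and removed edges contribute with opposite signs and are bounded separately rather than allowed to cancel.
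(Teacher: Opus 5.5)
Your proof is correct and complete. Using $p=e/\binom{n}{2}$, your identity
$\bigl(e_H(U)-p(H)\binom{u}{2}\bigr)-\bigl(e_G(U)-p(G)\binom{u}{2}\bigr)=(a-A\theta)+(B\theta-b)$ holds. The two one-sided estimates then bound this by $A(1-\theta)+B\theta\le\max(A,B)\le s$, which is slightly stronger than you need. Taking the maximum over $U$ finishes the argument.

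The paper gives no proof of its own here; it imports the statement as Lemma~2.2 of \cite{bridge}, so your direct computation is a self-contained replacement.

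An equivalent way to organise the same idea is to treat one edit at a time and induct on $s$:
\begin{itemize}
\item Adding an edge raises $e(U)$ by at most $1$ and can only increase the density, so the cost is at most $1$.
\item Deleting an edge does not raise $e(U)$, and it lowers the density by $1/\binom{n}{2}$, so the cost is at most $\binom{u}{2}/\binom{n}{2}\le 1$.
\end{itemize}
Your simultaneous treatment of added and removed pairs is the same computation done in one step.
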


\begin{lem}[Lemma~4.3 of~\cite{bridge}] \label{turan lem}
    For every $\epsilon>0$ there exists $\eta = \conlem{\eta}{turan lem}(\epsilon) > 0$ such that if $n$ is sufficiently large and $H$ is a complete multipartite graph on $n$ vertices with $\disc^+(H)\leq \eta n^2$, then $H$ is $\epsilon$-close to a Tur\'an graph.
\end{lem}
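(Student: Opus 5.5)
The plan is to reduce to a one-variable continuous statement about the part-size profile. Let $H$ have parts of sizes $x_1n\geq x_2n\geq\dots\geq x_kn$, so $\sum_i x_i=1$. Put $s:=\sum_i x_i^2$. Then $H$ has density $p=1-s+O(1/n)$.

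\textbf{Test sets.} For a test set $U$ that takes $y_in$ vertices from part $i$, write $y=\sum_i y_i$. A direct count gives
\begin{equation*}
e(H[U])-p\binom{|U|}{2}=\frac{n^2}{2}\Big(s\,y^2-\sum_i y_i^2\Big)+O(n).
\end{equation*}
Hence $\disc^+(H)\leq\eta n^2$ gives $s\big(\sum_i y_i\big)^2-\sum_i y_i^2\leq 2\eta+o(1)$ for every choice of $0\leq y_i\leq x_i$. I will only use two families of test vectors.

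\textbf{Truncations: no part is much larger than $s$.} Take $y_i=\min(x_i,\theta)$, and set $F(\theta)=\sum_i\min(x_i,\theta)\leq 1$. Let $\Phi(\theta)=sF(\theta)^2-\sum_i\min(x_i,\theta)^2$. Then $\Phi(\infty)=0$ and
\begin{equation*}
\Phi'(\theta)=2\,\#\{i:x_i>\theta\}\,\big(sF(\theta)-\theta\big).
\end{equation*}
Integrating from $\theta=s$ upwards and using $F\leq 1$ yields
\begin{equation*}
\Phi(s)\geq \int_s^\infty 2\,\#\{i:x_i>u\}(u-s)\,du=\sum_{i:\,x_i>s}(x_i-s)^2 .
\end{equation*}
Therefore $\sum_i (x_i-s)_+^2\leq 2\eta+o(1)$.

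\textbf{Averaging: the weight concentrates near $s$.} Consider the probability measure that gives weight $x_i$ to the value $x_i$. Its mean is exactly $\sum_i x_i\cdot x_i=s$. The previous step bounds its upper tail above $s$. Since the upper tail is small, Markov/Cauchy--Schwarz forces the lower tail to be small as well. Concretely, the total weight $\sum_{i:\,x_i<s-\delta}x_i$ is $O(\sqrt{\eta}/\delta)$. Choosing $\delta=\eta^{1/4}$, all but $O(\eta^{1/4})$ of the vertex mass lies in parts of relative size in $[s-\delta,s+\delta]$. Using $\sum_i(x_i-s)_+^2\leq 2\eta$ once more, that mass also lies outside parts larger than $s+\delta$.

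\textbf{Comparison with a Turán graph.} There are two regimes.
\begin{enumerate}[(a)]
\item If $s\leq\epsilon/10$, every edit needed to turn $H$ into $K_n$ lies inside a part. There are at most $sn^2/2\leq\epsilon n^2$ of them, so $H$ is $\epsilon$-close to the complete graph.
\item Otherwise $s$ is bounded below by a constant depending on $\epsilon$. Then the good parts number $k_0\approx(1-O(\eta^{1/4}))/s$, with sizes within $\delta n$ of $sn$. I take the Tur\'an graph with $m=\round$-off of $1/s$ parts, i.e.\ the nearest integer to $1/s$. The one-part case is just the empty graph. I would then move $O(\eta^{1/4}n)$ vertices: this dissolves the bad parts and balances the good ones, at a cost of $O(\eta^{1/4}n^2)$ edits. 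Since $1/s=O_\epsilon(1)$, the number of parts changes by at most a bounded amount, and this cost is also absorbed.
\end{enumerate}
Choosing $\eta$ small enough in terms of $\epsilon$ finishes the proof.

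The main obstacle is the last bookkeeping step when $s$ is moderately small. There $k_0$ may differ from the nearest integer to $1/s$ by a rounding error. One must check that merging or splitting to reach an exact Tur\'an graph costs only $O(sn^2+\eta^{1/4}n^2)$, which is below $\epsilon n^2$ once the case split threshold $\epsilon/10$ is used. The analytic core, the truncation derivative identity, is straightforward.
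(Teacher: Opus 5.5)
The paper gives no proof of this lemma. It is quoted from \cite{bridge} and used as a black box in the proof of Theorem~\ref{5/4 thm}, so there is no in-paper argument to compare with.

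\textbf{The analytic core is correct.} The identity $e(H[U])-p\binom{|U|}{2}=\frac{n^2}{2}\bigl(sy^2-\sum_i y_i^2\bigr)+O(n)$ holds. The formula $\Phi'(\theta)=2N(\theta)(sF(\theta)-\theta)$ is right. Since $\Phi$ is piecewise quadratic with $\Phi(\theta)=0$ for $\theta\geq x_1\geq s$, your integration giving $\sum_i(x_i-s)_+^2\leq 2\eta+o(1)$ is legitimate. Rounding $y_in$ to integers costs only $O(1/n)$, because fewer than $1/s$ parts are truncated. The averaging step also works: the size-biased measure has mean exactly $s$, so
\[
\sum_i x_i(s-x_i)_+=\sum_i x_i(x_i-s)_+\leq\sqrt{s}\,\Bigl(\sum_i(x_i-s)_+^2\Bigr)^{1/2},
\]
and Markov gives both tails.

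\textbf{The flaw is in the final bookkeeping, and the ``obstacle'' you flag there does not arise.} In case (b) you already know three things:
\begin{enumerate}[(i)]
\item the $k_0$ good parts have relative sizes in $[s-\delta,s+\delta]$;
\item their total mass is $1-O(\eta^{1/4})$;
\item $k_0\leq 1/(s-\delta)\leq 20/\epsilon$.
\end{enumerate}
Together these give $|k_0s-1|\leq k_0\delta+O(\eta^{1/4})=O(\eta^{1/4}/\epsilon)$. So take the Tur\'an graph with exactly $k_0$ parts. Each good part is within $\bigl(2\delta+O(\eta^{1/4})/k_0\bigr)n$ of $n/k_0$, and the bad parts carry $O(\eta^{1/4})n$ vertices. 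Relocating $O(\eta^{1/4}/\epsilon)\,n$ vertices therefore suffices, and each relocation changes at most $n$ pairs. The edit distance is then $O(\eta^{1/4}/\epsilon)\,n^2<\epsilon n^2$ once $\eta\ll\epsilon^{8}$. The same estimate shows that $k_0$ is the nearest integer to $1/s$, so no rounding mismatch can occur. Your argument thus yields a polynomial dependence of $\eta$ on $\epsilon$.

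\textbf{The repair you sketch would not work.} In regime (b), changing the number of parts by one means merging or splitting parts of size about $sn$. That costs order $s^2n^2$ edits. Since $s>\epsilon/10$, neither this nor your claimed $O(sn^2+\eta^{1/4}n^2)$ is below $\epsilon n^2$ in general; take $s=1/2$, where merging the two halves costs $n^2/4$. So the sentence ``the number of parts changes by at most a bounded amount, and this cost is also absorbed'' is false as a cost estimate. Delete it and the merging/splitting discussion, and replace them with the bound on $|k_0s-1|$ above. With that change your proof is complete.
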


We now give a proof of Theorem~\ref{5/4 thm}, which we restate here for convenience. 

\begin{custhm}{Theorem}{\ref{5/4 thm}}
    Let $G$ be an $n$-vertex graph and let $\epsilon > 0$. If $G$ is $\epsilon$-far from all Tur\'an graphs (including the empty graph and the complete graph), then 
    \begin{equation*}
        \disc^+(G)\geq \Omega_\epsilon(n^{5/4}). 
    \end{equation*}
\end{custhm}

\begin{proof}
    Since $\disc^+(G)\geq \epsilon$, we may assume that $n$ is sufficiently large with respect to $\epsilon$. Let $\delta = \min(\conlem{\eta}{turan lem}(\epsilon/2), \epsilon)/2 > 0$. We may assume that $\disc^+(G) < \clem{5/4 maxcut thm}(\delta)n^{5/4}/2 - n/8$; otherwise we are done. By Lemma~\ref{surp and disc lem}, it follows that $\surp\left(\overline{G}\right) < \clem{5/4 maxcut thm}(\delta)n^{5/4}$. Hence by Theorem~\ref{5/4 maxcut thm}', $\overline{G}$ is $\delta$-close to a disjoint union of cliques. Hence $G$ is $\delta$-close to a complete multipartite graph $H$. By Lemma~\ref{disc continuity lem}, it follows that $\disc^+(H)\leq \disc^+(G) + \delta n^2\leq \conlem{\eta}{turan lem}(\epsilon/2)n^2$ when $n$ is sufficiently large with respect to $\epsilon$. Hence by Lemma~\ref{turan lem}, $H$ is $\epsilon/2$-close to a Tur\'an graph. Hence $G$ is $\epsilon$-close to a Tur\'an graph, a contradiction. 
\end{proof}

\section{Triangle count} \label{tri counting sec}

Let $\tri(G)$ denote the number of triangles in a graph $G$. Recall that
\begin{equation*}
    6\tri(G) = \sum_{i = 1}^n\lambda_i^3, 
\end{equation*}
where $\lambda_1\geq \dots\geq \lambda_n$ are the eigenvalues of $G$. 

\begin{prop} \label{triangle counting prop}
    There exist an absolute constant $\clem{triangle counting prop} > 0$ such that the following holds. Let $r\geq 2$ be a positive integer, let $L > 1$ be a positive real number, and let $G$ be an $n$-vertex graph with average degree $d$ and least eigenvalue $\lambda_n$. We assume that $|\lambda_n|\leq L$, $n^{1/2}L^2\leq \clem{triangle counting prop}d$, and 
    \begin{equation} \label{r condition inequality}
        n^{1 - \frac{1}{r}}L^{\frac{2}{r} + \frac{1}{r - 1}}\leq \clem{triangle counting prop}d. 
    \end{equation}
    Then
    \begin{equation} \label{triangle count in terms of r}
        \tri(G)\geq \clem{triangle counting prop}nd^2 L^{-1/r}. 
    \end{equation}
\end{prop}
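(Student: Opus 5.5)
We write $6\tri(G) = \sum_i \lambda_i^3$ and split the spectrum into three pieces: negative eigenvalues, eigenvalues in $[0,T]$, and eigenvalues above $T$. The threshold $T$ is of order $d L^{-1/r}$ times a small constant.

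The negative part is harmless. It satisfies $\sum_{\lambda_i<0}|\lambda_i|^3\le L\sum_i\lambda_i^2 = 2e(G)L = ndL$, and the hypothesis $n^{1/2}L^2\le cd$ makes this $\ll nd^2 L^{-1/r}$. Indeed $ndL\le nd^2L^{-1/2}\cdot (L^{3/2}/d)$, and $L^{3/2}\le L^2\le cd/n^{1/2}\le cd$; after adjusting constants, $L\le c\,d L^{-1/r}$ follows since $L^{1+1/r}\le L^2\le cd$.

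For the positive part we use $\sum_i\lambda_i^2 = nd$. We want most of this quadratic mass on eigenvalues at least $T$. Then
\begin{equation*}
    \sum_{\lambda_i\ge T}\lambda_i^3\ge T\sum_{\lambda_i\ge T}\lambda_i^2\ge T\cdot\tfrac{nd}{2}\gg nd^2L^{-1/r}.
\end{equation*}
The small nonnegative eigenvalues contribute nonnegatively to the cube sum, so they can be ignored.

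The key step is to apply Proposition~\ref{spectral distribution prop} with this $r$, the given $L$, and $T = c' d L^{-1/r}$. This bounds $\sum_{\lambda_i\le T}\lambda_i^2$ by
\begin{equation*}
    C\bigl(nL^{1/r}T + n^{3/2}L^2 + n^{2-1/r}L^{2/r+1/(r-1)}\bigr).
\end{equation*}
Each term is at most $nd/6$ once constants are chosen. The first term equals $c'Cnd$. The second term is at most $Cn\cdot cd$ by $n^{1/2}L^2\le cd$. The third term is at most $Cn\cdot cd$ by~(\ref{r condition inequality}). Hence $\sum_{\lambda_i> T}\lambda_i^2\ge nd/2$, as needed.

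The main obstacle is verifying the side conditions of Proposition~\ref{spectral distribution prop}, namely $n\ge L^4$, $T\ge n^{1/2}L$ and $L>1$. The bound $n\ge L^4$ follows from $n^{1/2}L^2\le cd\le cn$. The condition $T\ge n^{1/2}L$ requires $c'dL^{-1/r}\ge n^{1/2}L$, i.e.\ $n^{1/2}L^{1+1/r}\le c'd$. This follows from $n^{1/2}L^2\le cd$ provided $c\le c'$. One could alternatively use~(\ref{r condition inequality}), but that is unnecessary. If $T$ were below $n^{1/2}L$, we would instead apply the proposition at the larger threshold, which only helps. So the remaining work is choosing $c'$ small relative to $\Clem{spectral distribution prop}$, and then $\clem{triangle counting prop}$ smaller still.
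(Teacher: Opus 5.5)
Your proposal is correct and follows the paper's proof: you apply Proposition~\ref{spectral distribution prop} at a threshold $T \asymp d L^{-1/r}$, use $\sum_i \lambda_i^2 = nd$ to place at least half of the quadratic mass above $T$, and absorb the negative eigenvalues using $n^{1/2}L^2 \le cd$. The differences are cosmetic. You bound the negative cube sum by $L\sum_i\lambda_i^2 = ndL$, whereas the paper uses $nL^3$; both are controlled by the same hypothesis. Your closing remark about switching to a larger threshold is unnecessary, since $c \le c'$ already guarantees $T \ge n^{1/2}L$.
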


\begin{proof}
    We take $\clem{triangle counting prop} = 0.01/\Clem{spectral distribution prop}$. Note that $\clem{triangle counting prop}\leq 0.01$ since $\Clem{spectral distribution prop}\geq 1$. 
    
    Since $nL^4\leq \clem{triangle counting prop}^2d^2\leq n^2$, we have $n\geq L^4$. Therefore, by Proposition~\ref{spectral distribution prop}, for any $T\geq n^{1/2}L$ we have
    \begin{equation*}
        \sum_{\lambda_i\leq T}\lambda_i^2\leq \Clem{spectral distribution prop}\left( nL^{\frac{1}{r}}T + n^{3/2}L^2 + n^{2 - \frac{1}{r}}L^{\frac{2}{r} + \frac{1}{r - 1}}\right)\leq \Clem{spectral distribution prop}nL^{\frac{1}{r}}T + 0.02nd. 
    \end{equation*}
    Since the total quadratic mass of all eigenvalues is given by
    \begin{equation*}
        \sum_{i = 1}^n\lambda_i^2 = nd, 
    \end{equation*}
    we have
    \begin{equation*}
        \sum_{\lambda_i > T}\lambda_i^2\geq 0.98nd - \Clem{spectral distribution prop}nL^{1/r}T. 
    \end{equation*}
    Take $T = 0.40\Clem{spectral distribution prop}^{-1}dL^{-1/r} = 40\clem{triangle counting prop}dL^{-1/r}\geq n^{1/2}L$, we have
    \begin{equation*}
        \sum_{\lambda_i > T}\lambda_i^2\geq nd/2. 
    \end{equation*}
    Hence
    \begin{equation*}
        \sum_{\lambda_i\geq 0}\lambda_i^3\geq ndT/2. 
    \end{equation*}
    On the other hand, for the negative eigenvalues, we have
    \begin{equation*}
        \sum_{\lambda_i < 0}|\lambda_i|^3\leq nL^3\leq \clem{triangle counting prop}^2d^2L^{-1}\leq ndT/4, 
    \end{equation*}
    Therefore, 
    \begin{equation*}
        \tri(G) = \frac{1}{6} \sum_{i = 1}^n\lambda_i^3\geq ndT/24\geq \clem{triangle counting prop}nd^2L^{-1/r}, 
    \end{equation*}
    as needed. 
\end{proof}

Whilst the estimate~(\ref{triangle count in terms of r}) strengthens with larger values of $r$, the size of $r$ is constrained by~(\ref{r condition inequality}). Therefore, we now investigate the content of~(\ref{r condition inequality}). In general, for $r\geq 2$ and $c > 0$, an inequality of the form
\begin{equation*}
    n^{1 - \frac{1}{r}}L^{\frac{2}{r} + \frac{1}{r - 1}}\leq cd
\end{equation*}
is equivalent to
\begin{equation*}
    b + \frac{1}{r - 1}\leq (a + b - 2)\frac{1}{r}, 
\end{equation*}
which may further be cast into the quadratic inequality
\begin{equation} \label{r quad ineq in prose}
    b r^2 - (a + 2b - 3)r + (a + b - 2)\leq 0, 
\end{equation}
where $a = \log_L(cd)$ and $b = \log_L \left(\frac{n}{cd}\right)$. In this language, the condition $n^{1/2}L^2\leq cd$ may be written as $a - b \geq 4$, which is equivalent to demanding that $r = 2$ satisfies~(\ref{r quad ineq in prose}). 

\begin{df}
    Let $R\subseteq \mathbb{R}_+^2$ denote the subset of tuples $(a, b)$ satisfying $a > b + 4$. For $(a, b)\in R$, let
    \begin{align*}
        f(a, b) &= \frac{a + 2b - 3 + \sqrt{(a + 2b - 3)^2 - 4b(a + b - 2)}}{2b}\\
        &=\frac{a + 2b - 3 + \sqrt{(a - 3)^2 - 4b}}{2b}. 
    \end{align*}
    be the largest $x\in\mathbb{R}$ satisfying
    \begin{equation} \label{x quad ineq in df}
        b x^2 - (a + 2b - 3)x + (a + b - 2)\leq 0. 
    \end{equation}
\end{df}

Note that for $(a, b)\in R$, $x = 2$ always satisfies~(\ref{x quad ineq in df}) with strict inequality. Therefore, $f(a, b) > 2$ for any $(a, b)\in R$. 

Lemma~\ref{square root lem} records a valid choice of $r$ satisfying~(\ref{r quad ineq in prose}). 

\begin{lem} \label{square root lem} 
    Let $c > 0$, $L > 1$, and $d > 0$ be positive real numbers, and let $n$ be a positive integer such that $n^{1/2}L^2 < cd < n$. Let $a = \log_L(cd)$ and $b = \log_L\left(\frac{n}{cd}\right)$. Then $(a, b)\in R$ and $r = \floor{f(a, b)}$ satisfies 
    \begin{equation*}
        n^{1 - \frac{1}{r}}L^{\frac{2}{r} + \frac{1}{r - 1}}\leq cd. 
    \end{equation*}
\end{lem}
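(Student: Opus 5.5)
We verify the two claims in order. First, $(a,b)\in R$: from $cd<n$ we get $b=\log_L(n/(cd))>0$. From $cd>0$ large (indeed $cd>n^{1/2}L^2\geq L^2>1$) we get $a>0$. The condition $n^{1/2}L^2<cd$ rewrites, with $n=L^{a+b}$ and $cd=L^a$, as $\tfrac{a+b}{2}+2<a$, i.e.\ $a>b+4$. So $(a,b)\in R$, and by the remark after the definition, $f(a,b)>2$, so $r=\floor{f(a,b)}\geq 2$ is a legitimate integer for which $r-1\geq 1$.

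Second, the target inequality. Taking $\log_L$, the inequality $n^{1-1/r}L^{2/r+1/(r-1)}\leq cd$ reads
\begin{equation*}
    (a+b)\left(1-\tfrac{1}{r}\right)+\tfrac{2}{r}+\tfrac{1}{r-1}\leq a,
\end{equation*}
i.e.\ $b+\frac{1}{r-1}\leq (a+b-2)\frac1r$. Multiplying by $r(r-1)>0$, this is equivalent to the quadratic inequality~(\ref{r quad ineq in prose}), $q(r):=br^2-(a+2b-3)r+(a+b-2)\leq 0$. So it suffices to show $q(r)\leq 0$.

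Since $b>0$, $q$ is a convex quadratic, and its set of nonpositivity is the closed interval between its roots; by definition $f(a,b)$ is the larger root (the discriminant $(a-3)^2-4b$ is positive since $a-3>b+1>0$ and $(b+1)^2>4b$ unless $b=1$, in which case still $(a-3)^2>4=4b$). Moreover $q(2)=4b-2a-4b+6+a+b-2=b-a+4<0$, so $2$ lies strictly inside the interval. Hence the whole interval $[2,f(a,b)]$ lies in $\{q\leq 0\}$, and since $2\leq r=\floor{f(a,b)}\leq f(a,b)$, we get $q(r)\leq 0$, which is the claimed inequality.

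The only step needing care is the convexity/interval argument, specifically checking $r\geq 2$ (so that $r-1>0$ when multiplying through and $r$ lies in the interval); this is exactly supplied by $f(a,b)>2$, itself coming from $q(2)<0$. Everything else is the logarithmic reformulation already recorded before the definition of $R$.
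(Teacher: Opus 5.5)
Your proof is correct and takes the same approach as the paper: after the logarithmic reduction to the quadratic inequality, the set where it holds is a closed interval whose right endpoint is $f(a,b)$ and which contains $2$, so $\lfloor f(a,b)\rfloor$ lies in it. You also make explicit the details the paper leaves implicit: $a,b>0$, $q(2)=b-a+4<0$, and $r\geq 2$, which justifies multiplying through by $r(r-1)$.
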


\begin{proof}
    Since $n^{1/2}L^2 < cd$, we have $a - b > 4$, and $(a, b)\in R$. By definition $f(a, b)$ denotes the maximum $x$ satisfying
    \begin{equation} \label{x quad ineq in lem}
        b x^2 - (a + 2b - 3)x + (a + b - 2)\leq 0. 
    \end{equation}
    Note that the set of $x$ satisfying~(\ref{x quad ineq in lem}) forms a closed interval and $x = 2$ satisfies~(\ref{x quad ineq in lem}). Therefore, $x = \floor{f(a, b)}$ must satisfy~(\ref{x quad ineq in lem}), and the desired conclusion follows. 
\end{proof}

The estimate $\tri(G)\gg nd^2L^{-1/r}$ given by Proposition~\ref{triangle counting prop} may be contrasted with the weaker lower bound $\tri(G)\gg nd^2L^{-1}$, which holds whenever $L\leq d^{1/2}/2$. We shall neither prove nor require this weaker estimate, which may be derived directly via trace identities without employing the spectral distribution results established in Section~\ref{spec dist sec}. 

\section{Dense neighbourhoods} \label{dense nbh sec}

The high triangle count established in Section~\ref{tri counting sec} suggests that the neighbourhood of a uniformly randomly chosen vertex in $G$ should be much denser in $G$. Indeed, the induced subgraph on such a neighbourhood is expected to have $\Omega(d^2L^{-1/r})$ edges. This implies a density of $\Omega(L^{-1/r})$ if the size of the neighbourhood is $O(d)$. However, the density might be much lower if the neighbourhood is significantly larger. In other words, to ensure the existence of a sufficiently dense subgraph, it helps if the maximum degree $\Delta\leq O(d)$. 

\begin{lem} \label{find upper bal lem}
    There exists an absolute constant $c = \clem{find upper bal lem} > 0$ such that the following holds. Let $G$ be an $n$-vertex graph with average degree $d$ and least eigenvalue $\lambda_n$ such that $|\lambda_n|\leq d/2$. Then $G$ has an induced subgraph $H$ such that $d(H)\geq cd$ and $\Delta(H)\leq c^{-1}d(H)$. 
\end{lem}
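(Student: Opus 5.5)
The plan is a degree-truncation argument iterated through increasingly dense induced subgraphs. The only spectral input is the surplus bound $\surp(H)\leq v(H)|\lambda_{\min}(H)|/4$ recalled in the introduction, together with Cauchy interlacing. Interlacing gives that every induced subgraph $H\sqsubseteq G$ has least eigenvalue at least $\lambda_n$, so $|\lambda_{\min}(H)|\leq |\lambda_n|\leq d/2$.

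\textbf{One step.} Let $G_0 = G$, and let $G_j$ be a current induced subgraph with $n_j$ vertices and average degree $d_j\geq d$. Fix $K = 16$. Let $B$ be the set of vertices of $G_j$ of degree greater than $Kd_j$, and let $S$ be the remaining vertices. Counting degrees gives $|B|\leq n_j/K$.
\begin{itemize}
\item If $e(S)\geq n_jd_j/32$, take $H = G_j[S]$. Then $d(H)\geq 2e(S)/n_j\geq d_j/16$ and $\Delta(H)\leq Kd_j\leq 256\, d(H)$.
\item Otherwise, apply the surplus bound to $G_j$ itself with the cut $(B,S)$. Writing $e_j = n_jd_j/2$, this gives
\begin{equation*}
e(B,S) - \tfrac{1}{2}e_j\leq n_j|\lambda_n|/4\leq n_jd_j/8 .
\end{equation*}
Since $e_j = e(B)+e(S)+e(B,S)$, we obtain $e(B)+e(S)\geq e_j/2 - n_jd_j/8 = n_jd_j/8$. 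Hence $e(B)\geq n_jd_j/8 - n_jd_j/32\geq n_jd_j/16$.
\end{itemize}
In the second case set $G_{j+1} = G_j[B]$. Its average degree is
\begin{equation*}
d_{j+1} = \frac{2e(B)}{|B|}\geq \frac{n_jd_j/8}{n_j/16} = 2d_j ,
\end{equation*}
so the hypothesis $|\lambda_{\min}(G_{j+1})|\leq d/2\leq d_{j+1}/2$ persists.

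\textbf{Termination.} Each step in which the first case fails at least doubles the average degree. Since $d_j < n$, the first case must occur after at most $\log_2(n/d)$ steps. At that point we obtain $H$ with $d(H)\geq d_j/16\geq d/16$ and $\Delta(H)\leq 256\, d(H)$, so $c = 1/256$ works.

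The main point to get right is the constant bookkeeping in the second case. The bound $|\lambda_n|\leq d/2$ only controls $e(B,S)$ up to an additive $n_jd_j/8$, so the threshold for $e(S)$ must be chosen well below $n_jd_j/8$ to still force $e(B)$ to be a constant fraction of $e(G_j)$. The factor $K$ must then be large enough that this concentration on $B$ genuinely increases the average degree. A subtlety is that the surplus bound must be applied to the current graph $G_j$ rather than to $G$; this is legitimate precisely because interlacing keeps every induced subgraph's least eigenvalue at least $-|\lambda_n|$, and $d_j\geq d$.
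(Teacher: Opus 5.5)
Your proof is correct, with $c=1/256$. It uses the same ingredients as the paper: truncating degrees at a constant multiple of the average degree, Markov's bound on the number of high-degree vertices, and the bound $\surp(H)\leq v(H)|\lambda_{\min}(H)|/4$ applied, via interlacing, to the cut between high- and low-degree vertices. The only difference is that the paper avoids iterating by starting from an induced subgraph $G'$ of maximum average degree, so the high-degree set automatically cannot be denser than $G'$ and a single step gives the contradiction; your doubling process is an unrolled version of that extremal choice.
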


\begin{proof}
    Take $c = 0.01$. We may assume that $e(G) > 0$ since the claim is evident otherwise. Let $G'\sqsubseteq G$ be an induced subgraph maximising $d(G')$ amongst induced subgraphs of $G$. 
    
    Let $S$ denote the set of vertices in $G'$ with degree exceeding $10d(G')$. Note that $|S|\leq v(G')/10$. Let $H = G' - S$ be the induced subgraph of $G$ on $v(G')\backslash S$. By construction, $\Delta(H)\leq 10d(G')$. Therefore, It suffices now to show that $d(H)\geq 10cd(G') = d(G')/10$. For the sake of contradiction, assume otherwise. Therefore, 
    \begin{equation*}
        2e(H) = v(H)d(H) < \frac{v(H)d(G')}{10}\leq \frac{v(G')d(G')}{10} = \frac{1}{5}e(G'). 
    \end{equation*}
    On the other hand, 
    \begin{equation*}
        2e(S) = |S|d(G[S])\leq |S|d(G')\leq \frac{v(G')d(G')}{10} = \frac{1}{5}e(G'). 
    \end{equation*}
    Lastly, we note that $e(S, V(H))\leq e(G')/2 + \surp(G')\leq e(G')/2 + v(G')L/4$. Hence
    \begin{equation*}
        2e(S, v(H))\leq e(G') + \frac{1}{2}v(G')L\leq e(G') + \frac{1}{4}v(G')d(G') = \frac{3}{2}e(G'). 
    \end{equation*}
    Therefore, 
    \begin{align*}
        2e(G')&\leq \frac{1}{5}e(G') + \frac{1}{5}e(G') + \frac{3}{2}e(G') = 1.9e(G')
    \end{align*}
    a contradiction. 
\end{proof}

We now deduce the densification lemma corresponding to the triangle count estimate established by Proposition~\ref{triangle counting prop}. 

\begin{lem} \label{1 step lem}
    There exist an absolute constant $\clem{1 step lem} > 0$ such that the following holds. Let $r\geq 2$ be a positive integer, let $L > 1$ be a positive real number, and let $G$ be an $n$-vertex graph with average degree $d$ and least eigenvalue $\lambda_n$. We assume that $|\lambda_n|\leq L$, $n^{1/2}L^2\leq \clem{1 step lem}d$, and 
    \begin{equation}
        n^{1 - \frac{1}{r}}L^{\frac{2}{r} + \frac{1}{r - 1}}\leq \clem{1 step lem}d. 
    \end{equation}
    Then $G$ has an induced subgraph on at most $d$ vertices with at least
    \begin{equation}
        \clem{1 step lem}d^2 L^{-1/r}
    \end{equation}
    edges. 
\end{lem}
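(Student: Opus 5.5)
We will combine Lemma~\ref{find upper bal lem} with Proposition~\ref{triangle counting prop}, and then average over neighbourhoods. First apply Lemma~\ref{find upper bal lem} to $G$. Since $|\lambda_n|\leq L$ and $L^2\leq n^{-1/2}\clem{1 step lem}d\leq d$, we have $|\lambda_n|\leq d/2$ provided $\clem{1 step lem}$ is small. This yields an induced subgraph $H$ with $d_H:=d(H)\geq \clem{find upper bal lem}d$ and $\Delta(H)\leq \clem{find upper bal lem}^{-1}d_H$.

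Let $n_H=v(H)\leq n$. By Cauchy interlacing, the least eigenvalue of $H$ is at least $\lambda_n\geq -L$. The hypotheses of Proposition~\ref{triangle counting prop} for $H$ are monotone in the right direction. Indeed,
\begin{equation*}
n_H^{1/2}L^2\leq n^{1/2}L^2\leq \clem{1 step lem}d\leq \clem{1 step lem}\clem{find upper bal lem}^{-1}d_H,
\end{equation*}
and similarly $n_H^{1-1/r}L^{2/r+1/(r-1)}\leq \clem{1 step lem}\clem{find upper bal lem}^{-1}d_H$. So choosing $\clem{1 step lem}\leq \clem{triangle counting prop}\clem{find upper bal lem}$ makes both conditions hold. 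Proposition~\ref{triangle counting prop} then gives
\begin{equation*}
\tri(H)\geq \clem{triangle counting prop}n_Hd_H^2L^{-1/r}.
\end{equation*}

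Next, average over neighbourhoods. We have $\sum_{v\in V(H)}e(H[N_H(v)])=3\tri(H)$, so some vertex $v$ satisfies $e(H[N_H(v)])\geq 3\clem{triangle counting prop}d_H^2L^{-1/r}\gg d^2L^{-1/r}$. Its neighbourhood has $|N_H(v)|\leq\Delta(H)\leq \clem{find upper bal lem}^{-1}d_H$ vertices. The set $N_H(v)$ induces the same subgraph in $G$ as in $H$, since $H$ is induced.

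The remaining issue is the vertex-count requirement $\leq d$. We have $|N_H(v)|\leq \clem{find upper bal lem}^{-1}d_H$, and $d_H$ may exceed $d$, being at most the maximal induced average degree. We would handle this as follows. Let $D=\clem{find upper bal lem}^{-1}d_H$ and, if $D>d$, take a uniformly random subset of $N_H(v)$ of size $d$. This keeps an expected fraction of roughly $(d/D)^2$ of the edges, leaving about $d^2L^{-1/r}\cdot d_H^2/D^2\gg d^2L^{-1/r}$ edges. The losses are only absolute constants, which are absorbed into $\clem{1 step lem}$.

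The main obstacle is this final size bookkeeping together with checking that the hypotheses pass to $H$. The latter works because passing to $H$ increases the average degree up to constants and does not increase $n$ or $L$. We would also note that $d_H\geq \clem{find upper bal lem}d$ gives $d^2$ rather than $d_H^2$ only after using $d_H\gg d$, and subsampling fixes the case $d_H\gg d$.
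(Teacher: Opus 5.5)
Your proposal is correct and is essentially the paper's proof. Both pass to the induced subgraph given by Lemma~\ref{find upper bal lem} and check that the hypotheses of Proposition~\ref{triangle counting prop} transfer to it (interlacing for $L$, $v(H)\le n$, $d(H)\gg d$). Both then pick a vertex whose neighbourhood spans at least $3\tri(H)/v(H)$ edges, and average down to $\floor{d}$ vertices at the cost of a factor $\gg d^2/\Delta(H)^2$. The one detail worth stating explicitly, as the paper does, is that $d\ge \clem{1 step lem}^{-1}n^{1/2}L^2>10$, which is what guarantees $\binom{\floor{d}}{2}\gg d^2$ in the final subsampling step.
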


\begin{proof}
    Let $\clem{1 step lem}\in (0, 0.1)$ be fixed later. Since $d\geq \clem{1 step lem}^{-1}L\geq 2L$, by Lemma~\ref{find upper bal lem}, $G$ has an induced subgraph $G'$ such that
    \begin{equation*}
        d(G')\geq \clem{find upper bal lem}d, \qquad \Delta(G')\leq \clem{find upper bal lem}^{-1}d(G'). 
    \end{equation*}
    Let $S$ be the neighbourhood in $G'$ of a uniformly randomly chosen vertex $v$ in $G'$. Note that $\EE{e(G[S])} = 3\tri(G')/v(G')$. Therefore, for some choice of $S$, we have
    \begin{equation*}
        e(G[S]) \geq 3\tri(G')/v(G'). 
    \end{equation*}
    Note that $G'$ and $L$ satisfy the conditions of Proposition~\ref{triangle counting prop} whenever $\clem{1 step lem}$ is sufficiently small with respect to $\clem{triangle counting prop}$. Therefore, 
    \begin{equation*}
        e(G[S]) \geq 3\tri(G')/v(G')\gg d(G')^2L^{-1/r}. 
    \end{equation*}
    Note that $d\geq \clem{1 step lem}^{-1}n^{1/2}L > 10$. Therefore $\binom{\floor{d}}{2}\gg d^2$. By a standard averaging argument, there exists a subset $T\subseteq S$ of at most $d$ vertices such that
    \begin{equation*}
        e(G[T]) \geq \frac{\binom{\floor{d}}{2}}{\binom{|S|}{2}}e(G[S])\gg \frac{d^2}{\Delta(G')^2}d(G')^2L^{-1/r}\gg d^2L^{-1/r}, 
    \end{equation*}
    as needed. 
\end{proof}

\section{Iterative densification} \label{iter densi sec}

We shall separate our analysis based on whether $G$ has a moderately large subgraph of constant density. Formally, we introduce the following definition. 

\begin{df}
    For $p\in (0, 1)$ and $n' > 0$, we call a graph $G$ \emph{$(p, n')$-concentrated} if $G$ has a subgraph on at least $n'$ vertices with density at least $p$; otherwise, we call $G$ \emph{$(p, n')$-dispersed}. 
\end{df}

For a $(p, n')$-concentrated graph $G$, the results of Section~\ref{dense graph sec} apply to its induced subgraph at least $n'$ vertices with density at least $p$. 

\begin{lem} \label{concentrated case lem}
    For any $p, q\in (0, 1)$, there exists $c = \clem{concentrated case lem}(p, q) > 0$ such that the following holds. Let $G$ be a $(p, n')$-concentrated $n$-vertex graph. If the least eigenvalue $\lambda_n$ of $G$ satisfies $|\lambda_n|\leq cn'^{1/4}$, then $G$ has an induced subgraph on $pn'/2$ vertices with density at least $q$. 
\end{lem}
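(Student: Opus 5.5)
The plan is to pass to the dense subgraph that witnesses concentration and then apply Corollary~\ref{rather dense subgraph cor} to it. Since $G$ is $(p, n')$-concentrated, it has a subgraph on at least $n'$ vertices with density at least $p$. The first step is to make this subgraph induced. Let $W$ be its vertex set and set $G_0 = G[W]$. Adding edges only increases density, so $G_0$ is an induced subgraph on $n_0 := |W|\geq n'$ vertices with density at least $p$.

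The second step transfers the eigenvalue bound to $G_0$. By Cauchy interlacing, the least eigenvalue of an induced subgraph is at least the least eigenvalue of the whole graph, so $|\lambda_{n_0}(G_0)|\leq |\lambda_n|$. We take $c = \clem{concentrated case lem}(p, q) := \clem{rather dense subgraph cor}(p, q)$. Then
\begin{equation*}
    |\lambda_{n_0}(G_0)|\leq |\lambda_n|\leq cn'^{1/4}\leq cn_0^{1/4}.
\end{equation*}

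Corollary~\ref{rather dense subgraph cor} applied to $G_0$ now gives an induced subgraph of $G_0$ on $pn_0/2$ vertices with density at least $q$. This is also an induced subgraph of $G$.

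The only remaining issue is that the statement asks for an induced subgraph on $pn'/2$ vertices, whereas we have one on $pn_0/2\geq pn'/2$ vertices. Passing to a subset of prescribed size does not lower the density by more than a constant factor, by the standard averaging argument used in Lemma~\ref{1 step lem}: a uniformly random subset of a given size has expected density equal to the original density. So some subset of size $pn'/2$ has density at least $q$, and the induced subgraph on it is the required one. Alternatively, one can read ``on $pn'/2$ vertices'' as ``on at least $pn'/2$ vertices''; with that reading the size bound holds directly and no further argument is needed.

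I expect no step to be a real obstacle. The point needing most care is the monotonicity of $|\lambda_n|$ under passing to induced subgraphs, which is exactly what interlacing provides.
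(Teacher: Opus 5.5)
Your proposal is correct and matches the paper's proof: set $c = \clem{rather dense subgraph cor}(p,q)$ and apply Corollary~\ref{rather dense subgraph cor} to an induced subgraph on at least $n'$ vertices with density at least $p$. Cauchy interlacing, which the paper leaves implicit, gives $|\lambda_{\min}(G_0)|\leq |\lambda_n|\leq c\,n_0^{1/4}$. Your extra averaging step to reach exactly $pn'/2$ vertices is harmless, since the paper simply reads the conclusion as ``at least $pn'/2$ vertices'', as in its later use of the lemma.
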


\begin{proof}
    Taking $\clem{concentrated case lem}(p, q) = \clem{rather dense subgraph cor}(p, q) > 0$, the conclusion follows once we apply Corollary~\ref{rather dense subgraph cor} to an induced subgraph of $G$ on at least $n'$ vertices with density at least $p$. 
\end{proof}

For any prescribed density $q\in (0, 1)$, Lemma~\ref{concentrated case lem} allows us to produce an induced subgraph of density at least $q$ from the induced subgraph of density at least $p$. The interest in this densification from one constant density to another lies in the fact that we are able to exert strong controls on the least eigenvalue of graphs with sufficiently high density. For instance, for regular graphs with density above and bounded away from $1/2$, a strong lower bound for the absolute value of the least eigenvalue may be deduced from Lemma~1.6 of~\cite{rst}. In general, for graphs which are not necessarily regular, we record the following, slightly weaker, result from~\cite{graph} applying to graphs with density at least $1 - 10^{-6}$, which suffices for our purposes. 

\begin{lem}[Theorem~9.1(a) of~\cite{graph}] \label{fairly dense lem}
    There exists an absolute constant $\conlem{q}{fairly dense lem}\in (0, 1)$, and for any $\delta > 0$, there exists $N = \conlem{N}{fairly dense lem}(\delta)\in\mathbb{Z}^+$ such that the following holds. Let $G$ be a graph on $n\geq N$ vertices with density at least $q$ and least eigenvalue $\lambda_n$. If $|\lambda_n|\leq n^{1/4 - \delta}$, then $G$ has an induced subgraph on at least $n^{1 - \delta}$ vertices with density at least $1 - |\lambda_n|^2n^{-1 + \delta}$. 
\end{lem}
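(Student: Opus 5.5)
This statement is quoted from~\cite{graph} (Theorem~9.1(a)), so within this paper we take it as given. If one wanted to reprove it with the present tools, I would proceed as follows. Let $L = |\lambda_n| \leq n^{1/4-\delta}$. The first step is to control the complement. Since $A + LI \succeq 0$ and $G$ has density at least $q$ close to $1$, the matrix $J - I - A = \bar A$ satisfies
\begin{equation*}
\bar A \preceq J + (L-1)I.
\end{equation*}
Hence $\bar A$ has at most one eigenvalue exceeding $L - 1$. Its total quadratic mass is $\tr \bar A^2 = 2e(\bar G) \leq (1-q)n^2$.

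I would then aim to show that $\bar G$ has an induced subgraph on at least $n^{1-\delta}$ vertices with few edges, namely average degree at most $L^2 n^{\delta}$. The natural approach is iterative. Whenever the top eigenvalue $\mu_1$ of $\bar A$ on the current vertex set $W$ is large compared with $L$, the leading eigenvector of $\bar A$ must be roughly aligned with that of $A$, which in turn is nearly a multiple of $1_W$. This is because $A \succeq -LI$ and $A + \bar A = J - I$ on $W$, so
\begin{equation*}
\mu_1(\bar A_W) \leq |W| - \mu_1(A_W) + \text{small}.
\end{equation*}

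The key step is to trade the bound $\bar A_W \preceq J + LI$ for an edge count. Testing against $1_U/\sqrt{|U|}$ for suitable $U \subseteq W$ gives
\begin{equation*}
\frac{2e(\bar G[U])}{|U|} \leq |U|\cdot \frac{1_U^T\bar A 1_U}{|U|^2}.
\end{equation*}
This is only useful once we restrict to sets $U$ on which the projection of $1_U$ onto the top eigenvector of $\bar A$ is controlled. The plan is to remove high-degree vertices of $\bar G$ and pass to a random subset $U$ of size $n^{1-\delta}$. For a random $U$, both the contribution $J$ and the contribution $LI$ appear with a size-dependent weight. Choosing $|U| = n^{1-\delta}$ balances the two, giving
\begin{equation*}
d_{\bar G[U]} \lesssim L^2 n^{\delta},
\end{equation*}
which is exactly the required density $1 - L^2 n^{-1+\delta}$.

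The main obstacle is the rank-one $J$ term. A large top eigenvalue of $\bar A$, for example when $\bar G$ is itself a dense multipartite-like graph, cannot be ruled out directly. This is where the hypothesis $L \leq n^{1/4-\delta}$ must enter, together with the high density $q$. Arguing as in Theorem~\ref{1/4 thm}', I would use Lemma~\ref{dense lem} and Proposition~\ref{dense regpartition reduction prop} to show that $G$ is close to a disjoint union of cliques. Then one clique $C$ of size linear in $n$ captures most of the density, and on $C$ the complement has no large eigenvalue, so the random-subset argument can be run there. I expect the bookkeeping in this reduction, with a loss of only $n^{\delta}$ rather than a constant factor, to be the hardest part. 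Given that difficulty, the efficient choice is to simply invoke~\cite{graph}.
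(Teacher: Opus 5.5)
Invoking Theorem~9.1(a) of~\cite{graph} is exactly what the paper does: the lemma is imported without proof. The reproof you sketch, however, is not a viable alternative, and its difficulty is not a matter of bookkeeping.

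\textbf{The central mechanism cannot work.} Testing $\bar A\preceq J+(L-1)I$ against $1_U$ gives only $2e(\bar G[U])\le |U|^2+(L-1)|U|$, which is trivial. No choice of $|U|$ ``balances'' the two terms. A uniformly random $U\subseteq W$ satisfies $\mathbb{E}\,e(\bar G[U])=e(\bar G[W])\binom{|U|}{2}/\binom{|W|}{2}$, so it inherits the complement density of $W$. Sampling can therefore never produce complement density $|\lambda_n|^2n^{-1+\delta}$ unless $W$ already has it.

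To get rid of the rank-one $J$ term you need structured test vectors orthogonal to the all-ones vector, as in the proof of Lemma~\ref{very dense no subgraph passage ln bound lem}. Even that lemma only applies once $\Delta(\bar G)\le c\,n^{2/3}$. Getting from complement density $1-q$ down to that regime is precisely the content of the cited theorem.

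Your displayed bound $\mu_1(\bar A_W)\le |W|-\mu_1(A_W)+\text{small}$ is also false. Take a clique on $(1-\eta)n$ vertices plus $\eta n$ isolated vertices. Then $|\lambda_n|=1$ and the density is about $(1-\eta)^2$, yet $\mu_1(\bar A)\approx\sqrt{\eta}\,n$ while $n-\mu_1(A)\approx \eta n$. Weyl's inequality only gives the reverse direction, $\mu_1(\bar A_W)\ge |W|-1-\mu_1(A_W)$.

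\textbf{The reduction through Theorem~\ref{1/4 thm}' is too weak.} It only certifies that $G$ is $\epsilon$-close to a disjoint union of cliques, with $\epsilon$ decaying at an induced-removal-lemma rate. Nothing in that argument prevents the complement inside the large clique $C$ from having $\epsilon n^2$ edges, and hence a top eigenvalue of order $\epsilon n$. So the claim that on $C$ the complement has no large eigenvalue is unjustified. The gap to the required deficit $|\lambda_n|^2n^{-1+\delta}\le n^{-1/2-\delta}$ is polynomial.

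You also cannot upgrade to polynomial closeness via Theorem~\ref{1/4 poly thm}. In this paper that theorem is proved through Proposition~\ref{1/4 poly clique} and Corollary~\ref{abstract rec 0 cor}, which rely on the very lemma at hand, so the argument would be circular. In any case, $n^{-\alpha}$-closeness with small $\alpha$ would still fall well short of $n^{-1/2}$.
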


We now treat iterative densification procedure for $(p, n')$-dispersed graphs. We first introduce the relevant terminology. 

\begin{df}
    Given a $n$-vertex graph $G$ with positive average degree $d$, and a positive real number $L > 1$, let $a_L(G) = \log_{L} (\clem{1 step lem}d)$ and $b_L(G) = \log_L \left(\frac{n}{\clem{1 step lem}d}\right)$. 
\end{df}

We impose a partial order on tuples in $\mathbb{R}^2$. 

\begin{df} \label{tuple po df}
    Let $\succcurlyeq$ be the partial order on $\mathbb{R}^2$ where $(a, b)\succcurlyeq (a', b')$ if $a \geq a'$ and $b\leq b'$. 
\end{df}

\begin{df}
    For a graph $G$ and a positive real number $L$, let $D_L(G)\subseteq \mathbb{R}^2$ be the subset consisting of the tuples $(a_L(H), b_L(H))$ for induced subgraphs $H\sqsubseteq G$ with at least one edge. We endow $D_L(G)\subseteq \mathbb{R}^2$ with the partial order $\succcurlyeq$. We let $\partial D_L(G)$ denote the lower closure in $\mathbb{R}^2$ of $D_L(G)$ with respect to $\succcurlyeq$. 
\end{df}

We combine Lemma~\ref{1 step lem} with the choice of the positive integer $r\geq 2$ given by Lemma~\ref{square root lem} to obtain the following result. 

\begin{lem} \label{1 step tuple lem}
    There exists an absolute constant $\Clem{1 step tuple lem} > 0$ such that the following holds. Let $L > 1$ be a positive real number, and let $G$ be an $n$-vertex graph with least eigenvalue $\lambda_n$ such that $|\lambda_n|\leq L$. If $(a, b)\in D_L(G)\cap R$, then $(a - 1/r - \log_L\Clem{1 step tuple lem}, 1/r + \log_L\Clem{1 step tuple lem})\in \partial D_L(G)$, where $r = \floor{f(a, b)}$. 
\end{lem}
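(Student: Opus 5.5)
Since $(a,b)\in D_L(G)$, there is an induced subgraph $H\sqsubseteq G$ with at least one edge and $(a_L(H), b_L(H)) = (a,b)$. Write $n_H = v(H)$ and $d_H = d(H)$. By definition $a = \log_L(\clem{1 step lem}d_H)$ and $b = \log_L(n_H/(\clem{1 step lem}d_H))$. By Cauchy interlacing the least eigenvalue of $H$ has absolute value at most $|\lambda_n|\le L$. The plan is to apply Lemma~\ref{1 step lem} to $H$ with $r=\floor{f(a,b)}$. This produces an induced subgraph $H'\sqsubseteq H\sqsubseteq G$ on at most $d_H$ vertices with at least $\clem{1 step lem}d_H^2L^{-1/r}$ edges. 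We then compute $(a_L(H'),b_L(H'))$ and check that it dominates the claimed tuple in the order $\succcurlyeq$. Membership in the lower closure $\partial D_L(G)$ then follows immediately.

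The first step is verifying the hypotheses of Lemma~\ref{1 step lem}. Since $(a,b)\in R$, we have $a>b+4$, which translates to $n_H^{1/2}L^2 < \clem{1 step lem}d_H$. Also $b>0$ gives $\clem{1 step lem}d_H<n_H$. Lemma~\ref{square root lem}, applied with $c=\clem{1 step lem}$ and this $(a,b)$, then gives exactly the condition
\[
n_H^{1-\frac1r}L^{\frac2r+\frac1{r-1}}\le \clem{1 step lem}d_H .
\]
We also need $r\ge 2$, which holds since $f(a,b)>2$. Together with $|\lambda_{\min}(H)|\le L$ and $L>1$, all hypotheses hold.

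The second step is the computation. Let $m=v(H')\le d_H$ and $e=e(H')\ge \clem{1 step lem}d_H^2L^{-1/r}$. Since $m\le d_H$, we have $e\le m^2/2$, which forces $m\ge \sqrt{2\clem{1 step lem}}\,d_H L^{-1/(2r)}$. Then $d(H') = 2e/m \ge 2\clem{1 step lem}d_H^2L^{-1/r}/d_H = 2\clem{1 step lem}d_HL^{-1/r}$, so
\[
a_L(H')\ge a - \tfrac1r + \log_L(2\clem{1 step lem}).
\]
For the second coordinate, $n'/d(H') = m^2/(2e) \le d_H^2/(2\clem{1 step lem}d_H^2L^{-1/r}) = L^{1/r}/(2\clem{1 step lem})$. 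Hence
\[
b_L(H') = \log_L\!\left(\frac{m}{\clem{1 step lem}d(H')}\right)\le \tfrac1r + \log_L\!\left(\frac{1}{2\clem{1 step lem}^2}\right).
\]

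Finally, choose $\Clem{1 step tuple lem}=\max\bigl(1/(2\clem{1 step lem}),\,1/(2\clem{1 step lem}^2)\bigr)$; since $\clem{1 step lem}<0.1$, this is larger than $1$. With this choice,
\[
a_L(H')\ge a-\tfrac1r-\log_L\Clem{1 step tuple lem}
\quad\text{and}\quad
b_L(H')\le \tfrac1r+\log_L\Clem{1 step tuple lem},
\]
which says $(a_L(H'),b_L(H'))\succcurlyeq$ the target tuple. Since $H'$ has at least one edge (its edge count is positive), $(a_L(H'),b_L(H'))\in D_L(G)$, so the target tuple lies in $\partial D_L(G)$. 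The only potentially delicate point is the first step, namely checking that $(a,b)\in R$ supplies precisely the inequalities Lemma~\ref{square root lem} needs, including $\clem{1 step lem}d_H<n_H$. The rest is bookkeeping of constants.
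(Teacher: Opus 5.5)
Your proposal is correct and matches the paper's proof essentially step for step. You pass to the induced subgraph $H$ realising $(a,b)$, use Cauchy interlacing and Lemma~\ref{square root lem} to verify the hypotheses of Lemma~\ref{1 step lem}, and then bound $a_L(H')$ and $b_L(H')$ for the resulting subgraph $H'$; your constant $\Clem{1 step tuple lem}=\max\bigl(1/(2\clem{1 step lem}),1/(2\clem{1 step lem}^2)\bigr)$ differs only cosmetically from the paper's $\max(\clem{1 step lem}^{-2},\clem{1 step lem}^{-1})$, and your explicit checks that $\clem{1 step lem}d_H<n_H$ and that $H'$ has an edge fill in details the paper leaves implicit.
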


\begin{proof}
    Let $(a, b) = (a_L(H), b_L(H)) = \left(\log_L (\clem{1 step lem}d(H)), \log_L\left(\frac{v(H)}{\clem{1 step lem}d(H)}\right)\right)$ for an induced subgraph $H\sqsubseteq G$ containing at least one edge. 
    
    By Cauchy's interlacing theorem, the absolute value of the least eigenvalue of $H$ is at most $L$. Since $(a, b)\in R$, we have $v(H)^{1/2}L^2 < \clem{1 step lem}d$. By Lemma~\ref{square root lem}, we have $v(H)^{1 - \frac{1}{r}}L^{\frac{2}{r} + \frac{1}{r - 1}}\leq \clem{1 step lem}d$. Hence $H, L$, and $r$ satisfy the conditions of Proposition~\ref{1 step lem}. Therefore, $H$ has an induced subgraph $H'$ on at most $d(H)$ vertices with at least $\clem{1 step lem}d(H)^2L^{-1/r}$ edges. Therefore, $a_L(H')\geq \log_L(2\clem{1 step lem}) + a - 1/r$, and $b_L(H')\leq 1/r - \log_L(2\clem{1 step lem}^2)$. Therefore, taking $\Clem{1 step tuple lem} = \max(\clem{1 step lem}^{-2}, \clem{1 step lem}^{-1})$, we have $(a - 1/r - \log_L\Clem{1 step tuple lem}, 1/r + \log_L\Clem{1 step tuple lem})\in \partial D_L(G)$. 
\end{proof}

We now show that $f$ is strictly increasing with respect to $\succcurlyeq$. 

\begin{lem}
    If $(a, b), (a', b')\in R$ satisfy $(a, b)\succcurlyeq (a', b')$, then $f(a, b)\geq f(a', b')$. Furthermore, equality holds only when $(a, b) = (a', b')$. 
\end{lem}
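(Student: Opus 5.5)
The plan is to rewrite $f$ in a form where monotonicity in each coordinate is evident, and then compare $(a,b)$ with $(a',b')$ through an intermediate point, changing one coordinate at a time.

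First, set $u = a - 3$. From the second expression for $f$ in its definition,
\begin{equation*}
    f(a,b) = 1 + \frac{u + \sqrt{u^2 - 4b}}{2b}.
\end{equation*}
We check that this is well defined on $R$, with a strictly positive numerator. Since $R\subseteq \mathbb{R}_+^2$, we have $b > 0$. For $(a,b)\in R$ we also have $u = a - 3 > b + 1 > 0$. Hence $u^2 > (b+1)^2 \geq 4b$ by AM--GM, so the square root is of a strictly positive quantity and the numerator $u + \sqrt{u^2-4b}$ is strictly positive.

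Second, we establish monotonicity in each variable separately. For fixed $b$, the map $u\mapsto u + \sqrt{u^2-4b}$ is strictly increasing in $u$ on the relevant range. Dividing by the positive constant $2b$ preserves this, so $f$ is strictly increasing in $a$. For fixed $a$, increasing $b$ strictly decreases $\sqrt{u^2-4b}$, so the positive numerator strictly decreases, while the positive denominator $2b$ strictly increases. Hence $f$ is strictly decreasing in $b$.

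Third, we combine the two via an intermediate point. Given $(a,b)\succcurlyeq(a',b')$, that is $a\geq a'$ and $b\leq b'$, consider $(a',b)$. This point lies in $R$: we have $b>0$ and $a' > b'+4 \geq b+4$. The intermediate point must be chosen this way, rather than as $(a,b')$, precisely so that membership in $R$ follows from $(a',b')\in R$. The two one-variable monotonicity statements then give
\begin{equation*}
    f(a,b)\geq f(a',b)\geq f(a',b').
\end{equation*}
The first inequality is strict if $a>a'$, and the second is strict if $b<b'$. Therefore equality forces $a=a'$ and $b=b'$. There is no real obstacle here; the only points needing care are the positivity of the radicand and of the numerator (handled in the first step) and the choice of intermediate point.
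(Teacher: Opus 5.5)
Your proof is correct, but it takes a different route from the paper's.

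\textbf{Your route.} You pass to the closed form $f(a,b) = 1 + \frac{u+\sqrt{u^2-4b}}{2b}$ with $u = a-3$. You verify that the radicand and the numerator are positive. You then prove strict monotonicity in each coordinate separately and chain the two through an intermediate point.

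\textbf{The paper's route.} The paper never handles the square root. It sets $x = f(a',b')$ and rewrites the defining quadratic as $(x-1)^2 b - (x-1)a + (3x-2)$. For fixed $x>1$ (which holds since $f>2$ on $R$), this is increasing in $b$ and decreasing in $a$. Hence the quadratic for $(a,b)$ is $\le 0$ at $x$. Since $f(a,b)$ is the largest point where it is $\le 0$, this gives $x \le f(a,b)$. If equality holds, then $(x-1)^2(b-b') - (x-1)(a-a') = 0$ with both terms nonpositive, so $a=a'$ and $b=b'$.

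\textbf{Comparison.} The paper's argument treats both coordinates at once, needs no intermediate point and no positivity checks on a radicand. It also uses exactly the characterisation of $f$ as the largest $x$ satisfying the quadratic inequality. That is the property used when $r$ is chosen in Lemma~\ref{square root lem}, and which gives monotonicity of $\lfloor f\rfloor$ in Corollary~\ref{partial to partial cor}. Your argument is more computational but entirely self-contained and equally short.

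\textbf{A small correction.} Your remark that the intermediate point must be $(a',b)$ rather than $(a,b')$ is not right. Since $a \ge a' > b'+4$ and $b' \ge b > 0$, the point $(a,b')$ also lies in $R$. More generally, every point $\succcurlyeq (a',b')$ does, because $R$ is upward closed. So either intermediate point works. This does not affect the validity of your proof.
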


\begin{proof}
    Let $x = f(a', b')$. By definition, 
    \begin{equation*}
        b'x^2 - (a' + 2b' - 3)x + (a' + b' - 2) = (x - 1)^2 b' - (x - 1)a' + (3x - 2) = 0. 
    \end{equation*}
    Since $x - 1 > 0$, $a\geq a'$ and $b\leq b'$, we have
    \begin{equation} \label{f monotonic proof eq}
        bx^2 - (a + 2b - 3)x + (a + b - 2) = (x - 1)^2 b - (x - 1)a + (3x - 2)\leq 0. 
    \end{equation}
    By the definition of $f(a, b)$, we therefore have $x\leq f(a, b)$ as needed. If $x = f(a, b)$, then equality must hold in~(\ref{f monotonic proof eq}). Hence $a = a'$ and $b = b'$ as needed. 
\end{proof}

\begin{cor} \label{partial to partial cor}
    Let $L > 1$ be a positive real number, and let $G$ be an $n$-vertex graph with least eigenvalue $\lambda_n$ such that $|\lambda_n|\leq L$. If $(a, b)\in \partial D_L(G)\cap R$, then $(a - 1/r - \log_L\Clem{1 step tuple lem}, 1/r + \log_L\Clem{1 step tuple lem})\in \partial D_L(G)$, where $r = \floor{f(a, b)}$. 
\end{cor}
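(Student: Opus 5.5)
The plan is to reduce Corollary~\ref{partial to partial cor} to Lemma~\ref{1 step tuple lem}, using the monotonicity of $f$ established in the preceding lemma. Let $(a,b)\in \partial D_L(G)\cap R$. By definition of the lower closure, there is some $(a',b')\in D_L(G)$ with $(a',b')\succcurlyeq (a,b)$, that is, $a'\geq a$ and $b'\leq b$.

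First I check that $(a',b')\in R$. Indeed, $a'-b'\geq a-b>4$. Positivity of $a',b'$ may need a word: $b'\geq 0$ holds essentially because $v(H)\geq d(H)$ and $\clem{1 step lem}<1$. If $R$ is strictly required to lie in $\mathbb{R}_+^2$, note that the formula for $f$ and Lemma~\ref{1 step tuple lem} only use $b>0$ and $a-b>4$, together with $b'=\log_L(v(H)/(\clem{1 step lem}d(H)))>0$. So $(a',b')\in D_L(G)\cap R$.

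Next I apply Lemma~\ref{1 step tuple lem} to $(a',b')$ with $r'=\floor{f(a',b')}$. This gives
\begin{equation*}
    \left(a'-1/r'-\log_L\Clem{1 step tuple lem},\ 1/r'+\log_L\Clem{1 step tuple lem}\right)\in\partial D_L(G).
\end{equation*}
By the monotonicity lemma, $f(a',b')\geq f(a,b)$, hence $r'\geq r=\floor{f(a,b)}$ and so $1/r'\leq 1/r$. Consequently
\begin{equation*}
    a'-1/r'-\log_L\Clem{1 step tuple lem}\ \geq\ a-1/r-\log_L\Clem{1 step tuple lem}
    \quad\text{and}\quad
    1/r'+\log_L\Clem{1 step tuple lem}\ \leq\ 1/r+\log_L\Clem{1 step tuple lem}.
\end{equation*}
In other words, the point produced for $(a',b')$ dominates the target point $(a-1/r-\log_L\Clem{1 step tuple lem},\,1/r+\log_L\Clem{1 step tuple lem})$ under $\succcurlyeq$. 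Since $\partial D_L(G)$ is a lower closure, it is downward closed, so the target point lies in it, as required.

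The only delicate point is the membership $(a',b')\in R$, specifically the positivity convention. I expect this to be routine once one observes that every element of $D_L(G)$ has $b>0$, which follows from the choice of $\clem{1 step lem}<1$.
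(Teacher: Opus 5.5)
Your proposal is correct and matches the paper's proof essentially step for step. Like the paper, you choose a dominating $(a',b')\in D_L(G)$, check $(a',b')\in R$ using $a'-b'\geq a-b$ and $b'>0$, apply Lemma~\ref{1 step tuple lem}, use the monotonicity of $f$ to get $r'\geq r$, and conclude because $\partial D_L(G)$ is downward closed.
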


\begin{proof}
    Let $(a', b')\in D_L(G)$ such that $(a', b')\succcurlyeq (a, b)$. Note that $a' - b'\geq a - b\geq 4$, and $b' > 0$ since $(a', b')\in D_L(G)$. Therefore, $(a', b')\in R$. Hence, by Lemma~\ref{1 step tuple lem}, then $(a' - 1/r' - \log_L\Clem{1 step tuple lem}, 1/r' + \log_L\Clem{1 step tuple lem})\in \partial D_L(G)$, where $r' = \floor{f(a', b')}\geq r = \floor{f(a, b)}$. Therefore, $(a' - 1/r' - \log_L\Clem{1 step tuple lem}, 1/r' + \log_L\Clem{1 step tuple lem})\succcurlyeq (a - 1/r - \log_L\Clem{1 step tuple lem}, 1/r + \log_L\Clem{1 step tuple lem})$. Hence $(a - 1/r - \log_L\Clem{1 step tuple lem}, 1/r + \log_L\Clem{1 step tuple lem})\in \partial D_L(G)$, as needed. 
\end{proof}

We now make use of $(p, n')$-dispersiveness. 

\begin{lem} \label{dispersed lemma}
    Let $p\in (0, 1)$, let $n' > 0$, and let $G$ be a $(p, n')$-dispersed graph. Let $L > 1$ be a real number. If $(a, b)\in \partial D_L(G)$ satisfies $a\geq \log_L n'$, then $b\geq \log_L (p^{-1})$. In particular, if $a + b\geq \log_L(p^{-1}n')$, then $b\geq \log_L(p^{-1})$. 
\end{lem}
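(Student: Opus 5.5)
The plan is to unwind the definitions and reduce to a single induced subgraph $H\sqsubseteq G$ with at least one edge. Take $(a,b)\in \partial D_L(G)$. Since $\partial D_L(G)$ is the lower closure of $D_L(G)$ under $\succcurlyeq$, there is an induced subgraph $H$ with $(a_L(H), b_L(H))\succcurlyeq (a,b)$. That is,
\begin{equation*}
    a_L(H)\geq a\geq \log_L n' \quad\text{and}\quad b_L(H)\leq b .
\end{equation*}
It therefore suffices to show $b_L(H)\geq \log_L(p^{-1})$, since then $b\geq b_L(H)\geq \log_L(p^{-1})$.

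To get this, translate the two coordinates back into graph parameters. Write $d = d(H)$ and $m = v(H)$. Then $a_L(H)\geq \log_L n'$ means $\clem{1 step lem}d\geq n'$. Since $\clem{1 step lem}<1$, this gives $m > d\geq n'$, so $H$ has at least $n'$ vertices. Because $G$ is $(p,n')$-dispersed, $H$ (an induced subgraph, hence a subgraph) cannot have density at least $p$, so $d/(m-1) < p$. Then
\begin{equation*}
    \frac{m}{\clem{1 step lem}d}\geq \frac{m}{d} > \frac{m}{m-1}\cdot p^{-1}\geq p^{-1},
\end{equation*}
again using $\clem{1 step lem}\leq 1$. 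Taking $\log_L$ (valid as $L>1$) gives $b_L(H)\geq \log_L(p^{-1})$, which proves the first assertion.

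For the ``in particular'' part, suppose $a+b\geq \log_L(p^{-1}n')$ and argue by cases. If $a\geq \log_L n'$, the first part applies directly. Otherwise $a<\log_L n'$, and then
\begin{equation*}
    b\geq \log_L(p^{-1}n') - a > \log_L(p^{-1}n') - \log_L n' = \log_L(p^{-1}).
\end{equation*}

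There is no real obstacle here; the only care needed is bookkeeping. One must check the direction of the partial order, which relates $a$ and $b$ to $H$ in opposite ways, and that the constant $\clem{1 step lem}\in(0,1)$ only helps in both inequalities. One must also note that density is defined with $v(H)-1$ in the denominator, so the inequality $m/d>p^{-1}$ still holds.
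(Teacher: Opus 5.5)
Your proof is correct and is essentially the paper's argument run directly rather than by contradiction. The paper assumes $b<\log_L(p^{-1})$, deduces that the witnessing induced subgraph $H$ has density above $p$ and hence fewer than $n'$ vertices, and so contradicts $a\le a_L(H)$; this is exactly the contrapositive of your chain of inequalities. Your case split for the ``in particular'' clause is also correct, and it fills in a step the paper leaves implicit.
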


\begin{proof}
    For the sake of contradiction, let $a\geq \log_L n'$ with $b < \log_L(p^{-1})$. Let $H\sqsubseteq G$ be an induced subgraph of $G$ such that $(a_L(H), b_L(H))\succcurlyeq (a, b)$. Since $\log_L\left(\frac{v(H)}{\clem{1 step lem}d(H)}\right) = b_L(H)\leq b < \log_L(p^{-1})$, the density of $H$ is above $p$. Therefore, as $G$ is $(p, n')$-dispersed, we have $v(H) < n'$. In other words, $a_L(H) + b_L(H) < \log_Ln'$. Therefore, $a\leq a_L(H) < \log_Ln'$, a contradiction. 
\end{proof}

\begin{cor} \label{epsilon densification cor}
    For any $\epsilon\in (0, 1)$, there exists $p = \conlem{p}{epsilon densification cor}(\epsilon)\in (0, 1)$ such that the following holds. Let $n' > 0$. Let $L > 1$ be a positive real number, and let $G$ be a $(p, n')$-dispersed $n$-vertex graph with least eigenvalue $\lambda_n$ such that $|\lambda_n|\leq L$. If $(a, b)\in \partial D_L(G)\cap R$, and $a\geq \log_L(p^{-1}n')$, then $(a - (1 + \epsilon)/r, (1 + \epsilon)/r)\in \partial D_L(G)$, where $r = \floor{f(a, b)}$. 
\end{cor}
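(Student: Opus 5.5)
The plan is to derive this from Corollary~\ref{partial to partial cor} and use Lemma~\ref{dispersed lemma} to absorb the constant error $\log_L\Clem{1 step tuple lem}$ into the slack $\epsilon/r$. We may assume $\Clem{1 step tuple lem}\geq 1$. Set
\begin{equation*}
    p = \conlem{p}{epsilon densification cor}(\epsilon) := \frac{1}{2}\Clem{1 step tuple lem}^{-(1 + 2\epsilon)/\epsilon}\in (0, 1).
\end{equation*}
Fix $(a, b)\in \partial D_L(G)\cap R$ with $a\geq \log_L(p^{-1}n')$, and write $r = \floor{f(a, b)}$.

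First, Corollary~\ref{partial to partial cor} applies directly, since $|\lambda_n|\leq L$. It gives the point
\begin{equation*}
    (a', b') := \left(a - 1/r - \log_L\Clem{1 step tuple lem},\ 1/r + \log_L\Clem{1 step tuple lem}\right)\in \partial D_L(G).
\end{equation*}
The target point is $(a - (1+\epsilon)/r, (1+\epsilon)/r)$. Because $\partial D_L(G)$ is a lower closure, it suffices to show that $(a', b')\succcurlyeq (a - (1+\epsilon)/r, (1+\epsilon)/r)$. Both coordinate comparisons reduce to the single inequality $\log_L\Clem{1 step tuple lem}\leq \epsilon/r$.

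Second, suppose instead that $\log_L\Clem{1 step tuple lem} > \epsilon/r$, and derive a contradiction from dispersedness. In this case
\begin{equation*}
    b' = 1/r + \log_L\Clem{1 step tuple lem} < (1 + 1/\epsilon)\log_L\Clem{1 step tuple lem} < \log_L(p^{-1}),
\end{equation*}
where the last inequality follows from the choice of $p$. On the other hand, $a' + b' = a\geq \log_L(p^{-1}n')$. The "in particular" clause of Lemma~\ref{dispersed lemma}, applied to $(a', b')\in \partial D_L(G)$ and using that $G$ is $(p, n')$-dispersed, then forces $b'\geq \log_L(p^{-1})$. This is a contradiction, so $\log_L\Clem{1 step tuple lem}\leq \epsilon/r$, and we conclude as in the first step.

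The argument is short, and the only real design choice is $p$. It must make $\log_L(p^{-1})$ exceed $(1 + 1/\epsilon)\log_L\Clem{1 step tuple lem}$ uniformly in $L$. This works because both sides scale by the same factor $1/\log L$, so a choice of $p$ depending only on $\epsilon$ suffices.

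The step that needs care is applying Lemma~\ref{dispersed lemma} to the \emph{new} point $(a', b')$ rather than to $(a, b)$. This works because the first coordinate loses exactly what the second gains, so $a' + b' = a$. Applying the lemma to $(a, b)$ would only bound $b$, which does not control $r$.
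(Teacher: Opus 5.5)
Your proof is correct and follows the paper's argument: apply Corollary~\ref{partial to partial cor}, then apply Lemma~\ref{dispersed lemma} to the new point (using $a'+b'=a$) to force $\log_L \Clem{1 step tuple lem}\leq \epsilon/r$. The only differences are cosmetic. The paper argues directly rather than by contradiction, and it takes $p=\Clem{1 step tuple lem}^{-1-1/\epsilon}$; your extra factor $\tfrac12$ and larger exponent, which give strict inequality, are harmless but unnecessary.
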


\begin{proof}
    By Corollary~\ref{partial to partial cor}, $(a - 1/r - \log_L\Clem{1 step tuple lem}, 1/r + \log_L\Clem{1 step tuple lem})\in \partial D_L(G)$. Therefore, it suffices to show that $(a - (1 + \epsilon)/r, (1 + \epsilon)/r)\preccurlyeq(a - 1/r - \log_L\Clem{1 step tuple lem}, 1/r + \log_L\Clem{1 step tuple lem})$. In other words, it suffices to show that
    \begin{equation} \label{epsilon densification cor target}
        \epsilon/r\geq \log_L \Clem{1 step tuple lem}. 
    \end{equation}
    Since $(a - 1/r - \log_L\Clem{1 step tuple lem}, 1/r + \log_L\Clem{1 step tuple lem})\in \partial D_L(G)$, and 
    \begin{equation*}
        a = (a - 1/r - \log_L\Clem{1 step tuple lem}) + (1/r + \log_L\Clem{1 step tuple lem})\geq \log_L(p^{-1}n'), 
    \end{equation*}
    we have $1/r + \log_L\Clem{1 step tuple lem}\geq \log_L (p^{-1})$ by Lemma~\ref{dispersed lemma}. Therefore,~(\ref{epsilon densification cor target}) follows once we take $p = \Clem{1 step tuple lem}^{-1 - 1/\epsilon}\in (0, 1)$. 
\end{proof}

\section{The recursion} \label{rec sec}

Given a suitable tuple in $\partial D_L(G)$, Corollary~\ref{epsilon densification cor} allows us to produce another tuple in $\partial D_L(G)$. Our strategy will involve iterative applications of Corollary~\ref{epsilon densification cor} starting at a suitable initial tuple in $\partial D_L(G)$. Given an $n$-vertex graph $G$ with average degree $d > 0$, one choice of such a tuple is
\begin{equation*}
    (a_L(G), b_L(G)) = \left(\log_L (\clem{1 step lem}d), \log_L \left(\frac{n}{\clem{1 step lem}d}\right)\right)\in \partial D_L(G). 
\end{equation*}

To study the effect of iterative applications of Corollary~\ref{epsilon densification cor}, we define the following function $g_\epsilon$ which, up to a small modification, records the map given by Corollary~\ref{epsilon densification cor}. Iterative applications of Corollary~\ref{epsilon densification cor} correspond to considering successively higher iterates of $g_\epsilon$. 

\begin{df}
    For $\epsilon\in [0, 1)$, let the function $g_\epsilon: R\rightarrow \mathbb{R}_+^2$ be given by
    \begin{equation*}
        g_\epsilon: (a, b)\mapsto \left(a - \frac{1 + \epsilon}{\ceil{f(a, b)} - 1}, \frac{1 + \epsilon}{\ceil{f(a, b)} - 1}\right). 
    \end{equation*}
    Let 
    \begin{equation*}
        R_{\text{stab}, \epsilon} := \bigcap_{\ell\geq 0}g_\epsilon^{-\ell}(R), 
    \end{equation*}
    where $g_\epsilon^{-\ell}(R)$ denotes the $\ell$th pre-image of $R$ under $g_\epsilon$. In other words, $R_{\text{stab}, \epsilon}\subseteq R$ is the set on which arbitrarily high iterates of $g_\epsilon$ are defined. We abbreviate $g:= g_0$ and $R_{\text{stab}}:= R_{\text{stab}, 0}$. 
\end{df}

Strictly speaking, Corollary~\ref{epsilon densification cor} gives rise to the map $(a, b)\mapsto \left(a - \frac{1 + \epsilon}{\floor{f(a, b)}}, \frac{1 + \epsilon}{\floor{f(a, b)}}\right)$. Taking this map as the definition of $g_\epsilon$ would have sufficed for our results at this stage. However, we have taken $\ceil{f(a, b)} - 1$ in place of $\floor{f(a, b)}$ in our definition of $g_\epsilon$ above in anticipation of passing to the limit $\epsilon\rightarrow 0$, in the process of which a suitable notion of lower-continuity will be essential. 

\begin{obs} \label{g epsilon obs}
    If $(a_1, b_1)\succcurlyeq (a_2, b_2)$ are both elements of $R$, then $g_\epsilon(a_1, b_1)\succcurlyeq g_\epsilon(a_2, b_2)$. In other words, for any fixed $\epsilon$, the function $g_\epsilon$ is order-preserving with respect to $\succcurlyeq$. In particular, since $R$ is an upwards-closed subset of $\mathbb{R}_+^2$ with respect to $\succcurlyeq$, so is $g_\epsilon^{-\ell}(R)$ for any $\ell\geq 0$. Hence their intersection $R_{\text{stab}, \epsilon}$ is an upwards-closed subset of $\mathbb{R}_+^2$ with respect to $\succcurlyeq$. 
\end{obs}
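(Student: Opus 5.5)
The observation makes two claims: $g_\epsilon$ is order-preserving, and the sets $g_\epsilon^{-\ell}(R)$ and $R_{\text{stab},\epsilon}$ are upwards closed. I would prove the first directly and get the second by induction on $\ell$.

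For monotonicity, take $(a_1,b_1)\succcurlyeq(a_2,b_2)$ in $R$, so $a_1\geq a_2$ and $b_1\leq b_2$. By the preceding lemma, $f(a_1,b_1)\geq f(a_2,b_2)$. Since $x\mapsto\ceil{x}-1$ is non-decreasing, we get $m_1:=\ceil{f(a_1,b_1)}-1\geq m_2:=\ceil{f(a_2,b_2)}-1$. Both are positive integers: $f>2$ on $R$ gives $\ceil{f}\geq 3$, so $m_i\geq 2$. Write $s_i=(1+\epsilon)/m_i$; then $s_1\leq s_2$. The second coordinates of the images are $s_1\leq s_2$, which is the required direction for $\succcurlyeq$. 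The first coordinates are $a_1-s_1\geq a_2-s_2$, because $a_1\geq a_2$ and $-s_1\geq -s_2$. Hence $g_\epsilon(a_1,b_1)\succcurlyeq g_\epsilon(a_2,b_2)$.

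Next, $R$ itself is upwards closed. If $(a',b')\succcurlyeq(a,b)$ with $(a,b)\in R$, then $a'-b'\geq a-b>4$. Moreover $b'\leq b$, so $b'$ cannot escape positivity upward, and $a'\geq a>0$. The remaining issue is positivity of $b'$. I would handle this by reading $\mathbb{R}_+^2$ as the ambient space in which the closure is taken, so that $b'>0$ is automatic. This is exactly how the statement is phrased: it says upwards closed as a subset of $\mathbb{R}_+^2$.

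Now induct on $\ell$, with base case $\ell=0$ just shown. Suppose $g_\epsilon^{-\ell}(R)$ is upwards closed, and take $y\in g_\epsilon^{-(\ell+1)}(R)$ together with $y'\succcurlyeq y$ in $\mathbb{R}_+^2$. Then $y\in R$, so $y'\in R$, and $g_\epsilon$ is defined at $y'$. By monotonicity $g_\epsilon(y')\succcurlyeq g_\epsilon(y)$, and $g_\epsilon(y)\in g_\epsilon^{-\ell}(R)$. The image $g_\epsilon(y')$ lies in $\mathbb{R}_+^2$: its second coordinate is positive, and its first coordinate is at least that of $g_\epsilon(y)$, which is positive. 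So the induction hypothesis gives $g_\epsilon(y')\in g_\epsilon^{-\ell}(R)$, i.e.\ $y'\in g_\epsilon^{-(\ell+1)}(R)$. Finally, an intersection of upwards-closed sets is upwards closed, which gives the claim for $R_{\text{stab},\epsilon}$.

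The only delicate point is the strict monotonicity of $f$ and the fact that $\ceil{\cdot}-1$ preserves the order. Both are immediate once the previous lemma is in hand, so I expect no real obstacle.
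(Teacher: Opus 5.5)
Your argument is correct and is essentially the paper's own reasoning, which the paper gives only inline rather than as a separate proof. The preceding lemma on $f$ makes $(1+\epsilon)/(\lceil f\rceil - 1)$ non-increasing along $\succcurlyeq$, so $g_\epsilon$ is order-preserving; upward closure of $g_\epsilon^{-\ell}(R)$ in $\mathbb{R}_+^2$ then follows from that of $R$ by induction on $\ell$, and the intersection inherits it (only weak monotonicity of $f$ is needed, so the strictness part of that lemma plays no role).
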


\begin{obs} \label{g epsilon nested obs}
    Given $(a, b)\in R$, and $\epsilon_1 < \epsilon_2$, we have $g_{\epsilon_1}(a, b)\succcurlyeq g_{\epsilon_2}(a, b)$. If $(a, b)\in R_{\text{stab}, \epsilon_2}$, then $g_{\epsilon_1}(a, b)\succcurlyeq g_{\epsilon_2}(a, b)$ must also lie in $R_{\text{stab}, \epsilon_2}$. In other words, $g_{\epsilon_1}$ maps $R_{\text{stab}, \epsilon_2}$ into itself. Hence $R_{\text{stab}, \epsilon_2}\subseteq R_{\text{stab}, \epsilon_1}$. 
\end{obs}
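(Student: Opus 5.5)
The plan is to prove the three assertions in order, each from the previous one together with Observation~\ref{g epsilon obs}.

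\textbf{Step 1: comparing $g_{\epsilon_1}(a,b)$ and $g_{\epsilon_2}(a,b)$.} Both images use the same integer $m := \ceil{f(a, b)} - 1$, since $f$ does not depend on $\epsilon$. As $f(a,b) > 2$ on $R$, we have $\ceil{f(a,b)}\geq 3$ and hence $m\geq 2 > 0$. So the quantity $(1+\epsilon)/m$ is strictly increasing in $\epsilon$. For $\epsilon_1 < \epsilon_2$ this means the first coordinate $a - (1+\epsilon)/m$ is larger for $\epsilon_1$, and the second coordinate $(1+\epsilon)/m$ is smaller for $\epsilon_1$. By Definition~\ref{tuple po df}, this is exactly $g_{\epsilon_1}(a,b)\succcurlyeq g_{\epsilon_2}(a,b)$.

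\textbf{Step 2: $g_{\epsilon_1}$ maps $R_{\text{stab},\epsilon_2}$ into itself.} Take $(a,b)\in R_{\text{stab},\epsilon_2}$. By definition, $g_{\epsilon_2}^{\ell}(a,b)\in R$ for every $\ell\geq 0$. Applying this with $\ell+1$ in place of $\ell$ shows $g_{\epsilon_2}^{\ell}(g_{\epsilon_2}(a,b))\in R$ for all $\ell$, so $g_{\epsilon_2}(a,b)\in R_{\text{stab},\epsilon_2}$. Observation~\ref{g epsilon obs} says $R_{\text{stab},\epsilon_2}$ is upwards-closed under $\succcurlyeq$. Combined with Step~1, this gives $g_{\epsilon_1}(a,b)\in R_{\text{stab},\epsilon_2}$.

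\textbf{Step 3: the inclusion $R_{\text{stab},\epsilon_2}\subseteq R_{\text{stab},\epsilon_1}$.} Start from $(a,b)\in R_{\text{stab},\epsilon_2}$ and argue by induction on $\ell$, using Step~2, that every iterate $g_{\epsilon_1}^{\ell}(a,b)$ is defined and lies in $R_{\text{stab},\epsilon_2}$. Since $R_{\text{stab},\epsilon_2}\subseteq R$, each iterate lies in $R$. This holds for all $\ell\geq 0$, which is precisely the statement $(a,b)\in R_{\text{stab},\epsilon_1}$.

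The only point needing care is the positivity $m\geq 2$ in Step~1, which fixes the direction of the inequality; the remaining steps are formal.
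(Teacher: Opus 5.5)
Your proposal is correct and follows the same route as the paper's inline argument: compare the two images coordinatewise using the common integer $\lceil f(a,b)\rceil - 1$, then use $g_{\epsilon_2}(a,b)\in R_{\text{stab},\epsilon_2}$ together with the upward-closedness from Observation~\ref{g epsilon obs}, and iterate to get the inclusion. You make explicit the details the paper leaves implicit, namely that $m\geq 2>0$ fixes the direction of the comparison, that $R_{\text{stab},\epsilon_2}$ is mapped into itself by $g_{\epsilon_2}$, and the induction on $\ell$.
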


Given $(a, b)\in R_{\text{stab}, \epsilon}$, let $(a_\ell, b_\ell) = g_\epsilon^{(\ell)}(a, b) \in R$. Since $a_{\ell + 1} = a_\ell - b_{\ell + 1} < a_\ell$, $a_\ell$ is a decreasing sequence which is bounded below by 4. Therefore, $a_\ell$ converges to a limit $a_\infty\geq 4$ as $\ell$ tends to infinity. It follows that $b_\ell$ tends to 0 as $\ell$ tends to infinity. This motivates the following definition. 

\begin{df}
    For $\epsilon\in [0, 1)$, and $(a, b)\in R_{\text{stab}, \epsilon}$, let $g_{\infty, \epsilon}(a, b) = a_\infty$, where $g_\epsilon^{(\ell)}(a, b) \rightarrow (a_\infty, 0)$ as $\ell\rightarrow \infty$. Again we abbreviate $g_\infty := g_{\infty, 0}$. 
\end{df}

\begin{prop} \label{abstract recursion result epsilon}
    Let $\epsilon\in (0, 1)$, and let $p = \conlem{p}{epsilon densification cor}(\epsilon)\in (0, 1)$. Let $L > 1$ and let $G$ be an $n$-vertex graph with least eigenvalue $\lambda_n$ such that $|\lambda_n|\leq L$. If $(a, b)\in \partial D_L(G)\cap R_{\text{stab}, \epsilon}$, then $G$ must be $(p, pL^{g_{\infty, \epsilon}(a, b)})$-concentrated. 
\end{prop}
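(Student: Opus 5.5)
We argue by contradiction: suppose $G$ is $(p, n')$-dispersed with $n' = pL^{g_{\infty,\epsilon}(a,b)}$, and iterate Corollary~\ref{epsilon densification cor} along the orbit $(a_\ell, b_\ell) = g_\epsilon^{(\ell)}(a,b)$. Since $(a,b)\in R_{\text{stab},\epsilon}$, every $(a_\ell, b_\ell)$ lies in $R$. The claim to prove by induction on $\ell$ is that $(a_\ell, b_\ell)\in \partial D_L(G)$ for all $\ell\geq 0$. The base case is the hypothesis.

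For the inductive step, assume $(a_\ell,b_\ell)\in\partial D_L(G)\cap R$. Note that $a_\ell\geq a_\infty = g_{\infty,\epsilon}(a,b)$, because $a_\ell$ is decreasing and converges to $a_\infty$. Hence
\[
a_\ell\geq a_\infty = \log_L(p^{-1}n'),
\]
which is exactly the threshold required in Corollary~\ref{epsilon densification cor}. That corollary then gives $(a_\ell - (1+\epsilon)/r,\,(1+\epsilon)/r)\in\partial D_L(G)$, where $r = \floor{f(a_\ell,b_\ell)}$.

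Since $\ceil{x}-1\leq\floor{x}$ for every real $x$, we have $(1+\epsilon)/(\ceil{f}-1)\geq(1+\epsilon)/\floor{f}$. Therefore
\[
g_\epsilon(a_\ell,b_\ell)\preccurlyeq\left(a_\ell-\frac{1+\epsilon}{r},\,\frac{1+\epsilon}{r}\right).
\]
Because $\partial D_L(G)$ is a lower closure, it follows that $(a_{\ell+1},b_{\ell+1})\in\partial D_L(G)$, completing the induction. One detail needs checking: $f(a_\ell,b_\ell)>2$ ensures $\ceil{f}-1\geq 2$, so $g_\epsilon$ is well defined. This holds since $f>2$ on $R$.

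It remains to derive the contradiction, and this is the step where some care is needed. Apply Lemma~\ref{dispersed lemma} to $(a_\ell,b_\ell)\in\partial D_L(G)$. Since $a_\ell\geq\log_L n'$, the lemma gives $b_\ell\geq\log_L(p^{-1})>0$ for every $\ell$. However, $b_\ell\to 0$ as $\ell\to\infty$, because $a_{\ell+1}=a_\ell-b_{\ell+1}$ and $a_\ell$ converges. This is a contradiction, so $G$ is $(p,n')$-concentrated.

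The main obstacle is the inductive invariant $a_\ell\geq\log_L(p^{-1}n')$ at every step, which the monotone convergence of $a_\ell$ handles. One could instead stop at the first $\ell$ where the hypothesis of Corollary~\ref{epsilon densification cor} fails, but the invariant shows this never happens. If the definition of concentration required exactly $n'$ vertices with some slack, one would apply Lemma~\ref{dispersed lemma} with $a_\ell>\log_L n'$. This strict inequality is available because $a_\ell>a_\infty$: the $a_\ell$ are strictly decreasing, since $b_{\ell+1}>0$.
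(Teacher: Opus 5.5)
Your proof is correct and is essentially the paper's argument: assume $G$ is $(p,n')$-dispersed, push the orbit $g_\epsilon^{(\ell)}(a,b)$ through $\partial D_L(G)$ by induction via Corollary~\ref{epsilon densification cor} (using $a_\ell\geq g_{\infty,\epsilon}(a,b)=\log_L(p^{-1}n')$), then contradict $b_\ell\to 0$ with the bound $b_\ell\geq\log_L(p^{-1})$ from Lemma~\ref{dispersed lemma}. You also check explicitly that $\lceil f\rceil-1\leq\lfloor f\rfloor$, so $g_\epsilon(a_\ell,b_\ell)$ lies below the corollary's output and hence stays in the lower closure; this fills in a step the paper leaves implicit.
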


\begin{proof}
    Let $n' = pL^{g_{\infty, \epsilon}(a, b)}$. For the sake of contradiction, we assume that $G$ is $(p, n')$-dispersed. Let $(a_\ell, b_\ell) = g_\epsilon^{(\ell)}(a, b)\in R$. Note that $a_\ell\geq g_{\infty, \epsilon}(a, b) = \log_L(p^{-1}n')$. 
    
    Therefore, by Corollary~\ref{epsilon densification cor}, if $(a_\ell, b_\ell)\in \partial D_L(G)$, then $(a_{\ell + 1}, b_{\ell + 1})\in \partial D_L(G)$. Since $(a_0, b_0) = (a, b)\in \partial D_L(G)$, we may induct on $\ell$ to show that $(a_\ell, b_\ell)\in \partial D_L(G)$ for all $\ell\geq 0$. Recall that $b_\ell\rightarrow 0$ as $\ell\rightarrow \infty$. 
    
    However, for any $\ell\geq 0$, we have $a_\ell\geq g_{\infty, \epsilon}(a, b)\geq \log_L n'$. Hence by Lemma~\ref{dispersed lemma}, $b_\ell\geq \log_L(p^{-1})$. In other words, $b_\ell$ is bounded away from 0, a contradiction to the fact that $b_\ell\rightarrow 0$ as $\ell\rightarrow \infty$. 
\end{proof}

\subsection{Passing to the limiting recursion}

Before we analyse the recursion driven by $g_\epsilon$ further, we first consider the limit as $\epsilon$ tends to zero. We note that for the majority of our purposes, it would suffice to fix a sufficiently small value of $\epsilon > 0$. We take this limit as $\epsilon\rightarrow 0$ here in the interest of removing artificial clutter. The following lemma gives a precise sense in which $R_{\text{stab}, \epsilon}$ approaches $R_{\text{stab}}$ as $\epsilon$ tends to zero. 

\begin{lem} \label{epsilon to 0 interior}
    We have
    \begin{equation*}
        R_{\text{stab}}^\circ = \bigcup_{\epsilon > 0}R_{\text{stab}, \epsilon}^\circ, 
    \end{equation*}
    where $R_{\text{stab}}^\circ$ and $R_{\text{stab}, \epsilon}^\circ$ denote the interiors of $R_{\text{stab}}$ and $R_{\text{stab}, \epsilon}$, respectively. 
\end{lem}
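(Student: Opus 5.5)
One inclusion is immediate and the other needs a compactness-type argument. By Observation~\ref{g epsilon nested obs}, $R_{\text{stab}, \epsilon}\subseteq R_{\text{stab}}$ for every $\epsilon > 0$. Hence $R_{\text{stab}, \epsilon}^\circ\subseteq R_{\text{stab}}^\circ$, which gives $\supseteq$. The substance is the reverse inclusion: given $(a, b)$ in the interior of $R_{\text{stab}}$, we must find $\epsilon > 0$ and a neighbourhood of $(a, b)$ contained in $R_{\text{stab}, \epsilon}$.

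Because $R_{\text{stab}}$ is upward closed (Observation~\ref{g epsilon obs}), it suffices to find $\epsilon > 0$ such that the single point $(a - \eta, b + \eta)$ lies in $R_{\text{stab}, \epsilon}$. Here $\eta > 0$ is chosen small enough that $(a - 2\eta, b + 2\eta)\in R_{\text{stab}}$. Indeed, $R_{\text{stab}, \epsilon}$ is also upward closed, so it then contains the open set $\{(a', b') : a' > a - \eta, b' < b + \eta\}$, and this open set contains $(a, b)$.

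To produce such an $\epsilon$, I would compare the $g_\epsilon$-orbit of $x := (a - \eta, b + \eta)$ with the $g$-orbit of $y := (a - 2\eta, b + 2\eta)$, and show inductively that $g_\epsilon^{(\ell)}(x)\succcurlyeq g^{(\ell)}(y)$ for all $\ell$. Since $g^{(\ell)}(y)\in R$ and $R$ is upward closed, this puts every iterate of $x$ in $R$. The mechanism is lower semicontinuity. The function $f$ is continuous and strictly monotone with respect to $\succcurlyeq$, so $\lceil f\rceil - 1$ is lower semicontinuous: it equals $\lfloor f\rfloor$ except where $f$ is an integer, where it drops by one. Consequently $b$-coordinates of the form $1/(\lceil f\rceil - 1)$ can only decrease under strict improvement in the order, and $a$-coordinates only increase.

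The key issue is the infinitely many steps: the slack must not be used up, since each step costs a factor $(1 + \epsilon)$. My plan is to track the first coordinates. For the $g$-orbit of $y$, the sequence $a_\ell$ decreases to $a_\infty\geq 4$, the sequence $b_\ell\to 0$, and $f(a_\ell, b_\ell)\to\infty$. Summing gives $a_0 - a_\infty = \sum_\ell b_{\ell+1}$, which is finite. The $g_\epsilon$-orbit of $x$ starts with first coordinate larger by $\eta$. Provided the $f$-values along this orbit dominate those along the $y$-orbit, so that $\lceil f\rceil - 1$ is at least the corresponding value for $y$, the total extra loss in the first coordinate is at most $\epsilon\sum_\ell b_{\ell + 1}^{(y)}\leq \epsilon a_0$. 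The second coordinates of the $x$-orbit stay at most $(1 + \epsilon)$ times those of the $y$-orbit. So I would maintain the inductive invariant
\[
a^{(x)}_\ell\geq a^{(y)}_\ell + \eta - \epsilon a_0 \qquad\text{and}\qquad b^{(x)}_\ell\leq (1 + \epsilon)b^{(y)}_\ell .
\]

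The hardest step is to show that this mixed invariant still gives $f(x_\ell)\geq f(y_\ell)$, which is what closes the induction. The second coordinate of the $x$-orbit may be slightly worse, so plain $\succcurlyeq$-domination fails. To handle this I would use the explicit formula for $f$. Increasing $a$ by $\eta/2$ raises $f$ by an amount proportional to $1/b$, while scaling $b$ by $(1 + \epsilon)$ lowers $f$ by $O(\epsilon f)$. I would check that $\eta/(2b_\ell)$ beats $\epsilon f(a_\ell, b_\ell)$, which holds because $f\approx a/b$ when $b$ is small. It should hold uniformly along the orbit once $\epsilon\ll\eta/a_0$. The induction then closes, and we conclude $x\in R_{\text{stab}, \epsilon}$.
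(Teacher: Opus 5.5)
Your argument is correct, and it takes a genuinely different route from the paper's.

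\textbf{The paper's route.} The paper never compares whole orbits. From $(a-\delta,b)\in R_{\text{stab}}^\circ$ it gets $g_\infty(a,b)\geq 4+\delta$ (Lemma~\ref{g infty is bounded away from 4 lem}). So the $g$-orbit eventually enters $R_{\delta/20}^\circ$, a region that $g_\epsilon$ maps into itself for every small $\epsilon$ (Lemma~\ref{trivial epsilon stab zone}). It then pulls back through the finitely many earlier steps with Lemma~\ref{pull back lem}. That lemma rests on two things: the openness of $g^{-1}(V)$ (Lemma~\ref{pull back lem alt}, where lower semicontinuity of $\lceil f\rceil-1$ is essential), and a compactness argument on a closed ball.

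\textbf{Your route.} A single global comparison replaces all of this. The slack $\eta$ in the first coordinate pays for all the $(1+\epsilon)$ losses at once, because they telescope to at most $\epsilon a_0$. You get an explicit $\epsilon\asymp\eta/a_0$. You also use only monotonicity of the ceiling, not semicontinuity, so the argument would work verbatim with $\lfloor f\rfloor$.

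\textbf{What each buys.} Your route is shorter and quantitative. The paper's route, in exchange, builds the pull-back and trivial-zone lemmas, which it reuses later (Lemma~\ref{Kt full num estimate}, Proposition~\ref{Kt prop}, and the computation for $(5.43,1)$).

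\textbf{Two details to tighten.}

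First, do the key step exactly rather than to first order. Your ``$O(\epsilon f)$'' estimate for rescaling $b$ is not uniform near $(a-3)^2=4b$, although the comparison you propose to check is the right one. Note that $f-1$ is the larger root of $bu^2-(a-3)u+1$. Keep the invariant in the inductive form $a^{(x)}_\ell\geq a^{(y)}_\ell+\eta-\epsilon(a_0-a^{(y)}_\ell)$, and set $u_0=f(y_\ell)-1$. Then the quadratic for $x_\ell$, evaluated at $u_0$, is at most $u_0(\epsilon b^{(y)}_\ell u_0-\eta/2)$. This is negative, because $b^{(y)}_\ell u_0<a^{(y)}_\ell-3<a_0$ and $\epsilon a_0\leq\eta/2$. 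Hence $f(x_\ell)>f(y_\ell)$ at every step, uniformly.

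Second, once you give up plain $\succcurlyeq$-domination, $x_\ell\in R$ no longer follows from upward closedness of $R$. Check it directly from the invariant instead:
\[
a^{(x)}_\ell-b^{(x)}_\ell-4>\eta-\epsilon a_0-\epsilon b^{(y)}_\ell\geq\eta-2\epsilon a_0\geq 0 \quad\text{for } \epsilon\leq\eta/(2a_0).
\]
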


It follows from Observation~\ref{g epsilon nested obs} that $R_{\text{stab}}^\circ \supseteq \cup_{\epsilon > 0}R_{\text{stab}, \epsilon}^\circ$, hence the content of Lemma~\ref{epsilon to 0 interior} lies in the inclusion $R_{\text{stab}}^\circ \subseteq \cup_{\epsilon > 0}R_{\text{stab}, \epsilon}^\circ$. Before we prove Lemma~\ref{epsilon to 0 interior}, we first establish some auxiliary results. Let $R_{\text{stab}, >0} := \cup_{\epsilon > 0}R_{\text{stab}, \epsilon}^\circ$ denote the union on the right-hand side of Lemma~\ref{epsilon to 0 interior}. 

\begin{lem} \label{pull back lem alt}
    Let $V$ be an open subset of $\mathbb{R}_+^2$ which is upward-closed with respect to $\succcurlyeq$. Then $g^{-1}(V)$ is open. 
\end{lem}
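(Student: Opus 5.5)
The plan is to exploit the fact that $g$ only fails to be continuous through the integer-valued factor $m(a,b) := \ceil{f(a,b)} - 1$, and that this factor can only jump \emph{upwards} under small perturbations. An upward jump moves $g$ upwards with respect to $\succcurlyeq$. Since $V$ is upward-closed, such jumps cannot push $g(a',b')$ out of $V$.

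First I would record two facts. $R = \{(a,b)\in\mathbb{R}_+^2 : a > b + 4\}$ is open in $\mathbb{R}_+^2$. The function $f$ is continuous on $R$, since the discriminant $(a-3)^2 - 4b$ is positive there and $b > 0$. Next, fix $(a,b)\in g^{-1}(V)$ and let $m = m(a,b)$, a positive integer because $f > 2$. Because $\ceil{x} - 1 < x$ for every real $x$, we have $f(a,b) > m$. By continuity of $f$ there is a neighbourhood $W\subseteq R$ of $(a,b)$ on which $f > m$. On $W$ this gives $\ceil{f} \geq m + 1$, i.e.\ $m(a',b')\geq m$. This is the lower semicontinuity of $m$.

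Next consider the auxiliary map $h(a',b') := (a' - 1/m,\, 1/m)$ with $m$ frozen. It is continuous, and $h(a,b) = g(a,b)\in V$. Since $V$ is open, after shrinking $W$ we may assume $h(W)\subseteq V$. For $(a',b')\in W$ we have $1/m(a',b') \leq 1/m$. Hence $a' - 1/m(a',b') \geq a' - 1/m$ and $1/m(a',b') \leq 1/m$, that is, $g(a',b') \succcurlyeq h(a',b')$ in the partial order of Definition~\ref{tuple po df}. As $h(a',b')\in V$ and $V$ is upward-closed, $g(a',b')\in V$. So $W\subseteq g^{-1}(V)$, and $g^{-1}(V)$ is open.

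The only delicate point is the direction of the semicontinuity. It is the reason the paper uses $\ceil{f} - 1$ rather than $\floor{f}$: with $\floor{f}$, the value could drop at integer points and the argument would fail. Once $m(a',b') \geq m(a,b)$ locally is established, the rest is formal.
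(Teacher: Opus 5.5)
Your proposal is correct and matches the paper's proof almost line for line. The paper likewise takes the open set $\{f > \lceil f(a,b)\rceil - 1\}$, on which $\lceil f\rceil - 1$ can only increase. It compares $g$ with the continuous map $h$ obtained by freezing this integer, intersects with $h^{-1}(V)$, and concludes by the upward-closure of $V$. Your remark that the lower semicontinuity of $\lceil f\rceil - 1$, as opposed to $\lfloor f\rfloor$, is what makes this work also agrees with the paper's stated reason for that choice.
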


\begin{proof}
    For any $(a, b)\in g^{-1}(V)$, we seek to find an open neighbourhood of $(a, b)$ in $g^{-1}(V)$. Since $f: R\rightarrow \mathbb{R}$ is continuous, $U:= \{\ceil{f}\geq \ceil{f(a, b)}\} = \{f > \ceil{f(a, b)} - 1\}$ is open. For $(a', b')\in U$, we have
    \begin{equation*}
        g(a', b')\succcurlyeq h(a', b'):=\left(a' - \frac{1}{\ceil{f(a, b)} - 1}, \frac{1}{\ceil{f(a, b)} - 1}\right)
    \end{equation*}
    Since $h$ is continuous, $h^{-1}(V)$ is open. Hence $W := U\cap h^{-1}(V)$ is open. For any $(a', b')\in W$, we have $h(a', b')\in V$ and hence $g(a', b')\in V$. Therefore, $W\subseteq g^{-1}(V)$ is an open neighbourhood of $(a, b)$ in $g^{-1}(V)$, as needed. 
\end{proof}

\begin{lem} \label{pull back lem}
    For $(a, b)\in R$, if $g(a, b)\in R_{\text{stab}, >0}$, then $(a, b)\in R_{\text{stab}, >0}$. 
\end{lem}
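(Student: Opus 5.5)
Set $(a', b') := g(a, b)$. By hypothesis $(a', b') \in R_{\text{stab}, \epsilon_0}^\circ$ for some $\epsilon_0 > 0$. The plan is to find $\epsilon \in (0, \epsilon_0]$ and an open neighbourhood $W$ of $(a, b)$ such that $W \subseteq R$ and $g_\epsilon(W) \subseteq R_{\text{stab}, \epsilon_0}$.

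\textbf{Step 1: this suffices.} Suppose such $\epsilon$ and $W$ exist. By Observation~\ref{g epsilon nested obs}, $g_\epsilon$ maps $R_{\text{stab}, \epsilon_0}$ into itself, and $R_{\text{stab}, \epsilon_0} \subseteq R_{\text{stab}, \epsilon}$. Hence every point of $W$ has all its $g_\epsilon$-iterates in $R$. Thus $W \subseteq R_{\text{stab}, \epsilon}$, so $(a, b) \in R_{\text{stab}, \epsilon}^\circ \subseteq R_{\text{stab}, >0}$.

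\textbf{Step 2: choosing $\epsilon$ and $W$.} Use the openness of $V := R_{\text{stab}, \epsilon_0}^\circ$. This set is open and, by Observation~\ref{g epsilon obs}, upward-closed: the interior of an upward-closed set is upward-closed, since translating by $(+s, -t)$ preserves the order. Let $m := \lceil f(a, b) \rceil - 1$. Pick $\eta > 0$ so small that the open ball of radius $3\eta$ about $(a', b')$ lies in $V$.

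\begin{itemize}
\item Let $U := \{f > m\}$. This set contains $(a, b)$ and is open by continuity of $f$, exactly as in the proof of Lemma~\ref{pull back lem alt}.
\item On $U$ we have $\lceil f \rceil - 1 \geq m$, so for $(a'', b'') \in U$,
\begin{equation*}
    g_\epsilon(a'', b'') \succcurlyeq \left(a'' - \frac{1 + \epsilon}{m}, \frac{1 + \epsilon}{m}\right) =: h_\epsilon(a'', b'').
\end{equation*}
Here we use that $R$ is open and $f$ is continuous. The same comparison as in Lemma~\ref{pull back lem alt} applies, now with the factor $1 + \epsilon$.
\item Note that $h_0(a, b) = (a', b')$. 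Choose $\epsilon$ with $\epsilon / m < \eta$ and $\epsilon \leq \epsilon_0$.
\item Let $W$ be the intersection of $U$, $R$, and the $\eta$-ball about $(a, b)$.
\end{itemize}

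For $(a'', b'') \in W$, the point $h_\epsilon(a'', b'')$ lies within $2\eta$ (up to a constant) of $(a', b')$, hence in $V$. By upward-closedness, $g_\epsilon(a'', b'') \in V \subseteq R_{\text{stab}, \epsilon_0}$, which is what Step 1 requires.

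\textbf{Main obstacle.} The delicate point is the discontinuity of $\lceil f \rceil$. The argument relies on lower semicontinuity: $\{\lceil f \rceil \geq m + 1\}$ is open because $\lceil x \rceil - 1 \geq m$ if and only if $x > m$. This is precisely why $g_\epsilon$ was defined using $\lceil f \rceil - 1$ rather than $\lfloor f \rfloor$. Combined with monotonicity of $g_\epsilon$ and upward-closedness of the target, this handles the jumps. One must also check that $g_\epsilon$ is defined on $W$, which holds since $W \subseteq R$ by construction.
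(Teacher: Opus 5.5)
Your proof is correct and follows essentially the same route as the paper: the open set $\{f > \lceil f(a,b)\rceil - 1\}$, an affine $\succcurlyeq$-minorant of $g_\epsilon$ on it that agrees with $g$ at $(a,b)$ when $\epsilon = 0$, openness and upward-closedness of $R_{\text{stab},\epsilon_0}^\circ$, and the nesting $R_{\text{stab},\epsilon_0}\subseteq R_{\text{stab},\epsilon}$. The only difference is cosmetic. You build the factor $1+\epsilon$ into the minorant $h_\epsilon$ and choose $\epsilon,\eta$ explicitly, where the paper uses a compactness/uniform-convergence step. Your version also makes explicit that the neighbourhood must be shrunk so that the minorant itself lands in $R_{\text{stab},\epsilon_0}^\circ$, which the paper's choice of the ball $K$ leaves implicit.
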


\begin{proof}
    Take $\epsilon_0 > 0$ such that $g(a, b)\in R_{\text{stab}, \epsilon_0}^{\circ }$. In other words, $(a, b)\in g^{-1}(R_{\text{stab}, \epsilon_0}^\circ)$, which is open by Lemma~\ref{pull back lem alt}. Let 
    \begin{equation*}
        h:(a', b')\in R\mapsto \left(a' - \frac{1}{\ceil{f(a, b)} - 1}, \frac{1}{\ceil{f(a, b)} - 1}\right)
    \end{equation*}
    Since $f: R\rightarrow \mathbb{R}$ is continuous, $U:= \{\ceil{f}\geq \ceil{f(a, b)}\} = \{f > \ceil{f(a, b)} - 1\}$ is open. Fix any closed ball $K\subseteq g^{-1}(R_{\text{stab}, \epsilon_0}^\circ)\cap U$ about $(a, b)$. Then $h(K)$ must be a compact subset of $R_{\text{stab}, \epsilon_0}^{\circ}$. Since $g_\epsilon(a', b') - g(a', b') + h(a', b')\rightarrow h(a', b')$ as $\epsilon\rightarrow 0$ uniformly over $(a', b')\in K$, for sufficiently small $\epsilon$, $g_\epsilon(a', b') - g(a', b') + h(a', b')\in R_{\text{stab}, \epsilon_0}^{\circ}$ for all $(a', b')\in K$. Fix such a choice of $\epsilon\in (0, \epsilon_0]$. For any $(a', b')\in K$, we have $g_\epsilon(a', b')\succcurlyeq g_\epsilon(a', b') - g_0(a', b') + h(a', b')\in R_{\text{stab}, \epsilon_0}^{\circ}\subseteq R_{\text{stab}, \epsilon}$, hence $g_\epsilon(a', b')\in R_{\text{stab}, \epsilon}$ and $(a', b')\in R_{\text{stab}, \epsilon}$. Thus $K\subseteq R_{\text{stab}, \epsilon}$. Therefore, $(a, b)\in R_{\text{stab}, \epsilon}^{\circ}\subseteq R_{\text{stab}, >0}$, as needed. 
\end{proof}

\begin{lem} \label{trivial epsilon stab zone}
    For any $\delta\in (0, 0.1)$, let $R_\delta\subseteq \mathbb{R}_+^2$ denote the subset of tuples $(a, b)$ satisfying $b\leq \delta(a - 4 - 10\delta)$. Then for any $\epsilon\in (0, \delta)$, we have $R_\delta\subseteq R_{\text{stab}, \epsilon}$. In particular, $R_\delta^\circ\subseteq R_{\text{stab}, >0}$. 
\end{lem}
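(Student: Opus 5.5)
The plan is to show that $R_\delta$ is a forward-invariant subset of $R$ under $g_\epsilon$. Concretely, I will prove two things: $R_\delta\subseteq R$, and $g_\epsilon(R_\delta)\subseteq R_\delta$ for every $\epsilon\in(0,\delta)$. Induction on $\ell$ then gives $g_\epsilon^{(\ell)}(R_\delta)\subseteq R_\delta\subseteq R$ for all $\ell\geq 0$, that is, $R_\delta\subseteq g_\epsilon^{-\ell}(R)$ for every $\ell$, and hence $R_\delta\subseteq R_{\text{stab},\epsilon}$.

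\textbf{Step 1: $R_\delta\subseteq R$.} Take $(a,b)\in R_\delta$. Since $b>0$, the inequality $b\leq\delta(a-4-10\delta)$ forces $a>4+10\delta$. Since $\delta<1$, it then gives $b<a-4$. So $a>b+4$ and $(a,b)\in R$.

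\textbf{Step 2: a lower bound on $f$.} Substitute $y=x-1$ into the identity used in the monotonicity lemma, namely $(x-1)^2b-(x-1)a+(3x-2)$. This turns the defining inequality of $f$ into
\begin{equation*}
    by^2-(a-3)y+1\leq 0 .
\end{equation*}
So $f(a,b)-1$ is the larger root $y_+$ of this quadratic. The roots have sum $(a-3)/b$ and product $1/b$. The smaller root is therefore $y_-=1/(by_+)$. Because $y_+\geq (a-3)/(2b)$, we get $y_-\leq 2/(a-3)<1$, using $a>4$. Hence
\begin{equation*}
    \ceil{f(a,b)}-1\geq f(a,b)-1\geq \frac{a-3}{b}-1=\frac{a-3-b}{b}.
\end{equation*}
Writing $s=(1+\epsilon)/(\ceil{f(a,b)}-1)$, it follows that $s\leq (1+\epsilon)b/(a-3-b)$.

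\textbf{Step 3: invariance.} Here $g_\epsilon(a,b)=(a-s,s)$. Membership in $R_\delta$ requires
\begin{equation*}
    s\leq\delta(a-s-4-10\delta),\quad\text{i.e.}\quad (1+\delta)s\leq\delta(a-4-10\delta).
\end{equation*}
Combine Step 2 with $b\leq\delta(a-4-10\delta)$ and $\epsilon<\delta$. It then suffices to check that $(1+\delta)^2\leq a-3-b$. From $b\leq\delta(a-4)$ we get
\begin{equation*}
    a-3-b\geq(1-\delta)(a-4)+1> 1+10\delta(1-\delta).
\end{equation*}
For $\delta<0.1$ this exceeds $1+2\delta+\delta^2$, since $8\delta\geq 11\delta^2$ holds there. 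Positivity of both coordinates of the image is clear, since $s>0$ and $a-s>a-b-4>0$.

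\textbf{The ``in particular''.} Fix $\epsilon=\delta/2$. The first part gives $R_\delta\subseteq R_{\text{stab},\epsilon}$, so taking interiors, $R_\delta^\circ\subseteq R_{\text{stab},\epsilon}^\circ\subseteq R_{\text{stab},>0}$.

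I expect the main obstacle to be Step 2, the quantitative lower bound on $\ceil{f}-1$. The rest is bookkeeping, but the margin $10\delta$ in the definition of $R_\delta$ must be large enough to absorb both the $(1+\epsilon)(1+\delta)$ losses and the $-1$ coming from the small root. This is why $a-3-b$ is bounded below using the full slack $a-4>10\delta$ rather than just $a>4$.
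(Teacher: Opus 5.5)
Your strategy is the same as the paper's. You show $R_\delta\subseteq R$ and $g_\epsilon(R_\delta)\subseteq R_\delta$ using a lower bound on $\ceil{f(a,b)}-1$ of order $(a-3)/b$, and then absorb the losses into the slack $10\delta$. Step 1, the reduction in Step 3 and the ``in particular'' are all fine.

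There is, however, one false inequality in Step 2. From $y_+\ge (a-3)/(2b)$ you correctly get $y_-\le 2/(a-3)$. But $2/(a-3)<1$ requires $a>5$, not $a>4$. Moreover, $R_\delta$ contains points with $4+10\delta<a\le 5$: for example $\delta=0.04$, $a=4.5$, $b\le 0.004$, where $2/(a-3)=4/3$. So as written you have not established $\ceil{f(a,b)}-1\ge (a-3-b)/b$, which is the bound Step 3 relies on.

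The conclusion $y_-<1$ is nevertheless true and has a one-line justification. With $Q(y)=by^2-(a-3)y+1$ you have $Q(0)=1>0$ and $Q(1)=b+4-a<0$ for $(a,b)\in R$, so the smaller root lies in $(0,1)$. This is the same fact the paper notes when it says that $x=2$ satisfies the defining quadratic of $f$ strictly on $R$. With that repair, your Step 3 goes through verbatim.

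Alternatively, keep only what $a>4$ actually gives, namely $y_-<2$, i.e.\ $\ceil{f(a,b)}-1\ge (a-3)/b-2$. This is exactly the bound the paper uses, obtained there from $\sqrt{1-u}\ge 1-u$. One then needs $(1+\epsilon)(1+\delta)\le a-3-2b$. This holds because $a-3-2b\ge (1-2\delta)(a-4)+1+20\delta^2\ge 1+10\delta$ whenever $a-4\ge 10\delta$, and $1+10\delta\ge(1+\delta)^2$.
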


\begin{proof}
    Note that $R_\delta\subseteq R$. It suffices to show that $g_\epsilon$ maps $R_\delta$ into itself. Let $(a, b)\in R_\delta$. To show that $g(a, b)\in R_\delta$, we need to prove that
    \begin{equation*}
        \frac{1 + \epsilon}{\ceil{f(a, b)} - 1}\leq \delta \left(a - \frac{1 + \epsilon}{\ceil{f(a, b)} - 1} - 4 - 10\delta\right). 
    \end{equation*}
    Since $(a, b)\in R_\delta$, we have $b\leq \delta \left(a - 4 - 10\delta\right)$. Hence it suffices to show that
    \begin{equation*}
        \frac{1 + \epsilon}{\ceil{f(a, b)} - 1}\leq b - \delta \frac{1 + \epsilon}{\ceil{f(a, b)} - 1}. 
    \end{equation*}
    In other words, we seek to show that
    \begin{equation} \label{trivial epsilon stab zone target}
        \ceil{f(a, b)} - 1\geq \frac{(1 + \epsilon)(1 + \delta)}{b}. 
    \end{equation}
    We have
    \begin{align*}
        f(a, b) &= \frac{a + 2b - 3 + \sqrt{(a - 3)^2 - 4b}}{2b} = \frac{a + 2b - 3 + (a - 3)\sqrt{1 - 4b(a - 3)^{-2}}}{2b}\\
        &\geq \frac{a + 2b - 3 + (a - 3) - 4b(a - 3)^{-1}}{2b} = \frac{a + b - 3}{b} - 2(a - 3)^{-1}\geq \frac{a - 3}{b} - 1. 
    \end{align*}
    Hence $\ceil{f(a, b)} - 1\geq \frac{a - 3}{b} - 2$. Therefore, to establish~(\ref{trivial epsilon stab zone target}), it suffices to show that
    \begin{equation*}
        a - 2b - 3\geq (1 + \epsilon)(1 + \delta).  
    \end{equation*}
    Since $b\leq \delta(a - 4 - 10\delta)$, we have
    \begin{equation*}
        a - 2b - 3\geq a - 3 - 2\delta(a - 4 - 10\delta)\geq 1 + 10\delta\geq (1 + \delta)(1 + \epsilon), 
    \end{equation*}
    as needed. 
\end{proof}

\begin{lem} \label{g infty is bounded away from 4 lem}
    Let $\delta > 0$ and $\epsilon\geq 0$. If $(a, b)\in R_{\text{stab}, \epsilon}$, then $(a + \delta, b)\in R_{\text{stab}, \epsilon}$ and $g_{\infty, \epsilon}(a + \delta, b)\geq 4 + \delta$. 
\end{lem}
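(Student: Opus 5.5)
The plan is to compare the orbit of $(a+\delta, b)$ under $g_\epsilon$ with the orbit of $(a,b)$, and to show by induction on $\ell$ that the two orbits stay exactly a horizontal distance $\delta$ apart in the sense of the order $\succcurlyeq$. Write $(a_\ell, b_\ell) = g_\epsilon^{(\ell)}(a,b)$, which is defined and lies in $R$ for every $\ell$ since $(a,b)\in R_{\text{stab},\epsilon}$. Write $(a'_\ell, b'_\ell)$ for the iterates starting from $(a'_0,b'_0) = (a+\delta, b)$. The claim I would prove inductively is
\begin{equation*}
(a'_\ell, b'_\ell)\succcurlyeq (a_\ell + \delta, b_\ell), \qquad (a'_\ell,b'_\ell)\in R.
\end{equation*}

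Membership in $R$ follows immediately from the first relation. Indeed, $a'_\ell - b'_\ell \geq a_\ell + \delta - b_\ell > 4$, and $b'_\ell > 0$ because $b'_\ell$ is of the form $(1+\epsilon)/(\ceil{f}-1)$ with $\ceil{f}-1\geq 2$ (and $b'_0 = b > 0$). For the inductive step, the key input is the monotonicity of $f$ proved above: since $(a'_\ell,b'_\ell)\succcurlyeq (a_\ell, b_\ell)$, we get $f(a'_\ell,b'_\ell)\geq f(a_\ell,b_\ell)$. Hence $m' := \ceil{f(a'_\ell,b'_\ell)}-1 \geq m := \ceil{f(a_\ell,b_\ell)}-1$, and therefore $b'_{\ell+1} = (1+\epsilon)/m' \leq (1+\epsilon)/m = b_{\ell+1}$. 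For the first coordinate, rather than appealing to Observation~\ref{g epsilon obs} (which only gives comparison with $g_\epsilon(a_\ell,b_\ell)$ and loses the shift), I would write
\begin{equation*}
a'_{\ell+1} = a'_\ell - b'_{\ell+1} \geq a_\ell + \delta - b_{\ell+1} = a_{\ell+1} + \delta.
\end{equation*}
This closes the induction. Consequently every iterate of $(a+\delta,b)$ lies in $R$, so $(a+\delta,b)\in R_{\text{stab},\epsilon}$.

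For the limit statement, we have $a'_\ell \geq a_\ell + \delta$ for all $\ell$. We also know $a_\ell > 4 + b_\ell$, since $(a_\ell,b_\ell)\in R$, and in particular $a_\ell\geq 4$. Passing to the limit, using $a_\ell\to g_{\infty,\epsilon}(a,b)\geq 4$ as recorded after the definition of $g_{\infty,\epsilon}$, gives $g_{\infty,\epsilon}(a+\delta,b) = \lim a'_\ell \geq g_{\infty,\epsilon}(a,b)+\delta \geq 4+\delta$.

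The only delicate point is the first-coordinate bookkeeping: one must use the identity $a_{\ell+1} = a_\ell - b_{\ell+1}$ together with the comparison $b'_{\ell+1}\leq b_{\ell+1}$, so that the shift $\delta$ is preserved rather than merely maintaining $\succcurlyeq$. Everything else is a direct consequence of the monotonicity lemma for $f$.
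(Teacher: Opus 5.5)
Your proof is correct and is essentially the paper's argument. The paper proves the one-step comparison $g_\epsilon(a+\delta,b)\succcurlyeq g_\epsilon(a,b)+(\delta,0)$ using monotonicity of $f$, then inducts (implicitly using that $g_\epsilon$ preserves $\succcurlyeq$, so the shift is not lost). Your direct orbit bookkeeping via $b'_{\ell+1}\le b_{\ell+1}$ and $a_{\ell+1}=a_\ell-b_{\ell+1}$ carries out the same induction explicitly.
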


\begin{proof}
    Note that $(a + \delta, b)\in R$ and
    \begin{align*}
        g_\epsilon(a + \delta, b) &= \left(a + \delta - \frac{1 + \epsilon}{\ceil{f(a + \delta, b)} - 1}, \frac{1 + \epsilon}{\ceil{f(a + \delta, b)} - 1}\right)\\
        &\succcurlyeq \left(a + \delta - \frac{1 + \epsilon}{\ceil{f(a, b)} - 1}, \frac{1 + \epsilon}{\ceil{f(a, b)} - 1}\right) = g_\epsilon(a, b) + (\delta, 0). 
    \end{align*}
    Therefore, it follows from an induction on $\ell$ that for all $\ell\geq 0$, we have
    \begin{equation*}
        g_\epsilon^{(\ell)}(a + \delta, b) \succcurlyeq g_\epsilon^{(\ell)}(a, b) + (\delta, 0), 
    \end{equation*}
    from which the desired conclusion follows. 
\end{proof}

We now give a proof of Lemma~\ref{epsilon to 0 interior}. 

\begin{proof}[Proof of Lemma~\ref{epsilon to 0 interior}]
    By Observation~\ref{g epsilon nested obs}, $R_{\text{stab}, >0}\subseteq R_{\text{stab}}^\circ$. Therefore, it suffices to show that $R_{\text{stab}}^\circ\subseteq R_{\text{stab}, >0}$. 
    
    Let $(a, b)\in R_{\text{stab}}^\circ$. In particular, for some $\delta > 0$, $(a - \delta, b)\in R_{\text{stab}}^\circ$. Hence by Lemma~\ref{g infty is bounded away from 4 lem}, $g_\infty(a, b)\geq 4 + \delta$. Let $(a_\ell, b_\ell) = g^{(\ell)}(a, b)$, we have $a_\ell\geq 4 + \delta$ for all $\ell\geq 0$. Since $b_\ell\rightarrow 0$ as $\ell\rightarrow\infty$, $(a_\ell, b_\ell)\subseteq R_{\delta/20}^\circ$ whenever $\ell$ is sufficiently large. By Lemma~\ref{trivial epsilon stab zone}, $(a_\ell, b_\ell)\in R_{\text{stab}, >0}$ for sufficiently large $\ell$. Therefore, iteratively applying Lemma~\ref{pull back lem}, it follows that $(a, b)\in R_{\text{stab}, >0}$ as needed. Hence $R_{\text{stab}}^\circ\subseteq R_{\text{stab}, >0}$. 
\end{proof}

Proposition~\ref{abstract recursion result epsilon} and Lemma~\ref{epsilon to 0 interior} combine to give the following result which may be viewed as a form of Proposition~\ref{abstract recursion result epsilon} for the limiting recurrence $g$. 

\begin{prop} \label{abstract recursion result 0}
    For any $(a, b)\in R_{\text{stab}}^{\circ}$, there exist $p = \conlem{p}{abstract recursion result 0}(a, b) > 0$ and $\delta = \conlem{\delta}{abstract recursion result 0}(a, b) > 0$ such that the following holds. Let $L > 1$ and let $G$ be an $n$-vertex graph with least eigenvalue $\lambda_n$ such that $|\lambda_n|\leq L$. If $(a, b)\in \partial D_L(G)$, then $G$ must be $(p, pL^{4 + \delta})$-concentrated. 
\end{prop}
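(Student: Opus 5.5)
The plan is to reduce Proposition~\ref{abstract recursion result 0} to Proposition~\ref{abstract recursion result epsilon}, using Lemma~\ref{epsilon to 0 interior} to move from the limiting recursion $g$ to some $g_\epsilon$ with $\epsilon > 0$. Lemma~\ref{g infty is bounded away from 4 lem} will supply the gain $\delta$ in the exponent.

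\textbf{Step 1 (room to the left).} Since $(a, b)\in R_{\text{stab}}^\circ$ and this set is open, choose $\delta > 0$ with $(a - \delta, b)\in R_{\text{stab}}^\circ$.

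\textbf{Step 2 (pass to some $\epsilon > 0$).} By Lemma~\ref{epsilon to 0 interior}, there is $\epsilon\in(0,1)$ such that $(a - \delta, b)\in R_{\text{stab}, \epsilon}^\circ\subseteq R_{\text{stab}, \epsilon}$. This choice of $\epsilon$ depends only on $(a, b)$.

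\textbf{Step 3 (lower bound on the limit).} Apply Lemma~\ref{g infty is bounded away from 4 lem} with this $\epsilon$ to the point $(a - \delta, b)$. It gives $(a, b)\in R_{\text{stab}, \epsilon}$ and $g_{\infty, \epsilon}(a, b)\geq 4 + \delta$.

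\textbf{Step 4 (apply the $\epsilon$-version).} Set $p = \conlem{p}{epsilon densification cor}(\epsilon)$, which again depends only on $(a, b)$. Let $G$ be an $n$-vertex graph with $|\lambda_n|\leq L$ and $(a, b)\in \partial D_L(G)$. Since also $(a, b)\in R_{\text{stab}, \epsilon}$, Proposition~\ref{abstract recursion result epsilon} shows that $G$ is $(p, pL^{g_{\infty, \epsilon}(a, b)})$-concentrated.

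\textbf{Step 5 (monotonicity in $n'$).} Being $(p, n')$-concentrated means having a dense subgraph on at least $n'$ vertices. This property is monotone: it implies $(p, n'')$-concentration for every $n''\leq n'$. Since $L > 1$ and $g_{\infty, \epsilon}(a, b)\geq 4 + \delta$, we have $pL^{g_{\infty, \epsilon}(a, b)}\geq pL^{4 + \delta}$. Hence $G$ is $(p, pL^{4 + \delta})$-concentrated, as required.

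\textbf{Main obstacle.} The only real subtlety is that Lemma~\ref{epsilon to 0 interior} gives membership in $R_{\text{stab}, \epsilon}$ but no quantitative control on $g_{\infty, \epsilon}$. Shifting left by $\delta$ before choosing $\epsilon$, and then appealing to Lemma~\ref{g infty is bounded away from 4 lem}, is exactly what supplies that control. The remaining steps are bookkeeping, mainly checking that $p$ and $\delta$ depend on $(a, b)$ alone.
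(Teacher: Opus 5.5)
Your proof is correct and follows the paper's argument: Lemma~\ref{epsilon to 0 interior} gives some $\epsilon>0$ with room $\delta$ to the left inside $R_{\text{stab},\epsilon}^\circ$; Lemma~\ref{g infty is bounded away from 4 lem} then gives $g_{\infty,\epsilon}(a,b)\geq 4+\delta$; and Proposition~\ref{abstract recursion result epsilon} finishes the argument. The differences are cosmetic: the paper picks $\epsilon$ first and then $\delta$ from the openness of $R_{\text{stab},\epsilon}^\circ$, whereas you pick $\delta$ in $R_{\text{stab}}^\circ$ first (equally valid), and you make explicit the monotonicity of concentration in $n'$, which the paper leaves implicit.
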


\begin{proof}
    By Lemma~\ref{epsilon to 0 interior}, $(a, b)\in R_{\text{stab}, \epsilon}^\circ$ for some $\epsilon > 0$. In particular, there exists $\delta > 0$ such that $(a - \delta, b)\in R_{\text{stab}, \epsilon}^\circ$. Hence, by Lemma~\ref{g infty is bounded away from 4 lem}, we have $g_{\infty, \epsilon}(a, b)\geq 4 + \delta$. Taking $p = \conlem{p}{abstract recursion result epsilon}(\epsilon) > 0$, the desired assertion follows from Proposition~\ref{abstract recursion result epsilon}. 
\end{proof}

Combining Proposition~\ref{abstract recursion result 0} with Lemmas~\ref{concentrated case lem} and~\ref{fairly dense lem}, we arrive at the following corollary. 

\begin{cor} \label{abstract rec 0 cor}
    For any $(a, b)\in R_{\text{stab}}^{\circ}$, there exists $\delta = \conlem{\delta}{abstract rec 0 cor}(a, b) > 0$ such that the following holds. Let $L > 1$ be sufficiently large with respect to $(a, b)$ and let $G$ be an $n$-vertex graph with least eigenvalue $\lambda_n$ such that $|\lambda_n|\leq L$. If $(a, b)\in \partial D_L(G)$, then $G$ has an induced subgraph $H$ such that $v(H)\geq L^{4 + \delta}$ and $p(H)\geq 1 - L^{-2 - \delta}$. 
\end{cor}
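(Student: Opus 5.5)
Our plan is to chain Proposition~\ref{abstract recursion result 0}, Lemma~\ref{concentrated case lem} and Lemma~\ref{fairly dense lem}.

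\textbf{Step 1: a moderately dense subgraph.} Let $p_0 = \conlem{p}{abstract recursion result 0}(a, b)$ and $\delta_0 = \conlem{\delta}{abstract recursion result 0}(a, b)$. By Proposition~\ref{abstract recursion result 0}, $G$ is $(p_0, n')$-concentrated with $n' = p_0L^{4 + \delta_0}$.

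\textbf{Step 2: boost the density to $q$.} Let $q = \conlem{q}{fairly dense lem}$ and $c = \clem{concentrated case lem}(p_0, q)$. Since $n'^{1/4} = p_0^{1/4}L^{1 + \delta_0/4}$, for $L$ large with respect to $(a, b)$ we have
\begin{equation*}
    |\lambda_n|\leq L\leq c\,n'^{1/4}.
\end{equation*}
Lemma~\ref{concentrated case lem} then gives an induced subgraph $G_1$ on $m := p_0n'/2 = p_0^2L^{4 + \delta_0}/2$ vertices with density at least $q$. By Cauchy interlacing, the least eigenvalue of $G_1$ also has absolute value at most $L$.

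\textbf{Step 3: near-complete subgraph.} Pick a small $\eta > 0$ depending on $\delta_0$; its constraints come from the exponent bookkeeping below. For $L$ large we have $m\geq \conlem{N}{fairly dense lem}(\eta)$. We also need $L\leq m^{1/4 - \eta}$, which holds for large $L$ provided
\begin{equation*}
    (4 + \delta_0)(1/4 - \eta) > 1.
\end{equation*}
Lemma~\ref{fairly dense lem} then yields $H\sqsubseteq G_1$ with $v(H)\geq m^{1 - \eta}$ and $p(H)\geq 1 - L^2m^{-1 + \eta}$.

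\textbf{Step 4: exponent bookkeeping (the main obstacle).} Set $\delta = \delta_0/4$. We need two inequalities for large $L$. First, $m^{1 - \eta}\geq L^{4 + \delta}$; this requires $(4 + \delta_0)(1 - \eta) > 4 + \delta_0/4$. Second, $L^2m^{-1 + \eta}\leq L^{-2 - \delta}$; this requires $(4 + \delta_0)(1 - \eta) > 4 + \delta_0/4$ as well. Both hold once $\eta$ is small in terms of $\delta_0$, say $\eta = \delta_0/(100(4 + \delta_0))$, which also satisfies the constraint in Step 3. The constants $p_0^2/2$ are absorbed by the strict exponent gaps when $L$ is large. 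The threshold on $L$ therefore depends only on $(a, b)$, through $p_0$, $\delta_0$, $c$ and $N(\eta)$.
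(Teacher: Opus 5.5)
Your proposal is correct and follows the paper's proof essentially verbatim: the same chain Proposition~\ref{abstract recursion result 0} $\to$ Lemma~\ref{concentrated case lem} $\to$ Lemma~\ref{fairly dense lem}, with the constants absorbed by strict exponent gaps once $L$ is large. Your explicit check of the spectral hypothesis $L\leq m^{1/4-\eta}$ in Step 3 is actually more careful than the paper: the paper's choice $\eta=\delta'/10$ gives $(4+\delta')(1/4-\delta'/10)<1$, so that hypothesis is not verified there, and a choice like yours with $(4+\delta_0)\eta<\delta_0/4$ is what makes the argument go through.
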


\begin{proof}
    By Proposition~\ref{abstract recursion result 0}, $G$ has an induced subgraph $H'$ such that $v(H')\geq pL^{4 + \delta'}$ and $p(H')\geq p$, where $p = \conlem{p}{abstract recursion result 0}(a, b) > 0$ and $\delta' = \conlem{\delta}{abstract recursion result 0}(a, b) > 0$. Without loss of generality, we assume that $\delta' < 0.1$. When $L$ is sufficiently large with respect to $(a, b)$, Lemma~\ref{concentrated case lem} yields an induced subgraph $H''\sqsubseteq H'$ such that $v(H'')\geq pv(H')/2\geq p^2L^{4 + \delta'}/2$ and $p(H'')\geq \conlem{q}{fairly dense lem}$. Let $N = \conlem{N}{fairly dense lem}(\delta'/10)$. When $L$ is sufficiently large with respect to $(a, b)$, $v(H'')\geq N$. Therefore, by Lemma~\ref{fairly dense lem}, $H''$ has an induced subgraph $H$ such that $v(H)\geq v(H'')^{1 - \delta'/10}\geq L^{4 + \delta'/2}$ and $p(H)\geq 1 - L^2 v(H'')^{-1 + \delta'/10}\geq 1 - L^{-2 - \delta'/2}$ (when $L$ is sufficiently large). Taking $\delta = \delta'/2$ suffices. 
\end{proof}

\subsection{Analysing the limiting recursion}

We now turn our attention to the behaviour of the limiting recursion $g$ and $R_{\text{stab}}$. In particular, we establish an explicit subset of $R_{\text{stab}}$. 

\begin{df}
    Take $R_c\subseteq \mathbb{R}_+^2$ be the subset of tuples $(a, b)$ satisfying $b\leq \frac{1}{2}(a - 4)^2 - \frac{1}{8}(a - 4)^3$. 
\end{df}

Note that if $(a, b)\in R_c$, then $\frac{1}{2}(a - 4)^2 - \frac{1}{8}(a - 4)^3\geq 0$, hence $a\leq 8$ and $b\leq 4$. 

\begin{lem}
    We have $R_c\subseteq R_{\text{stab}}$. 
\end{lem}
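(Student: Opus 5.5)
The plan is to prove that $R_c\subseteq R$ and that $g$ maps $R_c$ into itself. Then every iterate $g^{(\ell)}(a,b)$ of a point $(a,b)\in R_c$ stays in $R_c\subseteq R$, so $(a,b)\in g^{-\ell}(R)$ for all $\ell\geq 0$, which is exactly $R_c\subseteq R_{\text{stab}}$. Throughout I write $s=a-4$ and $\varphi(s)=\frac{1}{2}s^2-\frac{1}{8}s^3$, so that $R_c=\{(a,b)\in\mathbb{R}_+^2: b\leq\varphi(a-4)\}$.

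\emph{Step 1: $R_c\subseteq R$.} Membership in $R$ means $b<a-4=s$. Since $b>0$, we need $s\in(0,4)$, and for such $s$ we have $\varphi(s)<s$: this is equivalent to $\frac{s}{2}-\frac{s^2}{8}<1$, i.e. $s^2-4s+8>0$, which always holds. Hence $b\leq\varphi(s)<s$ and $(a,b)\in R$.

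\emph{Step 2: a clean formula for the step size.} Put $x=f(a,b)$ and $y=x-1$. The rewriting $(x-1)^2b-(x-1)a+(3x-2)=0$ used in the monotonicity lemma becomes $by^2-(a-3)y+1=0$. Hence $u_0:=1/y$ satisfies $u_0^2-(a-3)u_0+b=0$, and since $x=f(a,b)$ is the larger root in $x$, $u_0$ is the smaller root:
\[
u_0=\tfrac12\left((a-3)-\sqrt{(a-3)^2-4b}\right)=\frac{b}{a-3-u_0}.
\]
The actual step is $u=1/(\ceil{f(a,b)}-1)$. We have $\ceil{f}-1\geq f-1=y$, and $\ceil{f}-1\geq 2$ because $f>2$. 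Therefore $u\in(0,\min(u_0,1/2)]$, and $g(a,b)=(a-u,u)$.

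\emph{Step 3: invariance.} We need $(a-u,u)\in R_c$, i.e.
\[
h(u):=\varphi(s-u)-u\geq 0 .
\]
My plan is to prove $h\geq 0$ on the whole interval $[0,\min(u_0,1/2)]$; this handles the integer rounding in $u$ for free. The value at the left end is $h(0)=\varphi(s)\geq b>0$. For the right end $u_0$, use $u_0=b/(s+1-u_0)\leq \varphi(s)/(s+1-u_0)$ together with the Taylor bound $\varphi(s-u_0)\geq\varphi(s)-u_0\varphi'(s)$ (up to the cubic correction term, which has a definite sign) to reduce $h(u_0)\geq 0$ to a polynomial inequality in $s$ and $u_0$ on $s\in(0,4)$. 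I expect this inequality to hold because the coefficients $\frac12,\frac18$ are tuned for exactly this. When $\min(u_0,1/2)=1/2<u_0$, the right-end check is instead at $u=1/2$, where $h(1/2)=\varphi(s-\frac12)-\frac12$; here I would use that $u_0>1/2$ forces $b$, and hence $s$, to be bounded below, which should make this check go through. Finally, to pass from the endpoints to the interior I would use the shape of the cubic $h$: $h''(u)=\varphi''(s-u)=1-\frac34(s-u)$ changes sign at most once, so $h$ has at most one interior local minimum. That minimum occurs where $\varphi'(s-u)=-1$, which requires $s-u\geq (8+\sqrt{160})/6\approx 3.44$. I would rule this region out, or check $h\geq0$ there directly, using that $s<4$ and $u\leq u_0$ is small when $s$ is close to $4$ (because then $b\leq\varphi(s)\to 0$).

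\emph{Main obstacle.} The endpoint and interior analysis of $h$ for $s$ near $4$ is the delicate part. There $\varphi$ is decreasing, so moving from $a$ to $a-u$ can increase the allowed $b$-bound, but only slightly, and it has to be balanced against $u\approx b/(s+1)$. If the direct cubic analysis gets messy, my fallback is to split into the cases $s\leq 8/3$, where $\varphi$ is increasing and concave-then-convex arguments are simple, and $s\in(8/3,4)$, and in the second case verify $h\geq 0$ using the crude bound $u\leq u_0\leq \varphi(s)/(s+1-u_0)$.
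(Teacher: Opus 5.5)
Your overall plan is the same as the paper's: show $R_c\subseteq R$ and $g(R_c)\subseteq R_c$. Step~2 is correct. Your bound $u\le u_0=1/(f(a,b)-1)$ on the step is in fact slightly sharper than the paper's bound $u\le b/(a-3-b)$, which comes from $f(a,b)\ge (a-3)/b$.

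\textbf{The gap is Step~3.} The inequality $u_0\le\varphi(s-u_0)$ is the entire quantitative content of the lemma, and you only say you expect it to hold because the coefficients are ``tuned''. It is not a formality. In the worst case $b=\varphi(s)$ one computes $u_0=\tfrac12 s^2-\tfrac58 s^3+\tfrac78 s^4+O(s^5)$ and $\varphi(s-u_0)-u_0=\tfrac1{16}s^4+O(s^5)$. So the inequality is tight to relative order $s^2$ as $s\to0$. The delicate regime is therefore $s\to0$, not $s$ near $4$, where $\varphi'(4)=-2$ leaves ample room; your ``main obstacle'' is misplaced. Moreover, the endpoint estimate you propose is false. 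Exactly, $\varphi(s-u)-\varphi(s)+u\varphi'(s)=\tfrac{u^2}{8}(4-3s+u)$, which is negative whenever $u<3s-4$. It is the quadratic Taylor term, not the cubic one, that has no definite sign, since $\varphi$ is concave for $s>4/3$.

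\textbf{How to close it.} Use that $\varphi(s-u)/u=\tfrac18(s-u)^2\left(\tfrac{4-s}{u}+1\right)$ is decreasing in $u\in(0,s)$. Then $h\ge0$ on $(0,u^*]$ as soon as $h(u^*)\ge0$, and no interior analysis is needed. Your interior analysis also mislabels the critical point: at $s-u\approx3.44$ one has $h''=1-\tfrac34(s-u)<0$, so it is a local maximum, and the local minimum of the cubic $h$ lies at $u>s$.

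Since both $u_0$ and the cruder bound $b/(s+1-b)$ increase with $b$, it suffices to take $b=\varphi(s)$ and check $h\ge0$ at such an explicit upper bound. With $u^*=b/(s+1-b)$ and $x=s$, as in the paper, the claim becomes $8(x^3-4x^2+8x+8)^2\le(x^3-3x^2+4x+8)^2(x^3-3x^2+8x+8)$ for $x\in[0,4]$. The difference is $x^2$ times a degree-$7$ polynomial with constant term $64$, and its nonnegativity on $[0,4]$ still has to be verified.

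Two minor points. First, the case $u_0>1/2$ never occurs, since $b\le\varphi(s)\le s/2$ already forces $u_0\le1/2$. Second, in Step~1, $b>0$ does not force $s\in(0,4)$, because $\varphi(s)>0$ also for $-4<s<0$. For example, $(1,1)$ satisfies the defining inequality of $R_c$ but is not in $R$. So $R_c$ must be read with $a\ge4$, as the paper tacitly does.
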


\begin{proof}
    Note that $R_c\subseteq R$. It suffices to show that $g$ maps $R_c$ into itself. Let $(a, b)\in R_c$. Note that $b\leq \frac{1}{2}(a - 4)^2 - \frac{1}{8}(a - 4)^3\leq \frac{1}{2}(a - 4)$. Therefore, $(a - 3)^2 - 4b\geq 1 + 2(a - 4) + (a - 4)^2 - 4b\geq 1$. Since $\sqrt{y}\geq \sqrt{x} - (x - y)/2$ for $x\geq y\geq 1$, we have
    \begin{equation*}
        \sqrt{(a - 3)^2 - 4b}\geq a - 3 - 2b. 
    \end{equation*}
    Hence
    \begin{equation*}
        f(a, b) = \frac{a + 2b - 3 + \sqrt{(a - 3)^2 - 4b}}{2b}\geq \frac{a - 3}{b}. 
    \end{equation*}
    Therefore, 
    \begin{equation} \label{r bound in Rc lem}
        r\geq \frac{a - 3}{b} - 1\geq \frac{8(a - 3)}{(a - 4)^2(8 - a)} - 1 = \frac{a^3 - 16a^2 + 88a - 152}{(a - 4)^2(8 - a)}
    \end{equation}
    We seek to show that
    \begin{equation*}
        \frac{1}{r}\leq \frac{1}{2}\left(a - 4 - \frac{1}{r}\right)^2 - \frac{1}{8}\left(a - 4 - \frac{1}{r}\right)^3. 
    \end{equation*}
    We may write this in the form
    \begin{equation} \label{target in Rc lem}
        8\leq \left(a - 4 - \frac{1}{r}\right)^2\left(8r + 1 - ar\right). 
    \end{equation}
    Since the right-hand side of~(\ref{target in Rc lem}) is increasing in $r$, given~(\ref{r bound in Rc lem}), it suffices to show~(\ref{target in Rc lem}) when $r = \frac{a^3 - 16a^2 + 88a - 152}{(a - 4)^2(8 - a)}$. In other words, we seek to show that
    \begin{equation*} \label{target 2 in Rc lem}
        8\leq \left(a - 4 - \frac{(a - 4)^2(8 - a)}{a^3 - 16a^2 + 88a - 152}\right)^2\left(1 + \frac{a^3 - 16a^2 + 88a - 152}{(a - 4)^2}\right). 
    \end{equation*}
    Let $x = a - 4\in [0, 4]$. (\ref{target 2 in Rc lem}) may be written in the form
    \begin{equation*} \label{target 3 in Rc lem}
        8(x^3 - 4x^2 + 8x + 8)^2\leq \left(x^3 - 3x^2 + 4x + 8\right)^2\left(x^3 - 3x^2 + 8x + 8\right). 
    \end{equation*}
    In other words, expanding both sides of~(\ref{target 3 in Rc lem}), we seek to show that
    \begin{equation*} \label{target 4 in Rc lem}
        x^9 - 9x^8 + 43x^7 - 107x^6 + 144x^5 - 24x^4 - 64x^3 + 64x^2\geq 0
    \end{equation*}
    for any $x\in [0, 4]$. This is equivalent to showing that
    \begin{equation*}
        x^7 - 9x^6 + 43x^5 - 107x^4 + 144x^3 - 24x^2 - 64x + 64\geq 0
    \end{equation*}
    for any $x\in [0, 4]$. For $x\in [0, 1]$, this follows since
    \begin{align*}
        &x^7 - 9x^6 + 43x^5 - 107x^4 + 144x^3 - 24x^2 - 64x + 64\\
        &\geq x^7 + 9x^5(1 - x) + 34x^5 + 107(1 - x)x^4 + 37x(1 - x)^2 + 50(1 - x)^2 + (14 - x)\geq 0. 
    \end{align*}
    Therefore, it suffices to show~(\ref{target 4 in Rc lem}) when $x\in [1, 4]$. This is equivalent to showing that
    \begin{equation*}
        x^7 - 2x^6 + 10x^5 + 8x^4 + x^3 + 82x^2 + 60x + 48\geq 0
    \end{equation*}
    for all $x\in [0, 3]$, which follows since
    \begin{equation*}
        x^7 - 2x^6 + 10x^5 + 8x^4 + x^3 + 82x^2 + 60x + 48 = (x - 1)^2x^5 + 9x^5 + 8x^4 + x^3 + 82x^2 + 60x + 48\geq 0. \qedhere
    \end{equation*}
\end{proof}

\section{MaxCut in $K_t$-free graphs} \label{maxcut Kt sec}

This section is dedicated to the proof of Theorem~\ref{Kt lambda n thm} and hence of Theorem~\ref{Kt maxcut thm} which give lower bounds for $|\lambda_n|$ and surplus of $K_t$-free graphs. 

\begin{prop} \label{Kt prop}
    Let $c\in (1/2, 5/8)$ be a real number, and let $t$ be a positive integer. We assume that $\{(a, b)\in \mathbb{R}_+^2\mid a + b + 1\leq c(2a + b)\}\subseteq R_{\text{stab}}^{\circ}$. Let $G$ be $K_t$-free graph with $n$ vertices, $m$ edges, and least eigenvalue $\lambda_n$. Then 
    \begin{equation*}
        n|\lambda_n|\gg_{c, t}m^c. 
    \end{equation*}
\end{prop}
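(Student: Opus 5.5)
Argue by contradiction. Suppose $n|\lambda_n|\le \eta m^c$ for a small $\eta=\eta(c,t)>0$, and set $L:=\max(|\lambda_n|,2)$; the case $|\lambda_n|<2$ is absorbed by the same argument since then $L$ is constant. With $d=2m/n$, consider the initial tuple
\begin{equation*}
(a,b)=(a_L(G),b_L(G))=\left(\log_L(\clem{1 step lem}d),\,\log_L\frac{n}{\clem{1 step lem}d}\right)\in\partial D_L(G).
\end{equation*}
Since $a+b=\log_L n$ and $2a+b=\log_L(\clem{1 step lem}d^2 n/n)+\log_L(\ldots)$, we have $2a+b\approx\log_L(nd^2)=\log_L(4m^2/n)$ up to constants. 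The hypothesis $nL\le \eta m^c$ translates into $\log_L n+1\le c\log_L m+\log_L\eta$. The aim is to show that, after absorbing constants, the tuple $(a,b)$, or a tuple it dominates, lies in $\{a+b+1\le c(2a+b)\}\subseteq R_{\text{stab}}^\circ$. The issue is that $2a+b$ measures $m^2/n$ rather than $m$, so the region has to be read off correctly in terms of $m$. Since $c<5/8$, one natural route is to replace $G$ first by a dense induced subgraph.

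Concretely, I would pass to an induced subgraph via Lemma~\ref{find upper bal lem}: every induced subgraph $H$ has $|\lambda_{\min}(H)|\le L$ by interlacing, and $H$ is still $K_t$-free. The parametrisation should be chosen so that the region condition becomes $n'L\lesssim (n'd')^{c}$ for the relevant subgraph. One can also use monotonicity: $R_{\text{stab}}^\circ$ is upward closed under $\succcurlyeq$ (Observation~\ref{g epsilon obs}), so it suffices to find a dominated tuple in the region. Shrinking $\eta$ or enlarging $L$ shifts $(a,b)$ by $O(\log_L \eta)$ terms, which gives a small margin. Because the region is a closed half-plane inside the open set, a uniform margin is needed; I would get it by using a slightly larger $c'<5/8$, or by noting that a compact portion suffices since $a\le 8$ is forced. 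Once the tuple is in $R_{\text{stab}}^\circ$, and $L$ is large in terms of $(a,b)$ (the region is a cone, so it suffices to use finitely many representative tuples or a compactness argument on the slope $b/a$), Corollary~\ref{abstract rec 0 cor} yields an induced subgraph $H$ with $v(H)\ge L^{4+\delta}$ and density $p(H)\ge 1-L^{-2-\delta}$.

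The final step is to contradict $K_t$-freeness. By Tur\'an's theorem a $K_t$-free graph has density at most $1-\frac{1}{t-1}+o(1)$. However, $p(H)\ge 1-L^{-2-\delta}$, which exceeds this once $L$ is large in terms of $t$, giving a contradiction. If $L$ is bounded in terms of $(c,t)$, then $|\lambda_n|=O(1)$ and we need $n\gg m^c$ directly. This follows from Theorem~\ref{1/4 thm}-type arguments or, more simply, by running the same argument with $L$ set to a large constant $L_0$ (the hypothesis $|\lambda_n|\le L$ only needs an upper bound).

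I expect the main obstacle to be the bookkeeping that converts $nL\le\eta m^c$ into membership of some tuple of $\partial D_L(G)$ in the stated region with a uniform margin. Handling the constant $\clem{1 step lem}$, the $+1$ term, and the required largeness of $L$ with respect to $(a,b)$ uniformly over a noncompact family of tuples is where care is needed. I would try first to exploit the cone or scaling structure and a slightly larger exponent $c'\in(c,5/8)$ to obtain compactness.
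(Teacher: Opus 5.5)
Your skeleton matches the paper's: start from $(a_L(G),b_L(G))$, use the stability region, extract a very dense induced subgraph, and contradict $K_t$-freeness via Tur\'an. But one computation is wrong, and the key step is missing.

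\textbf{The computation of $2a+b$.} Write $c_1=\clem{1 step lem}$. Then $2a+b=2\log_L(c_1d)+\log_L\bigl(n/(c_1d)\bigr)=\log_L(c_1nd)=\log_L(2c_1m)$. So $2a+b$ measures $m$, not $m^2/n$, and $a+b+1\le c(2a+b)$ is exactly $nL\le(2c_1m)^c$. Hence the initial tuple of $G$ itself already lies in $S:=\{a+b+1\le c(2a+b)\}$, and the detour through Lemma~\ref{find upper bal lem} is unnecessary. The paper takes $L=(c_1m)^c/n$. If $L<|\lambda_n|$, or if $L=O_{c,t}(1)$ (using $|\lambda_n|\ge 1$), the bound is immediate, so one may assume $L$ is large and $L\ge|\lambda_n|$. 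Your choice $L=\max(|\lambda_n|,2)$ instead leaves you with a bounded-$L$ case to handle.

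\textbf{The genuine gap: uniformity.} You identify this as the main obstacle but do not resolve it. Corollary~\ref{abstract rec 0 cor} supplies $\delta$, and a threshold for how large $L$ must be, that depend on the individual tuple. The tuple coming from $G$ ranges over the non-compact set $S$. Your suggested remedies do not work:
\begin{itemize}
\item $a\le 8$ holds on $R_c$, not on $S$; for fixed $L$, $a_L(G)=\log_L(c_1d)$ can be arbitrarily large on $S$.
\item $S$ is not a cone, because of the $+1$.
\item No finite family of tuples is dominated by every point of $S$, since $b$ is also unbounded on $S$.
\item ``Running the argument with a large constant $L_0$'' is circular: $L_0$ must be large relative to a tuple that itself depends on $L_0$.
\end{itemize}

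\textbf{The missing idea.} You need a single $\epsilon>0$ with $S-(\epsilon,0)\subseteq R_{\text{stab},\epsilon}$. The paper obtains it by splitting $S$ into three pieces:
\begin{itemize}
\item Tuples with $b<\epsilon_1^2$ shift into $R_{2\epsilon_1}$ and are handled by Lemma~\ref{trivial epsilon stab zone}.
\item Tuples with $a>10$ are sent by one step of $g_{\epsilon_2}$ above $(6,0.6)\in R_c^\circ$, using $f>2$ on $R$.
\item The compact remainder is handled by Lemma~\ref{epsilon to 0 interior} together with compactness.
\end{itemize}
Lemma~\ref{g infty is bounded away from 4 lem} then gives $g_{\infty,\epsilon}\ge 4+\epsilon$ uniformly on $S$. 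Proposition~\ref{abstract recursion result epsilon} makes $G$ $(p,pL^{4+\epsilon})$-concentrated with $p=p(\epsilon)$ independent of $G$. Lemma~\ref{concentrated case lem} with $q=1-1/t$, followed by Tur\'an's theorem, finishes the proof, so the density $1-L^{-2-\delta}$ from Lemma~\ref{fairly dense lem} is not even needed.
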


\begin{proof}
    Let $S = \{(a, b)\in \mathbb{R}_+^2\mid a + b + 1\leq c(2a + b)\}\subseteq R_{\text{stab}}^{\circ}$. We may assume that $m > 0$; otherwise, the desired bound holds trivially. Therefore, $|\lambda_n|\geq 1$. Hence we may assume that $n$ and $m$ are sufficiently large with respect to $c$ and $t$. Take $L = (\clem{1 step lem}m)^c/n$. Without loss of generality, we may assume that $L$ is sufficiently large with respect to $c$ and $t$, and $L\geq |\lambda_n|$; otherwise we are done. It follows that $(a_L(G), b_L(G)) = \left(\log_L (\clem{1 step lem}d), \log_L \left(\frac{n}{\clem{1 step lem}d}\right)\right)\in S$. 
    
    We first show that there exists $\epsilon > 0$ such that $S - (\epsilon, 0)\subseteq R_{\text{stab}, \epsilon}$. We show this by decomposing $S$ into three parts, and verifying this property for each part. 

    \begin{itemize}
        \item Note that $S\subseteq ((2c - 1)^{-1}, \infty) \times \mathbb{R}_+ = (4 + \eta, \infty) \times \mathbb{R}_+$, where we take $\eta = (2c - 1)^{-1} - 4 > 0$. As in Lemma~\ref{trivial epsilon stab zone}, for any $\delta > 0$, let $R_\delta\subseteq \mathbb{R}_+^2$ denote the subset of tuples $(a, b)$ satisfying $b\leq \delta(a - 4 - 10\delta)$. Take $\epsilon_1 = \min(\eta, 1)/100$. Let $S_1 := \{(a, b)\in S\mid b < \epsilon_1^2\}\subseteq R_{2\epsilon_1} - (\epsilon_1, 0)\subseteq R_{\text{stab}, \epsilon_1} - (\epsilon_1, 0)$, where the last inclusion follows from Lemma~\ref{trivial epsilon stab zone}. 

        \item Let $S_2 := \{(a, b)\in S\mid a > 10\}$, and let $\epsilon_2' = 0.1$. Therefore, $S_2 - (\epsilon_2', 0)\subseteq R\cap ((9, \infty)\times\mathbb{R}^+)$. For any $(a, b)\in R\cap ((9, \infty)\times\mathbb{R}^+)$, we have
        \begin{equation*}
            g_{\epsilon_2'}(a, b)\succcurlyeq \left(9 - \frac{1 + \epsilon_2'}{2}, \frac{1 + \epsilon_2'}{2}\right)\succcurlyeq \left(6, 0.6\right)\subseteq R_c^\circ\subseteq R_{\text{stab}}^\circ = \bigcup_{\epsilon > 0}R_{\text{stab}, \epsilon}^\circ. 
        \end{equation*}
        Fix $\epsilon_2 \in (0, \epsilon_2')$ such that $\left(6, 0.6\right)\in R_{\text{stab}, \epsilon_2}^\circ$. Therefore, for any $(a, b)\in R\cap ((9, \infty)\times\mathbb{R}^+)$, we have
        \begin{equation*}
            g_{\epsilon_2}(a, b)\succcurlyeq g_{\epsilon_2'}(a, b)\succcurlyeq (6, 0.6)\in R_{\text{stab}, \epsilon_2}, 
        \end{equation*}
        and hence $(a, b)\in R_{\text{stab}, \epsilon_2}$. Therefore, $R\cap ((9, \infty)\times\mathbb{R}^+)\subseteq R_{\text{stab}, \epsilon_2}$. Hence $S_2 - (\epsilon_2, 0)\subseteq S_2 - (\epsilon_2', 0)\subseteq R\cap ((9, \infty)\times\mathbb{R}^+)\subseteq R_{\text{stab}, \epsilon_2}$. 
        
        \item Lastly, let $S_3 = S\backslash (S_1\cup S_2) = \{(a, b)\mid a + b + 1\leq c(2a + b), a\leq 10, b\geq \epsilon_1^2\}$. Note that $S_3$ is compact subset of $R_{\text{stab}}^\circ = \cup_{\epsilon > 0}R_{\text{stab}, \epsilon}^\circ$. Therefore, for some $\epsilon_3' > 0$, $S_3\subseteq R_{\text{stab}, \epsilon_3'}^\circ$. Leveraging the compactness of $S_3$ again, it follows that for a sufficiently small $\epsilon_3\in (0, \epsilon_3')$, we have $S_3 - (\epsilon_3, 0)\subseteq R_{\text{stab}, \epsilon_3}$. 
    \end{itemize}
    Taking $\epsilon = \min(\epsilon_1, \epsilon_2, \epsilon_3) > 0$, it now follows that $S - (\epsilon, 0)\subseteq R_{\text{stab}, \epsilon}$. By Lemma~\ref{g infty is bounded away from 4 lem}, it follows that $S\subseteq R_{\text{stab}, \epsilon}$ and for any $(a, b)\in S$, we have $g_{\infty, \epsilon}(a, b)\geq 4 + \epsilon$. Applying Proposition~\ref{abstract recursion result epsilon} to $(a_L(G), b_L(G)) = \left(\log_L (\clem{1 step lem}d), \log_L \left(\frac{n}{\clem{1 step lem}d}\right)\right)\in S\cap \partial D_L(G)$, it follows that $G$ has an induced subgraph $H$ on at least $pL^{4 + \epsilon}$ vertices with density at least $p$, where $p = \conlem{p}{epsilon densification cor}(\epsilon)\in (0, 1)$. Since $L$ is sufficiently large with respect to $\epsilon$, it follows that $L\leq \clem{concentrated case lem}(p, 1 - t^{-1})v(H)^{1/4}$. Hence Lemma~\ref{concentrated case lem} yields an induced subgraph $H'$ of $H$ with at least $pv(H)/2$ vertices and density at least $1 - t^{-1}$. Therefore, by Tur\'an's theorem, $H'$ must contain a copy of $K_t$, a contradiction. 
\end{proof}

To conclude the proof of Theorem~\ref{Kt lambda n thm}, it suffices to establish that $\{(a, b)\in \mathbb{R}_+^2\mid a + b + 1\leq 0.614(2a + b)\}\subseteq R_{\text{stab}}^{\circ}$. 

\begin{lem} \label{Maxcut leq 6 calculation}
    If $a + b + 1\leq 0.614(2a + b)$ and $a\leq 6$ then $(a, b)\in R_c^{\circ}\subseteq R_{\text{stab}}^{\circ}$. 
\end{lem}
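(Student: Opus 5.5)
The plan is to verify directly that every such $(a,b)$ satisfies the defining inequality of $R_c$ strictly, with $a$ strictly inside the range where $R_c$ lives. The inclusion $R_c^\circ\subseteq R_{\text{stab}}^\circ$ then follows from the preceding lemma $R_c\subseteq R_{\text{stab}}$, since taking interiors preserves inclusions. Concretely, the set $\{(a,b)\in\mathbb{R}_+^2 : 4<a<8,\ 0<b<\frac12(a-4)^2-\frac18(a-4)^3\}$ is open and contained in $R_c$. So it suffices to show that our hypotheses place $(a,b)$ in this set.

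First I rewrite the hypothesis $a+b+1\le 0.614(2a+b)$ as
\begin{equation*}
0.386\, b\le 0.228\, a-1 .
\end{equation*}
Since $b>0$ (we work in $\mathbb{R}_+^2$), this forces $a>1/0.228\approx 4.386$; in particular $a>4$. Together with $a\le 6$, and setting $x=a-4$, we get $x\in(0.385,2]$ and
\begin{equation*}
b\le \ell(x):=\frac{0.228x-0.088}{0.386}.
\end{equation*}
It therefore suffices to prove the one-variable inequality
\begin{equation*}
\varphi(x):=\tfrac12x^2-\tfrac18x^3-\ell(x)>0 \quad\text{for all } x\in[0.385,2].
\end{equation*}
Equivalently, after multiplying by $8\cdot 0.386$, I need positivity of an explicit cubic with numerical coefficients on that interval.

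The main obstacle is that this inequality is genuinely tight. Spot checks give margins of about $0.047$ at $x=2$, but only about $0.011$ near $x\approx 0.9$ to $1$; for instance at $x=1$ the two sides are $0.375$ and about $0.363$. So crude bounds will not suffice, and I will handle the cubic carefully. I would compute $\varphi'(x)=x-\tfrac38x^2-0.228/0.386$, a downward parabola, and find its roots explicitly. These give the local minimum and maximum of $\varphi$. The local minimum lies below $x\approx 0.8$, where $\varphi$ is still comfortably positive, and on the part of the interval beyond it $\varphi$ rises and then falls to the endpoint value $\varphi(2)>0$. It then suffices to evaluate $\varphi$ at the interval endpoints and at the local minimum, checking each value is positive. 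Alternatively, as a fallback, I would subdivide $[0.385,2]$ into short subintervals and bound $\varphi$ on each by its value at the left endpoint minus a Lipschitz bound times the length; here $|\varphi'|\le 2$ throughout. Near $x=1$ I would use steps of length about $0.005$.

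Finally, positivity of $\varphi$ gives $b\le \ell(x)<\frac12 x^2-\frac18x^3$, which is the strict inequality required. We also have $4<a\le 6<8$ and $b>0$, so $(a,b)$ lies in the open subset of $R_c$ described above, hence in $R_c^\circ$. This completes the argument.
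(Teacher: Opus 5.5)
Your proof is correct, and its reduction is the same as the paper's: with $y=a-5=x-1$, one has $3088\,\varphi(x)=-(386y^3-386y^2-106y-38)$, which is exactly the cubic the paper shows is negative. The two proofs differ only in how they certify the sign of this cubic.

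The paper uses a hand-checkable algebraic split at $a=5$. For $y\ge 0$ the cubic is below $386y^2(y-1)\le 0$, and for $y<0$ it is below $-30(2y+1)^2\le 0$. This covers all of $a\in[4,6]$ with no numerics.

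Your calculus route also works, but it forces you to handle irrational critical points against a thin margin, and one of your stated facts is wrong. The roots of $\varphi'$ are $x_\pm=\bigl(1\pm\sqrt{1-\tfrac32\cdot\tfrac{0.228}{0.386}}\bigr)/0.75$, so the local minimum sits at $x_-\approx 0.883$, not below $0.8$. Moreover, $\varphi(x_-)\approx 0.0102$ is the global minimum of $\varphi$ on $[0.385,2]$. It is precisely the tight spot your spot checks detected, not a place where $\varphi$ is comfortably positive. Since your plan evaluates $\varphi(x_-)$ explicitly, the argument survives, but only if that evaluation is done to precision well below $0.01$.

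Likewise, in your fallback the Lipschitz constant $2$ is very wasteful: on this interval $\varphi'$ takes values in $[-0.27,0.08]$. With steps of $0.005$, your bound of $2$ consumes almost the entire margin.

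In exchange, your write-up is more careful than the paper's about the interior. You note that $b>0$ forces $a>1/0.228>4$, and you exhibit the open set $\{4<a<8,\ 0<b<\tfrac12(a-4)^2-\tfrac18(a-4)^3\}\subseteq R_c$. The paper leaves implicit both why $(a,b)\notin R_c^\circ$ yields $b\ge \frac12(a-4)^2-\frac18(a-4)^3$ and why $a\ge 4$.
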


\begin{proof}
    The condition $a + b + 1\leq 0.614(2a + b)$ may be written in the form $0.386b\leq 0.228a - 1$. Assume, for the sake of contradiction, that $(a, b)\notin R_c^\circ$. Therefore, we have $b\geq  \frac{1}{2}(a - 4)^2 - \frac{1}{8}(a - 4)^3$. Hence
    \begin{equation*}
        0.386\left(\frac{1}{2}(a - 4)^2 - \frac{1}{8}(a - 4)^3\right) - 0.228a + 1\leq 0
    \end{equation*}
    for any $a\in [4, 6]$. Taking $x = a - 5\in [-1, 1]$, we have
    \begin{equation*}
        386 x^3 - 386 x^2 - 106 x - 38\geq 0. 
    \end{equation*}
    If $x\geq 0$, then $386 x^3 - 386 x^2 - 106 x - 38 < 386x^2(x - 1)\leq 0$, a contradiction. If $x < 0$, then
    \begin{equation*}
        386x^3 - 386x^2 - 106x - 38 < - 120x^2 - 120x - 30 = -30(2x + 1)^2\leq 0, 
    \end{equation*}
    again yielding a contradiction. 
\end{proof}

\begin{lem} \label{Kt full num estimate}
    We have $\{(a, b)\in \mathbb{R}_+^2\mid a + b + 1\leq 0.614(2a + b)\}\subseteq R_{\text{stab}}^{\circ}$. 
\end{lem}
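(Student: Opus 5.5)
The plan is to reduce to Lemma~\ref{Maxcut leq 6 calculation} by iterating $g$. Write $S=\{(a,b)\in\mathbb{R}_+^2\mid a+b+1\leq 0.614(2a+b)\}$, i.e.\ $0.386b\leq 0.228a-1$. Points of $S$ with $a\leq 6$ are handled by Lemma~\ref{Maxcut leq 6 calculation}. For $a>6$ the idea is to show that $g$ maps $S$ back into $S$, with room to spare, and that the orbit eventually lands either in $\{a\leq 6\}$ or in the trivially stable zone of Lemma~\ref{trivial epsilon stab zone}.

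First I check that $S\subseteq R$. The condition $0.228a>1$ forces $a>4.38$. Also $b\leq (0.228a-1)/0.386<a-4$ is equivalent to $0.158a>0.544$, which holds. Hence $f$ and $g$ are defined on $S$, and $f>2$ there, so $r':=\ceil{f(a,b)}-1\geq 2$. Next take $(a,b)\in S$ with $a>6$, and set $g(a,b)=(a-b',b')$ with $b'=1/r'\leq 1/2$. Membership of $g(a,b)$ in $S$ reads $0.614b'\leq 0.228a-1$. The left side is at most $0.307$, while for $a>6$ the right side exceeds $0.368$. So $g(a,b)$ satisfies the defining inequality of $S$ strictly, with a uniform margin.

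Now iterate, with $(a_\ell,b_\ell)=g^{(\ell)}(a,b)$. As long as $a_\ell>6$, the previous step keeps the orbit in $S$. If some $a_\ell\leq 6$, then $(a_\ell,b_\ell)\in S$ and Lemma~\ref{Maxcut leq 6 calculation} gives $(a_\ell,b_\ell)\in R_c^\circ\subseteq R_{\text{stab}}^\circ$. Otherwise $a_\ell\geq 6$ for all $\ell$, and, as in the discussion defining $g_\infty$, $b_\ell\to0$. Then $(a_\ell,b_\ell)$ eventually lies in $R_{\delta}^\circ$ for a small fixed $\delta$ (say $\delta=0.05$, using $a_\ell\geq 6$), and Lemma~\ref{trivial epsilon stab zone} puts it in $R_{\text{stab},>0}\subseteq R_{\text{stab}}^\circ$. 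In both cases some iterate lies in $R_{\text{stab}}^\circ$.

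Finally I pull back. Since $R_{\text{stab}}$ is upward closed (Observation~\ref{g epsilon obs}), so is its interior: a neighbourhood translated by $(+s,-t)$ stays inside. So Lemma~\ref{pull back lem alt} shows that $g^{-1}(R_{\text{stab}}^\circ)$ is open. Moreover $g^{-1}(R_{\text{stab}})\subseteq R_{\text{stab}}$ by definition of $R_{\text{stab}}$, so $g^{-1}(R_{\text{stab}}^\circ)\subseteq R_{\text{stab}}^\circ$. Inducting backwards along the orbit then gives $(a,b)\in R_{\text{stab}}^\circ$. Alternatively one can apply Lemma~\ref{pull back lem} repeatedly, since $R_{\text{stab},>0}=R_{\text{stab}}^\circ$ by Lemma~\ref{epsilon to 0 interior}.

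The main obstacle I expect is the case where the orbit stays in $a>6$ forever. There one must check that $b_\ell\to0$ really places the point in the interior of some $R_\delta$. This needs $a_\ell$ bounded away from $4$, which is guaranteed by $a_\ell\geq 6$, and a small $\delta$ with $6>4+10\delta$. The remaining verifications are routine inequalities.
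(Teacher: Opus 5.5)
Your argument is correct, but for $a>6$ it takes a longer route than the paper.

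\textbf{The paper's route.} It uses a single step of $g$. Since $f>2$ on $R$, we have $\ceil{f(a,b)}-1\geq 2$, so $g(a,b)\succcurlyeq(a-\tfrac12,\tfrac12)\succcurlyeq(5.5,\tfrac12)$. A direct evaluation, $\tfrac12(1.5)^2-\tfrac18(1.5)^3=0.703125>\tfrac12$, places $(5.5,\tfrac12)$ in $R_c^\circ\subseteq R_{\text{stab}}^\circ$. Upward closure of $R_{\text{stab}}^\circ$ (the same fact you verify) then gives $g(a,b)\in R_{\text{stab}}^\circ$. One application of Lemma~\ref{pull back lem} finishes.

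\textbf{Your route.} You show that $g$ maps $S\cap\{a>6\}$ back into $S$, via $0.614b'\leq 0.307<0.368<0.228a-1$. You then follow the whole orbit and split into two cases:
\begin{itemize}
\item the orbit drops to $a\leq 6$, where Lemma~\ref{Maxcut leq 6 calculation} applies;
\item the orbit stays above $6$ forever. Here you essentially re-run the proof of Lemma~\ref{epsilon to 0 interior}: $b_\ell\to 0$, the orbit enters $R_\delta^\circ$ with $\delta=0.05$, and Lemma~\ref{trivial epsilon stab zone} applies.
\end{itemize}
Finally you pull back along finitely many steps, using openness of $g^{-1}(R_{\text{stab}}^\circ)$ and $g^{-1}(R_{\text{stab}})\subseteq R_{\text{stab}}$. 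All of these steps check out. This includes your explicit check that $S\subseteq R$, which the paper uses without comment.

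\textbf{Comparison.} Your route needs no numerical evaluation of $R_c$ beyond what Lemma~\ref{Maxcut leq 6 calculation} already provides. The paper's route is much shorter: the crude bound $b'\leq\tfrac12$ already lands inside $R_c^\circ$ after one step, so no orbit-tracking, invariance argument or case split is needed.
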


\begin{proof}
    Let $(a, b)\in \mathbb{R}_+^2$ satisfy $a + b + 1\leq 0.614(2a + b)$. We seek to show that $(a, b)\in R_{\text{stab}}^{\circ}$. When $a\leq 6$, this follows from Lemma~\ref{Maxcut leq 6 calculation}. Therefore, we assume that $a > 6$. Since $(a, b)\in R$, we have $g(a, b)\succcurlyeq (a - 1/2, 1/2)\succcurlyeq (5 + 1/2, 1/2)\in R_c^\circ\subseteq R_{\text{stab}}^{\circ}$. Hence $(a, b)\in R_{\text{stab}}^{\circ}$ by Lemma~\ref{pull back lem}. 
\end{proof}

\begin{proof}[Proof of Theorem~\ref{Kt lambda n thm}]
    By Lemma~\ref{Kt full num estimate}, we have $\{(a, b)\in \mathbb{R}_+^2\mid a + b + 1\leq 0.614(2a + b)\}\subseteq R_{\text{stab}}^{\circ}$. Hence Theorem~\ref{Kt lambda n thm} follows from Proposition~\ref{Kt prop}. 
\end{proof}

\section{Chowla's cosine problem} \label{Chowla cosine sec}

In this section we show Theorem~\ref{chowla main} on Chowla's cosine problem. 

\begin{df}
    Given a finite set $X$ of $n$ positive integers, and a prime $q > 10\max X$, let $\Gamma_{q, X}$ denote the Cayley graph of the group $\mathbb{Z}/q\mathbb{Z}$ generated by $X\cup (-X)$. 
\end{df}

Note that the least eigenvalue $\lambda_q$ of $\Gamma_{q, X}$ is given by
\begin{equation*}
    |\lambda_q| = -\min_{q^{-1}\mathbb{Z}}\widehat{1_{X\cup (-X)}}\leq -\min\widehat{1_{X\cup (-X)}}. 
\end{equation*}

Note that $q$ may be arbitrarily large with respect to $n$. Therefore, whilst
\begin{equation*}
    (a_L(\Gamma_{q, X}), b_L(\Gamma_{q, X})) = \left(\log_L (2\clem{1 step lem}n), \log_L \left(\frac{q}{2\clem{1 step lem}n}\right)\right)\in \partial D_L(\Gamma_{q, X}), 
\end{equation*}
we have no means of guaranteeing that this tuple lies in $R$. Hence, we may not initiate the recursive procedure of Section~\ref{rec sec}. Instead, we consider a different choice of an initial tuple in $\partial D_L(\Gamma_{q, X})$, given by the following lemma. 

\begin{lem} \label{triv densification}
    There exists an absolute constant $\clem{triv densification}\in (0, 1)$ such that the following holds. Let $X$ be a finite set of $n$ positive integers, and let $q > 10\max X$ be a prime. Let $L > 1$ be a positive real number, such that $-\min\widehat{1_{X\cup (-X)}}\leq \clem{triv densification}L$ and $n\geq L^2$, then
    \begin{equation*}
        (\log_L(n) - 1, 1)\in \partial D_L(\Gamma_{q, X}). 
    \end{equation*}
\end{lem}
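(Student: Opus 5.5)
The plan is to take $H$ to be the subgraph of $\Gamma_{q, X}$ induced on the neighbourhood $N(0) = X\cup(-X)$ of the vertex $0$, and to show directly that $(a_L(H), b_L(H))\succcurlyeq(\log_L(n) - 1, 1)$. Unwinding the definitions, this amounts to two inequalities:
\begin{equation*}
    \clem{1 step lem}d(H)\geq n/L \qquad\text{and}\qquad v(H)\leq \clem{1 step lem}d(H)L.
\end{equation*}
Since $q > 10\max X$, the sets $X$ and $-X$ are disjoint modulo $q$ and do not contain $0$. Hence $\Gamma_{q, X}$ is a $2n$-regular simple graph on $q$ vertices and $v(H) = 2n$. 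By vertex-transitivity, each vertex lies in the same number of triangles, namely $e(H)$. Therefore $3\tri(\Gamma_{q, X}) = q\,e(H)$, and the problem reduces to a lower bound on the triangle count of the Cayley graph.

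For the triangle count, write $\lambda_1\geq\dots\geq\lambda_q$ for the eigenvalues of $\Gamma_{q, X}$ and set $m := |\lambda_q|$. By the remark preceding the lemma, $m\leq -\min\widehat{1_{X\cup(-X)}}\leq \clem{triv densification}L$. I will use three spectral facts.
\begin{itemize}
    \item The trace identity~(\ref{trace id 1}) gives $\sum_{\lambda_i\geq 0}\lambda_i = \sum_{\lambda_i<0}|\lambda_i|\leq qm$.
    \item Summing squares gives $\sum_i\lambda_i^2 = 2nq$, so $\sum_{\lambda_i\geq 0}\lambda_i^2\geq 2nq - qm^2$.
    \item By Cauchy--Schwarz, $\bigl(\sum_{\lambda_i\geq 0}\lambda_i^2\bigr)^2\leq \bigl(\sum_{\lambda_i\geq 0}\lambda_i\bigr)\bigl(\sum_{\lambda_i\geq 0}\lambda_i^3\bigr)$.
\end{itemize}
Combining these, $\sum_{\lambda_i\geq0}\lambda_i^3\geq (2n - m^2)^2q/m$. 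The negative part contributes at most $qm^3$ in absolute value. Because $n\geq L^2$ and $m\leq\clem{triv densification}L$ with $\clem{triv densification}$ small, we have $m^2\ll n$, so $qm^3$ is negligible against $n^2q/m$. This yields $6\tri(\Gamma_{q, X})\geq 3n^2q/m$, and hence
\begin{equation*}
    e(H)\geq \frac{3n^2}{2m}\geq \frac{3n^2}{2\clem{triv densification}L}.
\end{equation*}

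It follows that $d(H) = 2e(H)/(2n)\geq 3n/(2\clem{triv densification}L)$. Both required inequalities then hold once $\clem{triv densification}\leq \tfrac{3}{2}\clem{1 step lem}$ (with some room to absorb the $m^2$ and $m^3$ error terms):
\begin{itemize}
    \item $\clem{1 step lem}d(H)\geq \frac{3\clem{1 step lem}}{2\clem{triv densification}}\cdot\frac{n}{L}\geq n/L$;
    \item $v(H)/(\clem{1 step lem}d(H))\leq \frac{4\clem{triv densification}}{3\clem{1 step lem}}L\leq L$.
\end{itemize}
Since $H$ has edges, $(a_L(H), b_L(H))\in D_L(\Gamma_{q, X})$, and so $(\log_L(n)-1, 1)\in\partial D_L(\Gamma_{q, X})$.

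The main step needing care is the spectral lower bound on $\sum\lambda_i^3$. The proposition in Section~\ref{tri counting sec} is not directly applicable here, since $q$ may be enormous compared to $n$. The point is that the crude Cauchy--Schwarz argument above only needs the hypothesis $n\geq L^2$, not any relation between $n$ and $q$. The remaining work is routine bookkeeping of absolute constants.
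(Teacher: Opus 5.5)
Your proof is correct, but it takes a different route from the paper. The paper works locally at the vertex $0$. It plugs the single test vector $v = n\cdot 1_{\{0\}} - 2\clem{triv densification}L\,1_X$ into $-v^TAv\leq |\lambda_q|\,v^Tv$. This immediately gives $e(X)\geq n^2/(4\clem{triv densification}L)$ for the subgraph induced on $X$ alone, using only $n\geq 4\clem{triv densification}^2L^2$. The bound $\clem{1 step lem}d\geq n/L$ on $n$ vertices then follows directly.

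You instead run the global moment argument ($\sum\lambda_i = 0$, $\sum\lambda_i^2 = 2nq$, Cauchy--Schwarz) to get $\tri(\Gamma_{q,X})\gg n^2q/|\lambda_q|$. This is exactly the ``weaker estimate $\tri(G)\gg nd^2L^{-1}$ via trace identities'' that the paper mentions after Proposition~\ref{triangle counting prop} but never proves. You then use vertex-transitivity to localise it to $N(0) = X\cup(-X)$; regularity plus averaging over vertices would already suffice. As you note, the argument is independent of $q$.

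Both routes give a set of $\Theta(n)$ vertices with average degree $\gg n/L$, so nothing is lost. The paper's argument is shorter and purely local: the same computation shows that every subset of size at least $4|\lambda_n|^2$ of every neighbourhood, in any graph, has density $\gg 1/|\lambda_n|$. Yours relies on the regularity of the Cayley graph to pass from a global triangle count to a single neighbourhood.

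There is one small slip in your constants. The second inequality, $\frac{4\clem{triv densification}}{3\clem{1 step lem}}L\leq L$, needs $\clem{triv densification}\leq\frac34\clem{1 step lem}$, not $\frac32\clem{1 step lem}$. Also, absorbing the error terms to reach $6\tri\geq 3n^2q/m$ amounts to $4n^2-4nm^2\geq 3n^2$, i.e.\ $n\geq 4m^2$, which needs $\clem{triv densification}\leq 1/2$. The paper's choice $\clem{triv densification} = \min(1,\clem{1 step lem})/2$ satisfies both conditions.
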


\begin{proof}
    Take $\clem{triv densification} = \min(1, \clem{1 step lem})/2 > 0$. Let $A$ denote the adjacency matrix of $\Gamma_{q, X}$, and let $\lambda_q$ denote the least eigenvalue of $\Gamma_{q, X}$. Note that $A\succeq -|\lambda_q|I\succeq -\clem{triv densification}L I$. Consider the vector $v = n\cdot 1_{\{0\}} - 2\clem{triv densification}L1_X$. We have
    \begin{equation*}
        4\clem{triv densification}n^2L - 8\clem{triv densification}^2L^2e(X) = -v^TAv \leq \clem{triv densification}Lv^Tv = \clem{triv densification}L(n^2 + 4n\clem{triv densification}^2L^2)\leq 2\clem{triv densification}n^2L. 
    \end{equation*}
    Hence $e(X)\geq \frac{n^2}{4\clem{triv densification}L}$. Therefore, $v(G) = n$ and $\clem{1 step lem}d(G)\geq nL^{-1}$. Hence $(a_L(G), b_L(G))\succcurlyeq \left(\log_L(n) - 1, 1\right)$ and $\left(\log_L(n) - 1, 1\right)\in \partial D_L(\Gamma_{q, X})$. 
\end{proof}

When $\left(\log_L(n) - 1, 1\right)\in R_{\text{stab}}^\circ$, we may leverage the theory developed in the preceding sections together produce an induced subgraph of $\Gamma_{q, X}$ with $\Omega(L^4)$ vertices and density $1 - O(L^{-2})$. The following proposition gives a precise form of this result. 

\begin{prop} \label{chowla dense subgraph prop}
    Let $c$ be a positive real number such that $(c, 1)\in R_{\text{stab}}^\circ$. Let $X$ be a set of $n$ positive integers. If $n$ is sufficiently large with respect to $c$ and
    \begin{equation*}
        -\min \widehat{1_{X\cup (-X)}} < \clem{triv densification}n^{1/(c + 1)}, 
    \end{equation*}
    then for any prime $q > 10\max X$, $\Gamma_{q, X}$ has an induced subgraph $G$ such that $v(G)\geq n^{4/(c + 1)}$ and $\Delta(\overline{G})\leq n^{2/(c + 1)}$. 
\end{prop}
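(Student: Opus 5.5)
The plan is to choose $L = n^{1/(c+1)}$, so that $\log_L n = c+1$. This makes the tuple supplied by Lemma~\ref{triv densification} exactly $(c,1)$, and Corollary~\ref{abstract rec 0 cor} then yields a near-complete subgraph, from which the maximum degree bound is extracted.

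First I check the hypotheses of Lemma~\ref{triv densification}. By assumption $-\min\widehat{1_{X\cup(-X)}} < \clem{triv densification}L$. Since $(c,1)\in R_{\text{stab}}^\circ\subseteq R$, we have $c>5$, hence $c+1>2$ and $n\geq L^2$. The lemma then gives $(c,1) = (\log_L(n)-1,1)\in\partial D_L(\Gamma_{q,X})$.

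Next I invoke Corollary~\ref{abstract rec 0 cor}. The least eigenvalue of $\Gamma_{q,X}$ satisfies $|\lambda_q|\leq -\min\widehat{1_{X\cup(-X)}}\leq \clem{triv densification}L\leq L$. Also $L$ is large with respect to $(c,1)$ once $n$ is large with respect to $c$. So we obtain $\delta=\conlem{\delta}{abstract rec 0 cor}(c,1)>0$ and an induced subgraph $H$ with $v(H)\geq L^{4+\delta}$ and $p(H)\geq 1-L^{-2-\delta}$. Thus $\overline{H}$ has average degree at most $L^{-2-\delta}(v(H)-1)$.

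The main obstacle is converting this average-degree bound on $\overline H$ into a maximum-degree bound, while keeping at least $L^4$ vertices. My plan is a Markov-type deletion. I let $G$ be $H$ with every vertex of $\overline H$-degree exceeding $L^{2}$ removed.

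The number of removed vertices is at most $v(H)\cdot L^{-2-\delta}v(H)/L^{2} = v(H)^2L^{-4-\delta}$. This is too many when $v(H)\gg L^{4}$, so I first pass to a random induced subset of $H$ of size $s=L^{4+\delta/2}$, which is possible because $v(H)\geq s$. In such a subset, each vertex has expected $\overline H$-degree at most about $sL^{-2-\delta} = L^{2-\delta/2}$. Markov then shows that at most an $L^{-\delta/2}$ fraction of vertices have degree above $L^2$. In expectation the subset still has complement density at most $L^{-2-\delta}$, so I fix a subset achieving this.

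Deleting the high-degree vertices leaves $G$ with $v(G)\geq s/2\geq L^{4} = n^{4/(c+1)}$ for large $L$. Since deletion only lowers degrees, $\Delta(\overline G)\leq L^2 = n^{2/(c+1)}$. Any slack in the constants is absorbed by the $\delta$ gain in the exponents once $n$ is large.
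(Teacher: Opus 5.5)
Your proof is correct and follows the paper's route: set $L = n^{1/(c+1)}$, obtain $(c,1)\in\partial D_L(\Gamma_{q,X})$ from Lemma~\ref{triv densification}, apply Corollary~\ref{abstract rec 0 cor}, average down to an induced subgraph of bounded size with the same complement density, and delete the vertices of large complement degree by Markov. The differences are cosmetic: the paper uses an intermediate subgraph of size $\lceil 2L^4\rceil$ together with $L^{-2-\delta}\le L^{-2}/10$, whereas you use size $L^{4+\delta/2}$ and the $L^{-\delta/2}$ slack; also, your Markov step should be applied to the average complement degree of the fixed subset, not to each vertex's expected degree, which is what your final sentence effectively does.
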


\begin{proof}
    We implicitly assume throughout that $n$ is sufficiently large with respect to $c$. Note that since $(c, 1)\in R_{\text{stab}}^\circ\subseteq R$, we have $c\geq 5$. Let $L = n^{1/(c + 1)}$. Note that $L > 1$, $\clem{triv densification}L\geq -\min \widehat{1_{X\cup (-X)}}$, and $n\geq L^2$. Therefore, by Lemma~\ref{triv densification}, $(c, 1)\in \partial D_L(\Gamma_{q, X})$. 

    Note that the absolute value of the least eigenvalue of $\Gamma_{q, X}$ is bounded above by $L$. Applying Corollary~\ref{abstract rec 0 cor} to $\Gamma_{q, X}$, $L$ and $(c, 1)$, it follows that $\Gamma_{q, X}$ has an induced subgraph $G_1$ with at least $L^{4 + \conlem{\delta}{abstract rec 0 cor}}\geq 2L^4$ vertices and density at least $1 - L^{-2 - \conlem{\delta}{abstract rec 0 cor}}\geq 1 - L^{-2}/10$. By a standard averaging argument, $G_1$ has an induced subgraph $G_2$ with $\ceil{2L^4}$ vertices and density at least $1 - L^{-2}/10$. $\overline{G_2}$ has at most $L^4$ vertices of degree great than $L^2$. Removing these vertices from $G_2$ yields an induced subgraph $G$ of $G_2$ on at least $L^4$ vertices with $\Delta(\overline{G})\leq L^2$, as needed. 
\end{proof}

Tur\'an's theorem may be applied to this dense subgraph given by Proposition~\ref{chowla dense subgraph prop} to yield a complete subgraph on $\Omega(L^2)$ vertices. Up to different parameters, this was the approach considered by~\cite{graph} in showing that $\mathfrak{m}(n)\gg n^{1/10}$. However, to give the sharper bound of Theorem~\ref{chowla main}, this application of Tur\'an's theorem turns out to be too costly, instead we need to work directly with this dense induced subgraph. 

We start with the following lemma showing that any other vertex must be either adjacent to almost all vertices of this dense induced subgraph or almost none of them. 

\begin{lem} \label{half graph lem}
    Let $G$ be a graph on $n > 1$ vertices with least eigenvalue $\lambda_n$. For any vertex $i\in V(G)$, if $8\Delta(\overline{G - i})|\lambda_n|\leq n - 1$, then
    \begin{equation*}
        \min(\deg_G(i), \deg_{\overline{G}}(i))\leq 4|\lambda_n|^2. 
    \end{equation*}
\end{lem}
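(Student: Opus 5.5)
Throughout, write $L = |\lambda_n|$, $\Delta = \Delta(\overline{G - i})$, and split $V(G)\setminus\{i\}$ into $S = N_G(i)$ and $T = V(G)\setminus(S\cup\{i\})$, with $s = |S| = \deg_G(i)$ and $t = |T| = \deg_{\overline{G}}(i)$, so $s + t = n - 1$. The only spectral input is $A + LI \succeq 0$, applied to a three-parameter test vector
\begin{equation*}
    v = x e_i + y 1_S + z 1_T .
\end{equation*}
Set $u = y1_S + z1_T$. Since $i$ is adjacent exactly to $S$,
\begin{equation*}
    v^TAv = 2xys + u^TA_{G-i}u .
\end{equation*}
Nonnegativity of $v^T(A+LI)v$ for all real $x$ is a quadratic condition in $x$, namely $Lx^2 + 2xys + u^T(A_{G-i}+LI)u \geq 0$. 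Its discriminant must be nonpositive, which gives
\begin{equation*}
    (ys)^2 \leq L\left(u^TA_{G-i}u + L\|u\|^2\right).
\end{equation*}

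Next I need an upper bound on $u^TA_{G-i}u$. Compare $A_{G-i}$ with the complete graph, that is, $J - I$ minus the adjacency matrix of $\overline{G-i}$. This gives
\begin{equation*}
    u^TA_{G-i}u \leq (ys+zt)^2 - 2y^2e_{\overline{G}}(S) - 2z^2e_{\overline{G}}(T) - 2yz\,e_{\overline{G}}(S,T).
\end{equation*}
To kill the main term, choose $y = t$ and $z = -s$, so that $ys + zt = 0$ and $yz < 0$. Then only the cross complement edges survive, and
\begin{equation*}
    u^TA_{G-i}u \leq 2st\, e_{\overline{G}}(S,T) \leq 2st\,\Delta\min(s,t),
\end{equation*}
where the last step counts the complement edges from whichever side is smaller, each vertex having complement degree at most $\Delta$. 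Also $\|u\|^2 = t^2s + s^2t = st(n-1)$.

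Substituting into the discriminant inequality gives $s^2t^2 \leq L\,st\left(2\Delta\min(s,t) + L(n-1)\right)$, i.e.
\begin{equation*}
    st \leq 2L\Delta\min(s,t) + L^2(n-1).
\end{equation*}
Let $m = \min(s,t)$ and $M = \max(s,t) \geq (n-1)/2$. The hypothesis $8\Delta L \leq n - 1$ gives $2L\Delta \leq (n-1)/4 \leq M/2$. Hence $mM \leq mM/2 + L^2(n-1)$, so
\begin{equation*}
    m \leq 2L^2(n-1)/M \leq 4L^2,
\end{equation*}
which is the claim. If $s = 0$ or $t = 0$ there is nothing to prove.

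The only delicate step is the choice $y = t$, $z = -s$ together with counting the complement edges across $(S,T)$ from the smaller side. This is what makes the error term scale like $\Delta\min(s,t)$ rather than $\Delta\max(s,t)$, and hence lets it be absorbed via the hypothesis $8\Delta L \leq n - 1$. The rest is routine algebra; I would double-check the signs in the complement expansion and that the zero diagonal of $A$ only helps the upper bound.
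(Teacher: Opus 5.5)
Your proof is correct and is essentially the paper's argument. Your vector $xe_i + t1_S - s1_T$ is, up to scaling by $-st$, exactly the paper's test vector (with entries $-1/|B|$ on $B$ and $1/|C|$ on $C$), and the key step of bounding $e_{\overline{G}}(S,T)\leq \Delta\min(s,t)$ from the smaller side is the same. The only cosmetic differences are that you eliminate $x$ via the discriminant and argue directly, whereas the paper argues by contradiction and substitutes $x = 1/|\lambda_n|$, which is the optimal choice.
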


\begin{proof}
    We may assume that $e(G) > 0$ (and hence $\lambda_n < 0$); otherwise, the desired assertion is trivial. 
    
    Let $A$ denote the adjacency matrix of $G$. Note that $A\succeq -|\lambda_n|I$. Let $B$ denote the neighbourhood of $i$ in $G$ and let $C$ denote the neighbourhood of $i$ in $\overline{G}$. For the sake of contradiction, assume that $\min(|B|, |C|) > 4|\lambda_n|^2$. 
    
    For some $x\in\mathbb{R}$ to be fixed later, consider the vector $v\in\mathbb{R}^{V(G)}$ given by $v_i = x$, $v_j = -1/|B|$ for $j\in B$, and $v_j = 1/|C|$ for $j\in C$. Note that
    \begin{align*}
        - v^TA_Gv &= 2x - 2\cdot\frac{e_G(B)}{|B|^2} - 2\cdot\frac{e_G(C)}{|C|^2} + 2\cdot\frac{e_G(B, C)}{|B||C|}\geq 2x - 1 - 1 + 2\cdot\frac{|B||C| - e_{\overline{G}}(B, C)}{|B||C|}\\
        &= 2x - \frac{2e_{\overline{G}}(B, C)}{|B||C|}\geq 2x - \frac{2\min(|B|, |C|)\Delta(\overline{G - i})}{|B||C|}= 2x - \frac{2\Delta(\overline{G - i})}{\max(|B|, |C|)}\geq 2x - \frac{4\Delta(\overline{G - i})}{n - 1}\\
        &\geq 2x - \frac{1}{2|\lambda_n|}
    \end{align*}
    and
    \begin{equation*}
        v^Tv = x^2 + \frac{1}{|B|} + \frac{1}{|C|}\leq x^2 + \frac{2}{\min(|B|, |C|)} < x^2 + \frac{1}{2|\lambda_n|^2}. 
    \end{equation*}
    Since $- v^TA_Gv\leq |\lambda_n|v^Tv$, we must have
    \begin{equation*}
        |\lambda_n|x^2 - 2x + \frac{1}{|\lambda_n|} > 0. 
    \end{equation*}
    for any $x\in\mathbb{R}$. Taking $x = |\lambda_n|^{-1}$ gives the desired contradiction. 
\end{proof}

\begin{lem} \label{very dense no subgraph passage ln bound lem}
    There exists an absolute constant $\clem{very dense no subgraph passage ln bound lem}\in (0, 1)$ such that the following holds. Let $G$ be an $n$-vertex graph with least eigenvalue $\lambda_n$. If $\Delta(\overline{G})\leq \clem{very dense no subgraph passage ln bound lem}n^{2/3}$, then 
    \begin{equation*}
        \Delta(\overline{G})\leq \clem{very dense no subgraph passage ln bound lem}^{-1}|\lambda_n|^2. 
    \end{equation*}
\end{lem}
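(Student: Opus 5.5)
The plan is to apply Lemma~\ref{half graph lem} to a vertex of maximum degree in $\overline{G}$, after disposing of the case where $|\lambda_n|$ is already large. Let $D = \Delta(\overline{G})$ and let $c = \clem{very dense no subgraph passage ln bound lem}$ be a small absolute constant, say $c = 10^{-3}$.

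\emph{Trivial cases.} If $D = 0$ there is nothing to prove. Otherwise $1 \leq D \leq cn^{2/3}$, which forces $n \geq c^{-3/2}$, so $n$ is large. If $|\lambda_n| \geq n^{1/3}$, then
\begin{equation*}
    c^{-1}|\lambda_n|^2 \geq c^{-1}n^{2/3} \geq D,
\end{equation*}
and we are done. So assume from now on that $|\lambda_n| < n^{1/3}$.

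\emph{Applying Lemma~\ref{half graph lem}.} Pick a vertex $i$ with $\deg_{\overline{G}}(i) = D$. Clearly $\Delta(\overline{G - i}) \leq D$, so
\begin{equation*}
    8\Delta(\overline{G - i})|\lambda_n| \leq 8cn^{2/3}\cdot n^{1/3} = 8cn \leq n - 1,
\end{equation*}
since $c$ is small and $n \geq 2$. Lemma~\ref{half graph lem} therefore gives $\min(\deg_G(i), D) \leq 4|\lambda_n|^2$.

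\emph{Ruling out the wrong branch.} We have
\begin{equation*}
    \deg_G(i) = n - 1 - D \geq n - 1 - cn^{2/3}.
\end{equation*}
On the other hand, $4|\lambda_n|^2 < 4n^{2/3}$. Since $n \geq c^{-3/2}$ is large, $n - 1 - cn^{2/3} > 4n^{2/3}$, so $\deg_G(i) > 4|\lambda_n|^2$. Hence the minimum is attained by $D$, giving $D \leq 4|\lambda_n|^2 \leq c^{-1}|\lambda_n|^2$, as claimed.

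There is no serious obstacle here. The only care needed is choosing $c$ small enough to serve three purposes at once: it makes the hypothesis $8\Delta|\lambda_n| \leq n - 1$ of Lemma~\ref{half graph lem} hold, it ensures $n$ is large (via $D \geq 1$) so that $\deg_G(i)$ beats $4n^{2/3}$, and $c^{-1} \geq 4$ absorbs the constant in the final inequality.
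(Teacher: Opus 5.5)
Your proof is correct, but it is organised differently from the paper's. You reduce to Lemma~\ref{half graph lem} at a vertex $i$ of maximum complement degree. You pay for that lemma's hypothesis $8\Delta(\overline{G-i})|\lambda_n|\le n-1$ with a case split. If $|\lambda_n|\ge n^{1/3}$, the conclusion follows at once from $\Delta(\overline{G})\le cn^{2/3}$. Otherwise the hypothesis holds, and the branch $\deg_G(i)\le 4|\lambda_n|^2$ is impossible because $\deg_G(i)\ge n-1-cn^{2/3}$.

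The paper instead does a direct computation. It tests $Y=J+(|\lambda_n|-1)I-\overline{A}\succeq 0$, which is just $A+|\lambda_n|I$, against the mean-zero vector $w=\overline{\Delta}^{1/2}(n1_{\{i\}}-\mathbf{1})+(n1_B-\overline{\Delta}\mathbf{1})$, where $B$ is the non-neighbourhood of $i$. Because $w$ has mean zero, the $J$ term vanishes and the complement edges are controlled by degree sums. This gives $n|\lambda_n|\ge\overline{\Delta}^{1/2}(n-2\overline{\Delta})-\overline{\Delta}^2$, and the $n^{2/3}$ hypothesis is used only to absorb the $\overline{\Delta}^2$ error.

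So both arguments test the same positive semidefinite form with vectors built from $1_{\{i\}}$ and the (non-)neighbourhood of $i$. The difference is that the paper centres its vector and sets the weight on $i$ in terms of $\overline{\Delta}$ rather than $|\lambda_n|$, which removes the need for a case analysis.

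Your route is shorter given the preceding lemma, and it shows where the exponent $2/3$ comes from. It is exactly where the trivial regime $|\lambda_n|^2\ge n^{2/3}$ meets the regime $\overline{\Delta}|\lambda_n|\lesssim n$ in which Lemma~\ref{half graph lem} applies. The paper's route is self-contained and produces an explicit inequality between $|\lambda_n|$ and $\overline{\Delta}$ in one step.
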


\begin{proof}
    Let $A, \overline{A}$ denote the adjacency matrices of $G, \overline{G}$, respectively. Note that $\overline{A} = J - I - A\preceq J + (|\lambda_n| - 1)I$. Let $Y = J + (|\lambda_n| - 1)I - \overline{A}\succeq 0$. Let $\overline{\Delta} = \Delta(\overline{G})$ denote the maximum degree of $\overline{G}$. 
    
    Let $i$ be a vertex such that $\deg_{\overline{G}} (i) = \overline{\Delta}$ and let $B$ be the neighbourhood of $i$ in $\overline{G}$. Let $v = n\cdot 1_{\{i\}} - 1$ and let $u = n\cdot 1_B - \overline{\Delta}$. We have
    \begin{align*}
        v^T\overline{A}v &= -2n\overline{\Delta} + 2e(\overline{G})\geq -2n\overline{\Delta}, \\
        v^T\overline{A}u &= n^2\overline{\Delta} - n\overline{\Delta}^2 - n\sum_{j\in B}\deg_{\overline{G}}(j) + 2\overline{\Delta}e(\overline{G})\geq n\overline{\Delta}(n - 2\overline{\Delta})
        \\
        u^T\overline{A}u &= 2n^2e_{\overline{G}}(B) - 2n\overline{\Delta}\sum_{j\in B}\deg_{\overline{G}}(j) + 2\overline{\Delta}^2e(\overline{G})\geq -2n\overline{\Delta}^3, 
    \end{align*}
    and $v^Tv = n(n - 1), v^Tu = -n\overline{\Delta}, u^Tu = n\overline{\Delta}(n - \overline{\Delta})$. Therefore, since $Jv = Ju = 0$, we have
    \begin{align*}
        v^TYv &\leq n^2(|\lambda_n| - 1) + 2n\overline{\Delta}\leq n^2|\lambda_n|, \\
        v^TYu &\leq - n\overline{\Delta}(n - 2\overline{\Delta}), \\
        u^TYu &\leq n^2\overline{\Delta}(|\lambda_n| - 1) + 2n\overline{\Delta}^3\leq n^2\overline{\Delta}|\lambda_n| + 2n\overline{\Delta}^3. 
    \end{align*}
    Let $w = \overline{\Delta}^{1/2}v + u$. Since $Y\succeq 0$, we have
    \begin{equation*}
        0\leq w^TYw\leq 2n^2\overline{\Delta}|\lambda_n| + 2n\overline{\Delta}^3 - 2n\overline{\Delta}^{3/2}(n - 2\overline{\Delta}). 
    \end{equation*}
    Hence
    \begin{equation*}
        n|\lambda_n|\geq \overline{\Delta}^{1/2}(n - 2\overline{\Delta}) - \overline{\Delta}^2\geq (1 - 2\clem{very dense no subgraph passage ln bound lem} - \clem{very dense no subgraph passage ln bound lem}^{3/2})\overline{\Delta}^{1/2}n. 
    \end{equation*}
    Taking $\clem{very dense no subgraph passage ln bound lem} = 0.1$ yields the desired bound. 
\end{proof}

\begin{prop} \label{chowla no dense subgraph prop}
    Let $L > 1$ be a real number, let $X$ be a set of $n$ positive integers, and let $q > 10\max(\max X, L^3)$ be a prime number. We assume that $L$ is sufficiently large (in an absolute sense), 
    \begin{equation*}
        -\min \widehat{1_{X\cup (-X)}}\leq L, 
    \end{equation*}
    and $\Gamma_{q, X}$ has an induced subgraph $G$ such that $v(G)\geq L^4$ and $\Delta(\overline{G})\leq L^2$. Then $n\gg L^7$. 
\end{prop}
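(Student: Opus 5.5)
The plan is to exploit the dichotomy of Lemma~\ref{half graph lem} for vertices outside the dense subgraph, combined with the translation invariance of the Cayley graph. Write $W = V(G)\subseteq \mathbb{Z}/q\mathbb{Z}$, so $|W|\geq L^4$ and every $w\in W$ is non-adjacent (within $W$) to at most $L^2$ vertices of $W$. By Cauchy interlacing, every induced subgraph of $\Gamma_{q, X}$ has least eigenvalue of absolute value at most $L$.

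First step: classify the vertices of $\Gamma_{q,X}$ relative to $W$. For any vertex $x\notin W$, apply Lemma~\ref{half graph lem} to the induced subgraph on $W\cup\{x\}$. Its hypothesis $8\Delta(\overline{G})|\lambda|\leq 8L^3\leq |W|$ holds once $L$ is large. It follows that $x$ is either \emph{heavy}, meaning adjacent to all but at most $4L^2$ vertices of $W$, or \emph{light}, meaning adjacent to at most $4L^2$ vertices of $W$. Let $H$ be the set of heavy vertices; it contains $W$.

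Second step: bound $|H|$ above by $n$. Double count the edges between $W$ and $H$. Each $w\in W$ has degree $2n$ in $\Gamma_{q,X}$, and each $h\in H$ sends at least $|W| - 4L^2\geq |W|/2$ edges into $W$. Hence $|H|\leq 4n$. It therefore suffices to show $|H|\gg L^7$.

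Third step: produce many heavy vertices from the group structure. A vertex $x$ is heavy precisely when the neighbourhood $x + (X\cup -X)$ contains almost all of $W$, i.e.\ when $W - x\subseteq \pm X$ up to $4L^2$ exceptions. The neighbourhood of a vertex $w\in W$ is $w\pm X$, of size $2n$. Split this neighbourhood into its heavy and light parts, and note that the light vertices of $\Gamma_{q, X}$ contribute at most $4L^2$ edges each into $W$. Summing $e(W, V\setminus H)$ in two ways then gives
\begin{equation*}
|W|\cdot(2n - |H|) \leq 4L^2\cdot(\text{number of light vertices meeting } W).
\end{equation*}
Lemma~\ref{half graph lem} already controls each light vertex individually, but to bound their number I would apply a positive-semidefiniteness argument to $A + L I$, in the spirit of the proofs of Lemma~\ref{triv densification} and Lemma~\ref{very dense no subgraph passage ln bound lem}. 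The test vector would combine $1_W$ with the indicator of the light neighbours, weighted so that a large light mass forces $-v^TAv > L\,v^Tv$. Balancing the weights should give a bound of the shape $n\gg |W|\cdot L^{3}\geq L^7$: the factor $L^4$ comes from $|W|$ and the factor $L^3$ from the relation $4L^2\cdot L\approx$ the eigenvalue slack. The hypothesis $q > 10L^3$ ensures that the relevant translates of $W$ do not wrap around and overlap.

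The main obstacle is this third step. It requires converting the local dichotomy into a global count: showing that the complement degree within $H$, or equivalently the light mass around $W$, cannot be small without violating $A\succeq -LI$. I would first try applying Lemma~\ref{very dense no subgraph passage ln bound lem} to $\Gamma_{q,X}[H]$. If $|H|\leq cL^7$ were small, then the complement degrees in $H$ would be at most $0.1|H|^{2/3}$, so that lemma would give complement degree $\ll L^2$ throughout $H$. I would then seek a contradiction with the fact that the Cayley translates $W + s$ for $s\in\pm X$ must mostly lie in $H$, which forces many non-edges inside $H$. If this route fails, I would fall back on the explicit test-vector computation described above.
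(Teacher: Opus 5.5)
Your first two steps are correct. Lemma~\ref{half graph lem} applies to $W\cup\{x\}$ because $8L^3\le L^4$. Double counting edges into $W$ then gives $|H|\le 4n$ for the heavy set $H$. The gap is the third step, which is where the whole content of the proposition lies: you give only a placeholder, and the routes you suggest do not work.

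\begin{itemize}
\item The displayed inequality $|W|(2n-|H|)\le 4L^2\cdot(\cdots)$ is vacuous once $|H|\ge 2n$.
\item More fundamentally, any test-vector argument showing that the light mass around $W$ is small can only yield $|H|\ge 2n-(\text{error})$. That is a lower bound for $|H|$ in terms of $n$, and together with $|H|\le 4n$ it is circular. Nothing in your outline produces a bound on $n$ in terms of $L$, so the claimed balancing $n\gg|W|L^3$ is numerology.
\item Your fallback also fails. Heaviness only controls adjacency to $W$, so you have no bound on non-edges among heavy vertices outside $W$, and you cannot feed $\Gamma_{q,X}[H]$ into Lemma~\ref{very dense no subgraph passage ln bound lem}.
\item The claim that the translates $W+s$, $s\in\pm X$, mostly lie in $H$ is unjustified. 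The true structure is the opposite: a maximal near-clique meets each nontrivial translate of itself in only $O(L)$ vertices.
\item The hypothesis $q>10L^3$ is not about wrap-around. With $q$ prime, it rules out a near-clique being invariant under a nonzero translation, which would force it to be all of $\mathbb{Z}/q\mathbb{Z}$ and give $q\ll L^2$.
\end{itemize}

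Three ideas are missing.

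\emph{Closure step.} Instead of the heavy set relative to $W$, take $H$ of maximum order among induced subgraphs with $\Delta(\overline{H})\le 10L^2$. Suppose some $i\notin V(H)$ had more than $4L^2$ neighbours in $H$. Then Lemma~\ref{half graph lem} gives it at most $4L^2$ non-neighbours in $H$. So $\Gamma_{q,X}[V(H)\cup\{i\}]$ has complement maximum degree at most $14L^2\le 0.1\,(L^4)^{2/3}$. Lemma~\ref{very dense no subgraph passage ln bound lem} then pushes this back down to $10L^2$, contradicting maximality. Hence every vertex outside $H$ is light with respect to $H$ itself, not merely $W$.

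\emph{Additive input.} The $e(H)\gg L^8$ edges carry only $n$ possible differences. So some $t\in X$ gives $|S|\gg L^8/n$, where $S=V(H)\cap(V(H)+t)$. Take a vertex $i\in(V(H)+t)\setminus V(H)$. By closure it has at most $4L^2$ neighbours in $S$. By translation invariance it has at most $\deg_{\overline{H}}(i-t)\le 10L^2$ non-neighbours in $S$. Hence $|S|\ll L^2\le v(H)/2$.

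\emph{Final test vector.} Test $A\succeq -LI$ on $|T|1_S-|S|1_T$, where $T$ is the symmetric difference of $V(H)$ and $V(H)+t$. The two halves of $T$ are nearly non-adjacent, by closure. The set $S$ is nearly complete to $T$, since each vertex of $S$ lies in both near-cliques. This gives $|S|\ll L$. Hence $L^8/n\ll L$, that is $n\gg L^7$. The exponent comes from $L^8/L$, and the bound $|H|\le 4n$ is never needed.
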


\begin{proof}
    Let $H$ maximise $v(H)$ amongst induced subgraphs of $\Gamma_{q, X}$ satisfying $\Delta(\overline{H})\leq \clem{very dense no subgraph passage ln bound lem}^{-1}L^2$. Note that, in particular, $v(H)\geq v(G)\geq L^4$. 

    We first show that if $i\in V(\Gamma_{q, X})\backslash V(H)$ is a vertex not in $H$, then $e(\{i\}, H)\leq 4L^2$. Assume otherwise. By Lemma~\ref{half graph lem}, we must have $e_{\overline{\Gamma_{q, X}}}(\{i\}, H)\leq 4L^2$. Let $H'$ be the induced subgraph of $\Gamma_{q, X}$ on $V(H)\cup \{i\}$. Therefore, $\Delta(\overline{H'})\leq \max(\clem{very dense no subgraph passage ln bound lem}^{-1}L^2 + 1, 4L^2)\leq (\clem{very dense no subgraph passage ln bound lem}^{-1} + 4)L^2\leq \clem{very dense no subgraph passage ln bound lem}L^{8/3}\leq \clem{very dense no subgraph passage ln bound lem}v(H')^{2/3}$. Hence by Lemma~\ref{very dense no subgraph passage ln bound lem}, we have
    \begin{equation*}
        \Delta(\overline{H'})\leq \clem{very dense no subgraph passage ln bound lem}^{-1}L^2, 
    \end{equation*}
    a contradiction to the maximality of $v(H)$. 

    Therefore, $e(\{i\}, H)\leq 4L^2$ for any $i\in V(\Gamma_{q, X})\backslash V(H)$. Since $q\geq 10\max X$, for any edge $ij\in E(\Gamma_{q, X})$, we may unambiguously define $|i - j|\in X$. We consider the function
    \begin{align*}
        \operatorname{diff}: E(H)&\rightarrow X, \\
        ij&\mapsto |i - j|. 
    \end{align*}
    By the pigeonhole principle, there exists some $t\in X$ such that $|\operatorname{diff}^{-1}(\{t\})|\geq \frac{e(H)}{|X|}\gg L^8/n$. In other words, $|V(H)\cap (V(H) + t)|\gg L^8/n$. 
    
    If $V(H) = V(H) + t$, then $V(H) = V(\Gamma_{q, X})$. Hence $\Delta(\overline{\Gamma_{q, X}}) = q - 2n - 1\leq \clem{very dense no subgraph passage ln bound lem}^{-1}L^2$. Thus $q\ll L^2$, a contradiction. Hence we may assume that $V(H)\neq V(H) + t$. 
    
    Let $S = V(H)\cap (V(H) + t)$, and let $T$ be the symmetric difference of $V(H)$ and $(V(H) + t)$. Take any $i\in (V(H) + t)\backslash V(H)$. We have
    \begin{equation*}
        |S| = e(\{i\}, S) + e_{\overline{\Gamma_{q, X}}}(\{i\}, S)\leq e(\{i\}, V(H)) + \deg_{\overline{H}}(i - t)\leq 4L^2 + \clem{very dense no subgraph passage ln bound lem}^{-1}L^2\ll L^2. 
    \end{equation*}
    In particular, when $L$ is sufficiently large, we have $|S|\leq v(H)/2$ and $|T|\geq v(H)$. 
    
    Let $A$ denote the adjacency matrix of $\Gamma_{q, X}$. Consider the vector $v = |T|\cdot 1_S - |S|\cdot 1_T$. We have
    \begin{align*}
        -v^TAv &= - 2|T|^2e(S) - 2|S|^2e(T) + 2|S||T|e(S, T)\\
        &= 2|T|^2e_{\overline{\Gamma_{q, X}}}(S) + 2|S|^2e_{\overline{\Gamma_{q, X}}}(T) - 2|S||T|e_{\overline{\Gamma_{q, X}}}(S, T) + |S|^2|T| + |S||T|^2\\
        &\geq 2|S|^2e_{\overline{\Gamma_{q, X}}}(T) - 2|S||T|e_{\overline{\Gamma_{q, X}}}(S, T)\\
        &\geq 2|S|^2e_{\overline{\Gamma_{q, X}}}(V(H)\backslash (V(H) + t), (V(H) + t)\backslash V(H)) - 4\clem{very dense no subgraph passage ln bound lem}^{-1}|S|^2|T|L^2\\
        &= |S|^2|T|^2/2 - 2|S|^2e(V(H)\backslash (V(H) + t), (V(H) + t)\backslash V(H)) - 4\clem{very dense no subgraph passage ln bound lem}^{-1}|S|^2|T|L^2\\
        &\geq |S|^2|T|^2/2 - 4|S|^2|T|L^2 - 4\clem{very dense no subgraph passage ln bound lem}^{-1}|S|^2|T|L^2
    \end{align*}
    and $v^Tv = |S|^2|T| + |S||T|^2$. Since $-v^TAv\leq Lv^Tv$, we have
    \begin{equation*}
        |T|/2 - 4L^2 - 4\clem{very dense no subgraph passage ln bound lem}^{-1}L^2\leq L + L|T|/|S|. 
    \end{equation*}
    Hence
    \begin{equation*}
        v(H)/2 - 8\clem{very dense no subgraph passage ln bound lem}^{-1}L^2\leq 4Lv(H)/|S|. 
    \end{equation*}
    When $L$ is sufficiently large, $8\clem{very dense no subgraph passage ln bound lem}^{-1}L^2\leq L^4/4\leq v(H)/4$. Hence $|S|\ll L$. However, we have established that $|S|\gg L^8/n$. Hence $n\gg L^7$ as asserted. 
\end{proof}

\begin{prop} \label{chowla prop}
    If $(c, 1)\in R_{\text{stab}}^\circ$ and $c < 6$, then 
    \begin{equation*}
        \mathfrak{m}(n)\gg n^{1/(c + 1)}. 
    \end{equation*}
\end{prop}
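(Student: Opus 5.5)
We argue by contradiction, combining Proposition~\ref{chowla dense subgraph prop} with Proposition~\ref{chowla no dense subgraph prop}. Fix $c$ with $(c,1)\in R_{\text{stab}}^\circ$ and $c<6$. Let $X$ be a set of $n$ positive integers, with $n$ sufficiently large with respect to $c$; small $n$ are absorbed into the implied constant, since $\mathfrak{m}(n)>0$ trivially. Suppose that
\begin{equation*}
    -\min\widehat{1_{X\cup(-X)}} < \clem{triv densification}n^{1/(c+1)}.
\end{equation*}
Otherwise the desired bound $\mathfrak{m}(n)\gg n^{1/(c+1)}$ already holds for this $X$. Set $L=n^{1/(c+1)}$, and choose a prime $q>10\max(\max X, L^3)$, which is possible since primes are unbounded.

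The first step applies Proposition~\ref{chowla dense subgraph prop}. It yields an induced subgraph $G$ of $\Gamma_{q,X}$ with $v(G)\geq n^{4/(c+1)}=L^4$ and $\Delta(\overline{G})\leq n^{2/(c+1)}=L^2$.

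The second step checks the hypotheses of Proposition~\ref{chowla no dense subgraph prop}. We have $-\min\widehat{1_{X\cup(-X)}}\leq \clem{triv densification}L\leq L$, since $\clem{triv densification}<1$. The bound on $q$ holds by our choice of $q$. Finally, $L$ is large in the absolute sense because $n$ is large with respect to $c$. That proposition then gives $n\gg L^7=n^{7/(c+1)}$.

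The third step derives the contradiction. Since $c<6$, we have $7/(c+1)>1$, so $n\gg n^{7/(c+1)}$ is false once $n$ is large enough with respect to $c$. Hence for every such $X$ we get $-\min\widehat{1_{X\cup(-X)}}\geq \clem{triv densification}n^{1/(c+1)}$. Taking the infimum over $X$ gives $\mathfrak{m}(n)\gg n^{1/(c+1)}$.

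There is no real obstacle beyond bookkeeping: the hypotheses of the two propositions must be matched exactly, and $q$ must be chosen large enough in terms of $L$. The substantive input, namely the stability of $(c,1)$, is supplied by the assumption. Verifying which $c<6$ actually satisfy $(c,1)\in R_{\text{stab}}^\circ$, which is needed to reach the exponent $1/6.43$, is a separate numerical matter and not part of this statement.
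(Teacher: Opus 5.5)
Your proof is correct and is the paper's argument: assume the minimum is below $c_{\ref{triv densification}}\, n^{1/(c+1)}$ with $L=n^{1/(c+1)}$, apply Proposition~\ref{chowla dense subgraph prop} to obtain the induced subgraph with $v(G)\geq L^4$ and $\Delta(\overline{G})\leq L^2$, then apply Proposition~\ref{chowla no dense subgraph prop} to get $n\gg L^7$, which contradicts $c<6$. Your choice of a prime $q>10\max(\max X, L^3)$ matches the hypothesis of Proposition~\ref{chowla no dense subgraph prop} exactly, slightly more carefully than the paper's $q>\max(10\max X, L^3)$.
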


\begin{proof}
    We may assume that $n$ is sufficiently large with respect to $c$. Let $L = n^{1/(c + 1)}$. Let $X$ be any set of $n$ positive integers. We seek to show that 
    \begin{equation*}
        -\min \widehat{1_{X\cup (-X)}}\geq \clem{triv densification}n^{1/(c + 1)}. 
    \end{equation*}
    Assume otherwise. Let $q > \max(10\max X, L^3)$ be a prime number. By Proposition~\ref{chowla dense subgraph prop}, $\Gamma_{q, X}$ has an induced subgraph $G$ such that $v(G)\geq L^4$ and $\Delta(\overline{G})\leq L^2$. By Proposition~\ref{chowla no dense subgraph prop}, we have $n = L^{c + 1}\gg L^7$, a contradiction. 
\end{proof}

To deduce Theorem~\ref{chowla main} from Proposition~\ref{chowla prop}, it remains to show that $(5.43, 1)\in R_{\text{stab}}^\circ$. 

\begin{lem}
    We have $(5.43, 1)\in R_{\text{stab}}^\circ$. 
\end{lem}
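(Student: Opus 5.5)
The plan is to follow the pattern of the proof of Lemma~\ref{Kt full num estimate}. I would compute finitely many iterates $(a_\ell,b_\ell)=g^{(\ell)}(5.43,1)$ explicitly and stop at the first one lying in an explicit open subset of $R_{\text{stab}}^\circ$. Then I would pull back along the orbit with Lemma~\ref{pull back lem}. The justification is as follows. By Lemma~\ref{epsilon to 0 interior} we have $R_{\text{stab}}^\circ=R_{\text{stab},>0}$. Lemma~\ref{pull back lem} says that if $g(a,b)\in R_{\text{stab},>0}$ and $(a,b)\in R$, then $(a,b)\in R_{\text{stab},>0}$. So it suffices to exhibit some $\ell$ with $(a_\ell,b_\ell)\in R_c^\circ$, which lies in $R_{\text{stab}}^\circ$ by the lemma $R_c\subseteq R_{\text{stab}}$ together with monotonicity. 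Failing that, $(a_\ell,b_\ell)\in R_\delta^\circ$ for some $\delta\in(0,0.1)$ also suffices, by Lemma~\ref{trivial epsilon stab zone}. I also need to check that every earlier iterate lies in $R$, i.e.\ $a_j>b_j+4$.

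Each step is a short computation: evaluate $f(a,b)=\bigl(a+2b-3+\sqrt{(a-3)^2-4b}\bigr)/(2b)$, set $r=\ceil{f}-1$, and map to $(a-1/r,1/r)$. My rough first steps are these.
\begin{itemize}
\item $f(5.43,1)\approx 2.905$, so $r=2$ and $(a_1,b_1)=(4.93,0.5)$.
\item $f(4.93,0.5)\approx 4.24$, so $r=4$ and $(a_2,b_2)=(4.68,0.25)$.
\item $f(4.68,0.25)\approx 7.06$, so $r=7$ and $(a_3,b_3)\approx(4.537,0.1429)$.
\item $f(a_3,b_3)\approx 11.06$, so $r=11$ and $(a_4,b_4)\approx(4.446,0.0909)$.
\end{itemize}
At each of these points I test the $R_c$ condition $b<\tfrac12(a-4)^2-\tfrac18(a-4)^3$. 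So far it fails narrowly: at $\ell=4$ the right-hand side is about $0.0884$ against $b_4\approx0.0909$. I would therefore keep iterating. The $a_\ell$ decrease by $b_{\ell+1}$ each step, and $b_\ell$ shrinks roughly like $b_\ell^2/(a_\ell-4)$. So I expect $b_\ell$ soon to drop well below the quadratic curve while $a_\ell$ stays bounded away from $4$ (heuristically the limit is near $4.35$). From there, membership in $R_c^\circ$ follows, or in $R_\delta^\circ$ with, say, $\delta=0.02$ after a few more steps.

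The main obstacle is numerical robustness. The margins are thin both for the $R_c$ test and in whether $f$ lands just above or below an integer, which changes $r$. The values of $f$ computed above are not extremely close to integers, so rigorous interval bounds on the square roots should suffice, and I would record explicit rational upper and lower bounds at each step. Since $g$ is order-preserving (Observation~\ref{g epsilon obs}), it is enough to carry lower bounds for $a_\ell$ and upper bounds for $b_\ell$ along the orbit. If the orbit approaches $a=4$ too closely for $R_c$, I would instead aim for $R_\delta^\circ$ with a small explicit $\delta$ at a later step. Openness at the starting point is automatic, because Lemma~\ref{pull back lem} already produces membership in the open set $R_{\text{stab},>0}$.
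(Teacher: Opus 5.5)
Your plan is correct and is essentially the paper's proof: your first four iterates agree with the paper's. One further step gives $g^{(5)}(5.43,1)=(135019/30800,1/16)$, which lies in $R_c^\circ$ since with $x=a-4\approx 0.3837$ one has $\tfrac12x^2-\tfrac18x^3\approx 0.0666>1/16$; from there the paper pulls back along the orbit via Lemma~\ref{pull back lem alt} (with $V=R_{\text{stab}}^\circ$) rather than your Lemma~\ref{pull back lem}, an immaterial difference.

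Your tail heuristics are off, though. The decay is only geometric, $b_{\ell+1}\approx b_\ell/(a_\ell-3)$, and the orbit's limit is about $4.15$ (already $a_{14}\approx 4.1985$). So the suggested fallback to $R_{0.02}$ would never apply, but this does not matter because $R_c^\circ$ is reached at $\ell=5$.
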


\begin{proof}
    We have $(5.43, 1)\in R$, $g(5.43, 1) = (4.93, 1/2)\in R$, $g^{(2)}(5.43, 1) = (4.68, \frac{1}{4})\in R$, $g^{(3)}(5.43, 1) = (\frac{794}{175}, \frac{1}{7})\in R$, $g^{(4)}(5.43, 1) = (\frac{8559}{1925}, \frac{1}{11})\in R$, and $g^{(5)}(5.43, 1) = (\frac{135019}{30800}, \frac{1}{16})\in  R_c^{\circ}\subseteq R_{\text{stab}}^\circ$. Hence by Lemma~\ref{pull back lem alt}, $(5.43, 1)\in R_{\text{stab}}^\circ$ as asserted. 
\end{proof}

\section{Polynomial proximity to a disjoint union of cliques} \label{poly sec}

In this section we give a proof of Theorem~\ref{1/4 poly thm}, which we restate here for convenience. Along with input from the theory we developed earlier, we loosely follow the proof of Theorem~\ref{1/6 poly thm} in~\cite{graph}. 

\begin{custhm}{Theorem}{\ref{1/4 poly thm}}
    For any $\gamma\in (0, 1/4)$, there exists $\alpha > 0$ such that the following holds when $n$ is sufficiently large with respect to $\gamma$. If the least eigenvalue $\lambda_n$ of an $n$-vertex graph $G$ satisfies $|\lambda_n|\leq n^\gamma$, then $G$ is $n^{-\alpha}$-close to a disjoint union of cliques. 
\end{custhm}

For the sake of clarity, we do not track or optimise the dependency of $\alpha$ on $\gamma$. However, a more careful analysis of our argument shows that we can in fact take $\alpha = (16 + o(1))(1/4 - \gamma)^2$ as $\gamma\rightarrow 1/4$. 

We first record the following result asserting the existence of a large clique. 

\begin{prop} \label{1/4 poly clique}
    Let $(a, b)\in R_{\text{stab}}^{\circ}$, and let $L > 1$ be sufficiently large with respect to $(a, b)$. If $G$ is an $n$-vertex graph with least eigenvalue $\lambda_n$ such that $|\lambda_n|\leq L$ and $(a, b)\in \partial D_L(G)$, then $G$ contains a clique on $L^{2 + \conlem{\delta}{abstract rec 0 cor}(a, b)}$ vertices. 
\end{prop}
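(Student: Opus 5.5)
The plan is to combine Corollary~\ref{abstract rec 0 cor} with Tur\'an's theorem, in the same way the paper extracts cliques in the Chowla section. Write $\delta = \conlem{\delta}{abstract rec 0 cor}(a, b) > 0$.

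First, since $(a, b)\in R_{\text{stab}}^\circ$, $L$ is sufficiently large with respect to $(a, b)$, $|\lambda_n|\leq L$ and $(a, b)\in \partial D_L(G)$, Corollary~\ref{abstract rec 0 cor} applies directly. It yields an induced subgraph $H\sqsubseteq G$ with $v(H)\geq L^{4 + \delta}$ and $p(H)\geq 1 - L^{-2-\delta}$. Consequently the complement satisfies
\begin{equation*}
    e(\overline{H})\leq L^{-2-\delta}\binom{v(H)}{2}, \qquad d(\overline{H})\leq L^{-2-\delta}(v(H) - 1).
\end{equation*}

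Second, a large clique in $H$ is the same as a large independent set in $\overline{H}$. By the Caro--Wei bound, or equivalently Tur\'an's theorem applied to $\overline{H}$, we get
\begin{equation*}
    \alpha(\overline{H})\geq \frac{v(H)}{d(\overline{H}) + 1}\geq \frac{v(H)}{L^{-2-\delta}v(H) + 1}.
\end{equation*}
Since $v(H)\geq L^{4+\delta}\geq L^{2+\delta}$, the denominator is at most $2L^{-2-\delta}v(H)$, so $\alpha(\overline{H})\geq L^{2+\delta}/2$. This gives a clique in $H$, and hence in $G$ since $H$ is induced, on at least $L^{2+\delta}/2$ vertices.

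Third, I need to remove the factor $1/2$ so that the clique has the stated $L^{2+\delta}$ vertices. The clean fix is to re-run the argument with a slightly larger exponent. The proof of Corollary~\ref{abstract rec 0 cor} has slack: it obtains density $1 - L^{2}v(H'')^{-1+\delta'/10}$ with $v(H'')\gg L^{4+\delta'}$, which is in fact $1 - L^{-2-\delta'+o(1)}$, and then only records $\delta = \delta'/2$. So I would apply that proof with its internal parameter giving density at least $1 - L^{-2-2\delta}$ (for instance, using $\delta'$ as produced there), so that Caro--Wei gives at least $L^{2+2\delta}/2\geq L^{2+\delta}$ for $L$ large.

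If one prefers to treat the corollary as a black box, the weaker conclusion $L^{2+\delta}/2$ still exceeds $L^{2+\delta/2}$. One can then simply read the proposition's $\conlem{\delta}{abstract rec 0 cor}(a,b)$ as a suitably shrunk positive constant; the point is only that some positive exponent gain over $L^2$ survives.

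The whole argument is routine. The only delicate point is this constant-versus-exponent bookkeeping in the third step, and it is absorbed by the requirement that $L$ be large with respect to $(a, b)$.
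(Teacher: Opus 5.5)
Your argument is the paper's: apply Corollary~\ref{abstract rec 0 cor} and then Tur\'an's theorem (equivalently Caro--Wei on $\overline{H}$). The paper simply ignores the lower-order constant loss that your third step addresses.

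That step reaches the right conclusion but overstates the slack. The $+1$ in the Caro--Wei denominator costs only a factor $1/(1+L^{-2})$, not $1/2$. Separately, the corollary's proof as written gives density $1-O\bigl(L^{-2-0.6\delta'+\delta'^2/10}\bigr)$ on $\gg L^{4+0.6\delta'-\delta'^2/10}$ vertices, not $1-L^{-2-\delta'+o(1)}$, because the factor $v(H'')^{\delta'/10}\approx L^{0.4\delta'}$ is not $L^{o(1)}$. This still suffices, since $0.6\delta'-\delta'^2/10>\delta'/2=\delta$ when $\delta'<0.1$.
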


\begin{proof}
    By Corollary~\ref{abstract rec 0 cor}, $G$ contains an induced subgraph $H$ such that $v(H)\geq L^{4 + \conlem{\delta}{abstract rec 0 cor}(a, b)}$ and $p(H)\geq 1 - L^{-2 - \conlem{\delta}{abstract rec 0 cor}(a, b)}$. The conclusion follows by applying Tur\'an's theorem to $H$. 
\end{proof}

We record a handful of simple lemmas from~\cite{graph}. 

\begin{lem}[Simple consequence of Claim~5.1 and Lemma~11.1 of~\cite{graph}] \label{poly dense Y lem}
    Let $G$ be a graph on $n$ vertices with least eigenvalue $\lambda_n$. Let $X\cup Y$ be a partition of $V(G)$, then $n^3|\lambda_n|\geq e(X, Y)^2 - 4n^2e(Y)$. 
\end{lem}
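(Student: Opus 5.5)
We prove the inequality $n^3|\lambda_n|\geq e(X,Y)^2-4n^2e(Y)$ with a single test vector in the quadratic form of $A$, where $A$ is the adjacency matrix of $G$. Write $L=|\lambda_n|$. Since $A\succeq -LI$, every $v\in\mathbb{R}^{V(G)}$ satisfies $-v^TAv\leq Lv^Tv$. We take $v=\alpha 1_X-\beta 1_Y$ with $\alpha,\beta\geq 0$ to be optimised. Then
\begin{equation*}
    -v^TAv=2\alpha\beta\, e(X,Y)-2\alpha^2e(X)-2\beta^2e(Y),\qquad v^Tv=\alpha^2|X|+\beta^2|Y|.
\end{equation*}
The term $-2\alpha^2e(X)$ has the wrong sign, and the target inequality contains no $e(X)$ term. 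So we must choose $\alpha$ small enough to make this term harmless, and use the trivial bound $e(X)\leq n^2/2$.

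\emph{Choice of weights.} Take $\beta=1$ and $\alpha=e(X,Y)/n^2$. The bound $e(X)\leq |X|^2/2\leq n^2/2$ gives $2\alpha^2e(X)\leq e(X,Y)^2/n^2$. The mixed term equals $2e(X,Y)^2/n^2$. Hence
\begin{equation*}
    -v^TAv\geq \frac{e(X,Y)^2}{n^2}-2e(Y).
\end{equation*}
On the other side, $e(X,Y)\leq |X||Y|\leq n^2/4$ gives $\alpha\leq 1/4$, so $v^Tv\leq |X|+|Y|=n$. Combining,
\begin{equation*}
    \frac{e(X,Y)^2}{n^2}-2e(Y)\leq Ln.
\end{equation*}
Multiplying by $n^2$ yields $n^3L\geq e(X,Y)^2-2n^2e(Y)$. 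This is stronger than the claimed bound, since $e(Y)\geq 0$.

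If one prefers a cleaner optimisation, one can instead take $\alpha=e(X,Y)/(2n^2)$, or optimise over $\alpha$ exactly. Either choice loses only constants, and the factor $4$ in the statement leaves enough room. In this reading, Claim~5.1 of~\cite{graph} is presumably exactly this test-vector computation, and Lemma~11.1 specialises it. We do not need either result, since the direct computation suffices.

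\emph{Main obstacle.} The only delicate point is controlling the $e(X)$ contribution without any hypothesis on $X$. It is handled by making $\alpha$ proportional to $e(X,Y)/n^2$, so that the quadratic loss $\alpha^2n^2$ is dominated by the linear gain $\alpha e(X,Y)$. Apart from this, the proof is a routine calculation.
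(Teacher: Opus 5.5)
Your proof is correct. With $v=\frac{e(X,Y)}{n^2}1_X-1_Y$, the bounds $e(X)\le n^2/2$ and $v^Tv\le n$ give $-v^TAv\ge e(X,Y)^2/n^2-2e(Y)$. Combined with $-v^TAv\le|\lambda_n|\,v^Tv$, this yields $n^3|\lambda_n|\ge e(X,Y)^2-2n^2e(Y)$, which is stronger than the statement.

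The paper gives no argument of its own; it only cites Claim~5.1 and Lemma~11.1 of~\cite{graph}. The intended route presumably goes through the MaxCut surplus. Two things suggest this: the constants match, and elsewhere the paper pairs Claim~5.1 with Lemma~8.3 to turn a surplus lower bound into one for $|\lambda_n|$. That route would combine two ingredients:
\begin{itemize}
\item a combinatorial estimate of the shape $\surp(G)\ge e(X,Y)^2/(4n^2)-e(Y)$, obtainable by randomly rounding a fractional vector like yours to a cut;
\item the bound $\surp(G)\le n|\lambda_n|/4$, which is just $A\succeq-|\lambda_n|I$ tested on $\pm1$ vectors.
\end{itemize}
Multiplying through by $4n^2$ gives exactly $e(X,Y)^2-4n^2e(Y)$. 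Your version applies positive semidefiniteness of $A+|\lambda_n|I$ directly to the fractional vector, so it skips the detour through cuts. It is self-contained and is all the paper actually needs where the lemma is applied. The cited route has the advantage of also giving the surplus lower bound itself.

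Your aside is not right as stated. With the bounds you use, $\alpha=e(X,Y)/(2n^2)$ gives only $\tfrac34e(X,Y)^2-2n^2e(Y)\le n^3|\lambda_n|$. Slack in the coefficient of $e(Y)$ cannot absorb a loss in the coefficient of $e(X,Y)^2$; take $e(Y)=0$ to see this. Your choice $\alpha=e(X,Y)/n^2$ is exactly the maximiser of $2\alpha e(X,Y)-\alpha^2n^2$, so keep it.
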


\begin{lem}[Lemma~11.4 of~\cite{graph}] \label{poly dichotomy lem}
    There exists an absolute constant $C = \Clem{poly dichotomy lem} \geq 1$ such that the following holds. Let $G$ be an $n$-vertex graph with least eigenvalue $\lambda_n$ and let $X, Y\subseteq V(G)$ disjoint subsets of vertices of the same size such that $G[X]$ and $G[Y]$ are complete. Then either $e(X, Y)\leq C |\lambda_n|^2|X|$ or $e_{\overline{G}}(X, Y)\leq C|\lambda_n|^2|X|$. 
\end{lem}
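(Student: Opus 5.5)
We work inside the induced subgraph on $X\cup Y$; by Cauchy interlacing its least eigenvalue is at least $-L$, where $L=|\lambda_n|$. Write $k=|X|=|Y|$ and let $B\in\{0,1\}^{X\times Y}$ be the bipartite adjacency matrix. The adjacency matrix restricted to $X\cup Y$ is then
\[
\begin{pmatrix} J-I & B\\ B^T & J-I\end{pmatrix}\succeq -LI,
\]
and everything will come from testing this against well-chosen vectors. The plan has two stages. First we show that $B$ is close in Frobenius norm to a matrix of the form $r\mathbf 1^T+\mathbf 1 s^T$. Second we use test vectors that do involve the all-ones directions to show that $r$ and $s$ must be essentially constant, with value essentially $0$ or $1$.

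\emph{Stage 1 (centering).} Let $P$ be the projection onto $\mathbf 1^\perp$ (on either side). For $x\perp\mathbf 1_X$ and $y\perp\mathbf 1_Y$ the $J$ blocks vanish, so the inequality gives $(L-1)(|x|^2+|y|^2)+2x^TBy\ge 0$. Replacing $y$ by $-y$ and optimising the scaling yields $\|PBP\|_{\mathrm{op}}\le L$. Since $PBP$ has rank at most $k$, we get $\|PBP\|_F^2\le L^2k$. Expanding $PBP$, this says that $B_{ij}=r_i+s_j+e_{ij}$, with $r_i$ the centred row means, $s_j$ the column means, and $\sum e_{ij}^2\le L^2k$. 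Equivalently, the sum over quadruples $i,i'\in X$, $j,j'\in Y$ of the squared second differences $(B_{ij}-B_{ij'}-B_{i'j}+B_{i'j'})^2$ is $O(k^3L^2)$.

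This stage cannot be the whole argument. The matrix $B=\mathbf 1_{X_1}\mathbf 1_Y^T$, where half of $X$ is complete to $Y$ and the other half has no neighbours in $Y$, has $PBP=0$ but intermediate density.

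\emph{Stage 2 (killing the rank-one part).} Fix a threshold and split $X=X_1\cup X_2$ according to whether $\deg_Y(i)\ge k/2$, and similarly split $Y=Y_1\cup Y_2$ by degree into $X$. Stage 1 says that, up to an $L^2k$ error in the number of ``wrong'' entries, $B$ behaves like the sum of a row pattern and a column pattern. So the dichotomy fails only if some nontrivial blocks are nearly complete and others nearly empty, in an induced-$P_2$-like configuration. For instance, $X_2$ is nearly empty to $Y$, $X_1$ is nearly complete to $Y$, and $X_1$, $X_2$ are complete to each other because $X$ is a clique.

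For such a configuration we test with a vector that is constant on blocks, such as $v=\mathbf 1_{X_2}-t\,\mathbf 1_{X_1}+\mathbf 1_Y$. In the model case $|X_1|=|X_2|=k/2$ with $t=3$, this gives
\[
v^TAv\approx k^2\left(\tfrac54-\tfrac{3t}{2}+\tfrac{t^2}{4}\right)=-k^2,
\qquad |v|^2=6k,
\]
which forces $L\gtrsim k$. In general, the block sizes $|X_1|,|X_2|$ and the number of non-conforming entries enter as parameters. Optimising $t$ (and using the analogous vectors built from $Y_1,Y_2$) then shows the following: if both $e(X,Y)$ and $e_{\overline G}(X,Y)$ exceed $CL^2k$, the negative quadratic form dominates $L|v|^2$, a contradiction.

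\emph{Main obstacle.} The delicate step is the quantitative bookkeeping in Stage 2. The error entries $e_{ij}$ from Stage 1 are controlled only in $\ell_2$ by $L^2k$. One must show that they cannot conspire with unbalanced block sizes (say $|X_2|$ very small) to keep the Rayleigh quotient above $-L$, while still leaving more than $CL^2k$ edges and non-edges. I would handle this by discarding the rows and columns containing many erroneous entries; there are few of them by Markov's inequality applied to $\sum e_{ij}^2$. I would then run the block test vector on what remains, choosing $t$ depending on $|X_1|/|X_2|$ so that the gain is proportional to $\min(e(X,Y),e_{\overline G}(X,Y))$ rather than to $k^2$. Finally I would compare this gain with $L|v|^2=O(Lk)$, plus the Frobenius error $O(L^2k)$. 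The $L^2$ in the conclusion should arise exactly from this Frobenius error.
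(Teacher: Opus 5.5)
The paper does not prove this lemma; it is quoted from~\cite{graph}. So your argument has to stand on its own.

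\textbf{Stage~1 is correct.} Testing with $x\perp\mathbf 1_X$, $y\perp\mathbf 1_Y$ gives $\|PBP\|_{\mathrm{op}}\le L-1$, hence $\|PBP\|_F^2\le (L-1)^2(k-1)$. Your example $B=\mathbf 1_{X_1}\mathbf 1_Y^T$ rightly shows that this alone cannot suffice.

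\textbf{The gap is Stage~2.} It carries the whole content of the lemma and is not actually carried out. Two problems:
\begin{enumerate}
\item Stage~1 only gives $B_{ij}=r_i+s_j+e_{ij}$ with \emph{real} degree profiles and an error that is small only in $\ell_2$. Your claim that $B$ is then a $0/1$ row or column pattern up to $O(L^2k)$ wrong entries, leaving only an induced-$P_2$-like block configuration, needs a rounding argument that you do not give. It also needs the observation that for a $0/1$ matrix only one of the two profiles can be non-constant.
\item The block test vector is analysed only in the balanced, noise-free model. The unbalanced noisy case is the one you yourself flag as the main obstacle, and it is exactly where $L^2$ rather than $L$ must appear. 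It is left as a plan, and the stated scalings do not hold for the vector you wrote:
\begin{itemize}
\item With $|X_1|=a\ll k$ and coefficient $1$ on $\mathbf 1_Y$, making $v^TAv<0$ forces $t\gtrsim k/a$. Then $|v|^2\gtrsim k^2/a$, not $O(k)$.
\item Once $B$ is replaced by a block pattern, the residual is no longer $PBP$. You control its support size but not its operator norm, so its contribution is not a fixed ``$O(L^2k)$''; it depends on the sizes of the entries of $v$.
\end{itemize}
\end{enumerate}

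\textbf{How to close it.} The missing step is short if you keep the degree profiles instead of rounding them.
\begin{itemize}
\item Let $\rho=B\mathbf 1_Y$ and $\sigma=B^T\mathbf 1_X$, and test with $x=-\tau P\rho$, $y=\gamma\mathbf 1_Y$. Since $\mathbf 1^Tx=0$ and $x^TBy=-\gamma\tau\|P\rho\|^2$, the condition $v^TAv\ge -L|v|^2$ says that
\[
(L-1)\|P\rho\|^2\tau^2-2\|P\rho\|^2\gamma\tau+k(k-1+L)\gamma^2\ge 0
\]
for all $\tau,\gamma$. Hence $\|P\rho\|^2\le (L-1)k(k-1+L)$, and likewise for $\sigma$.
\item With $p=e(X,Y)/k^2$, the decomposition $B=PBP+\frac1k(P\rho)\mathbf 1^T+\frac1k\mathbf 1(P\sigma)^T+pJ$ is Frobenius-orthogonal. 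For a $0/1$ matrix, $\|B-pJ\|_F^2=k^2p(1-p)$. Therefore
\[
k^2p(1-p)\le (L-1)^2(k-1)+2(L-1)(k-1+L)\le 3L^2k .
\]
\item This gives $\min\bigl(e(X,Y),e_{\overline G}(X,Y)\bigr)\le 2k^2p(1-p)\le 6L^2k$, with no thresholds, discarding or case analysis.
\end{itemize}

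\textbf{An alternative route.} Use a per-vertex vector in the spirit of Lemma~\ref{half graph lem}. Fix $y\in Y$ with $d=d_X(y)$, and put value $1$ at $y$, $-\alpha$ on $N_X(y)$, and $\alpha d/(k-d)$ on $X\setminus N_X(y)$. Optimising $\alpha$ yields $d(k-d)/k\le L(L-1)$. So each vertex of $Y$, and symmetrically of $X$, has at most $2L^2$ neighbours or at most $2L^2$ non-neighbours in the other clique. Now double count $e(X_{\mathrm{high}},Y_{\mathrm{low}})$:
\begin{itemize}
\item If $|Y_{\mathrm{low}}|\ge 4L^2$, this forces $|X_{\mathrm{high}}|\le 4L^2$, and so $e(X,Y)\le 6L^2k$.
\item Otherwise $e_{\overline G}(X,Y)\le 6L^2k$.
\end{itemize}
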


\begin{lem}[Simple consequence of Claim~5.1 and Lemma~8.3 of~\cite{graph}] \label{poly P2 lem}
    Let $G$ be an $n$-vertex graph with least eigenvalue $\lambda_n$ and let $X, Y, Z\subseteq V(G)$ disjoint subsets of vertices of the same size. Let $\delta > 0$ be a real number such that
    $e_{\overline{G}}(X, Y)\leq \delta|X|^2$, $e_{\overline{G}}(X, Z)\leq \delta|X|^2$ and $e(Y, Z)\leq \delta|X|^2$. Then $|\lambda_n|\geq (1/3 - 4\delta)|X|$. 
\end{lem}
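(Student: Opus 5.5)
The plan is to restrict to the induced subgraph $G' = G[X\cup Y\cup Z]$ and use a single explicit cut together with the surplus--eigenvalue inequality $\surp(H)\le v(H)|\lambda_{\min}(H)|/4$ recalled in the introduction. Write $s=|X|=|Y|=|Z|$. By Cauchy's interlacing theorem, the least eigenvalue $\lambda'$ of $G'$ satisfies $|\lambda'|\le|\lambda_n|$. Since $v(G')=3s$, we get
\begin{equation*}
    \surp(G')\le \tfrac{3s}{4}|\lambda'|\le \tfrac{3s}{4}|\lambda_n|.
\end{equation*}
So it suffices to prove that $\surp(G')\ge (\tfrac14-3\delta)s^2$, which gives $|\lambda_n|\ge \tfrac{4}{3s}(\tfrac14-3\delta)s^2=(\tfrac13-4\delta)s$.

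For the lower bound on $\surp(G')$, I will use the cut $(X,\,Y\cup Z)$, which mirrors the induced $P_2$ structure with centre $X$. Its size is $e(X,Y)+e(X,Z)$. The total edge count of $G'$ is
\begin{equation*}
    m'=e(X)+e(Y)+e(Z)+e(X,Y)+e(X,Z)+e(Y,Z).
\end{equation*}
Hence
\begin{equation*}
    \surp(G')\ge e(X,Y)+e(X,Z)-\tfrac{m'}{2}=\tfrac12\bigl[e(X,Y)+e(X,Z)-e(Y,Z)-e(X)-e(Y)-e(Z)\bigr].
\end{equation*}

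Now insert the hypotheses. We have $e(X,Y)=s^2-e_{\overline G}(X,Y)\ge(1-\delta)s^2$, and likewise $e(X,Z)\ge(1-\delta)s^2$, while $e(Y,Z)\le\delta s^2$. The internal edge counts satisfy only the trivial bound $e(X),e(Y),e(Z)\le \binom{s}{2}\le s^2/2$. This yields
\begin{equation*}
    \surp(G')\ge \tfrac12\bigl[2s^2-3\delta s^2-\tfrac32 s^2\bigr]=\tfrac{s^2}{4}-\tfrac32\delta s^2\ge(\tfrac14-3\delta)s^2,
\end{equation*}
as required.

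The only delicate point is the internal edges. They enter the surplus with a negative sign, and nothing in the hypotheses controls them. In the worst case, where $X$, $Y$ and $Z$ are cliques, they exactly consume all but $s^2/4$ of the $2s^2$ cross edges. The plan therefore relies on the observation that the trivial bound $\binom{s}{2}$ per part still leaves the required $s^2/4$. Any naive test-vector approach applied directly to $A\succeq-|\lambda_n|I$ would have to handle these internal edges separately, which is why I route the argument through the surplus.
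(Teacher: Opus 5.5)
Your proof is correct and follows the route the paper indicates by citing Claim~5.1 and Lemma~8.3 of~\cite{graph}: a surplus lower bound for $G[X\cup Y\cup Z]$ from the cut $(X, Y\cup Z)$, combined with $\surp(H)\le v(H)|\lambda_{\min}(H)|/4$ and interlacing; your intermediate bound $(\tfrac14-3\delta)|X|^2$ is exactly what produces the stated constant $\tfrac43(\tfrac14-3\delta)=\tfrac13-4\delta$. As a minor aside, your computation in fact gives $(\tfrac13-2\delta)|X|$, and the surplus detour amounts to plugging the test vector $1_X-1_{Y\cup Z}$ into $A\succeq-|\lambda_n|I$ for $G$ itself, so interlacing is unnecessary and, contrary to your closing remark, the direct approach handles the internal edges in exactly the same way.
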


\begin{proof}[Proof of Theorem~\ref{1/4 poly thm}]
    Without loss of generality, we assume that $\gamma\in (1/5, 1/4)$. We implicitly assume throughout that $n$ is sufficiently large with respect to $\gamma$. Fix $\beta > 0$ sufficiently small with respect to $\gamma$ so that $(\gamma^{-1}(1 - 2\beta), 2\gamma^{-1}\beta)\in R_c^\circ\subseteq R_{\text{stab}}^\circ$. Let $\delta = \min\left(\beta, \conlem{\delta}{abstract rec 0 cor}\left(\gamma^{-1}(1 - 2\beta), 2\gamma^{-1}\beta\right)\right) > 0$. We take $\alpha = \gamma\delta/2$. It suffices to show that $G$ is $O(n^{-\gamma\delta})$-close to a disjoint union of cliques. 

    Let $V_1, \dots, V_k$ denote a maximal collection of disjoint subsets of $n^{2\gamma + \gamma\delta}$ vertices each such that $G[V_i]$ is complete for each $i\in [k]$. Let $V = \cup_i V_i$. 
    
    We seek to show that $e(G) - e(V)\leq 4\clem{1 step lem}^{-1/2}n^{2 - \beta}$. Assume otherwise. We now show that $e(V^c)\geq \clem{1 step lem}^{-1} n^{2 - 2\beta}$. Applying Lemma~\ref{poly dense Y lem} with $X = V$, $Y = V^c$, we have
    \begin{equation*}
        n^{3 + \gamma}\geq e(V, V^c)^2 - 4n^2e(V^c). 
    \end{equation*}
    Therefore, if $e(V^c) < \clem{1 step lem}^{-1} n^{2 - 2\beta}$, we have
    \begin{equation*}
        e(V, V^c)^2\leq 5\clem{1 step lem}^{-1}n^{4 - 2\beta}. 
    \end{equation*}
    Hence $e(V, V^c)\leq 3\clem{1 step lem}^{-1/2}n^{2 - \beta}$ and $e(G) - e(V)\leq (3\clem{1 step lem}^{-1/2} + o(1))n^{2 - \beta}$, a contradiction. Hence $e(V^c)\geq \clem{1 step lem}^{-1} n^{2 - 2\beta}$. Let $H:= G - V$ be the induced subgraph of $G$ on $V^c$ and let $L = n^\gamma$. Since $\clem{1 step lem}d(H)\geq \clem{1 step lem}e(H)/n\geq n^{1 - 2\beta}$ and $\frac{v(H)}{\clem{1 step lem}d(H)}\leq n^{2\beta}$, it follows that $\left(a_L(H), b_L(H)\right)\succcurlyeq \left(\gamma^{-1}(1 - 2\beta), 2\gamma^{-1}\beta\right)$. Hence by Proposition~\ref{1/4 poly clique}, $H$ contains a clique on $L^{2 + \delta} = n^{2\gamma + \gamma\delta}$ vertices, a contradiction to the maximality of the collection $V_1, \dots, V_k$. 

    Therefore, we must have $e(G) - e(V)\leq 4\clem{1 step lem}^{-1/2}n^{2 - \gamma\delta}$. It suffices to show that the edit distance between $G:= G[V]$ and a disjoint union of cliques is at most $O_{\gamma}(n^{2 - \gamma\delta})$. Note that $|V_1| = \dots = |V_k| = n^{2\gamma + \gamma\delta}\geq 100\Clem{poly dichotomy lem}L^2$. By Lemma~\ref{poly dichotomy lem}, for each pair of $i\neq j\in [k]$, either $e(V_i, V_j)\leq \Clem{poly dichotomy lem}L^2|V_1|\leq 0.01 |V_1|^2$ or $e_{\overline{G}}(V_i, V_j)\leq 0.01 |V_1|^2$. Construct an auxiliary graph $\Gamma$ on vertex set $[k]$ where $ij\in E(\Gamma)$ if $e_{\overline{G}}(V_i, V_j)\leq 0.01 |V_1|^2$. If there is an induced copy of $P_2$ (the path on three vertices and two edges) in $\Gamma$, then $|\lambda_n|\geq (1/3 - 0.04)|V_1|\gg n^{2\gamma}$ by Lemma~\ref{poly P2 lem}, a contradiction. Hence $\Gamma$ is a disjoint union of cliques. Let $G'$ denote the graph on the vertex set $V$, where $G[V_i]$ is complete for each $i\in [k]$ and $G[V_i, V_j]$ is complete if $ij\in \Gamma$ and empty otherwise. $G'$ is a disjoint union of cliques and the edit distance between $G$ and $G'$ is at most
    \begin{equation*}
        k^2\cdot \Clem{poly dichotomy lem}L^2|V_1|\ll n^{2 - \gamma\delta}, 
    \end{equation*}
    as desired. 
\end{proof}

\textbf{AI usage declaration.} The mathematical content of this paper is human-generated. ChatGPT Pro is used to write Python scripts for initial numerical experiments, on which the mathematical content does not depend. ChatGPT Pro is also used to polish an initial version of this manuscript. 

\textbf{Acknowledgements.} The author would like to thank Oliver Janzer for advice and guidance on this paper, and Timothy Gowers for helpful suggestions on an initial version submitted as a part III essay. The author would also like to thank Zhihan Jin for pointing out an error in a previous version of the proof of Proposition~\ref{key} and suggesting an AI-generated correction. 

\bibliographystyle{abbrv}
\bibliography{mybib}

\end{document}